\documentclass[reqno]{amsart}

\usepackage{amsmath}
\usepackage{amsthm}
\usepackage{amsfonts}
\usepackage{amssymb}
\usepackage{enumerate}
\usepackage{graphicx}
\usepackage{todonotes}
\usepackage{epstopdf}
\usepackage{color}
\usepackage{tikz}
\usepackage{subcaption}

\renewcommand{\ln}{\log} 

\newcommand{\1}{\mathbf{1}}


\DeclareMathOperator{\cov}{Cov}
\DeclareMathOperator{\var}{Var}
\DeclareMathOperator{\tr}{tr}

\DeclareMathOperator{\diag}{diag}
\DeclareMathOperator{\dist}{dist}
\DeclareMathOperator{\rank}{rank}
\DeclareMathOperator{\supp}{supp}

\newcommand{\M}{\operatorname{M}}

\newcommand{\Prob}{\mathbb{P}}
\newcommand{\E}{\mathbb{E}}
\newcommand{\C}{\mathbb{C}}
\renewcommand{\P}{\mathbb{P}}
\renewcommand\Re{\operatorname{Re}}
\renewcommand\Im{\operatorname{Im}}
\newcommand{\eps}{\varepsilon}
\newcommand*\lap{\mathop{}\!\Delta}
\newcommand{\one}{\mathbf{1}}

\def\R{\mathbb{R}}

\theoremstyle{plain}
  \newtheorem{theorem}{Theorem}[section]
  
  \newtheorem{lemma}[theorem]{Lemma}
  \newtheorem{corollary}[theorem]{Corollary}

\theoremstyle{definition}
  \newtheorem{definition}[theorem]{Definition}
  \newtheorem{example}[theorem]{Example}
  \newtheorem{assumption}[theorem]{Assumption}

\theoremstyle{remark}
  \newtheorem{remark}[theorem]{Remark}

\usepackage{bbm}
\newcommand{\abs}[1]{\left\vert#1\right\vert} 
\newcommand{\set}[1]{\left\{#1\right\}} 

\newcommand{\ind}{\mathbbm 1} 

\begin{document}

\title[Local behavior of critical points and roots of random polynomials]{On the local pairing behavior of critical points and roots of random polynomials} 

\author{Sean O'Rourke}
\address{Department of Mathematics\\University of Colorado\\Campus Box 395\\Boulder, CO 80309-0395\\USA}
\email{sean.d.orourke@colorado.edu}
\thanks{S. O'Rourke has been supported in part by NSF grants ECCS-1610003 and DMS-1810500.}

\author{Noah Williams}
\address{Department of Mathematical Sciences\\Appalachian State University\\342 Walker Hall\\121 Bodenheimer Dr.\\Boone, NC 28608\\USA}
\email{williamsnn@appstate.edu}

\begin{abstract}
We study the pairing between zeros and critical points of the polynomial $p_n(z) = \prod_{j=1}^n(z-X_j)$, whose roots $X_1, \ldots, X_n$ are complex-valued random variables. Under a regularity assumption, we show that if the roots are independent and identically distributed, the Wasserstein distance between the empirical distributions of roots and critical points of $p_n$ is on the order of $1/n$, up to logarithmic corrections.  The proof relies on a careful construction of disjoint random Jordan curves in the complex plane, which allow us to naturally pair roots and nearby critical points.  In addition, we establish asymptotic expansions to order $1/n^2$ for the locations of the nearest critical points to several fixed roots. This allows us to describe the joint limiting fluctuations of the critical points as $n$ tends to infinity, extending a recent result of Kabluchko and Seidel. Finally, we present a local law that describes the behavior of the critical points when the roots are neither independent nor identically distributed.
\end{abstract}

\maketitle

\section{Introduction}

This paper concerns the nature of the pairing between the critical points and roots of random polynomials in a single complex variable.  In particular, we consider polynomials of the form 
\begin{equation} \label{eq:def:pn}
p_n(z):= \prod_{j=1}^n(z-X_j),
\end{equation}
where $X_1, \ldots, X_n$ are complex-valued random variables (not necessarily independent or identically distributed). While much is known about the locations of the critical points of $p_n$ when the roots are deterministic (see for example Marden's book \cite{M} which contains the Gauss--Lucas theorem and Walsh's two circle theorem among other results), Pemantle and Rivin \cite{PR}, Hanin \cite{H1, H2, H3}, and Kabluchko \cite{K} first demonstrated that the random version of this problem admits greater precision, especially when the degree $n$ is large. 

In particular, Pemantle and Rivin conjectured that when $X_1, \ldots, X_n$ are chosen to be independent and identically distributed (iid) with distribution $\mu$, then the empirical distribution constructed from the critical points of $p_n$ converges weakly in probability to $\mu$. They proved their conjecture in \cite{PR} for measures satisfying some technical assumptions, and Subramanian \cite{S} refined their work for $\set{X_j}_{j=1}^n$ on the unit circle.  Kabluchko first proved the conjecture in full generality in \cite{K} to obtain the following result. 
\begin{theorem}[Kabluchko \cite{K}] \label{thm:kabluchko}
Let $X_1, X_2, \ldots$ be iid complex-valued random variables with distribution $\mu$.  Then for any bounded and continuous function $\varphi: \mathbb{C} \to \mathbb{C}$, 
\[ \frac{1}{n-1} \sum_{j=1}^{n-1} \varphi(w_j^{(n)}) \longrightarrow \int \varphi(z) d \mu(z) \]
in probability as $n \to \infty$, where $w_1^{(n)}, \ldots, w_{n-1}^{(n)}$ are the critical points of the polynomial
$ p_n(z) =  \prod_{j=1}^n(z-X_j). $
\end{theorem}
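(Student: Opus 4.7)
The plan is to connect the empirical measure of critical points $\nu_n := \frac{1}{n-1}\sum_{j=1}^{n-1}\delta_{w_j^{(n)}}$ to the empirical measure of roots $\mu_n := \frac{1}{n}\sum_{j=1}^{n}\delta_{X_j}$ via Cauchy transforms and logarithmic potentials, and then transfer the strong law of large numbers for $\mu_n$ to $\nu_n$. Introduce the random Cauchy--Stieltjes transform
\[
S_n(z) := \frac{1}{n}\frac{p_n'(z)}{p_n(z)} = \frac{1}{n}\sum_{j=1}^{n}\frac{1}{z-X_j},
\]
whose zeros off the set $\{X_1,\ldots,X_n\}$ coincide with the critical points of $p_n$. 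From $p_n'(z) = n\,p_n(z)\,S_n(z)$, differentiating $\log p_n'$ yields the identity
\[
\frac{1}{n-1}\frac{p_n''(z)}{p_n'(z)} = \frac{n}{n-1}S_n(z) + \frac{1}{n-1}\frac{S_n'(z)}{S_n(z)},
\]
whose left-hand side is the Cauchy transform of $\nu_n$.

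By the classical strong law of large numbers, for each fixed $z$ off the support of $\mu$ we have $S_n(z) \to g(z) := \int (z-w)^{-1}\,d\mu(w)$ and $S_n'(z) \to g'(z)$ almost surely. To upgrade this pointwise statement to weak convergence of $\nu_n$, I would work at the level of logarithmic potentials. Writing $U_{\mu_n}(z) := \int \log|z-w|\,d\mu_n(w) = \frac{1}{n}\log|p_n(z)|$ and $U_{\nu_n}(z) := \frac{1}{n-1}\log(|p_n'(z)|/n)$, the factorization $|p_n'| = n|p_n||S_n|$ gives the clean relation
\[
U_{\nu_n}(z) = \frac{n}{n-1}U_{\mu_n}(z) + \frac{1}{n-1}\log|S_n(z)|.
\]
A standard law-of-large-numbers argument for the logarithmic kernel (with a truncation of $\mu$ near infinity to handle heavy tails) shows $U_{\mu_n} \to U_\mu := \int\log|z-w|\,d\mu(w)$ in $L^1_{\mathrm{loc}}$ in probability. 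Consequently, if the remainder $\frac{1}{n-1}\log|S_n|$ also vanishes in $L^1_{\mathrm{loc}}$ in probability, then so does $U_{\nu_n} - U_\mu$. Taking distributional Laplacians then converts this into the desired weak convergence $\nu_n \to \mu$ in probability.

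The main obstacle is precisely this remainder estimate $\frac{1}{n-1}\log|S_n| \to 0$ in $L^1_{\mathrm{loc}}$. For the upper-tail contribution, concavity of the logarithm gives $\log^{+}|S_n(z)| \le \max_j \log^{+}|z-X_j|^{-1}$, which is locally integrable in $z$ uniformly in the $X_j$; dividing by $n-1$ and using concentration of $S_n$ around $g$ on a fine grid kills this positive part. The lower-tail (negative) contribution is the heart of the matter, because small values of $|S_n|$ correspond to proximity of $z$ to a critical point --- precisely what we are trying to locate. I would handle it by invoking Jensen's formula on small discs, which rewrites $\int_D \log|S_n|$ in terms of the number of zeros of $S_n$ inside $D$ together with boundary data, thereby trading a singular pointwise estimate for an area-integrated one. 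The Gauss--Lucas theorem, which confines every critical point to the convex hull of the roots, both supplies tightness of $\{\nu_n\}$ and lets the whole argument be localized to a large fixed ball, completing the deduction of Theorem~\ref{thm:kabluchko}.
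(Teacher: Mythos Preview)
The paper does not prove Theorem~\ref{thm:kabluchko}; it is stated as a cited background result due to Kabluchko, and the paper's own contributions (Theorems~\ref{thm:wasserstein}, \ref{thm:multiInCLT}, \ref{thm:multiOutCLT}, \ref{thm:main}, etc.) take it as known. So there is no ``paper's own proof'' to compare against here.

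That said, your outline does follow the skeleton of Kabluchko's actual argument: pass to logarithmic potentials via the identity $U_{\nu_n} = \tfrac{n}{n-1}U_{\mu_n} + \tfrac{1}{n-1}\log|S_n|$, invoke the law of large numbers for $U_{\mu_n}$, and reduce everything to showing $\tfrac{1}{n-1}\log|S_n| \to 0$ in $L^1_{\mathrm{loc}}$ in probability. You correctly identify the lower tail of $\log|S_n|$ as the crux. However, your proposed mechanism for handling it---Jensen's formula on small discs to trade pointwise smallness for zero-counting---is not how Kabluchko proceeds, and as written it is circular: Jensen's formula would express the integral of $\log|S_n|$ over a disc in terms of the zeros of $S_n$ inside that disc, but those zeros are exactly the critical points $w_j^{(n)}$ whose distribution you are trying to determine. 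Without an \emph{a priori} bound on how many of them cluster in a small disc, Jensen gives you nothing. Kabluchko instead proves an anti-concentration lemma for $|S_n(z)|$ at \emph{fixed} $z$ (roughly, that $\log^{-}|S_n(z)|$ is uniformly integrable), using the randomness of the $X_j$ directly rather than any geometric input about the critical points. Your sketch would need to be rerouted through such an estimate to close.

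A minor point: your upper-tail bound $\log^{+}|S_n(z)| \le \max_j \log^{+}|z-X_j|^{-1}$, while correct, does not by itself yield $\tfrac{1}{n-1}\int_B \log^{+}|S_n| \to 0$, since $\int_B \max_j \log^{+}|z-X_j|^{-1}\,d^2z$ can be of order $n$. One instead uses that $S_n(z) \to g(z)$ for a.e.\ $z$ together with local integrability of $\log^{+}$ to control this side; the heavy lifting really is all in the lower tail.
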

 
Inspired by such results, the first author established several versions of Theorem \ref{thm:kabluchko} for random polynomials with dependent roots that satisfy some technical conditions \cite{O}. For example, the conclusion of Theorem \ref{thm:kabluchko} holds for characteristic polynomials of certain classes of matrices from the classical compact  matrix groups. Additionally, in \cite{OW}, the authors adapted  Kabluchko's strategy to the situation where $p_n$ is perturbed to have $o(n)$ deterministic roots.  Two other relevant works include Reddy's thesis \cite{TRR} and the recent paper of Byun, Lee, and Reddy \cite{BLR}, who showed that under some mild assumptions, Kabluchko's result holds when $p_n$ has mostly deterministic roots and several (potentially dependent) random ones. Byun, Lee, and Reddy proved several additional results including that the sequence of empirical measures constructed from the zeros of $p^{(k)}_n$ converges weakly in probability to the distribution $\mu$, for \textit{any} fixed choice of $k$, as well as a version of Theorem \ref{thm:kabluchko} when the roots $X_1, \ldots, X_n$ are given by a $2D$ Coulomb gas density.  

Theorem \ref{thm:kabluchko} and most of the cited works above focus on the macroscopic, or global, behavior of the critical points of $p_n$.  For example, by combining Theorem \ref{thm:kabluchko} with the Law of Large Numbers, one obtains that, for any bounded and continuous function $\varphi: \mathbb{C} \to \mathbb{C}$, 
\begin{equation} \label{eq:kabluchko}
	\sum_{j=1}^n \varphi(X_j) = \sum_{j=1}^{n-1} \varphi(w_j^{(n)}) + o(n) 
\end{equation}
with high probability\footnote{See Section \ref{sec:notation} for a complete description of the asymptotic notation used here and in the sequel.}.  
In contrast to Theorem \ref{thm:kabluchko}, this paper focuses on describing the local behavior of the critical points.  

One important aspect of the local critical point behavior 
is that the critical points and roots of $p_n$ appear to pair with one another. Theorem \ref{thm:kabluchko} and \eqref{eq:kabluchko} describe this phenomenon at the macroscopic level by comparing the global behaviors of the critical points and roots. However, a glance at Figures \ref{fig:twoDisks} and \ref{fig:norm} suggests that a stronger pairing phenomenon exists.
In particular, one sees that nearly every critical point is paired closely with a root of $p_n$, an indication that the local behavior of the critical points should be extremely similar to the local behavior of the roots. 
\begin{figure}
\includegraphics[width =.8\columnwidth]{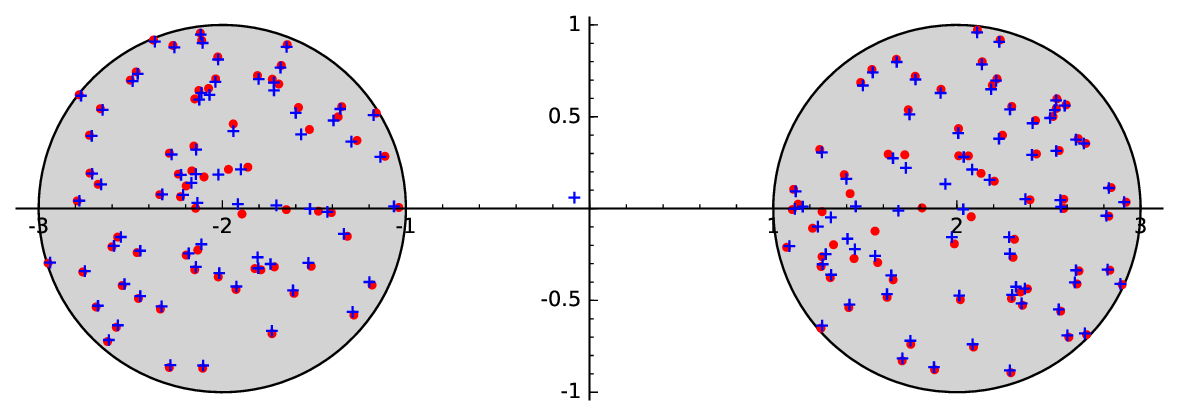}
\caption{The roots (red dots) and critical points (blue crosses) of a random, degree $150$ polynomial, where all $150$ roots are chosen independently and uniformly on two disks. See Example \ref{ex:twoDisks}.}
\label{fig:twoDisks}
\end{figure}

\begin{figure}
\includegraphics[width =.8\columnwidth]{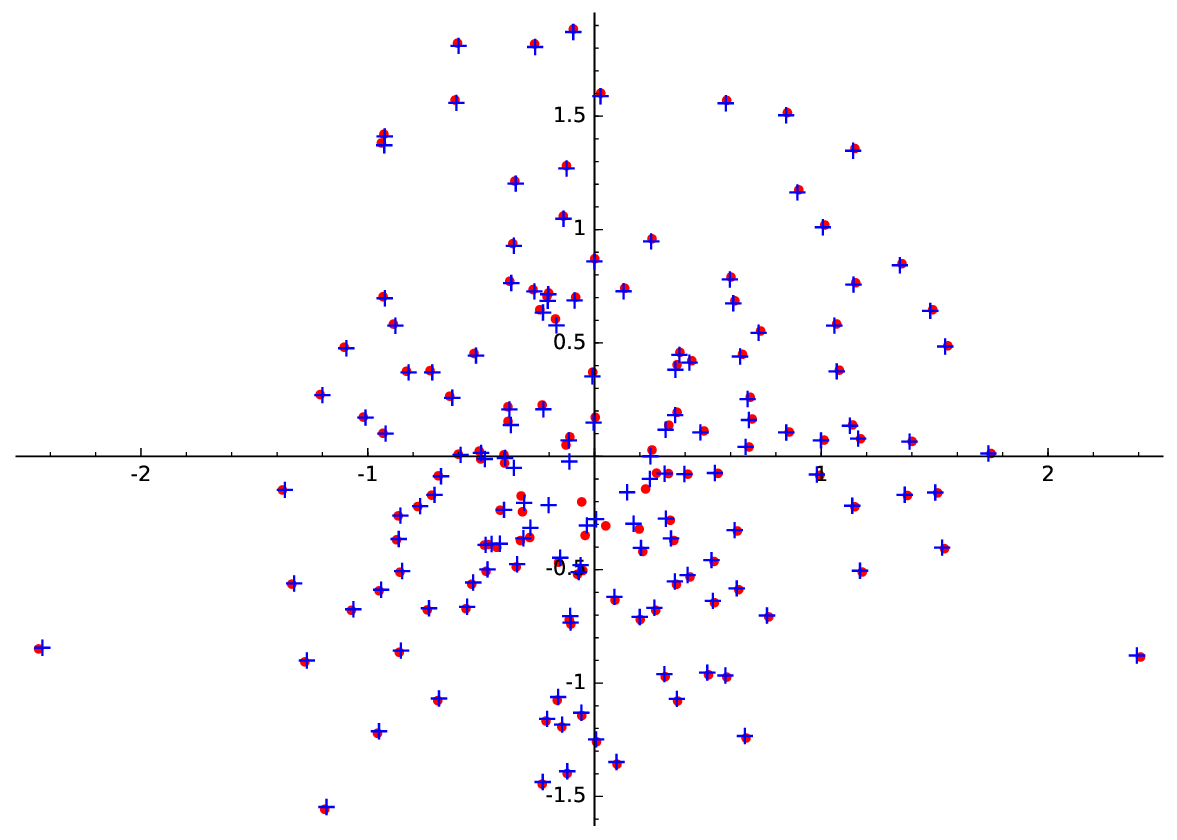}
\caption{The roots (red dots) and critical points (blue crosses) of a random, degree $150$ polynomial, where all $150$ roots are chosen independently according to a standard complex normal distribution. See Example \ref{ex:norm}.}
\label{fig:norm}
\end{figure}

Hanin investigated the pairing phenomenon between roots and critical points for several classes of random functions \cite{H1,H2,H3}, including random polynomials with independent roots. He proved that the distance between a fixed, deterministic root and its nearest critical point is roughly $1/n$ in the case where $\mu$ has a bounded density supported on the Riemann sphere \cite{H3}. The root-and-critical point pairing for random polynomials was also explored in \cite{OW, OWo}, and Dennis and Hannay gave an electrostatic explanation of the phenomenon in \cite{DH}.  Most recently, Steinerberger showed that the pairing phenomenon also holds for some classes of deterministic polynomials \cite{SS}, and Kabluchko and Seidel determined the asymptotic fluctuations of the critical point of $p_n$ that is nearest a given root \cite{KS}. Kabluchko and Seidel's results are similar to some of our conclusions below and appear to have been concurrently derived using different methods. We present a detailed comparison between \cite{KS} and our work in the next section. 


In this paper, we refine the results mentioned above to obtain a more complete picture of the pairing that occurs between zeros and critical points of $p_n$. We begin by exhibiting a bound on the Wasserstein, or ``transport,'' distance between the collections of roots and critical points of $p_n$. While this result explains the nearly 1-1 pairing between roots and critical points in Figures \ref{fig:twoDisks} and \ref{fig:norm}, it does not allow one to describe the behavior near any particular root. We accomplish this feat in the next section of the paper, where we discuss the joint fluctuations  for a fixed number of critical points of $p_n$. We conclude our analysis by establishing a local law that describes the mesoscopic behavior of the critical points of $p_n$.  
Many of our results focus on the cases where the roots $X_1, \ldots, X_n$ of $p_n$ are iid, but for some of our results, we do not even require that the roots be independent.

\subsection{Notation} \label{sec:notation}

Throughout the paper, we use asymptotic notation, such as $O$ and $o$, under the assumption that $n \to \infty$.  We write $X_n = O(Y_n)$ , $Y_n = \Omega(X_n)$, $X_n \ll Y_n$, or $Y_n \gg X_n$ to denote the bound $|X_n| \leq C Y_n$ for some constant $C > 0$ and for all $n > C$.  If the implicit constant depends on a parameter $k$, e.g., $C = C_k$, we denote this with subscripts, e.g., $X_n = O_k(Y_n)$ or $X_n \ll_k Y_n$.  By $X_n=o_k(Y_n)$, we mean that for any $\eps > 0$, there is a natural number $N_{\eps,k}$ depending on $k$ and $\eps$ for which $n \geq N_{\eps,k}$ implies $\abs{X_n} \leq \eps {Y_n}$.  
In general, $C,c, K$ are constants which may change from one occurrence to the next.  We often use subscripts, such as $C_{P_1,P_2,\ldots}$, to denote that the constant depends on some parameters $P_1, P_2,\ldots$.  

We use the following set-theoretic conventions. For $z_0 \in \mathbb{C}$ and $r \geq 0$, we define
\[
B(z_0, r) := \{ z \in \mathbb{C} : |z - z_0| < r \}
\] 
to be the open ball of radius $r$ centered at $z_0$, and $\overline{B}(z_0,r)$ to be its closure. The notations $\#S$ and $\abs{S}$ denote the cardinality of the finite set $S$. The natural numbers, $\mathbb{N}$, do not include zero. 

For a probability measure $\mu$, we use $X \sim \mu$ to mean that the random variable $X$ has distribution $\mu$ and $\supp(\mu)$ to denote its support. We say that a probability measure $\mu$ on $\mathbb{C}$ has \emph{density} $f$ if $\mu$ is absolutely continuous with respect to Lebesgue measure on $\mathbb{C}$ and the Radon--Nikodym derivative of $\mu$ with respect to Lebesgue measure is $f$. The random variable $\ind_E$ is the indicator supported on the event $E$, and we say an event $E$ (which depends on $n$) holds with \emph{overwhelming probability} if for every $\alpha > 0$, $\Prob(E) \geq 1 - O_\alpha(n^{-\alpha})$. 

Finally, we use $d^2z$ to denote integration with respect to the Lebesgue measure on $\C$ to avoid confusion with complex line integrals, where we integrate against $dz$. We use $\sqrt{-1}$ to denote the imaginary unit and reserve $i$ as an index.

\subsection*{Acknowledgements} 
The authors thank Boris Hanin for calling their attention to this line of research and for many useful conversations.  The authors also thank the anonymous referees for useful feedback and corrections.  

\section{Main Results}\label{sec:results}

We begin by introducing the Wasserstein metric in order to discuss the pairing between the roots and critical points of $p_n$ that one sees in Figures \ref{fig:twoDisks} and \ref{fig:norm}.  

\subsection{Wasserstein distance} \label{sec:wasserstein}
For probability measures $\mu$ and $\nu$ on $\mathbb{C}$, let $W_1(\mu, \nu)$ denote the $L_1$-Wasserstein distance between $\mu$ and $\nu$ defined by
\[ W_1(\mu, \nu) := \inf_{\pi} \int |x - y| d \pi(x,y), \]
where the infimum is over all probability measures $\pi$ on $\mathbb{C} \times \mathbb{C}$ with marginals $\mu$ and $\nu$ (see e.g. \cite{V}, Chapter 6). 
Theorem \ref{thm:wasserstein} below gives a bound on the Wasserstein distance between the empirical measures constructed from the roots $X_1, \ldots, X_n$ and the critical points $w_1^{(n)}, \ldots, w_{n-1}^{(n)}$ of the polynomial $p_n$ defined in \eqref{eq:def:pn}. We denote these empirical measures by
\begin{equation} \label{eq:def:mun}
\mu_n := \frac{1}{n}\sum_{j=1}^n\delta_{X_j}\quad \text{and}\quad \mu'_n := \frac{1}{n-1}\sum_{j=1}^{n-1}\delta_{w_j^{(n)}},
\end{equation}
respectively.
%
Before we state Theorem \ref{thm:wasserstein}, we motivate some regularity assumptions $\mu$ must satisfy in the hypothesis.
%

Consider that
\begin{equation}
\frac{1}{n}p_n'(z) = \prod_{j=2}^n(z-X_j)\left((z-X_1)\frac{1}{n}\sum_{j=2}^n\frac{1}{z-X_j} + \frac{1}{n}\right),
\label{eqn:whyCauchy}
\end{equation}
where the sum on the right-hand side is an empirical mean of iid random variables. Provided $\mu$ is sufficiently nice, the Law of Large Numbers implies $\frac{1}{n}\sum_{j=2}^n\frac{1}{z-X_j}$ converges in distribution to the Cauchy--Stieltjes transform of $\mu$, which is given by
\begin{equation} \label{eq:def:m_mu}
m_\mu(z) := \int_\C \frac{d\mu(x)}{z-x},
\end{equation}
and defined for those values of $z \in \mathbb{C}$ for which the integral exists. Heuristically speaking, if $m_{\mu}(z)$ is finite and bounded away from zero near $z=X_1$, then $p'_n(z) \approx 0$ for some $z$ satisfying $\abs{z-X_1} = O(1/n)$. If, on the other hand, $\abs{m_\mu(z)}$ is close to $0$ for $z$ near $X_1$, we have $p'_n(z) \approx \prod_{j=2}^n(z-X_j)$, so $p'_n(z)$ need not have any zeros near $X_1$. Similar heuristic intuition applies if we replace $X_1$ in turn with $X_2, \ldots, X_n$. 

In light of the discussion above, conditions on the Cauchy--Stieltjes transform of $\mu$ feature prominently in this paper, particularly in the hypothesis of Theorem \ref{thm:wasserstein}, which requires at least one of the assumptions below.

%
%
%
%
%
%

\begin{assumption}
Suppose there are positive constants, $C_1,C_2$, so that the following conditions hold when $X_1, \ldots, X_n$ are iid complex-valued random variables with common  distribution $\mu$:
\begin{enumerate}[(i)]
\item \label{it:subquad} for any $\eps > 0$, 
$
\P(\abs{m_\mu(X_1)} < \eps) \leq C_1\eps^2;
$
\item \label{it:maxBd} the random variable $\eta_n:=\max_{1\leq j\leq n}\abs{X_j}$ satisfies 
$
\P\left(\eta_n \geq n^{C_2}\right) = o(1). 
$
\end{enumerate}
\label{assum:subquad}
\end{assumption}

\begin{assumption}[Alternative to Assumption \ref{assum:subquad} for radially symmetric distributions] Suppose $\mu$ has two finite absolute moments and a continuous density, $f$, that is radially symmetric about $z=z_0$ and that satisfies $f(z_0) > 0$. 
\label{assum:radSym}
\end{assumption}

We can now state the main result of this subsection.

\begin{theorem} \label{thm:wasserstein}
Let $X_1, \ldots, X_n$ be iid, complex random variables whose distribution, $\mu$, has a bounded density and satisfies either Assumption \ref{assum:subquad} or Assumption \ref{assum:radSym}. Then, there is a positive constant $C$, depending on $\mu$, so that with probability $1 - o(1)$,
\begin{equation} \label{eq:wasserstein:bound}
W_1(\mu_n, \mu'_n) \leq \frac{C\eta_n(\ln{n})^{9}}{n},
\end{equation}
where $\eta_n := \max_{1 \leq j\leq n}\abs{X_j}$, and $\mu_n, \mu'_n$ (defined in \eqref{eq:def:mun})
are the empirical measures constructed from the roots and critical points of $p_n(z) = \prod_{j=1}^n(z-X_j).$
\end{theorem}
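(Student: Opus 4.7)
The plan is to bound $W_1(\mu_n,\mu'_n)$ by constructing an explicit coupling: pair most roots $X_j$ with a nearby critical point living inside a small region $\Gamma_j$ around $X_j$, and pay a crude Gauss--Lucas price of $O(\eta_n)$ per ``bad'' root for the rest (since every critical point sits in the convex hull of the roots, hence within $\eta_n$ of each root). The good regions are built from open balls $B(X_j, r_j)$ of radius
\[
r_j = \frac{K (\ln n)^{a}}{n\,\abs{m_\mu(X_j)}},
\]
for large constants $K,a$. This choice is motivated by the identity
\[
\frac{p_n'(z)}{p_n(z)} = \frac{1}{z-X_j} + \sum_{k\neq j}\frac{1}{z-X_k},
\]
whose vanishing forces $|z-X_j|\approx 1/(n|m_\mu(X_j)|)$ once the empirical sum is replaced by its mean $n\,m_\mu(X_j)$.

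For each $j$, let $\Gamma_j$ denote the connected component of $\bigcup_{k} B(X_k, r_k)$ containing $X_j$. A quantitative argument-principle/Rouch\'e step applied to the holomorphic function $h(z) = 1 + (z-X_j)\sum_{k\neq j}(z-X_k)^{-1}$ (whose zeros coincide with the critical points of $p_n$ away from the $X_k$) should show, with overwhelming probability, that every connected component $\Gamma$ contains exactly as many critical points of $p_n$ as roots. This reduces to proving
\[
\biggl|\sum_{k\neq j}(z-X_k)^{-1} - n\, m_\mu(z)\biggr| \ll \frac{1}{r_j}
\]
uniformly on $\partial\Gamma_j$; a second-moment estimate for $\sum_{k\neq j}(z-X_k)^{-1}$ combined with an $n^{-C}$-net and a union bound on $\partial\Gamma_j$ delivers this once $a$ is taken large enough. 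Within each $\Gamma$, roots and critical points may then be coupled arbitrarily at per-unit-mass cost at most $\mathrm{diam}(\Gamma)\leq \sum_{j\in\Gamma} 2 r_j$.

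To assemble, split the roots at threshold $\tau = n^{-1/2}$. For the ``good'' roots $|m_\mu(X_j)|\geq\tau$, a dyadic decomposition over $|m_\mu(X_j)|\in[2^{-k-1},2^{-k})$ combined with Assumption~\ref{assum:subquad}(i) gives $\E[\abs{m_\mu(X_1)}^{-1}\indicator{|m_\mu(X_1)|\geq \tau}] = O(1)$, so
\[
\frac{1}{n}\sum_{|m_\mu(X_j)|\geq \tau} 2 r_j \;\leq\; \frac{2K(\ln n)^{a}}{n^2}\sum_{j} \frac{\indicator{|m_\mu(X_j)|\geq \tau}}{|m_\mu(X_j)|} \;=\; O\Bigl(\frac{(\ln n)^{a'}}{n}\Bigr)
\]
with high probability, where the extra logarithms absorb concentration losses. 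The ``bad'' roots with $|m_\mu(X_j)|<\tau$ number at most $O(\ln n)$ with overwhelming probability (Chernoff, using $\P(|m_\mu(X_1)|<\tau)\leq C_1/n$), contributing at most $O(\eta_n \ln n /n)$. The radially symmetric case covered by Assumption~\ref{assum:radSym} follows in parallel after establishing near $z_0$ a substitute tail bound on $m_\mu(X_1)$ from the positivity and continuity of $f$ at $z_0$.

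The main obstacle is the Rouch\'e step: if $B(X_j,r_j)$ comes close to some other root $X_k$, then $(z-X_k)^{-1}$ blows up on $\partial\Gamma_j$ and the approximation to $n\,m_\mu(z)$ fails. I would first establish, using the bounded density of $\mu$, minimum-spacing bounds of the form $\P(\min_{j\neq k}|X_j-X_k|<n^{-C})\leq n^{-C'}$, and re-classify as bad any root whose ball meets too many neighbors, absorbing its cost into the $\eta_n$-regime. Carrying out these concentration estimates uniformly in $z\in\partial\Gamma_j$ and uniformly in $j$, while tracking all polylogarithmic losses from nets, union bounds, and the dyadic decomposition, is what forces the $(\ln n)^{9}$ exponent in the final bound.
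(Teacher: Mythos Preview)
Your overall architecture matches the paper's proof almost exactly: balls of radius $\sim (\ln n)^{a}/(n|m_\mu(X_j)|)$, connected components, a Rouch\'e-type count of critical points versus roots inside each component, a Gauss--Lucas bound for the exceptional roots, and a dyadic decomposition in $|m_\mu|$ to sum the transport costs. The paper even reduces Assumption~\ref{assum:radSym} to Assumption~\ref{assum:subquad} first, so you need not treat the radially symmetric case ``in parallel.''

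The gap is in the Rouch\'e step, and it is not the minimum-spacing issue you flag. First, $h(z)=1+(z-X_j)\sum_{k\neq j}(z-X_k)^{-1}$ is \emph{not} holomorphic on $\Gamma_j$ when the component contains other roots $X_k$; it has simple poles there. This is fixable by the meromorphic argument principle, but it signals the real problem: you cannot compare $\sum_{k\neq j}(z-X_k)^{-1}$ to $n\,m_\mu(z)$ with the precision $\ll 1/r_j$ you need via a second-moment estimate plus an $\eps$-net. The truncated terms $(z-X_k)^{-1}$ have variance $\sim \ln n$, so the sum fluctuates by $\sqrt{n\ln n}$; for roots with $|m_\mu(X_j)|$ near your threshold $\tau=n^{-1/2}$ the target $1/r_j\sim \sqrt{n}/(\ln n)^{a}$ is \emph{smaller} than this fluctuation, and no moment method survives a union bound over $n$ roots (let alone a polynomial-size net), since the tail is only polylogarithmically small.

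The paper resolves this with three linked ideas. It raises the threshold to $(\ln n)^{4}/\sqrt{n}$ (this is what ultimately produces the exponent $9$, since there are then $O((\ln n)^{8}\ln\ln n)$ bad roots). It factors out \emph{all} roots within $(\ln n)^{2}/(n|m_\mu(X)|)$ of a reference root $X$ in the component, writing $p_n=q_X r_X$, and compares $p_n'$ not to a linearization but to $f_X(z)=q_X(z)\bigl(r_X'(z)+r_X(z)\,(n-|R_{\mathrm{near}}|)/(z-Y_X)\bigr)$, where $Y_X$ is chosen so that the far sum is exactly matched at $z=X$; the zeros of $f_X$ inside the component are then counted via Walsh's two circle theorem applied to $r_X(z)(z-Y_X)^{n-|R_{\mathrm{near}}|}$. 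Finally, the concentration of the far sum is established only at the single point $X$ (events $F_n^i$, controlled by a fourth-moment bound whose failure probability scales like $1/(n|m_\mu(X_i)|^{2})$ and hence sums over $i$ using Assumption~\ref{assum:subquad}(i)), and is transferred to $z\in\partial\Gamma$ by a deterministic Lipschitz bound on $z\mapsto\sum_{j\notin R_{\mathrm{near}}}(z-X_j)^{-1}$, not by a net. Your proposal will go through once you replace the second-moment/net mechanism by this combination.
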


In the case where $\mu$ has sub-exponential tails, one can show that with probability tending to $1$, $\eta_n = O(\ln{n})$. Consequently, Theorem \ref{thm:wasserstein} immediately implies the following corollary.  
\begin{corollary} \label{cor:wasserstein}
Let $X_1, \ldots, X_n$ be iid, complex random variables whose distribution, $\mu$, has a bounded density and satisfies Assumption \ref{assum:subquad} part \eqref{it:subquad} in addition to the following condition:
\begin{enumerate}[(i')]
\addtocounter{enumi}{1}
\item there exist $C,c>0$ such that if $X \sim \mu$, then, $\P(\abs{X} > t) \leq Ce^{-ct}$ for every $t> 0$.
\end{enumerate}
Then, there is a positive constant $C_\mu$, depending only on $\mu$, so that with probability $1 - o(1)$,
\begin{equation*}
W_1(\mu_n, \mu'_n) \leq \frac{C_\mu(\ln{n})^{10}}{n},
\end{equation*}
where $\mu_n, \mu'_n$ (defined in \eqref{eq:def:mun})
are the empirical measures constructed from the roots and critical points of $p_n(z) = \prod_{j=1}^n(z-X_j).$
\end{corollary}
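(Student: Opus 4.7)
The plan is to deduce Corollary \ref{cor:wasserstein} directly from Theorem \ref{thm:wasserstein}. Hypothesis \eqref{it:subquad} of Assumption \ref{assum:subquad} is imposed by the corollary, so two things need to be verified to invoke the theorem and extract the claimed bound: first, that part \eqref{it:maxBd} of Assumption \ref{assum:subquad} is implied by the sub-exponential tail condition (i'); and second, that $\eta_n = O(\ln n)$ holds with probability $1 - o(1)$ under the same condition, so that the $\eta_n$ appearing in \eqref{eq:wasserstein:bound} can be absorbed into a single extra factor of $\ln n$.

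Both of these are immediate from a union bound applied to the maximum. For any threshold $t > 0$, the sub-exponential tail assumption (i') gives
\begin{equation*}
\P(\eta_n \geq t) \;\leq\; \sum_{j=1}^n \P(|X_j| \geq t) \;\leq\; C n e^{-ct}.
\end{equation*}
Taking $t = n^{C_2}$ for any fixed constant $C_2 > 0$ makes the right-hand side decay faster than any polynomial in $n$, verifying part \eqref{it:maxBd} of Assumption \ref{assum:subquad}. In particular, Theorem \ref{thm:wasserstein} applies and yields
\[
W_1(\mu_n, \mu'_n) \;\leq\; \frac{C \eta_n (\ln n)^9}{n}
\]
on an event of probability $1 - o(1)$.

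Next, using the same union bound with $t = K \ln n$ for a constant $K > 2/c$ produces
\begin{equation*}
\P(\eta_n \geq K \ln n) \;\leq\; C n^{1 - cK} \;=\; o(1).
\end{equation*}
On the intersection of this event with the event on which Theorem \ref{thm:wasserstein} holds — still of probability $1 - o(1)$ by a final union bound — one concludes
\[
W_1(\mu_n, \mu'_n) \;\leq\; \frac{C K (\ln n)^{10}}{n},
\]
and the constants are absorbed into a single $C_\mu$ depending only on $\mu$ (through $C$ and $c$).

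There is no substantive obstacle in deriving the corollary: all the difficulty resides in Theorem \ref{thm:wasserstein} itself, and the content of the reduction is the standard observation that sub-exponential tails allow one to replace the crude polynomial control on $\eta_n$ required by Assumption \ref{assum:subquad}\eqref{it:maxBd} with the sharper logarithmic bound, at the modest cost of a single additional factor of $\ln n$ in the final estimate.
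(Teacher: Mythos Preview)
Your proposal is correct and matches the paper's approach: the paper simply notes that sub-exponential tails give $\eta_n = O(\ln n)$ with probability $1 - o(1)$, so the corollary follows immediately from Theorem \ref{thm:wasserstein}. Your argument fills in exactly these details via the natural union bound, and there is nothing more to it.
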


Theorem \ref{thm:wasserstein} and Corollary \ref{cor:wasserstein} show that the roots and critical points can be paired in such a way that the typical spacing between a critical point and its paired root is $O(n^{-1})$, up to logarithmic corrections. This precisely describes the phenomenon observed in Figures \ref{fig:twoDisks} and \ref{fig:norm}, and the authors believe that these bounds are optimal (up to logarithmic factors) based on the theorems of Section \ref{sec:microscopic} below and the results in \cite{KS}.  

A couple of remarks concerning Theorem \ref{thm:wasserstein} and its corollary are in order. Due to the heuristic that motivates our proof of Theorem \ref{thm:wasserstein} (see Figure \ref{fig:loops}), the authors conjecture that Assumption \ref{assum:subquad} part \eqref{it:subquad} can be weakened to require that for some fixed $\delta > 0$, $\P(\abs{m_\mu(X_1)}< \eps) \leq C_1 \eps^{1+\delta}$. At present, we require $\delta = 1$ to obtain some technical bounds in the proof.  An examination of the proof reveals exactly where this condition is needed. 
The second remark concerns the appearance of $\eta_n$ on the right-hand side of \eqref{eq:wasserstein:bound}.  The authors believe this term is at least partially necessary.  Indeed, based on numerical experiments, the Wasserstein distance $W_1(\mu_n, \mu'_n)$ appears larger for distributions $\mu$ with extremely heavy tails.  The precise dependence of $\eta_n$ on the Wasserstein distance remains an open question.  

\subsection{Examples of Theorem \ref{thm:wasserstein} and Corollary \ref{cor:wasserstein}}
The assumptions of Theorem \ref{thm:wasserstein} and Corollary \ref{cor:wasserstein} are rather technical, so this subsection is devoted to several specific examples worked out in detail.  

\begin{example}[$\mu$ is uniform on a disk]\label{ex:oneDisk}
If $\mu$ has a uniform distribution on the disk of radius $R$ centered at $z_0$, then, $\mu$ has density 
$
f(z) = \frac{1}{\pi R^2}\ind_{\abs{z-z_0} \leq R}
$
and Cauchy--Stieltjes transform
\[
m_\mu(z) = \begin{cases}\frac{1}{R^2}\left(\overline{z-z_0}\right) & \text{if $\abs{z-z_0} \leq R$},\\\frac{1}{z-z_0} &\text{if $\abs{z-z_0} \geq R$}.\end{cases}
\]
(Lemma \ref{lem:radSymMu} facilitates the computation of $m_\mu(z)$ when $\mu$ is radially symmetric. For this example, apply Lemma \ref{lem:radSymMu} when $z=0$, $R = 1$, and apply a linear transformation.) It follows that if $X \sim \mu$, then for any $\eps < 1$,
\[
\P\left(\abs{m_\mu(X)} < \eps\right) \leq \P\left(\abs{X-z_0} < R^2\eps\right) = R^2\eps^2,
\]
so $\mu$ satisfies Assumption \ref{assum:subquad}, and by Theorem \ref{thm:wasserstein}, with probability $1-o(1)$, $W_1(\mu_n, \mu'_n) =O((\ln{n})^9/n)$. (Note that almost surely, $\eta_n \leq \abs{z_0} + R$).
\label{ex:disks}
\end{example}

\begin{example}[$\mu$ is supported on all of $\C$]\label{ex:norm}
Assumption \ref{assum:radSym} is easy to verify for a large class of measures that do not necessarily have compact support. For example, suppose $\mu$ has a standard complex normal distribution with density
$
f(z) = \frac{1}{\pi} e^{-\abs{z}^2}.  
$
Clearly, $\mu$ is radially symmetric about the origin, and $f(z)$ is continuous with $f(0) = \pi^{-1} > 0$. Furthermore, $\mu$ has sub-exponential tails, so by Corollary \ref{cor:wasserstein}, with probability tending to $1$, $W_1(\mu_n, \mu_n') \leq O((\ln{n})^{10}/n)$. Figure \ref{fig:norm} illustrates this example.
%
\end{example}

\begin{example}[$\mu$ is not radially symmetric]\label{ex:twoDisks}
In this last example, we consider a situation where $\mu$ does not exhibit radial symmetry. Suppose $\mu$ is uniform on the two disks $B(-2, 1)$ and $B(2,1)$ with  density
\[
f(z) = \frac{1}{2\pi}\left(\ind_{\abs{z+2}< 1}(z) + \ind_{\abs{z-2} < 1}(z)\right),
\]
which is depicted in Figure \ref{fig:twoDisks}. By separately considering the cases $\abs{z+2} < 1$, $\abs{z-2} < 1$, and $\abs{z\pm 2} \geq 1$, we can use the calculations from Example \ref{ex:disks} to obtain the Cauchy--Stieltjes transform:
\begin{equation}
m_\mu(z) = \begin{cases} \frac{1}{2}\left(\overline{z+2} + \frac{1}{z-2}\right)&\text{if $\abs{z+2} < 1$,}\\ \frac{z}{z^2 -4} &\text{if $\abs{z\pm 2} \geq 1$,}\\
\frac{1}{2}\left(\overline{z-2} + \frac{1}{z+2}\right) &\text{if $\abs{z-2} < 1$}.\end{cases}
\label{eqn:twoCirc}
\end{equation}
Since $\mu$ has compact support, Assumption \ref{assum:subquad} part \eqref{it:maxBd} holds trivially. In Appendix \ref{sec:appendixA}, we establish part \eqref{it:subquad}, so by Theorem \ref{thm:wasserstein}, with probability $1-o(1)$, $W_1(\mu_n,\mu'_n) = O((\ln{n})^9/n)$. 
\end{example}

\subsection{Fluctuations of the critical points} \label{sec:microscopic}
While Theorem \ref{thm:wasserstein} describes the typical distance between a root and its paired critical point, it does not allow one to study any particular root or critical point. Toward this end, we now fix several of the roots and treat them as deterministic: consider the polynomial  
\[
p_n(z) :=\prod_{l=1}^s(z-\xi_l)\prod_{j=1}^{n+1-s}(z-X_j), \] 
where $X_1, \ldots, X_{n+1-s}$ are iid complex-valued random variables with distribution $\mu$, and $\vec{\xi} = (\xi_1, \ldots, \xi_s)$  is a deterministic vector in $\C^s$.  Our goal is to simultaneously study the behavior of the critical points closest to $\xi_l$, $1 \leq l \leq s$.

Our first result, Theorem \ref{thm:multiInCLT}, covers the situation where some of the values $\xi_1, \ldots, \xi_s$ are allowed to be inside the support of $\mu$. In particular, for each $1 \leq l \leq s$, equation \eqref{eqn:multiInLoc} locates the critical point, $w_l^{(n)}$, that is near $\xi_l$ to within $O(n^{-2})$  (up to logarithmic corrections). This bound indicates that each $w_l^{(n)}$ is centered at
\begin{equation}
\hat{w}_l^{(n)} := \xi_l - \frac{1}{n+1}\frac{n}{\sum_{j\neq l}\frac{1}{\xi_l-\xi_j} +\sum_{j=1}^{n+1-s}\frac{1}{\xi_l - X_j}},
\label{eqn:wHat}
\end{equation}
rather than $\xi_l$.  This observation allows us to express the fluctuations of each critical point as a sum of independent random variables (up to some lower order error terms), and we use this to show that the fluctuations of the vector $(w_1^{(n)}, \ldots, w_s^{(n)})$ converge in distribution to a multivariate normal distribution. See Figure \ref{fig:BlobLLNx4}.

\begin{figure}
\includegraphics[width =1\columnwidth, trim = {0 0 0 1.5cm}, clip]{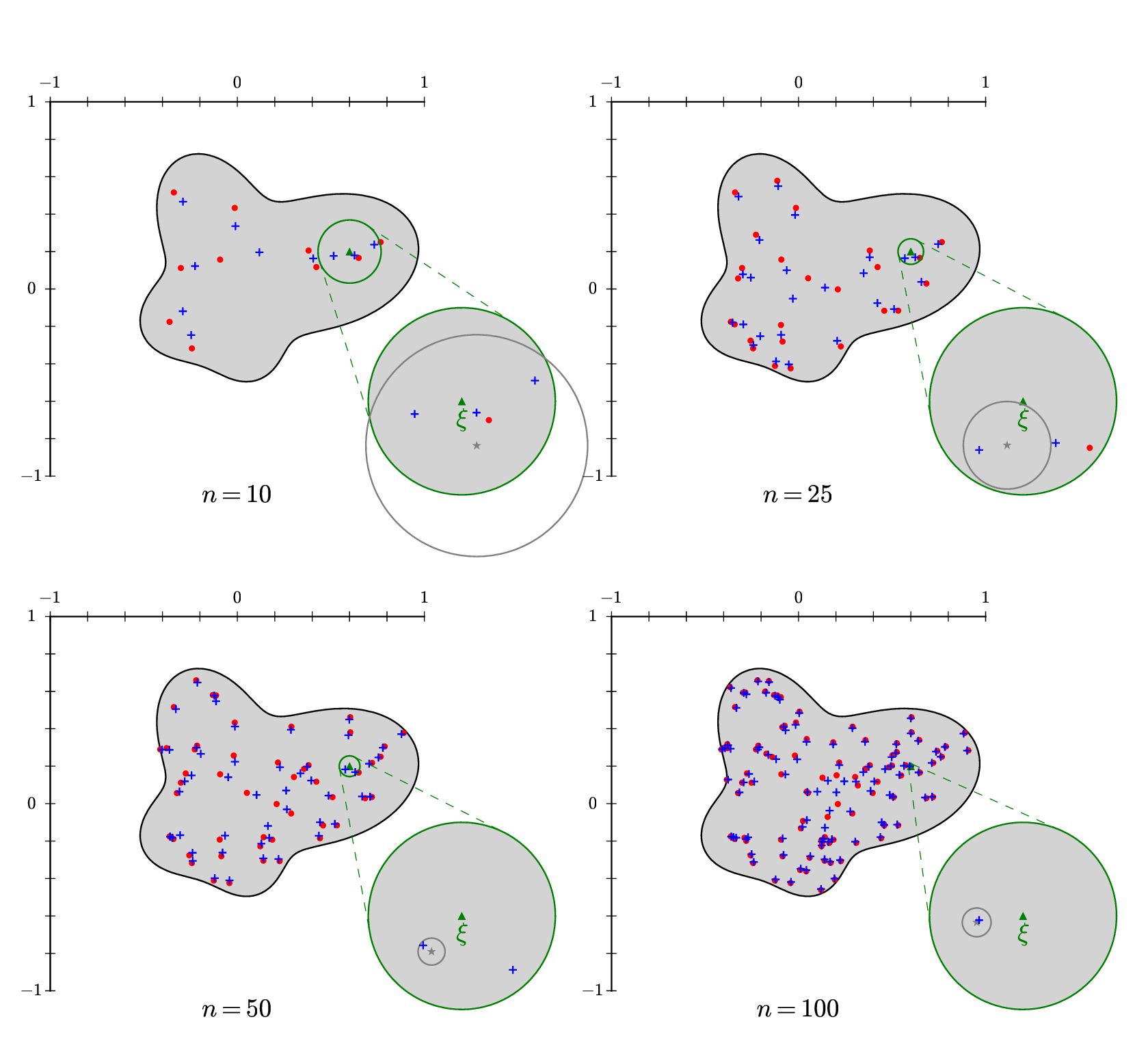}
\caption{Simulation to illustrate Theorem \ref{thm:multiInCLT}. The roots (red dots) and critical points (blue crosses) of $p_n(z) = (z-\xi_1)\prod_{j=1}^n(z-X_j)$ for increasing values of $n$, where the roots, $X_1, \ldots, X_{100}$, are chosen independently and uniformly on the outlined region. The green circle centered at $\xi_1$ is of radius $\frac{2n}{n+1}\left(\sum_{j=1}^n\frac{1}{\xi_1-X_j}\right)^{-1}$ and the gray circle has radius $\frac{20}{n^2}$ and center $\hat{w}_1^{(n)}$ (see \eqref{eqn:wHat}).} 
\label{fig:BlobLLNx4}
\end{figure}

In order to state Theorem \ref{thm:multiInCLT} we need the following definitions. Let
\begin{align*}
\M_{\mu} &:= \left\{ z \in \mathbb{C} : m_{\mu}(z) = 0 \right\}
\end{align*}
denote the set of zeros of $m_\mu$. We say that a measure $\mu$ has a \textit{density in a neighborhood of $z_0$} if there exists a $\rho > 0$ so that the restriction of $\mu$ to the open ball $B(z_0,\rho)$ is absolutely continuous with respect to the Lebesgue measure on $B(z_0,\rho)$.

\begin{theorem}[Locations and fluctuations of critical points when $p_n$  has several deterministic roots]
Let $X_1, X_2, \ldots$ be iid complex-valued random variables with distribution $\mu$, fix $s$ and the distinct, deterministic values $\xi_1, \ldots, \xi_s \notin \M_\mu$, and suppose that in a neighborhood of each $\xi_l$, $1 \leq l \leq s$, $\mu$ has a bounded density, $f$. Then, with probability $1 - o(1)$, the polynomial
\[p_n(z) = \prod_{l=1}^s(z-\xi_l)\prod_{j=1}^{n+1-s}(z-X_j)\]
has $s$ critical points, $w_1^{(n)}, \ldots, w_s^{(n)}$, such that for $1 \leq l \leq s$, $w_l^{(n)}$ is the unique critical point of $p_n$ that is within a distance of $\frac{3}{\abs{m_\mu(\xi_l)}n}$ of $\xi_l$, and  
\begin{equation}
\abs{w_l^{(n)} - \xi_l + \frac{1}{n+1}\frac{n}{\sum_{j\neq l}\frac{1}{\xi_l - \xi_j} + \sum_{j=1}^{n+1-s}\frac{1}{\xi_l - X_j}}} = O_{\mu,\vec{\xi}}\left(\left(\frac{\ln n}{n}\right)^2\right).
\label{eqn:multiInLoc}
\end{equation}
In addition, if $f$ is continuous at $\xi_1, \ldots, \xi_s$, then we have
\begin{equation}\label{eqn:multiInCLT}
\left(\frac{n^{3/2}}{\sqrt{\ln{n}}}\cdot m_\mu(\xi_l)^2\cdot\left(w_l^{(n)} -\xi_l + \frac{1}{n+1}\frac{1}{m_\mu(\xi_l)}\right)\right)_{l=1}^s \longrightarrow (N_1, \ldots, N_s) 
\end{equation}
in distribution as $n \to \infty$, where $(N_1, \ldots, N_s)$ is a vector of complex random variables whose real and imaginary components $(\Re(N_1), \Im(N_1), \ldots, \Re(N_s), \Im(N_s))$ have a multivariate normal distribution with mean zero and covariance structure characterized by
\begin{equation}
\begin{aligned}
\cov(\Re(N_j),\Re(N_l)) &= \begin{cases}\frac{\pi f(\xi_j)}{2}&\text{if $l=j$,}\\0&\text{else}\end{cases}\\
\cov(\Im(N_j),\Im(N_l)) &= \begin{cases}\frac{\pi f(\xi_j)}{2}&\text{if $l=j$,}\\0&\text{else}\end{cases}\\
\cov(\Re(N_j), \Im(N_l)) &= 0.
\end{aligned}
\label{eqn:multiInCov}
\end{equation}
%
\label{thm:multiInCLT}
\end{theorem}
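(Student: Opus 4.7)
The plan is to establish the localization \eqref{eqn:multiInLoc} first and then to identify the leading-order random fluctuation of $w_l^{(n)}$ about the deterministic centering $\xi_l - 1/((n+1) m_\mu(\xi_l))$ as a normalized sum of iid terms with heavy tails, to which a logarithmic-correction CLT applies. Throughout, I would work on a high-probability event on which standard concentration estimates hold for the random sum
\[
A_l := \sum_{k \neq l} \frac{1}{\xi_l - \xi_k} + \sum_{j=1}^{n+1-s} \frac{1}{\xi_l - X_j}
\]
and for the derivatives of $h_l$ (defined below) on a microscopic disk around each $\xi_l$.

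For the localization, I would write the logarithmic derivative as
\[
\frac{p_n'(z)}{p_n(z)} = \frac{1}{z-\xi_l} + h_l(z), \qquad h_l(z) := \sum_{k \neq l} \frac{1}{z-\xi_k} + \sum_{j=1}^{n+1-s} \frac{1}{z-X_j},
\]
so that any critical point near $\xi_l$ (distinct from the roots) satisfies the fixed-point equation $z - \xi_l = -1/h_l(z)$. A Rouch\'e or contraction-mapping argument on the closed disk of radius $3/(n|m_\mu(\xi_l)|)$ around $\xi_l$ then produces a unique such critical point $w_l^{(n)}$, using $\xi_l \notin \M_\mu$ and the bounded density of $f$ near $\xi_l$ to conclude that $|A_l|$ lies within a constant factor of $n|m_\mu(\xi_l)|$ with probability $1-o(1)$. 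Iterating the fixed-point equation and Taylor-expanding $h_l$ about $\xi_l$ gives
\[
w_l^{(n)} - \xi_l = -\frac{1}{A_l} + O\bigl(|h_l'(\xi_l)|/|A_l|^3\bigr) + O\bigl(|w_l^{(n)} - \xi_l|^2 \sup|h_l''|/|A_l|\bigr),
\]
and the claimed centering $\hat w_l^{(n)} = \xi_l - n/((n+1) A_l)$ differs from $\xi_l - 1/A_l$ by exactly $1/((n+1) A_l) = O(1/n^2)$. A high-probability bound $|h_l'(\xi_l)| = O(\sqrt{n \ln n})$, obtained by truncating the heavy-tailed summands $1/(\xi_l - X_j)^2$ at the right scale, then yields the error $O((\ln n/n)^2)$ required by \eqref{eqn:multiInLoc}.

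For the joint CLT, a short algebraic computation gives
\[
\hat w_l^{(n)} - \xi_l + \frac{1}{(n+1) m_\mu(\xi_l)} = \frac{Z_l + O(1)}{(n+1)\, A_l\, m_\mu(\xi_l)}, \qquad Z_l := \sum_{j=1}^{n+1-s} \Bigl( \tfrac{1}{\xi_l - X_j} - m_\mu(\xi_l) \Bigr),
\]
which, together with $A_l \sim (n+1) m_\mu(\xi_l)$ and the localization error being negligible on the CLT scale, reduces \eqref{eqn:multiInCLT} to proving $\bigl(Z_l/\sqrt{n \ln n}\bigr)_{l=1}^s \to (N_1,\dots,N_s)$ in distribution. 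The summands have infinite variance: under $X \sim \mu$ and $Y := 1/(\xi_l - X)$, the change of variables $y = 1/(\xi_l - x)$ gives $Y$ density $f(\xi_l - 1/y)/|y|^4$ with tail $\Prob(|Y| > t) \sim \pi f(\xi_l)/t^2$, and a polar computation near $\xi_l$ yields $\E[(\Re Y)^2 \ind_{|Y|\leq t}] \sim \pi f(\xi_l) \ln t$, an identical estimate for $\Im Y$, and vanishing cross-moment. Truncating at scale $\sqrt{n \ln n}$ produces truncated variance $\sim \tfrac{\pi}{2} f(\xi_l)\, n \ln n$ for each of $\Re Z_l, \Im Z_l$, matching \eqref{eqn:multiInCov}. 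I would extract the multivariate limit via the Cram\'er--Wold device: for any coefficients $a_1,\dots,a_s$ and small enough $\varepsilon>0$, the disjoint balls $B(\xi_l,\varepsilon)$ separate the singularities of the summands, so the truncated variance of $\sum_l a_l(1/(\xi_l - X) - m_\mu(\xi_l))$ splits into $\sum_l$ of the individual contributions, forcing asymptotic independence of the coordinates.

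The main obstacle is the CLT step. The summands $1/(\xi_l - X_j) - m_\mu(\xi_l)$ have infinite variance, so the classical CLT does not apply; instead one must use the boundary case of the generalized CLT with normalization $\sqrt{n \ln n}$ and verify a Lindeberg-type condition at precisely that truncation scale, so that the constant $\pi f(\xi_l)/2$ is read off correctly. The multivariate aspect compounds this, requiring a quantitative truncation argument to establish asymptotic independence of the $Z_l$ across the distinct base points $\xi_l$. By comparison, the Rouch\'e-style localization and the Taylor expansion producing \eqref{eqn:multiInLoc} are routine once high-probability control on $A_l$, $h_l'(\xi_l)$, and $\sup|h_l''|$ over the microscopic disk has been secured.
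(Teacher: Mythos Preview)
Your overall strategy matches the paper's: a Rouch\'e-type localization via the fixed-point equation $z-\xi_l=-1/h_l(z)$ (packaged in the paper as a deterministic lemma, Theorem~\ref{thm:detLoc}), followed by a heavy-tailed CLT established by truncation at scale $\sqrt{n\ln n}$, a Lindeberg exchange, and Cram\'er--Wold, which is precisely the content of the paper's Theorem~\ref{thm:genCLT}. Your reduction of \eqref{eqn:multiInCLT} to the joint convergence of $\bigl(Z_l/\sqrt{n\ln n}\bigr)_l$ and the polar computation of the limiting covariance are correct and mirror the paper.

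There is, however, a genuine quantitative error in the localization step. Your bound $|h_l'(\xi_l)|=O(\sqrt{n\ln n})$ is false: the summands $Y_j:=(\xi_l-X_j)^{-2}$ satisfy $\P(|Y_j|>t)\sim\pi f(\xi_l)/t$, so $|Y_j|$ has \emph{no finite mean} and lies in the domain of attraction of an index-$1$ stable law; the sum $\sum_j|Y_j|$ is of order $n\ln n$, not $\sqrt{n\ln n}$ (the single nearest $X_j$, at distance $\sim n^{-1/2}$, already contributes a term of size $\sim n$). The correct high-probability bound is $|h_l'(\xi_l)|=O\bigl(n(\ln n)^c\bigr)$ for some $c\le2$, which still gives $|h_l'(\xi_l)|/|A_l|^3=O\bigl((\ln n)^c/n^2\bigr)$ and hence \eqref{eqn:multiInLoc}, so the argument is salvageable---but the derivation takes work. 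The paper obtains it by bounding the Lipschitz constant of $z\mapsto\frac1n\sum_j(z-X_j)^{-1}$ on the microscopic disk via an annular decomposition that counts how many $X_j$ fall in dyadic shells around $\xi_l$ (Lemma~\ref{lem:inLip}), yielding $k_{\mathrm{Lip}}=O\bigl((\ln n)^2\bigr)$ directly and feeding into the deterministic Rouch\'e statement with error $O(k_{\mathrm{Lip}}/n^2)$; this avoids any separate estimate of $h_l''$. Relatedly, your second Taylor remainder should read $O\bigl(|w_l^{(n)}-\xi_l|^2\sup|h_l''|/|A_l|^2\bigr)$ rather than $/|A_l|$; as written, that term would swamp the target $(\ln n/n)^2$.
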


\begin{remark}
Theorem \ref{thm:multiInCLT} can also be extended to the case where $\xi_1, \ldots, \xi_s$ are independent random variables (rather than deterministic values).  This can be seen by conditioning on $\xi_1, \ldots, \xi_s$ and applying Theorem \ref{thm:multiInCLT}; a similar argument was used in \cite{KS}.  
\end{remark}

Compare Theorem \ref{thm:multiInCLT} to Theorem 2.2 of \cite{KS}, which describes the same phenomenon when $s=1$. Both theorems identify the same fluctuations of $w_1^{(n)}$ about $\xi_1$, however, the two results locate the critical point $w_1^{(n)}$ on different scales. While Theorem 2.2 from \cite{KS} shows that $w_1^{(n)}$ is the unique critical point of $p_n$ within a distance of order $o(1/\sqrt{n})$ of $\xi_1$, Theorem \ref{thm:multiInCLT} refines the location of $w_1^{(n)}$ to within order $O(n^{-2})$ up to logarithmic corrections. In fact, since $\frac{1}{n}\sum_{j=1}^{n}\frac{1}{\xi_1 - X_j}$ converges almost surely to $m_\mu(\xi_1)$, the results of the two theorems can be combined to give a stronger picture of the local behavior of $w_1^{(n)}$. Note that in contrast to the method of proof used by Kabluchko and Seidel in \cite{KS}, our approach is based on a deterministic argument (see Theorem \ref{thm:detLoc}).  



For values of $\xi_1, \ldots, \xi_s$ outside the support of $\mu$, \eqref{eqn:multiInCLT} and \eqref{eqn:multiInCov} demonstrate that the scaling factor $n^{3/2}/\sqrt{\ln{n}}$ is too small to achieve a meaningful result. (Indeed, $f$ may be chosen to be identically zero outside $\supp(\mu)$, so the random vector $(N_1, \ldots N_s)$ is almost surely the zero vector.) The following result refines the analysis in this situation and is depicted in Figure \ref{fig:UnitCirc9x9}.

\begin{theorem}[Locations and Fluctuations of critical points when $p_n$ has several roots outside $\supp(\mu)$]
Let $X_1, X_2, \ldots$ be iid complex-valued random variables with common distribution $\mu$, fix $s\in \mathbb{N}$, and suppose $\xi_1, \ldots, \xi_s \notin \supp(\mu) \cup \M_\mu$ are distinct, fixed deterministic values. Then, there exist constants $C, c_{\mu,\vec{\xi}}, C_{\mu,\vec{\xi}} > 0$, so that with probability at least $1 - C\exp(-c_{\mu,\vec{\xi}}\,n)$, the polynomial
\[p_n(z) = \prod_{l=1}^s(z-\xi_l)\prod_{j=1}^{n+1-s}(z-X_j)\]
has $s$ critical points, $w_1^{(n)}, \ldots, w_s^{(n)}$, such that for $1 \leq l \leq s$, $w_l^{(n)}$ is the unique critical point of $p_n$ that is within a distance of $\frac{3}{\abs{m_\mu(\xi_l)}n}$ of $\xi_l$, and  
\begin{equation}
\abs{w_l^{(n)} - \xi_l + \frac{1}{n+1}\frac{n}{\sum_{j\neq l}\frac{1}{\xi_l - \xi_j} + \sum_{j=1}^{n+1-s}\frac{1}{\xi_l - X_j}}} < \frac{C_{\mu,\vec{\xi}}}{n^2}.
\label{eqn:multiOutLoc}
\end{equation}
In addition, we have
\begin{equation}
\left(n^{3/2}\cdot m_\mu(\xi_l)^2\cdot\left(w_l^{(n)} -\xi_l + \frac{1}{n+1}\frac{1}{m_\mu(\xi_l)}\right)\right)_{l=1}^s \longrightarrow (N_1, \ldots, N_s)
\label{eqn:multiOutCLT}
\end{equation}
in distribution as $n \to \infty$, where $(N_1, \ldots, N_s)$ is a vector of complex random variables whose real and imaginary components $(\Re(N_1), \Im(N_1), \ldots, \Re(N_s), \Im(N_s))$ have a multivariate normal distribution with mean zero and covariance structure
\begin{equation}
\begin{aligned}
\cov(\Re(N_j),\Re(N_l)) &= \cov\left(\Re\left(\frac{1}{\xi_j - X_1}\right),\Re\left(\frac{1}{\xi_l - X_1}\right)\right)\\
\cov(\Im(N_j),\Im(N_l)) &= \cov\left(\Im\left(\frac{1}{\xi_j - X_1}\right),\Im\left(\frac{1}{\xi_l - X_1}\right)\right)\\
\cov(\Re(N_j), \Im(N_l)) &= \cov\left(\Re\left(\frac{1}{\xi_j - X_1}\right),\Im\left(\frac{1}{\xi_l - X_1}\right)\right).
\end{aligned}
\label{eqn:multiOutCov}
\end{equation}
\label{thm:multiOutCLT}
\end{theorem}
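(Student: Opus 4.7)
The plan is to combine a deterministic localization argument with a classical multivariate central limit theorem, exploiting the crucial fact that when $\xi_l \notin \supp(\mu)$, the random variables $Z_j^{(l)} := 1/(\xi_l - X_j)$ are uniformly bounded by $1/d$, where $d := \min_l \dist(\xi_l,\supp(\mu)) > 0$. This boundedness is the main reason the proof is sharper (and probabilistically simpler) than that of Theorem~\ref{thm:multiInCLT}: it supplies exponential concentration for free and gives a finite limiting covariance without any truncation or density assumption.

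First, I would apply Hoeffding's inequality to the centered variables $Z_j^{(l)} - m_\mu(\xi_l)$ to show that, for each $l \in \{1,\ldots,s\}$, $\bigl|\tfrac{1}{n+1-s}\sum_{j=1}^{n+1-s} Z_j^{(l)} - m_\mu(\xi_l)\bigr| < |m_\mu(\xi_l)|/2$ with probability at least $1 - Ce^{-cn}$; a union bound in $l$ gives this simultaneously and delivers the exponentially small failure probability in the theorem statement. On this event, Theorem~\ref{thm:detLoc} (the deterministic localization result referenced in the excerpt) provides, for each $l$, a unique critical point $w_l^{(n)}$ in the ball $B(\xi_l, 3/(n|m_\mu(\xi_l)|))$ together with the location bound \eqref{eqn:multiOutLoc}; the latter is obtained by writing the critical-point equation $p_n'(w_l^{(n)})/p_n(w_l^{(n)})=0$, isolating the leading singular term $1/(w_l^{(n)}-\xi_l)$, and solving by fixed-point iteration, with the Taylor remainder controlled by the smallness of $w_l^{(n)}-\xi_l$.

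Second, I would apply the standard multivariate CLT to the $2s$-dimensional real random vector formed by the real and imaginary parts of the iid, bounded, centered variables $(Z_j^{(l)} - m_\mu(\xi_l))_{l=1}^s$. Boundedness guarantees finite second moments, so $\tfrac{1}{\sqrt{n+1-s}}\sum_{j=1}^{n+1-s}\bigl(Z_j^{(l)} - m_\mu(\xi_l)\bigr)_{l=1}^s \longrightarrow (N_1,\ldots,N_s)$ in distribution, with covariance structure exactly as in \eqref{eqn:multiOutCov}.

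To transfer this to $w_l^{(n)}$, I substitute into the right-hand side of \eqref{eqn:multiOutLoc}: its denominator equals $(n+1)\,m_\mu(\xi_l) + O(1) + \sqrt{n}\,N_l + o(\sqrt{n})$, and the first-order expansion $\tfrac{1}{a+b} = \tfrac{1}{a} - \tfrac{b}{a^2} + O(b^2/a^3)$ with $a$ of order $n$ and $b$ of order $\sqrt{n}$ gives $w_l^{(n)} - \xi_l + \tfrac{1}{(n+1)\,m_\mu(\xi_l)} = \tfrac{N_l}{n^{3/2}\,m_\mu(\xi_l)^2}(1+o(1)) + O(n^{-2})$. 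Multiplying by $n^{3/2}\,m_\mu(\xi_l)^2$ and applying Slutsky's theorem yields \eqref{eqn:multiOutCLT}. I expect the main technical obstacle to be ensuring that the implicit constants in the $O(1/n^2)$ error of \eqref{eqn:multiOutLoc} are uniform on the high-probability event — in particular, independent of the realization of $\{X_j\}$ — so that this error is genuinely negligible on the $n^{-3/2}$ scale required by the CLT.
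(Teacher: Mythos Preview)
Your proposal is correct and follows essentially the same approach as the paper: Hoeffding's inequality applied to the bounded variables $1/(\xi_l-X_j)$ yields the exponential probability bound, Theorem~\ref{thm:detLoc} (applied on the complement of the bad event, with deterministic constants $C_1,C_2,k_{\text{Lip}}$ depending only on $\mu$ and $\vec\xi$) supplies the localization \eqref{eqn:multiOutLoc}, and then the classical multivariate CLT combined with Slutsky's theorem and the algebraic expansion you describe gives \eqref{eqn:multiOutCLT}. Your concern about uniformity of the $O(n^{-2})$ error is resolved precisely because Theorem~\ref{thm:detLoc} is deterministic: once on the good event the constants in its conclusion depend only on the fixed parameters $C_1,C_2,k_{\text{Lip}}$, not on the sample.
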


\begin{figure}
\includegraphics[width =.85\columnwidth, trim = {0 0 2.5cm 1.3cm}, clip]{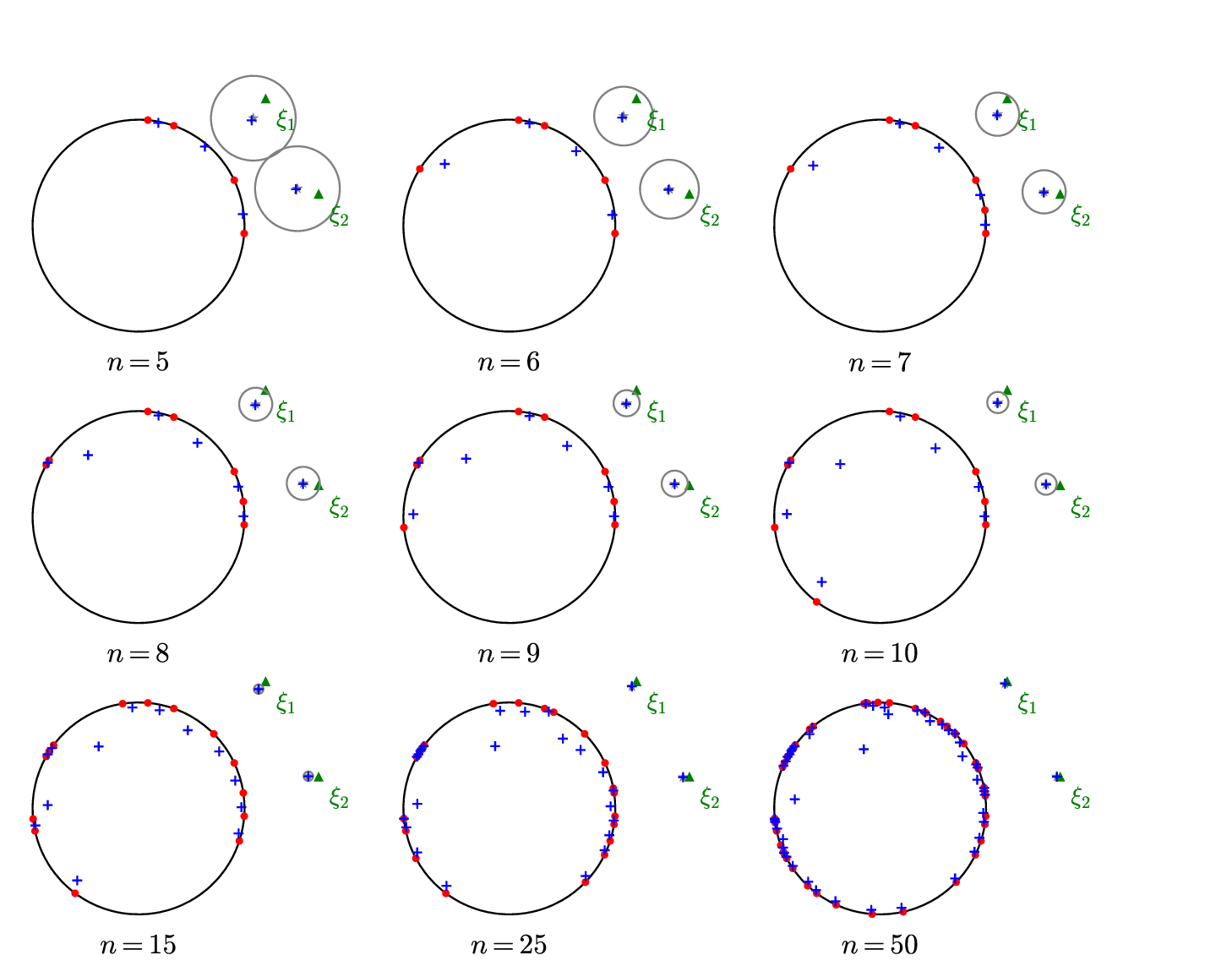}
\caption{Simulation to illustrate Theorem \ref{thm:multiOutCLT}. The roots (red circles) and critical points (blue crosses) of $p_n(z) = \prod_{l=1}^2(z-\xi_l)\prod_{j=1}^{n-1}(z-X_j)$ for increasing values of $n$, where the roots, $X_1, \ldots, X_{49}$, are chosen independently and uniformly on the unit circle. The gray circles are of radius $\frac{10}{n^2}$ and are centered at $\hat{w}_1^{(n)}$, $\hat{w}_2^{(n)}$ defined in \eqref{eqn:wHat}.
}
\label{fig:UnitCirc9x9}
\end{figure}


\begin{remark}
After an application of the Borel--Cantelli lemma, Theorem \ref{thm:multiOutCLT} can be combined with Theorem 2.9 of \cite{OW} to establish the following: when $\mu$ has compact support, almost surely, for $n$ sufficiently large, $w_1^{(n)}, \ldots, w_s^{(n)}$, characterized by \eqref{eqn:multiOutLoc}, are the only critical points of $p_n$ outside an $\eps$-neighborhood of $\supp(\mu) \cup M_{\mu}$. 
\end{remark}

In Section \ref{sec:microProof}, we provide a generalization of Theorem \ref{thm:multiOutCLT} to a situation where $p_n$ has a number of deterministic roots that may depend on $n$ (see Theorem \ref{thm:manyDet} below). The proofs of Theorems \ref{thm:multiInCLT} and \ref{thm:multiOutCLT} are based on a technical, deterministic argument that applies to cases where $X_1, \ldots, X_n$ are random variables that are not independent (see Theorem \ref{thm:detLoc}). To illustrate this point, we conclude the subsection with a result that demonstrates pairing between individual roots and critical points of $p_n$ when $p_n$ is the characteristic polynomial of a random matrix.

\begin{theorem} \label{thm:eig}
Fix $\eps > 0$ and $\lambda \in \mathbb{C}$ with $|\lambda| \geq 1 + 3 \eps$.  Let $M$ be an $n \times n$ random matrix whose entries are iid copies of a random variable with mean zero, unit variance, and finite fourth moment.  Let $A$ be an $n \times n$ deterministic matrix with operator norm $\|A\| = O(1)$, $\rank(A) = O(1)$\footnote{We continue to use asymptotic notation, such as $O$ and $o$, under the assumption that $n \to \infty$.  In this theorem, $n$ represents the dimension of the matrices $M$ and $A$.}, and whose only nonzero eigenvalue is $\lambda$.  Then almost surely, for $n$ sufficiently large, the characteristic polynomial\footnote{Here, $I$ denotes the identity matrix.} 
\[
p_n(z) := \det \left( zI - \frac{1}{\sqrt{n}} M  - A \right) 
\]
of $\frac{1}{\sqrt{n}} M  + A$ has a factorization 
$p_n(z) = (z - \xi) \prod_{i=1}^{n-1} (z - X_i)$,
where 
\begin{enumerate}[(i)]
\item \label{item:inside} The roots $X_1, \ldots, X_{n-1}$ lie inside the disk $B(0, 1 + 2\eps)$.  
\item \label{item:outside} The root $\xi$ lies outside the disk $B(0,1+2\eps)$ and satisfies $\xi = \lambda + o(1)$.  
\item \label{item:eigcp} $p_n$ contains a unique critical point, $w_\xi^{(n-1)}$, which satisfies 
\begin{equation} \label{eq:eigcp}
	\left| w_\xi^{(n-1)} - \xi + \frac{1}{n}\cdot\frac{1}{ \frac{1}{n-1} \sum_{i=1}^{n-1} \frac{1}{\xi - X_i} } \right| = O_{\lambda, \eps} \left( \frac{1}{n^2} \right). 
\end{equation}
and hence
\begin{equation} \label{eq:eigo1}
	w_\xi^{(n-1)} = \lambda + o(1) 
\end{equation}
\end{enumerate}
\end{theorem}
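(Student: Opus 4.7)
The plan is to bootstrap from known outlier results for deformed iid matrices (which give parts (i) and (ii)) and then deploy the deterministic critical point localization Theorem \ref{thm:detLoc} on each realization to obtain part (iii). The deterministic nature of that intermediate tool is crucial, because the eigenvalues $X_1, \ldots, X_{n-1}$ of $\frac{1}{\sqrt{n}}M + A$ are certainly not independent.

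First, I would establish the spectral decomposition. Part (i) is essentially a spectral radius statement: by the Bai--Yin-type bound on the operator norm of $\frac{1}{\sqrt{n}} M$ together with Weyl's inequalities (using $\rank(A) = O(1)$ and $\|A\| = O(1)$), almost surely all but at most $O(1)$ eigenvalues lie inside $B(0, 1+\eps)$ for $n$ large. The remaining control, pushing the bulk inside $B(0,1+2\eps)$ and identifying the single outlier near $\lambda$ in part (ii), follows from Tao's theorem on outliers in the spectrum of iid matrices under bounded rank perturbations (which requires exactly the finite fourth moment hypothesis we assume). This produces, almost surely for $n$ large, a unique eigenvalue $\xi$ with $\xi = \lambda + o(1)$ at distance at least $|\lambda| - (1+2\eps) \geq \eps$ from the bulk $\{X_1, \ldots, X_{n-1}\} \subset B(0, 1+2\eps)$.

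Second, I would apply Theorem \ref{thm:detLoc} with the distinguished root $\xi$ and the remaining roots $X_1, \ldots, X_{n-1}$. Since Theorem \ref{thm:detLoc} is a deterministic statement, the lack of independence among the $X_i$ is immaterial, so long as its quantitative hypotheses are verified on each realization. Two quantities need to be controlled uniformly: the separation $\min_i |\xi - X_i| \geq \eps$, which we already have; and the normalized Cauchy--Stieltjes-type sum
\[
m_n(\xi) := \frac{1}{n-1} \sum_{i=1}^{n-1} \frac{1}{\xi - X_i}.
\]
The map $z \mapsto 1/(\xi - z)$ is bounded and continuous on $B(0, 1+2\eps)$, and the bulk empirical measure $\frac{1}{n-1}\sum \delta_{X_i}$ converges weakly almost surely to the circular law $\mu_{\text{circ}}$ (uniform on the unit disk) as a consequence of the strong circular law and the bounded rank perturbation. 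Hence
\[
m_n(\xi) \longrightarrow \int \frac{d\mu_{\text{circ}}(z)}{\lambda - z} = \frac{1}{\lambda} \neq 0,
\]
so $|m_n(\xi)|$ is bounded below by, say, $\tfrac{1}{2|\lambda|}$ for $n$ large.

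With these inputs, Theorem \ref{thm:detLoc} yields the existence of a unique critical point $w_\xi^{(n-1)}$ within distance $3/(|m_n(\xi)| n)$ of $\xi$, together with the refined asymptotic \eqref{eq:eigcp}; then \eqref{eq:eigo1} follows by plugging in $\xi = \lambda + o(1)$ and $1/m_n(\xi) = \lambda + o(1)$, which makes the $1/n$ correction vanish. The main technical obstacle I anticipate is checking that the hypotheses of Theorem \ref{thm:detLoc}, which was designed with iid roots in mind, are genuinely deterministic and can be verified pointwise on each realization of the matrix spectrum; in particular, any global regularity input about $\mu$ (such as a sub-quadratic bound on $\P(|m_\mu(X_1)| < \eps)$) must be replaced by hand using the separation and the circular-law convergence just described. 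Once that verification is pinned down, the rest is an elementary substitution.
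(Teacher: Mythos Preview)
Your proposal is correct and follows the same architecture as the paper: cite Tao's outlier theorem for parts \eqref{item:inside} and \eqref{item:outside}, then feed the resulting deterministic spectral picture into Theorem \ref{thm:detLoc} for part \eqref{item:eigcp}. The one substantive difference is how you obtain the lower bound $|m_n(\xi)| \gg_{\lambda,\eps} 1$. You invoke the almost sure circular law (stable under bounded-rank perturbation) and weak convergence to conclude $m_n(\xi) \to 1/\lambda$; the paper instead uses a purely deterministic rotation trick, writing
\[
\left|\frac{1}{n}\sum_{i=1}^{n-1}\frac{1}{\xi-X_i}\right| \;\ge\; \frac{1}{n}\sum_{i=1}^{n-1}\frac{|\xi|-|X_i|}{(|\xi|+|X_i|)^2} \;\gg_{\eps,\lambda}\; 1
\]
directly on the event \eqref{eq:eigevent}. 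Your route is more conceptual and identifies the limit explicitly, but it imports an extra almost-sure input (the circular law) and requires a word about the $n$-dependence of $\xi$ in the test function $z\mapsto 1/(\xi-z)$ (easily handled via equicontinuity, since $\dist(\xi,B(0,1+2\eps))\ge \eps/2$ eventually). The paper's route needs nothing beyond Tao's outlier theorem and works realization-by-realization. Either is fine; the Lipschitz hypothesis \eqref{it:detLoc2} of Theorem \ref{thm:detLoc}, which you do not single out, is immediate from the uniform separation $\min_i|\xi-X_i|\ge \eps/2$.
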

\begin{remark}
The conclusion in \eqref{eq:eigo1} can be deduced from properties \eqref{item:inside} and \eqref{item:outside} and Walsh's two circle theorem (see, for example, \cite[Theorem 4.1.1]{RS}).  However, the conclusion in \eqref{eq:eigcp} cannot be deduced from Walsh's two circle theorem and instead follows from Theorem \ref{thm:detLoc}. We prove Theorem \ref{thm:eig} in Appendix \ref{sec:appendixA}.
\end{remark}

\subsection{A local law for the critical points} \label{sec:locallaw}

In this subsection, we consider a local law that describes the behavior of the critical points of
\[ p_n(z) := \prod_{i=1}^n (z - X_i). \]
We begin with the case where $X_1, \ldots, X_n$ are arbitrary random variables (not assumed to be independent nor identically distributed) and then specialize our main result to several applications and examples.

\begin{theorem}[Local law] \label{thm:main}
Fix $C > 0$, and let $X_1, \ldots, X_n$ be complex-valued random variables (not necessarily independent nor identically distributed) which satisfy the following axioms.  
\begin{enumerate}[(i)]
\item (Upper bound) With overwhelming probability, 
$ \max_{1 \leq i \leq n} |X_i| \leq e^{n^C}, $
\item \label{item:cond:anti} (Anti-concentration) For every $a > 0$, there exists $b > 0$ such that
\begin{equation} \label{eq:anti}
	\left| \sum_{i=1}^n \frac{1}{Z - X_i} \right| \geq n^{-b} 
\end{equation}
with probability $1 - O_{a}(n^{-a})$, where $Z$ is uniformly distributed on $B(0, n^C)$, independent of $X_1, \ldots, X_n$. 
\end{enumerate}
Let $\varphi: \mathbb{C} \to \mathbb{R}$ be a twice continuously differentiable function (possibly depending on $n$) which is supported on $B(0, n^{C})$ and which satisfies the pointwise bound
\begin{equation} \label{eq:pointwise}
	| \lap \varphi(z) | \leq n^{C} 
\end{equation}
for all $z \in \mathbb{C}$.  Then, for every fixed $c > 0$ and every $\alpha > 0$, 
\[ \sum_{j=1}^{n-1} \varphi(w_j^{(n)}) = \sum_{i=1}^n \varphi(X_i) + O_{\alpha}(\| \lap \varphi \|_1 \log n) + O_{\alpha}(n^{-c}) \]
with probability $1 - O_{\alpha}(n^{-\alpha})$, where $w_1^{(n)}, \ldots, w_{n-1}^{(n)}$ are the critical points of the polynomial 
\[ p_n(z) := \prod_{i=1}^n (z - X_i) \]
and $\| \lap \varphi \|_1$ is the $L_1$-norm of $\lap \varphi$.  Here, the implicit constants in our asymptotic notation depend on $C, c$, and $\alpha$.  
\end{theorem}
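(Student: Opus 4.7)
The plan is to reduce the difference between the two sums to a single logarithmic potential integral and then control that integral via a good/bad decomposition based on the size of $F(z) := p_n'(z)/p_n(z) = \sum_{i=1}^n (z - X_i)^{-1}$. Starting from the distributional identity $\tfrac{1}{2\pi}\Delta \log|z - a| = \delta_a$ and integrating by parts against the compactly supported test function $\varphi$, together with the fact that $\int_\C \Delta\varphi\, d^2 z = 0$ (which kills the contribution of the leading coefficient $n$ in $p_n'(z) = n\prod_j (z - w_j^{(n)})$), I would first establish
\[
\sum_{j=1}^{n-1}\varphi(w_j^{(n)}) - \sum_{i=1}^n \varphi(X_i) = \frac{1}{2\pi}\int_\C \log|F(z)|\,\Delta\varphi(z)\,d^2 z.
\]

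Next, for large constants $K, b, M$ to be chosen in terms of $C, c, \alpha$, I would set $B_1 := \{z \in B(0, n^C) : |F(z)| > n^K\}$, $B_2 := \{z \in B(0, n^C) : |F(z)| < n^{-b}\}$, $B := B_1 \cup B_2$, and $G := B(0, n^C) \setminus B$. On $G$, the bound $|\log|F(z)|| \leq \max(K, b)\log n$ yields the main error $O(\|\Delta\varphi\|_1 \log n)$ at once. For $B_1$, the crude estimate $|F(z)| \leq \sum_i |z - X_i|^{-1}$ forces each $z \in B_1$ within $n^{1-K}$ of some $X_i$, so $|B_1| \leq \pi n^{3 - 2K}$ deterministically. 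For $B_2$, applying the anti-concentration axiom \eqref{eq:anti} with large exponent $a$ gives a $b = b(a)$ with $\P(|F(Z)| < n^{-b}) \leq O_a(n^{-a})$ for $Z$ uniform on $B(0, n^C)$; conditioning on $X_1, \ldots, X_n$ and applying Markov's inequality then yields $|B_2| \leq n^{-M}$ outside an event of probability at most $O_\alpha(n^{-\alpha})$, provided $a$ is chosen large enough.

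To handle the bad contribution I would apply Cauchy--Schwarz together with a polynomial $L^2$-bound on $\log|F|$. On the event $\max_i |X_i| \leq e^{n^C}$ (which holds with overwhelming probability), the Gauss--Lucas theorem places each $w_j^{(n)}$ in the same ball $B(0, e^{n^C})$, and the elementary estimate $\int_{B(0, n^C)}(\log|z - a|)^2\,d^2 z = O(n^{4C})$ for $|a| \leq e^{n^C}$ combines with Cauchy--Schwarz on the sums $\log|p_n| = \sum_i \log|z - X_i|$ and $\log|p_n'/n| = \sum_j \log|z - w_j^{(n)}|$ to yield $\int_{B(0, n^C)} (\log|F(z)|)^2\,d^2 z \leq n^D$ for some $D = D(C)$. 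Then
\[
\left|\int_B \log|F(z)|\,\Delta\varphi(z)\,d^2 z\right| \leq \|\Delta\varphi\|_\infty \cdot |B|^{1/2} \cdot n^{D/2} \leq n^{C + D/2 - M/2},
\]
which is at most $n^{-c}$ once $M$ is chosen larger than $2(C + c) + D$.

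The main obstacle is the interplay between the $L^2$-bound and the smallness of $|B|$: the crude polynomial $L^2$ bound is lossy because $\log|F|$ has logarithmic singularities at every root and every critical point, and the exponent $D$ depends sensitively on the global upper bound $|X_i| \leq e^{n^C}$. The constants $K, a, b, M$ must therefore be chosen together, and one must union-bound the several high-probability events (the maximum-modulus bound, the anti-concentration estimate, the Markov application) to keep the total failure probability at $O_\alpha(n^{-\alpha})$. A secondary technical point is that the distributional integration by parts in the first step requires $\log|p_n|$ and $\log|p_n'|$ to be treated as $L^1_{\text{loc}}$ functions with isolated logarithmic singularities, which is standard.
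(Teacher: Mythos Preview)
Your argument is correct and uses the same ingredients as the paper (the log-transform identity, the $L^2$ bound on $\log|p_n|$ and $\log|p_n'|$ over $B(0,n^C)$, the anti-concentration axiom, and the crude upper bound forcing $|F|\le n^K$ away from small neighborhoods of the roots), but the implementation is genuinely different. The paper does \emph{not} decompose the integration domain into good and bad sets; instead it introduces $m=n^\beta$ auxiliary iid uniform points $Z_1,\ldots,Z_m$ on $B(0,n^C)$ and replaces both integrals $\int \Delta\varphi\,\log|p_n|$ and $\int \Delta\varphi\,\log|p_n'|$ by Monte Carlo empirical averages (controlling the sampling error via Chebyshev and the same $L^2$ bound you use). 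Then a union bound over the $m$ sample points, combined with axiom~\eqref{item:cond:anti} and the analogous crude upper bound, gives $n^{-b}\le |F(Z_l)|\le n^{b}$ simultaneously for all $l$, so $|\log|p_n(Z_l)|-\log|p_n'(Z_l)||=O(\log n)$ pointwise and the difference of the two empirical averages is bounded by $O(\log n)\cdot\frac1m\sum_l|\Delta\varphi(Z_l)|$, which is in turn replaced by $O(\log n)\,\|\Delta\varphi\|_1/|B|$ via a third Monte Carlo approximation. Your direct route avoids the auxiliary randomization and is arguably cleaner; the paper's sampling trick has the minor advantage that it sidesteps the explicit Lebesgue-measure bound on the bad set (your Markov step $\E|B_2|\ll n^{2C-a}\Rightarrow \P(|B_2|>n^{-M})\ll n^{M+2C-a}$), trading it for a union bound over finitely many random points. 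Both yield the same final estimate with constants depending on $C,c,\alpha$.
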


\begin{remark} \label{rem:alt}
Condition \eqref{item:cond:anti} on the random variables $X_1, \ldots, X_n$ from Theorem \ref{thm:main} is implied by the following: 
\begin{enumerate}
\item[(ii')] for every $a > 0$, there exists $b > 0$ such that, for almost every $z \in B(0, n^C)$, 
\[ \left| \sum_{i=1}^n \frac{1}{z - X_i} \right| \geq n^{-b} \]
with probability $1 - O_{a}(n^{-a})$.  
\end{enumerate}
Indeed, the implication follows by simply conditioning on the random variable $Z$ (which avoids a set of Lebesgue measure zero with probability $1$).  
\end{remark}

The assumptions of Theorem \ref{thm:main} are fairly technical, and we derive some simpler conditions that guarantee when the hypotheses of Theorem \ref{thm:main} are met in Section \ref{sec:guarantee}.  We now specialize Theorem \ref{thm:main} to the case where $X_1, \ldots, X_n$ are independent random variables.  

\begin{theorem}[Local law for independent roots] \label{thm:indep}
Fix $C > 0$, and let $X_1, \ldots, X_n$ be independent complex-valued random variables which satisfy
$ \max_{1 \leq i \leq n} \E |X_i | \leq n^C. $
In addition, assume $X_1$ is absolutely continuous (with respect to Lebesgue measure on $\mathbb{C}$) and has density bounded by $n^C$.  Let $\varphi: \mathbb{C} \to \mathbb{R}$ be a twice continuously differentiable function (possibly depending on $n$) which is supported on $B(0, n^{C})$ and which satisfies the pointwise bound given in \eqref{eq:pointwise} for all $z \in \mathbb{C}$.  Then, for every fixed $c > 0$ and every $\alpha > 0$, 
\[ \sum_{j=1}^{n-1} \varphi(w_j^{(n)}) = \sum_{i=1}^n \varphi(X_i) + O_{\alpha}(\| \lap \varphi \|_1 \log n) + O_{\alpha}(n^{-c}) \]
with probability $1 - O_{\alpha}(n^{-\alpha})$, where $w_1^{(n)}, \ldots, w_{n-1}^{(n)}$ are the critical points of the polynomial 
$ p_n(z) := \prod_{i=1}^n (z - X_i) $
and $\| \lap \varphi \|_1$ is the $L_1$-norm of $\lap \varphi$.  Here, the implicit constants in our asymptotic notation depend on $C, c$, and $\alpha$.
\end{theorem}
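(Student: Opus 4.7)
The plan is to deduce Theorem \ref{thm:indep} from Theorem \ref{thm:main} by verifying its two hypotheses (possibly with a slightly enlarged constant in place of $C$, which causes no harm since all relevant quantities are bounded by polynomial factors in $n$). The upper bound axiom is routine: Markov's inequality applied to $\E|X_i| \leq n^C$ followed by a union bound yields $\max_{1 \leq i \leq n}|X_i| \leq e^{n}$ (indeed $\leq e^{n^\delta}$ for any $\delta > 0$) with overwhelming probability, which more than suffices. The real work is the anti-concentration axiom, and for this I verify the pointwise version (ii') from Remark \ref{rem:alt}: for every $a > 0$, find $b > 0$ so that for each fixed $z \in B(0, n^C)$,
\[
\left| \sum_{i=1}^n \frac{1}{z - X_i} \right| \geq n^{-b}
\]
with probability $1 - O_a(n^{-a})$.

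The idea is to use the bounded density of $X_1$ to smear out that one singular term while conditioning on the others. Fix $z \in B(0, n^C)$ and condition on $X_2, \ldots, X_n$; let $S := \sum_{i=2}^n (z - X_i)^{-1}$, so the failure event becomes $W := (z - X_1)^{-1} \in B(-S, n^{-b})$. The Jacobian of the conformal change of variables $x_1 \mapsto (z - x_1)^{-1}$ is $|w|^{-4}$, hence the push-forward density of $W$ satisfies
\[
f_W(w) \;=\; f_{X_1}\!\left(z - \tfrac{1}{w}\right) \cdot |w|^{-4} \;\leq\; \frac{n^C}{|w|^4}.
\]
The main obstacle is that this bound blows up as $w \to 0$, which corresponds to $|z - X_1|$ being very large. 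I tame the singularity by splitting off the tail event $\{|X_1| > n^{C+a}\}$: by Markov it has probability at most $n^{-a}$, while on its complement $|z - X_1| \leq 2 n^{C+a}$ and hence $|W| \geq (2 n^{C+a})^{-1}$. Consequently,
\[
\P\!\left(W \in B(-S, n^{-b}),\ |X_1| \leq n^{C+a}\right) \;\leq\; \int_{B(-S,\, n^{-b}) \cap \{|w| \geq (2n^{C+a})^{-1}\}} \frac{n^C}{|w|^4}\, d^2 w \;\leq\; 16\pi\, n^{5C + 4a - 2b},
\]
which is also $\leq n^{-a}$ once $b$ is taken a sufficiently large multiple of $C + a$ (for instance, $b = 3(C+a)$).

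Summing the two contributions gives the desired bound $O(n^{-a})$ uniformly in $z \in B(0, n^C)$ and in the random variables $X_2, \ldots, X_n$, verifying condition (ii') of Remark \ref{rem:alt} and hence the anti-concentration axiom of Theorem \ref{thm:main}; Theorem \ref{thm:main} then yields Theorem \ref{thm:indep} at once. The only delicate point in the plan is the $|w|^{-4}$ singularity of the push-forward density at the origin, which is precisely what makes $B(-S, n^{-b})$ dangerous when $|S|$ is unusually small. The Markov truncation $\{|X_1| \leq n^{C+a}\}$ handles this by forcing $|W|$ to remain polynomial-scale in $n$, and it is notable that only a finite first moment on $X_1$ is used, which is why the mild hypothesis of Theorem \ref{thm:indep} suffices.
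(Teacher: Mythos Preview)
Your proposal is correct and follows essentially the same route as the paper: reduce to Theorem \ref{thm:main} via Lemma \ref{lemma:critup} (Markov plus union bound) and Lemma \ref{lemma:critanti} (condition on $X_2,\ldots,X_n$ and $z$, then exploit the bounded density of $X_1$ together with the Markov truncation $\{|X_1|\leq n^{C+a}\}$). The only cosmetic difference is that the paper proves the anti-concentration of $(z-X_1)^{-1}$ by a case split on $|w|$ (Lemma \ref{lemma:singleanti}), whereas you compute the push-forward density and bound it on the truncated region; the two arguments are equivalent in content.
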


Theorem \ref{thm:indep} can be viewed as a local version of Theorem \ref{thm:kabluchko} and \eqref{eq:kabluchko}.  Indeed, since the functions in the theorem above can depend on $n$, one can approximate an indicator function of Borel sets which changes with $n$.  
In addition, the error bound in Theorem \ref{thm:indep} is significantly better then the error term from \eqref{eq:kabluchko}.  

Interestingly, Theorem \ref{thm:indep} only requires a single root ($X_1)$ to actually be random; the rest may be deterministic.  In particular, since the density of $X_1$ is bounded by $n^C$, $X_1$ can itself be quite close to deterministic.   Obviously, though, the result fails for deterministic polynomials.  For example, consider $q_n(z) := z^n-1$.  The conclusion of Theorem \ref{thm:indep} fails for this polynomial since all of the critical points are located at the origin while the roots are the $n$-th roots of unity, located on the unit circle.  However, Theorem \ref{thm:indep} does apply to $p_n(z) := q_n(z) (z - X)$, where $X$ is uniformly distributed on $B(z_0, n^{-C/2})$ for any fixed $z_0 \in \mathbb{C}$.  Theorem \ref{thm:indep} strengthens Theorem 1.6 of \cite{BLR} for the empirical distribution associated with the zeros of $p'_n$ by providing a rate of convergence. As a consequence of Theorem \ref{thm:indep}, we have the following central limit theorem (CLT).  


\begin{theorem}[Central limit theorem for linear statistics] \label{thm:clt}
Let $X_1, X_2, \ldots$  be iid random variables which are absolutely continuous (with respect to Lebesgue measure on $\mathbb{C}$) and have a bounded density.  In addition, assume $\E|X_1| < \infty$.  Let $\varphi: \mathbb{C} \to \mathbb{R}$ be a twice continuously differentiable function with compact support which does not depend on $n$.  Then,
\[
\frac{1}{\sqrt{n}} \left(\sum_{j=1}^{n-1} \varphi(w^{(n)}_j) - \E\left[\sum_{j=1}^{n-1} \varphi(w^{(n)}_j)\right] \right) \longrightarrow N(0, v^2) \]
in distribution as $n \to \infty$, where $w^{(n)}_1, \ldots, w^{(n)}_{n-1}$ are the critical points of the polynomial 
$ p_n(z) := \prod_{i=1}^n (z - X_i) $
and $v^2$ is the variance of $\varphi(X_1)$.  
\end{theorem}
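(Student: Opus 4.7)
The plan is to deduce the CLT from the local law (Theorem \ref{thm:indep}) by replacing the sum over critical points with the corresponding sum over the roots, up to an error that is negligible on the $\sqrt{n}$ scale, and then to invoke the classical CLT for the iid sum $\sum_i \varphi(X_i)$ and conclude via Slutsky's theorem.

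First, I would verify that Theorem \ref{thm:indep} applies to this fixed $\varphi$. Since $\varphi$ is $C^2$ with compact support, it is bounded (say $|\varphi|\le M$), and both $\|\lap\varphi\|_\infty$ and $\|\lap\varphi\|_1$ are finite constants independent of $n$. Once $n$ is large, $\supp(\varphi)\subset B(0,n^C)$ trivially, while the bounded density of $X_1$ and the bound $\E|X_i|<\infty$ supply the remaining hypotheses of Theorem \ref{thm:indep}. Taking $\alpha=2$ in the local law produces an event $A_n$ with $\P(A_n)\ge 1 - O(n^{-2})$ on which
\[
|E_n|\le K\log n, \qquad E_n := \sum_{j=1}^{n-1}\varphi(w_j^{(n)}) - \sum_{i=1}^n\varphi(X_i),
\]
for some constant $K$ independent of $n$.

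Next, to handle centering correctly, I would bound $\E E_n$. Because $|\varphi|\le M$, the deterministic estimate $|E_n|\le (2n-1)M$ holds without exception, so splitting over $A_n$ and $A_n^c$ yields
\[
|\E E_n| \le K\log n + (2n-1)M\cdot O(n^{-2}) = O(\log n).
\]
Dividing by $\sqrt{n}$, both $E_n/\sqrt{n}$ (in probability, since $K\log n/\sqrt{n}\to 0$ on $A_n$ and $\P(A_n)\to 1$) and $(\E E_n)/\sqrt{n}$ (deterministically) tend to $0$, so $(E_n-\E E_n)/\sqrt{n}=o_\P(1)$.

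Finally, writing $S_n := \sum_j \varphi(w_j^{(n)})$ and $T_n := \sum_i\varphi(X_i)$, we have the decomposition
\[
\frac{S_n - \E S_n}{\sqrt{n}} = \frac{T_n - \E T_n}{\sqrt{n}} + \frac{E_n - \E E_n}{\sqrt{n}}.
\]
The classical CLT for the bounded iid sequence $\varphi(X_i)$ gives $(T_n-\E T_n)/\sqrt{n}\to N(0,v^2)$ in distribution, and the preceding paragraph shows that the second summand is $o_\P(1)$, so Slutsky's theorem yields the desired convergence. The main (and essentially only) obstacle is having the local law with an error that is merely polylogarithmic in $n$; once Theorem \ref{thm:indep} is in hand, the CLT reduces to a routine perturbation argument.
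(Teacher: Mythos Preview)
Your proposal is correct and follows essentially the same route as the paper's own proof: apply Theorem~\ref{thm:indep} to replace $\sum_j\varphi(w_j^{(n)})$ by $\sum_i\varphi(X_i)$ up to an $O(\log n)$ error with high probability, use boundedness of $\varphi$ to control the expectation of the error, and finish with the classical CLT and Slutsky. The only cosmetic differences are that the paper takes $\alpha=100$ rather than $\alpha=2$ and is slightly terser about the centering step.
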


We now state a version of Theorem \ref{thm:indep} that applies when the function $\varphi$ is analytic.  While analyticity is a much more rigid assumption, the next result does not contain the extra factor of $\log n$ present in the error term from Theorem \ref{thm:indep}.  


\begin{theorem}[Local law for analytic test functions] \label{thm:analytic}
Fix $C, c, \eps > 0$.  Let $\mu$ be a probability measure on $\mathbb{C}$ supported on $B(0, C)$, and assume
\begin{equation} \label{eq:mmulowbnd}
	| m_\mu(z)| \geq c 
\end{equation}
for all $z \in \Gamma$, where $\Gamma$ is the boundary of $B(0,C+\eps)$.  Then for any function $\varphi$ (possibly depending on $n$), analytic in a neighborhood containing the closure of $B(0,C+\eps)$, one has
\[ \sum_{j=1}^{n-1} \varphi(w_j^{(n)}) = \sum_{i=1}^n \varphi(X_i) + O \left( \oint_\Gamma |\varphi(z)| |dz| \right), \]
where $w_1^{(n)}, \ldots, w_{n-1}^{(n)}$ are the critical points of the polynomial
$ p_n(z) := \prod_{i=1}^n (z - X_i) $
and $X_1, \ldots, X_n$ are iid random variables with distribution $\mu$.  Here, the implicit constants in our asymptotic notation depend on $C, c$, and $\eps$.  
\end{theorem}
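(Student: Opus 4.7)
The plan is to express both sums as contour integrals of logarithmic derivatives around $\Gamma$ and then bound the difference in closed form. Since $\mu$ is supported on $B(0,C)$, every iid root $X_i$ lies in $B(0,C) \subset B(0,C+\eps)$ almost surely, and by the Gauss--Lucas theorem every critical point $w_j^{(n)}$ lies in the convex hull of $X_1, \ldots, X_n$ and hence also inside $\Gamma$. Because $\varphi$ is analytic on a neighborhood of $\overline{B(0,C+\eps)}$, the residue theorem yields
\[
\sum_{i=1}^n \varphi(X_i) = \frac{1}{2\pi\sqrt{-1}} \oint_\Gamma \varphi(z)\, \frac{p_n'(z)}{p_n(z)}\, dz,
\qquad
\sum_{j=1}^{n-1} \varphi(w_j^{(n)}) = \frac{1}{2\pi\sqrt{-1}} \oint_\Gamma \varphi(z)\, \frac{p_n''(z)}{p_n'(z)}\, dz.
\]
So it suffices to control the ratio $R(z) := \frac{p_n''(z)}{p_n'(z)} - \frac{p_n'(z)}{p_n(z)}$ uniformly on $\Gamma$.

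Next, I would derive a clean formula for $R(z)$ by differentiating the identity $p_n'(z)/p_n(z) = \sum_{i=1}^n (z-X_i)^{-1}$, which gives
\[
\frac{p_n''(z)}{p_n(z)} = \left(\frac{p_n'(z)}{p_n(z)}\right)^2 - \sum_{i=1}^n \frac{1}{(z-X_i)^2},
\]
and hence $R(z) = -\sum_{i=1}^n (z-X_i)^{-2} \big/ \sum_{i=1}^n (z-X_i)^{-1}$. On $\Gamma$, the triangle inequality gives $|z - X_i| \geq \eps$ for every $X_i \in B(0,C)$, so the numerator is bounded by $n/\eps^2$ in absolute value. The denominator equals $n\, m_{\mu_n}(z)$, where $\mu_n$ is the empirical measure of the roots. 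Thus, once one shows $|m_{\mu_n}(z)| \geq c/2$ uniformly on $\Gamma$ with high probability, it follows that $|R(z)| = O(1)$ uniformly on $\Gamma$, and then
\[
\left|\sum_{j=1}^{n-1}\varphi(w_j^{(n)}) - \sum_{i=1}^n \varphi(X_i)\right|
\leq \frac{1}{2\pi}\oint_\Gamma |\varphi(z)|\,|R(z)|\,|dz|
= O\!\left(\oint_\Gamma |\varphi(z)|\,|dz|\right),
\]
with implicit constant depending only on $C$, $c$, and $\eps$.

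The main obstacle is promoting the hypothesis $|m_\mu(z)| \geq c$ on $\Gamma$ to a lower bound on $|m_{\mu_n}(z)|$ that holds uniformly for $z \in \Gamma$. For each fixed $z \in \Gamma$, the summands $(z-X_i)^{-1}$ are iid and bounded by $1/\eps$, so Hoeffding's inequality applied separately to real and imaginary parts yields $|m_{\mu_n}(z) - m_\mu(z)| = O(n^{-1/2} \log n)$ with probability at least $1 - n^{-A}$ for any prescribed $A$. To upgrade this pointwise estimate to uniform control, I would exploit that $m_{\mu_n}$ is holomorphic on $\C \setminus B(0,C)$ and satisfies the deterministic derivative bound $|m_{\mu_n}'(z)| \leq 1/\eps^2$ on $\Gamma$; hence $m_{\mu_n} - m_\mu$ is Lipschitz on $\Gamma$ with constant $O(1/\eps^2)$. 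A union bound over a net in $\Gamma$ of cardinality $n^{O(1)}$, combined with this Lipschitz estimate, delivers $|m_{\mu_n}(z) - m_\mu(z)| = o(1)$ uniformly on $\Gamma$ with probability $1 - o(1)$, and hence $|m_{\mu_n}(z)| \geq c/2$ uniformly on $\Gamma$ with the same probability. This closes the argument.
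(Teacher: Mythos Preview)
Your argument is correct and in fact takes a more direct route than the paper's. Both proofs hinge on the same key analytic input --- a uniform lower bound $\inf_{z\in\Gamma}|m_{\mu_n}(z)|\gg 1$ with overwhelming probability, obtained by Hoeffding plus a Lipschitz net argument (the paper imports this from \cite[Lemma~4.1]{OW} as its Lemma~\ref{lemma:OW}). Where the two diverge is in packaging the difference of the two linear statistics. You work with the logarithmic derivatives $p_n'/p_n$ and $p_n''/p_n'$ and compute the explicit identity
\[
\frac{p_n''(z)}{p_n'(z)}-\frac{p_n'(z)}{p_n(z)}=-\frac{\sum_{i=1}^n (z-X_i)^{-2}}{\sum_{i=1}^n (z-X_i)^{-1}},
\]
from which the $O(1)$ bound on $\Gamma$ is immediate. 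The paper instead passes through the companion matrix representation $\tfrac{1}{n}zp_n'(z)=\det\bigl(zI-D(I-\tfrac1nJ)\bigr)$, defines the resolvents $G(z)=(zI-D)^{-1}$ and $R(z)=(zI-D(I-\tfrac1nJ))^{-1}$, and applies the Sherman--Morrison formula to control $\tr R(z)-\tr G(z)$. Unwinding the algebra, the paper's expression and yours differ only by the harmless term $1/z$ (the extra zero eigenvalue of $D(I-\tfrac1nJ)$, which the paper absorbs via $|\varphi(0)|\ll\oint_\Gamma|\varphi|\,|dz|$). Your approach is shorter and avoids the matrix machinery entirely; the paper's route, while heavier here, makes the connection to resolvent methods in random matrix theory explicit, which may be useful in other contexts. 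One minor remark: your Hoeffding-plus-net argument actually delivers overwhelming probability, not just $1-o(1)$, since the per-point failure probability can be taken as $n^{-A}$ for any $A$ and the net has polynomial cardinality.
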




\subsection{Guaranteeing the assumptions in the local law} \label{sec:guarantee}

In this section, we provide some criteria for assuring the assumptions in Theorem \ref{thm:main} are met. 

\begin{lemma}[Simple criterion for an upper bound] \label{lemma:critup}
Fix $C, \eps > 0$, and suppose $X_1, \ldots, X_n$ are complex-valued random variables (not necessarily independent nor identically distributed).  If 
$ \max_{1 \leq i \leq n} \E |X_i|^\eps \leq n^C, $
then
$ \max_{1 \leq i \leq n} |X_i| \leq e^{n^C} $
with overwhelming probability.  
\end{lemma}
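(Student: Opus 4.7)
The plan is a routine application of Markov's inequality followed by a union bound, so I expect no serious obstacle. The key observation is that the hypothesis controls $\E|X_i|^\eps$ polynomially in $n$, while the target threshold $e^{n^C}$ is exponentially large, so a tail bound at that threshold will be super-polynomially small.

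Concretely, for each $i$, I would apply Markov's inequality to the non-negative random variable $|X_i|^\eps$ with threshold $t^\eps$ to obtain
\[
\P(|X_i| > t) \;=\; \P(|X_i|^\eps > t^\eps) \;\leq\; \frac{\E|X_i|^\eps}{t^\eps} \;\leq\; \frac{n^C}{t^\eps}.
\]
Setting $t := e^{n^C}$ gives $\P(|X_i| > e^{n^C}) \leq n^C e^{-\eps n^C}$. A union bound over the $n$ indices then yields
\[
\P\!\left(\max_{1\leq i \leq n}|X_i| > e^{n^C}\right) \;\leq\; n^{C+1} e^{-\eps n^C}.
\]

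To finish, I would note that for any fixed $\alpha > 0$, the right-hand side $n^{C+1} e^{-\eps n^C}$ is $O_\alpha(n^{-\alpha})$ as $n \to \infty$ because the exponential factor $e^{-\eps n^C}$ dominates every fixed negative power of $n$. By the definition of \emph{overwhelming probability} given in Section \ref{sec:notation}, this is exactly the conclusion required. The whole argument uses nothing beyond Markov, a union bound, and the elementary growth comparison between $e^{n^C}$ and polynomials in $n$, so there is no real obstacle to anticipate.
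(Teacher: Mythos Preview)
Your proof is correct and follows the same approach as the paper: a union bound followed by Markov's inequality applied to $|X_i|^\eps$. The paper states the argument more tersely, but the content is identical.
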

\begin{proof}
As 
$ \Prob \left( \max_{1 \leq i \leq n} |X_i| > e^{n^C} \right) \leq \sum_{i=1}^n \Prob (|X_i| > e^{n^C} ), $
the claim follows from a simple application of Markov's inequality.  
\end{proof}

\begin{lemma}[Criterion for anti-concentration] \label{lemma:critanti}
Fix $C > 0$, and let $X_1, \ldots, X_n$ be complex-valued random variables such that $X_1$ is independent of $X_2, \ldots, X_n$.  In addition, assume $X_1$ is absolutely continuous (with respect to Lebesgue measure on $\mathbb{C}$) with density bounded by $n^C$, and suppose that $\E|X_1| \leq n^C$.  Then for every $a > 0$, there exists $b > 0$ such that
\[ \left| \sum_{i=1}^n \frac{1}{Z - X_i} \right| \geq n^{-b} \]
with probability $1 - O_{a}(n^{-a})$, where $Z$ is uniformly distributed on $B(0, n^C)$ and independent of $X_1, \ldots, X_n$.  
\end{lemma}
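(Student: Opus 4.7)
The plan is to condition on $Z$ and $X_2, \ldots, X_n$ and exploit the density of $X_1$ as the sole source of randomness. Writing $Y := Z - X_1$ and $S := \sum_{i=2}^{n} \frac{1}{Z - X_i}$, we observe that, conditionally, $Y$ has density bounded by $n^C$ (since $X_1$ does and $Z$ is independent of $X_1$) and $\E|Y| \leq |Z| + \E|X_1| \leq 2 n^C$. It therefore suffices to show that
\[
\P\!\left( \left| \tfrac{1}{Y} + S \right| < n^{-b} \,\middle|\, Z, X_2, \ldots, X_n \right) = O_{a}(n^{-a})
\]
uniformly over $Z$ with $|Z| \leq n^C$ and over $X_2, \ldots, X_n$, and then take expectations.

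Geometrically, the bad set $E := \{y \in \C : |1/y + S| < n^{-b}\}$ is the image of the disk $B(-S, n^{-b})$ under the involution $w \mapsto 1/w$. Completing the square in the inequality $|1 + Sy|^2 < n^{-2b} |y|^2$ shows that when $|S|^2 > n^{-2b}$, $E$ is an open disk with center $-\bar S/(|S|^2 - n^{-2b})$ and radius $n^{-b}/(|S|^2 - n^{-2b})$; when $|S|^2 < n^{-2b}$, $E$ is the exterior of a closed disk with center $\bar S/(n^{-2b} - |S|^2)$ and radius $n^{-b}/(n^{-2b} - |S|^2)$; and when $|S| = n^{-b}$, $E$ is a half-plane. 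In all three cases, a short calculation gives that every $y \in E$ with $|S| \leq n^{-b}$ satisfies $|y| \geq 1/(|S| + n^{-b}) \geq n^b/2$.

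From here I run a case analysis split at a threshold $|S|_\star := n^{-\beta}$ for a parameter $\beta \in (0, b)$ to be chosen at the end. When $|S| \leq |S|_\star$, the triangle inequality yields $|1/y| \leq |S| + n^{-b} \lesssim n^{-\beta}$ for any $y \in E$, so $|y| \gtrsim n^\beta$; Markov's inequality with $\E|Y| \leq 2 n^C$ then gives $\P(Y \in E \mid \cdots) \lesssim n^{C - \beta}$. When $|S| > |S|_\star$, the explicit description yields $\mathrm{Area}(E) \leq 4\pi\, n^{-2b}/|S|^4 \leq 4\pi\, n^{-2b + 4\beta}$, and multiplying by the density bound $n^C$ gives $\P(Y \in E \mid \cdots) \leq 4\pi\, n^{C - 2b + 4\beta}$. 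Setting $\beta = 2b/5$ equalizes the two exponents, and then choosing $b$ sufficiently large in terms of $C$ and $a$ (for instance $b \geq 5(C + a + 1)/2$) makes both bounds $O_{a}(n^{-a})$, which completes the argument upon taking expectations.

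The main subtlety is the regime $|S| \approx n^{-b}$, where the Möbius image $E$ has radius that blows up like $n^b$ and the pure area bound is useless. The saving grace is that in exactly this regime the disk $E$ sits far from the origin (every point of $E$ has modulus at least $1/(|S| + n^{-b}) \gtrsim n^b$), so its intersection with the typical range of $Y$ is only visible on the rare event $\{|Y| \gtrsim n^b\}$, which is controlled by the first-moment bound via Markov. The two estimates therefore cover complementary ranges of $|S|$, and the quantitative balance between them is what forces the choice $\beta = 2b/5$.
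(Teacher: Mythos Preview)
Your proof is correct and follows essentially the same strategy as the paper: condition on $Z, X_2, \ldots, X_n$, split into cases according to the size of the fixed sum $S$, apply Markov's inequality when $|S|$ is small, and use the density bound on the area of the bad set when $|S|$ is large. The only cosmetic difference is that the paper places the threshold at $|S| = n^{-b/2}$ and, in the large-$|S|$ case, first truncates to $\{|X_1| \le n^{C+a}\}$ before bounding the bad region as a ball of radius $O(n^{-b/2} n^{C+a})$, whereas your explicit M\"obius computation of $\mathrm{Area}(E)$ avoids that extra truncation step; the two arguments are otherwise interchangeable.
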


We prove Lemma \ref{lemma:critanti} in Appendix \ref{sec:appendixA}.


\subsection{Overview and outline}
The remainder of the paper is devoted to proving our main results. In Section \ref{sec:microProof}, we establish Theorems \ref{thm:multiInCLT}, \ref{thm:multiOutCLT}, and \ref{thm:eig} of Subsection \ref{sec:microscopic} by way of Theorem \ref{thm:detLoc} for deterministic polynomials, which we also use to prove a generalization to Theorem \ref{thm:multiOutCLT}. Section \ref{sec:locProof} contains the proofs of the local laws from Subsection \ref{sec:locallaw} including those for Theorems \ref{thm:main}, \ref{thm:indep}, \ref{thm:clt}, and \ref{thm:analytic}. We conclude the paper with a proof of Theorem \ref{thm:wasserstein} in Section \ref{sec:wassProof}.

There are two appendices that contain minor lemmata and supporting calculations. In Appendix \ref{sec:appendixA}, we provide Lemma \ref{lem:radSymMu} to simplify the computation of $m_\mu$ for radially symmetric distributions, we include calculations related to Example \ref{ex:twoDisks}, and we justify Lemma \ref{lemma:critanti}. Appendix \ref{sec:appendixB} contains some classical arguments that establish a Lindeberg CLT that we use to prove part of Theorem \ref{thm:multiInCLT}.

\section{Proof of results in Section \ref{sec:microscopic}} \label{sec:microProof}

The proofs of Theorems \ref{thm:multiInCLT}, \ref{thm:multiOutCLT}, and \ref{thm:eig} rely on the following theorem for deterministic polynomials.  

\begin{theorem}
	Suppose $\xi$ is a complex number, $\vec{X} = (X_1, X_2, \ldots, X_n)$ is a vector of complex numbers, and $C_1, C_2, k_\text{Lip}$ are positive values for which the following three conditions hold:
	\begin{enumerate}[(i)]
		\item \label{it:detLoc1} $C_1 \leq \abs{\frac{1}{n}\sum_{j=1}^n\frac{1}{\xi-X_j}} \leq C_2$;
		\item \label{it:detLoc2} The function $z \mapsto \frac{1}{n}\sum_{j=1}^n \frac{1}{z-X_j}$ is Lipschitz continuous with constant $k_\text{Lip}$ on the set $\set{z \in \mathbb{C} :\abs{z -\xi} \leq \frac{2}{C_1n}}$;
		\item \label{it:detLoc3} $\displaystyle\min_{1\leq j \leq n}\abs{\xi - X_j} > \frac{3}{C_1n}$.
	\end{enumerate}
	Then, if $C > 0$ and $n \in \mathbb{N}$ satisfy 
	\begin{equation} \
	C > \frac{8(1+2C_2^2)}{C_1^3}\quad \text{and} \quad n > 4C_2 \max\set{\frac{1}{C_1}, C(k_\text{Lip} + 1)},
	\end{equation}
	the polynomial $p_n(z) := (z-\xi)\prod_{j=1}^n(z-X_j)$ has exactly one critical point, $w_\xi^{(n)}$, that is within a distance of $\frac{3}{2C_1 n}$ of $\xi$, and  
	\begin{equation}
	\abs{w_\xi^{(n)}-\xi + \frac{1}{n+1}\frac{1}{\frac{1}{n}\sum_{j=1}^n\frac{1}{\xi-X_j}}} < \frac{C(k_\text{Lip}+1)}{n^2}.
	\label{eqn:detLocCP}
	\end{equation}
	\label{thm:detLoc}
\end{theorem}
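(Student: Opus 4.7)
My plan is to recast the critical point equation as a holomorphic zero-finding problem and combine Rouch\'e's theorem with the Lipschitz estimate in hypothesis \eqref{it:detLoc2}. Setting $w := z - \xi$, any zero of $p_n'(z)$ at which $z$ differs from every $X_j$ satisfies
\[ h(w) := 1 + w\,T(w) = 0, \qquad T(w) := \sum_{j=1}^n \frac{1}{\xi+w-X_j} = n S(\xi+w), \]
where I write $S(z) := \frac{1}{n}\sum_j \frac{1}{z-X_j}$ as in \eqref{it:detLoc1}. Hypothesis \eqref{it:detLoc3} guarantees $\min_j|\xi-X_j| > 3/(C_1 n)$, so $h$ is holomorphic on $\overline{B}(0, 3/(2C_1n))$ and every critical point of $p_n$ in that disk is a zero of $h$. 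The natural linearization is $h_0(w) := 1 + w\,T(0)$, whose unique zero is $w_0 := -1/T(0) = -1/(nS(\xi))$; by \eqref{it:detLoc1}, $|w_0| \leq 1/(nC_1) < 3/(2C_1n)$, so $w_0$ lies strictly inside the disk.

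I would establish existence and uniqueness of $w_\xi^{(n)}$ by applying Rouch\'e's theorem on the circle $|w| = 3/(2C_1n)$. The reverse triangle inequality together with \eqref{it:detLoc1} yields
\[ |h_0(w)| \geq |T(0)|\,|w| - 1 \geq nC_1 \cdot \tfrac{3}{2C_1 n} - 1 = \tfrac{1}{2}, \]
while the Lipschitz hypothesis \eqref{it:detLoc2} applied to $T(w) = nS(\xi + w)$ produces
\[ |h(w) - h_0(w)| = |w|\,|T(w) - T(0)| \leq n k_\text{Lip}|w|^2 = \frac{9 k_\text{Lip}}{4 C_1^2 n}. \]
Under the stated lower bound on $n$ this is strictly less than $1/2$, so Rouch\'e forces $h$ and $h_0$ to have the same number of zeros in the disk, namely exactly one. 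This furnishes the unique critical point $w_\xi^{(n)}$ of $p_n$ in $B(\xi, 3/(2C_1n))$ and in particular rules out any other critical point at distance less than $3/(2C_1n)$ from $\xi$.

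To locate $w_\xi^{(n)}$ to order $1/n^2$, I would use the fixed-point identity $w_\xi^{(n)} = -1/T(w_\xi^{(n)})$ coming from $h(w_\xi^{(n)}) = 0$ to write
\[ w_\xi^{(n)} + \frac{1}{T(0)} = \frac{T(w_\xi^{(n)}) - T(0)}{T(0)\,T(w_\xi^{(n)})}. \]
Controlling the numerator by $n k_\text{Lip}|w_\xi^{(n)}|$ via \eqref{it:detLoc2}, using $|w_\xi^{(n)}| \leq 2/(nC_1)$ (which follows since $w_\xi^{(n)}$ lies within $O(1/n^2)$ of $w_0$ once $n$ is large), and using the Lipschitz-perturbed lower bound $|T(w_\xi^{(n)})| \geq nC_1/2$ inherited from \eqref{it:detLoc1}, yields $|w_\xi^{(n)} + 1/T(0)| = O(k_\text{Lip}/(n^2 C_1^3))$. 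Since the target quantity in \eqref{eqn:detLocCP} differs from $-1/T(0)$ by only $1/((n+1)T(0)) = O(1/(n^2 C_1))$, the triangle inequality assembles these into the claimed $C(k_\text{Lip}+1)/n^2$ bound, where the additive ``$+1$'' inside $k_\text{Lip}+1$ absorbs precisely the $k_\text{Lip}$-independent contribution from the $\frac{1}{n}$ versus $\frac{1}{n+1}$ discrepancy. The hard part I anticipate is the explicit constant bookkeeping needed to match the stated threshold $C > 8(1+2C_2^2)/C_1^3$ and the stated lower bound on $n$: the upper bound $C_2 \geq |S(\xi)|$ enters through quantities like $|h_0(w)|$ and $|T(0)T(w_\xi^{(n)})|$, and controlling cross terms quadratic in $T(0)$ is what produces the $C_2^2$ dependence. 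Once the elementary estimates above are made sharp and all the $O(1/n)$ slack is tracked, the stated form of the bound follows.
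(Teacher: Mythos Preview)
Your approach is correct and is somewhat more streamlined than the paper's. The paper introduces a comparison polynomial $\widetilde{p}(z)=(z-\xi)(z-Y_n)^n$ with $Y_n=\xi-1/S(\xi)$, writes down its logarithmic derivative $\widetilde{L}_n$, and applies Rouch\'e's theorem \emph{twice}: once on the large disk $B(\xi,3/(2C_1n))$ and once on the small disk $B\bigl(c_n,C(k_{\text{Lip}}+1)/n^2\bigr)$ centered at the exact zero $c_n$ of $\widetilde{L}_n$. Uniqueness in the large disk and membership in the small disk together give both conclusions at once. You instead linearize $h(w)=1+wT(w)$ to $h_0(w)=1+wT(0)$, apply Rouch\'e only on the large circle to get existence and uniqueness, and then extract the $O(1/n^2)$ location from the fixed-point identity $w_\xi^{(n)}=-1/T(w_\xi^{(n)})$. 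Both routes are elementary; yours trades a second Rouch\'e application for a direct perturbation estimate.

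Two small points. First, your parenthetical justification of $|w_\xi^{(n)}|\le 2/(nC_1)$ is needlessly circular-sounding: the Rouch\'e step already places $w_\xi^{(n)}$ in the open disk of radius $3/(2C_1n)$, which is less than $2/(C_1n)$, so no appeal to the $O(1/n^2)$ conclusion is needed. Second, your speculation about where $C_2$ enters is off. In the paper's argument $C_2$ is genuinely used, because the small Rouch\'e disk is centered at $c_n$ and one needs $|c_n-\xi|\ge 1/((n+1)C_2)$ to ensure $\xi$ stays outside it. In your argument the relevant bounds on $|T(0)|$ and $|T(w_\xi^{(n)})|$ are \emph{lower} bounds, governed by $C_1$; tracking the constants in your fixed-point step gives roughly $3k_{\text{Lip}}/(n^2C_1^3)+1/(n^2C_1)$, which is already below $C(k_{\text{Lip}}+1)/n^2$ for any $C>8/C_1^3$. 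So your route actually does not need the upper bound $C_2$ at all, and the ``$C_2^2$ dependence'' you anticipate never materializes.
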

We remark that criteria \eqref{it:detLoc1} and \eqref{it:detLoc2} appear relevant in view of \eqref{eqn:whyCauchy} and its accompanying heuristic. 
Assumption \eqref{it:detLoc3} helps to guarantee that $p_n(z)$ has only one critical point that is within order $O(1/n)$ of $\xi$, but with respect to establishing equation \eqref{eqn:detLocCP}, \eqref{it:detLoc3} is likely an artificial constraint related to the use of Rouch\'{e}'s theorem in the proof. We prove Theorem \ref{thm:detLoc} in the next subsection.

\subsection{Proof of Theorem \ref{thm:detLoc}}

Our strategy is to compare $p_n(z)$ to the simpler polynomial \[\widetilde{p}(z) = (z-\xi)(z-Y_n)^n,\] where 
\[
Y_n := \xi - \frac{1}{\frac{1}{n}\sum_{j=1}^n\frac{1}{\xi - X_j}}
\]
is chosen so that near $\xi$, the logarithmic derivatives
\[
L_n(z) := \frac{1}{z-\xi} + \sum_{j=1}^n\frac{1}{z-X_j}\quad \text{and} \quad \widetilde{L}_n(z) := \frac{1}{z-\xi} + \frac{n}{z-Y_n}
\]
of $p_n$ and $\widetilde{p}_n$, respectively, are close to each other. In particular, we will use Rouch\'{e}'s theorem to show that $L_n$ and $\widetilde{L}_n$ both have exactly one zero in each of the nested open balls
\[
D_n^\text{sm}:= B\left(c_n, \frac{C(k_\text{Lip}+1)}{n^2}\right)\quad \text{and}\quad D_n^\text{lg} := B\left(\xi, \frac{3}{2C_1n}\right),
\]
where
\[
c_n := \xi- \frac{1}{n+1}\frac{1}{\frac{1}{n}\sum_{j=1}^n\frac{1}{\xi-X_j}}
\]
can be easily verified to be a root of $\widetilde{L}_n$. By ``clearing the denominators'' we will conclude that $p_n$ has exactly one critical point in each of the two balls. The lemma below establishes a few  key facts that we frequently reference throughout the proof.
\begin{lemma}
Under the assumptions of Theorem \ref{thm:detLoc}:
\begin{enumerate}[(i)]
\item \label{it:smBall} For $\abs{z - c_n} \leq \frac{C(k_\text{Lip}+1)}{n^2}$:
\begin{align*}
&\frac{C(k_\text{Lip}+1)}{n^2} < \abs{z-\xi} < \frac{5}{4C_1n},\ \text{so $D_n^\text{sm} \subset D_n^\text{lg}$};\\
&\frac{C(k_\text{Lip}+1)}{n^2} < \abs{z-Y_n} < \frac{2}{C_1};\\
&\frac{C(k_\text{Lip}+1)}{n^2}<\frac{1}{C_1 n}  < \abs{z-X_j}\ \text{for $1\leq j \leq n$}.\\
\end{align*}
\item \label{it:lgBall} For $\abs{z - \xi} \leq \frac{3}{2nC_1}$:
\begin{align*}
&\frac{3}{2C_1n} < \abs{z-Y_n} < \frac{5}{2C_1};\\
&\frac{3}{2C_1n} < \abs{z-X_j}\ \text{for $1\leq j \leq n$};\\
&\frac{1}{2C_1n} < \abs{z-c_n}\ \text{if $\abs{z-\xi} = \frac{3}{2nC_1}$}.
\end{align*}
\end{enumerate}
\label{lem:detIneq}
\end{lemma}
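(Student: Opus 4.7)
The lemma is a sequence of triangle inequality estimates built on the algebraic identities
\[
\xi - Y_n = \tfrac{1}{m_n},\qquad \xi - c_n = \tfrac{1}{(n+1)m_n},\qquad Y_n - c_n = \tfrac{n}{(n+1)m_n},
\]
where $m_n := \frac{1}{n}\sum_{j=1}^n \frac{1}{\xi-X_j}$. Together with hypothesis \eqref{it:detLoc1}, these yield $|\xi-Y_n| \in [1/C_2,\,1/C_1]$, $|\xi-c_n| \in [1/((n+1)C_2),\,1/((n+1)C_1)]$, and $|Y_n-c_n| \in [n/((n+1)C_2),\,n/((n+1)C_1)]$. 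The two size constraints on $n$ play complementary roles: $n > 4C_2\,C(k_{\text{Lip}}+1)$ forces $C(k_{\text{Lip}}+1)/n^2 < 1/(4nC_2)$, making the small-ball radius negligible compared to $|\xi-c_n|$, while $n > 4C_2/C_1$ forces $3/(2nC_1)$ to be negligible compared to $|\xi-Y_n|$.

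For part \eqref{it:smBall}, the plan is to fix $z \in D_n^{\text{sm}}$ and apply the triangle inequality in both directions to the decompositions $z - \xi = (z-c_n)+(c_n-\xi)$ and $z - Y_n = (z-c_n)+(c_n-Y_n)$, using that $|z-c_n| \leq C(k_{\text{Lip}}+1)/n^2$ is much smaller than $|c_n-\xi| \geq 1/(2nC_2)$ (invoking $n+1 \leq 2n$) and much smaller than $|c_n-Y_n| \geq 1/(2C_2)$. The lower bound $1/(C_1 n) < |z-X_j|$ then follows from hypothesis \eqref{it:detLoc3} together with the upper bound $|z-\xi| < 5/(4C_1 n)$ just established, since $3/(C_1 n) - 5/(4C_1 n) = 7/(4C_1 n) > 1/(C_1 n)$.

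Part \eqref{it:lgBall} is handled analogously. The bounds on $|z-Y_n|$ come from $z-Y_n = (z-\xi)+(\xi-Y_n)$, with the upper bound $|z-\xi| \leq 3/(2nC_1)$ dominated by $|\xi-Y_n| \geq 1/C_2$ via $n > 4C_2/C_1$. The lower bound on $|z-X_j|$ follows directly from hypothesis \eqref{it:detLoc3} and the cancellation $3/(C_1 n) - 3/(2C_1 n) = 3/(2C_1 n)$. Finally, on the boundary $|z-\xi| = 3/(2nC_1)$, the reverse triangle inequality gives $|z-c_n| \geq |z-\xi| - |\xi-c_n| > 3/(2C_1 n) - 1/(nC_1) = 1/(2C_1 n)$.

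The only obstacle is bookkeeping: every claimed strict inequality must be traced back to one of the two lower bounds on $n$, and ensuring strictness is preserved throughout requires balancing the constants carefully. No individual step is analytically difficult; the lemma is essentially a catalogue of estimates packaged for the subsequent Rouch\'{e}-theorem argument.
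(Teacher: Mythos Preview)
Your proposal is correct and follows essentially the same approach as the paper: both proofs amount to repeated applications of the triangle inequality built on the identities $\xi - c_n = 1/((n+1)m_n)$ and $\xi - Y_n = 1/m_n$, together with the size assumptions on $n$ to control the resulting error terms. Your explicit tabulation of the identities and of which hypothesis on $n$ drives each strict inequality is, if anything, slightly more organized than the paper's write-up, which proves the first inequality of (i) in detail and then declares the rest ``nearly identical.''
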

\begin{proof}
To prove \eqref{it:smBall}, suppose $\abs{z - c_n} \leq \frac{C(k_\text{Lip}+1)}{n^2}$. By the triangle inequality, we have
\[
\abs{z-\xi} \geq \abs{c_n - \xi} - \abs{z-c_n} \geq\frac{1}{(n+1)C_2} - \frac{C(k_\text{Lip}+1)}{n^2} \geq \frac{1}{2nC_2} - \frac{C(k_\text{Lip}+1)}{n^2},
\]
and by the hypothesis that $n > 4C_2C(k_\text{Lip}+1)$, it follows that
\[
\abs{z-\xi} > \frac{1}{2nC_2} - \frac{1}{4nC_2} = \frac{1}{4nC_2} > \frac{C(k_\text{Lip}+1)}{n^2}.
\]
On the other hand, we have
\[
\abs{z-\xi} \leq \abs{z-c_n} + \abs{c_n - \xi} \leq \frac{C(k_\text{Lip}+1)}{n^2} + \frac{1}{(n+1)C_1},
\]
and the assumption $n > 4C_2C(k_\text{Lip}+1)$ guarantees that
\[
\abs{z-\xi} < \frac{1}{4nC_1} + \frac{1}{(n+1)C_1} < \frac{5}{4C_1n}
\]
(note: $C_1 \leq C_2$). This establishes the first inequality. The second follows from nearly identical reasoning; we omit the details. To achieve the inequalities $\frac{1}{C_1 n} < \abs{z-X_j}$, we use $\abs{z-\xi} < \frac{5}{4C_1n}$, which we just proved, and the assumption that $\min_{1 \leq j \leq n}\abs{\xi - X_j} > \frac{3}{C_1n}$. Indeed, for $1 \leq j \leq n$, the triangle inequality yields
\[
\abs{z-X_j} \geq \abs{\xi-X_j} - \abs{z-\xi} > \frac{3}{C_1n} - \frac{5}{4C_1n} > \frac{1}{C_2n} > \frac{C(k_\text{Lip}+1)}{n^2}.
\]
This completes the proof of part \eqref{it:smBall}. Part \eqref{it:lgBall} follows from nearly identical reasoning. Note that the assumption $n > 4C_2/C_1$ is useful for achieving the lower bound on $\abs{z-Y_n}$. We omit the remaining details.
\end{proof}

The lower bounds in Lemma \ref{lem:detIneq} imply that under the assumptions of Theorem \ref{thm:detLoc}, $L_n(z)$ and $\widetilde{L}_n(z)$ are holomorphic on the domain $D_n^\text{sm}$ and that $(z-\xi)L_n(z)$ and $(z-\xi)\widetilde{L}_n(z)$ are holomorphic on the domain $D_n^\text{lg}$.
We will show that under the same assumptions, $\abs{L_n(z) - \widetilde{L}_n(z)} < \abs{\widetilde{L}_n(z)}$ for $z$ in the boundaries of $D_n^\text{sm}$ and $D_n^\text{lg}$ in order to justify Rouch\'{e}'s theorem. To that end, assume the hypotheses of Theorem \ref{thm:detLoc} and let $z \in \partial D_n^\text{sm} \cup \partial D_n^\text{lg}$. 
Then, the triangle inequality implies 
\begin{align*}
\abs{L_n(z) - \widetilde{L}_n(z)} &= \abs{\sum_{j=1}^n \frac{1}{z-X_j} - \frac{n}{z-Y_n}}\\
&\leq\abs{\sum_{j=1}^n \frac{1}{z-X_j} - \sum_{j=1}^n \frac{1}{\xi-X_j}} + \abs{\sum_{j=1}^n \frac{1}{\xi-X_j} - \frac{1}{\frac{1}{n}z-\frac{1}{n}Y_n}}\\
&\leq nk_\text{Lip}\abs{z-\xi} + \abs{\sum_{j=1}^n \frac{1}{\xi-X_j} - \frac{1}{\frac{1}{n}(z-\xi)+\frac{1}{\sum_{j=1}^n\frac{1}{\xi-X_j}}}},
\end{align*}
where we have used hypothesis \eqref{it:detLoc2} of Theorem \ref{thm:detLoc} to bound the first term on the left. By factoring $\abs{\sum_{j=1}^n \frac{1}{\xi-X_j}}$ from both terms in the right summand, we obtain 
\[
\abs{L_n(z) - \widetilde{L}_n(z)} \leq nk_\text{Lip}\abs{z-\xi} + n\cdot\abs{\frac{1}{n}\sum_{j=1}^n \frac{1}{\xi-X_j}}\cdot\abs{1 - \frac{1}{(z-\xi)\frac{1}{n}\sum_{j=1}^n\frac{1}{\xi-X_j}+1}},
\]
and then, combining the fractions, factoring out another $\abs{\sum_{j=1}^n \frac{1}{\xi-X_j}}$, and applying hypothesis \eqref{it:detLoc1}  of Theorem \ref{thm:detLoc} twice yields
\begin{align*}
\abs{L_n(z) - \widetilde{L}_n(z)} &\leq nk_\text{Lip}\abs{z-\xi}  + nC_2\cdot\abs{\frac{(z-\xi)\frac{1}{n}\sum_{j=1}^n\frac{1}{\xi-X_j}}{(z-\xi)\frac{1}{n}\sum_{j=1}^n\frac{1}{\xi-X_j}+1}}\\
&\leq nk_\text{Lip}\abs{z-\xi}  + nC_2^2\cdot\frac{\abs{z-\xi}}{\abs{(z-\xi)\frac{1}{n}\sum_{j=1}^n\frac{1}{\xi-X_j}+1}}.
\end{align*}
Finally, we can use the reverse triangle inequality and hypothesis \eqref{it:detLoc1}  of Theorem \ref{thm:detLoc} to show
\begin{equation}
\begin{aligned}
\abs{L_n(z) - \widetilde{L}_n(z)} &\leq nk_\text{Lip}\abs{z-\xi}  + nC_2^2\abs{z-\xi} \cdot\frac{1}{1-\abs{(z-\xi)\frac{1}{n}\sum_{j=1}^n\frac{1}{\xi-X_j}}}\\
&\leq n\abs{z-\xi}\left(k_\text{Lip}  + \frac{C_2^2}{1-\abs{z-\xi}C_2}\right).
\end{aligned}
\label{eqn:detRouche1}
\end{equation}
At this point, we split the argument into two cases: $\abs{z-c_n} = \frac{C(k_\text{Lip}+1)}{n^2}$ and $\abs{z-\xi} = \frac{3}{2nC_1}$. In the first case, Lemma \ref{lem:detIneq} guarantees that $\abs{z-\xi} < \frac{2}{nC_1}$, and the hypotheses of Theorem \ref{thm:detLoc} require that $\frac{1}{2} > \frac{2C_2}{nC_1}$, so we obtain
\begin{equation}
\abs{L_n(z) - \widetilde{L}_n(z)} < \frac{2}{C_1}\left(k_\text{Lip}  + 2C_2^2\right) \leq \frac{2}{C_1}(k_\text{Lip}+1)(1 + 2C_2^2).
\label{eqn:LnSmall}
\end{equation}
On the other hand,
\begin{align*}
\abs{\widetilde{L}_n(z)} &= \abs{\frac{1}{z-\xi} + \frac{n}{z-Y_n}}\\
&= \abs{\frac{z-Y_n + n(z-\xi)}{(z-\xi)(z-Y_n)}}\\
&= (n+1)\cdot \abs{z-\xi}^{-1} \cdot \abs{z-Y_n}^{-1} \cdot \abs{z - c_n}\\
&> n \cdot \frac{nC_1}{2} \cdot \frac{C_1}{2} \cdot \frac{C(k_\text{Lip}+1)}{n^2},
\end{align*}
where the last inequality follows from Lemma \ref{lem:detIneq}. One of the assumptions in Theorem \ref{thm:detLoc} is that $C > \frac{8(1+2C_2^2)}{C_1^3}$, so
\begin{equation}
\abs{\widetilde{L}_n(z)} > \frac{C_1^2}{4}(k_\text{Lip}+1)\frac{8(1+2C_2^2)}{C_1^3} = \frac{2}{C_1}(k_\text{Lip}+1)(1 + 2C_2^2).
\label{eqn:LnBig}
\end{equation}
Combining \eqref{eqn:LnSmall} and \eqref{eqn:LnBig} yields $\abs{L_n(z) - \widetilde{L}_n(z)} < \abs{\widetilde{L}_n(z)}$ for $z$ in the boundary of $D_n^\text{sm}$. In addition, recall (Lemma \ref{lem:detIneq} part \eqref{it:lgBall}) that $L_n(z)$ and $\widetilde{L}(z)$ are holomorphic on the domain $D_n^\text{sm}$, so Rouch\'{e}'s theorem guarantees that $L_n(z)$ and $\widetilde{L}_n(z)$ have the same number of zeros inside $D_n^\text{sm}$. Since $c_n$ is the unique zero of $\widetilde{L}_n(z)$ in $D_n^\text{sm}$, we conclude that $L_n(z)$ has exactly one zero, $w_\xi^{(n)}$, in $D_n^\text{sm}$.  Furthermore, 
$
L_n(z) = \frac{p'_n(z)}{p_n(z)}
$
(which is analytic for $z \in D_n^\text{sm}$ by \eqref{it:smBall} of Lemma \ref{lem:detIneq}), so the zeros of $L_n(z)$ in $D_n^\text{sm}$ are the same as the critical points of $p_n(z)$ in $D_n^\text{sm}$. We conclude that $p_n(z)$ has exactly one critical point in $D_n^\text{sm}$.

Lemma \ref{lem:detIneq} shows that $D_n^\text{sm} \subset D_n^\text{lg}$, so it remains to establish that $p_n(z)$ also has exactly one critical point in $D_n^\text{lg}$, for then, the critical point in both domains must be the same one. Continuing from \eqref{eqn:detRouche1}, in the case where $\abs{z-\xi} = \frac{3}{2C_1 n}$, we obtain
\begin{equation}
\abs{L_n(z) - \widetilde{L}_n(z)} < \frac{3}{2C_1}(k_\text{Lip}+2C_2^2) \leq \frac{3}{2C_1}(k_\text{Lip}+1)(1+2C_2^2),
\label{eqn:LnSmall2}
\end{equation}
where we have once again used the assumption that $\frac{1}{2} \geq \frac{2C_2}{nC_1}$. Similarly to above, we also have
\begin{align*}
\abs{\widetilde{L}_n(z)} &= \abs{\frac{1}{z-\xi} + \frac{n}{z-Y_n}}\\
&= \abs{\frac{z-Y_n + n(z-\xi)}{(z-\xi)(z-Y_n)}}\\
&= (n+1)\cdot \abs{z-\xi}^{-1} \cdot \abs{z-Y_n}^{-1} \cdot \abs{z - c_n}\\
&> n \cdot \frac{2C_1n}{3} \cdot \frac{2C_1}{5} \cdot \frac{1}{2C_1n}\\
&= \frac{2C_1n}{15}
\end{align*}
where the inequality follows from Lemma \ref{lem:detIneq}, \eqref{it:lgBall}. From the assumptions on $n$ and $C$ in Theorem \ref{thm:detLoc}, it follows that 
\[
n > 4C_2C(k_\text{Lip}+1) > \frac{32(1+2C_2^2)(k_\text{Lip}+1)}{C_1^2}\cdot\frac{C_2}{C_1} \geq \frac{32(1+2C_2^2)(k_\text{Lip}+1)}{C_1^2}
\]
(recall $C_1 \leq C_2$), so in the case when $\abs{z-\xi} = \frac{3}{2C_1n}$, 
\begin{equation}
\abs{\widetilde{L}_n(z)} > \frac{64}{15C_1}(k_\text{Lip}+1)(1+2C_2^2).
\label{eqn:LnBig2}
\end{equation}
Combining \eqref{eqn:LnSmall2} and \eqref{eqn:LnBig2} yields $\abs{L_n(z) - \widetilde{L}_n(z)} < \abs{\widetilde{L}_n(z)}$ for $z$ in the boundary of $D_n^\text{lg}$. Consequently, for $z \in \partial D_n^\text{lg}$,
\[
\abs{(z-\xi)L_n(z) - (z-\xi)\widetilde{L}_n(z)} < \abs{(z-\xi)\widetilde{L}_n(z)},
\]
and since $(z-\xi)L_n(z)$, $(z-\xi)\widetilde{L}_n(z)$ are holomorphic in $D_n^\text{lg}$ by Lemma \ref{lem:detIneq}, \eqref{it:lgBall}, Rouch\'{e}'s theorem guarantees that $(z-\xi)L_n(z)$, $(z-\xi)\widetilde{L}_n(z)$ have the same number of zeros in $D_n^\text{lg}$. In fact, $(z-\xi)\widetilde{L}_n(z)$ has exactly one zero in $D_n^\text{lg}$, namely $c_n$, so
\[
(z-\xi)L_n(z) = \frac{p_n'(z)}{\prod_{j=1}^n(z-X_j)}
\]
has exactly one zero in $D_n^\text{lg}$, too. (Note: by Lemma \ref{lem:detIneq}, \eqref{it:smBall}, $D_n^\text{sm} \subset D_n^\text{lg}$.) Hence, $p_n'(z)$ has exactly one root in $D_n^\text{lg}$, and as we showed above, this root lies in $D_n^\text{sm}$. The proof of Theorem \ref{thm:detLoc} is complete.

In the remainder of this section, we use Theorem \ref{thm:detLoc} to prove Theorems \ref{thm:multiInCLT},  \ref{thm:multiOutCLT} and \ref{thm:eig}. We also include a subsection where we sketch how the arguments could be modified to prove Theorem \ref{thm:manyDet}, which generalizes part of Theorem \ref{thm:multiOutCLT} to situations where $p_n$ has many deterministic roots. When  $\xi \in \supp(\mu)$, it is difficult to control $\frac{1}{n}\sum_{j=1}^n\frac{1}{\xi-X_j}$, so we start with the proof of Theorem \ref{thm:multiOutCLT}, which is more straightforward than the justification of Theorem \ref{thm:multiInCLT}.

\subsection{Proof of Theorem \ref{thm:multiOutCLT}}\label{sec:detLoc}

We begin by establishing equation \eqref{eqn:multiOutLoc} via Theorem \ref{thm:detLoc}. To that end, we consider $\set{\xi_l}_{l=1}^s$, one at a time, letting each in turn play the role of $\xi$ in the statement of Theorem \ref{thm:detLoc}. Fix $\xi_l$, $1 \leq l \leq s$. We will show that for large $n$, on the complement of the ``bad'' event 
\[
E_n^l := \set{\abs{\frac{1}{n+1-s}\sum_{j=1}^{n+1-s}\frac{1}{\xi_l - X_j} - m_\mu(\xi_l)} \geq \frac{\abs{m_\mu(\xi_l)}}{3}},
\]
the hypotheses of Theorem \ref{thm:detLoc} are satisfied with $\xi = \xi_l$,
\[
\vec{X}  = (\xi_1, \ldots, \xi_{l-1}, \xi_{l+1}, \ldots, \xi_s, X_1,\ldots, X_{n+1-s}),
\]
and the positive constants
\begin{equation}
C_{1,l} := \frac{\abs{m_\mu(\xi_l)}}{2},\ C_{2,l} := \frac{3\abs{m_\mu(\xi_l)}}{2},\ k_{\text{Lip},l} := \frac{9}{\dist(\xi_l, \supp(\mu) \cup\set{\xi_j: j \neq l})^2}.
\label{eqn:CkOut}
\end{equation}
(Here, $\dist(z, D) := \inf_{w \in D} |z - w|$ is the distance from $z \in \mathbb{C}$ to a set $D \subset \mathbb{C}$.)

For large $n$, on the complement of $E_n^l$, 
\begin{equation}
\begin{aligned}
&\abs{\frac{1}{n}\left(\sum_{\substack{j=1\\j \neq l}}^s\frac{1}{\xi_l-\xi_j} + \sum_{j=1}^{n+1-s}\frac{1}{\xi_l-X_j}\right)}\\
 &\qquad\qquad\leq \abs{\frac{1}{n}\sum_{\substack{j=1\\j\neq l}}^s\frac{1}{\xi_l-\xi_j}} + \frac{n+1-s}{n}\abs{\frac{1}{n+1-s}\sum_{j=1}^{n+1-s}\frac{1}{\xi_l-X_j}}\\
&\qquad\qquad\leq o_l(1) + \abs{\frac{1}{n+1-s}\sum_{j=1}^{n+1-s}\frac{1}{\xi_l-X_j}-m_\mu(\xi_l)} + \abs{m_\mu(\xi_l)}\leq C_{2,l}
\end{aligned}
\label{eqn:multiOutC2}
\end{equation}
(The last inequality holds for large $n$.) Similarly, for large $n$, on the event $\left(E_n^l\right)^c$, 
\begin{equation}
\abs{\frac{1}{n}\left(\sum_{\substack{j=1\\j \neq l}}^s\frac{1}{\xi_l-\xi_j} + \sum_{j=1}^{n+1-s}\frac{1}{\xi_l-X_j}\right)} \geq C_{1,l},
\label{eqn:multiOutC1}
\end{equation}
and condition \eqref{it:detLoc1} of Theorem \ref{thm:detLoc} follows from equations \eqref{eqn:multiOutC2} and \eqref{eqn:multiOutC1}. If $n$ is chosen large enough that
\[
\eps_l:=\dist(\xi_l, \supp(\mu) \cup\set{\xi_j : j \neq l}) > \frac{3}{C_{1,l}n},
\]
then condition \eqref{it:detLoc3} of Theorem \ref{thm:detLoc} holds, and for $\abs{z - \xi_l} \leq \frac{2}{C_{1,l}n}$, 
\begin{align*}
\min_{1\leq j \leq n+1-s}\abs{z-X_j} &\geq \min_{1\leq j \leq n+1-s}\abs{\xi_l-X_j}-\abs{z-\xi_l}\geq \eps_l-\frac{2}{C_{1,l}n}> \frac{\eps_l}{3},\\
\min_{j\neq l}\abs{z-\xi_j} &\geq \min_{j \neq l}\abs{\xi_l-\xi_j}-\abs{z-\xi_l}\geq \eps_l-\frac{2}{C_{1,l}n}> \frac{\eps_l}{3}.
\end{align*}
In particular, this shows that for positive integers $n > 3(C_1\eps_l)^{-1}$ and complex numbers $z, w \in \set{z:\abs{z - \xi} \leq \frac{2}{C_{1,l}n}}$,
\begin{align*}
&\abs{\frac{1}{n}\left(\sum_{\substack{j=1\\j\neq l}}^{s}\frac{1}{z-\xi_j} + \sum_{j=1}^{n+1-s}\frac{1}{z-X_j}\right) - \frac{1}{n}\left(\sum_{\substack{j=1\\j\neq l}}^{s}\frac{1}{w-\xi_j}+\sum_{j=1}^{n+1-s}\frac{1}{w - X_j}\right)}\\
&\qquad= \frac{1}{n}\abs{\sum_{\substack{j=1\\j\neq l}}^{s} \frac{w-z}{(z-\xi_j)(w-\xi_j)}+\sum_{j=1}^{n+1-s}\frac{w-z}{(z-X_j)(w-X_j)}}\\
&\qquad\leq \abs{w-z}\cdot\frac{9}{\eps^2_l}= k_{\text{Lip},l}\cdot\abs{w-z},
\end{align*}
which implies condition \eqref{it:detLoc2} of Theorem \ref{thm:detLoc}.

Now, fix any $C > \max_{1\leq l\leq s}\frac{8(1+2C_{2,l}^2)}{C_{1,l}^3}$. If $n$ is a natural number large enough to guarantee inequalities \eqref{eqn:multiOutC2} and \eqref{eqn:multiOutC1} for $1 \leq l \leq s$ and that satisfies
\begin{equation}
n > \max\set{\frac{4C_{2,l}}{C_{1,l}},4C_{2,l}C(k_{\text{Lip},l}+1), \frac{3}{C_{1,l}\eps_l}: 1 \leq l \leq s},
\label{eqn:nMaxOutLoc}
\end{equation}
Theorem \ref{thm:detLoc} guarantees that on the complement of $\cup_{l=1}^sE_n^l$, the polynomial $p_n$ has $s$ critical points, $w_1^{(n)}, \ldots,w_s^{(n)}$, such that for $1 \leq l \leq s$, $w_l^{(n)}$ is the unique critical point of $p_n$ that is within a distance of $\frac{3}{\abs{m_\mu(\xi_l)}n}$ of $\xi_l$, and 
\begin{equation}
\abs{w_l^{(n)}-\xi_l + \frac{1}{n+1}\frac{n}{\sum_{j\neq l} \frac{1}{\xi_l-\xi_j} + \sum_{j=1}^{n+1-s}\frac{1}{\xi_l-X_j}}} < \frac{C(k_{\text{Lip},l}+1)}{n^2}.
\label{eqn:s>1}
\end{equation}
(Note that for large $n$, $w_1^{(n)}, \ldots, w_s^{(n)}$ are distinct because $\xi_1, \ldots, \xi_s$ are distinct and \eqref{eqn:s>1} implies $w_l^{(n)}\to\xi_l$ for $1 \leq l \leq s$.)
We complete our justification of \eqref{eqn:multiOutLoc} from Theorem \ref{thm:multiOutCLT} by choosing $C_{\mu,\vec{\xi}}$ larger than $\max_{l}C(k_{\text{Lip},l} + 1)$ and applying Hoeffding's inequality to the bounded random variables $(\xi_l-X_j)^{-1}$ to achieve the desired control over $\P(\cup_l E_n^l)$. More specifically, since $\xi_l \notin \supp(\mu)$ for $1 \leq l \leq s$, the random variables $Y_j^l:=\left(\xi_l - X_j\right)^{-1}$ are almost surely uniformly bounded by $K_l:=\dist(\xi_l,\supp(\mu))^{-1}$, and  the following version of Hoeffding's inequality applies with $t_l := \frac{\abs{m_\mu(\xi_l)}}{3}$.
\begin{lemma}[Hoeffding's inequality for complex-valued random variable; Lemma 3.1 from \cite{OW})]
\label{lem:Hoeffding}
Let $Y_1, \ldots, Y_n$ be iid complex-valued random variables which satisfy $|Y_j| \leq K$ almost surely for some $K > 0$.  Then there exist absolute constants $C, c>0$ such that
\[
\Prob \left( \left| \frac{1}{n} \sum_{j=1}^n Y_j - \frac{1}{n} \E \left[ \sum_{j=1}^n Y_j \right] \right| \geq t \right) \leq C \exp \left( - c n t^2 / K^2 \right),
\]
for every $t > 0$.  
\end{lemma}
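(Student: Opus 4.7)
The plan is to reduce the complex-valued statement to the classical real-valued Hoeffding inequality by splitting each $Y_j$ into its real and imaginary parts. Writing $Y_j = \Re(Y_j) + \sqrt{-1}\, \Im(Y_j)$, the bound $|Y_j| \leq K$ forces $|\Re(Y_j)| \leq K$ and $|\Im(Y_j)| \leq K$ almost surely, so the families $\{\Re(Y_j)\}_{j=1}^n$ and $\{\Im(Y_j)\}_{j=1}^n$ each consist of iid real-valued random variables bounded by $K$ in absolute value. This is precisely the setting in which the classical Hoeffding inequality applies.

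Next, I would apply the classical (real-valued) Hoeffding inequality separately to the centered sums
\[
S_n^{\Re} := \frac{1}{n}\sum_{j=1}^n \Re(Y_j) - \E\!\left[\Re(Y_1)\right], \qquad S_n^{\Im} := \frac{1}{n}\sum_{j=1}^n \Im(Y_j) - \E\!\left[\Im(Y_1)\right],
\]
to obtain bounds of the form $\P(|S_n^{\Re}| \geq t/\sqrt{2}) \leq 2\exp(-c' n t^2/K^2)$ and the analogous bound for $S_n^{\Im}$, for a suitable absolute constant $c' > 0$. Then I would combine these via the elementary inequality
\[
\left| \frac{1}{n}\sum_{j=1}^n Y_j - \frac{1}{n}\E\!\left[\sum_{j=1}^n Y_j\right] \right| \leq |S_n^{\Re}| + |S_n^{\Im}|,
\]
together with a union bound, to conclude that the probability in question is at most $C \exp(-c n t^2/K^2)$ for appropriate absolute constants $C, c > 0$. (One could alternatively use $|z|^2 = |\Re z|^2 + |\Im z|^2$ and bound each term on the event $\{|z| \geq t\}$, which leads to the same conclusion up to constants.)

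There is no real obstacle here: the hardest step is simply tracking the constants, since splitting the threshold $t$ between the real and imaginary parts introduces a factor of $\sqrt{2}$ in the exponent. The iid assumption is used in precisely one place, namely to invoke the classical real-valued Hoeffding inequality for sums of independent bounded random variables; everything else is the triangle inequality and a union bound. For the purposes of the applications of this lemma in the proofs of Theorems \ref{thm:multiOutCLT} and \ref{thm:manyDet}, only the existence of some absolute $C, c > 0$ matters, so no further refinement of the constants is needed.
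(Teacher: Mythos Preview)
Your approach is correct and is the standard way to derive a complex-valued Hoeffding inequality from the real-valued one. Note, however, that the paper does not actually prove this lemma: it is quoted verbatim as Lemma 3.1 from \cite{OW} and invoked as a black box in the proofs of Theorems \ref{thm:multiOutCLT} and \ref{thm:manyDet}. So there is no ``paper's own proof'' to compare against here.

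One small remark on your write-up: the threshold $t/\sqrt{2}$ goes with the Pythagorean identity $|z|^2 = |\Re z|^2 + |\Im z|^2$ (if both parts are below $t/\sqrt{2}$ in absolute value then $|z|<t$), whereas the triangle inequality $|z| \leq |\Re z| + |\Im z|$ that you actually invoke only yields that one of the two parts must be at least $t/2$. Either route is fine and gives absolute constants $C,c>0$; just keep the threshold consistent with whichever inequality you use.
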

By Lemma \ref{lem:Hoeffding}, we can find $C,c_{\mu,\vec{\xi}}>0$ such that $\cup_l E_n^l$ occurs with probability at least $1-C\exp(-c_{\mu,\vec{\xi}}\,n)$ as is desired.

We have established with overwhelming probability the existence of the critical points $w_1^{(n)},\ldots, w_s^{(n)}$ characterized by \eqref{eqn:multiOutLoc}. It remains to show that they satisfy the convergence in \eqref{eqn:multiOutCLT}. To that end, apply the Borel--Cantelli Lemma to the events $\cup_lE_n^l$ to see that almost surely, for large enough $n$, $w_l^{(n)}$ satisfies \eqref{eqn:multiOutLoc} for $1 \leq l \leq n$. It follows that with probability $1$, for sufficiently large $n$ and any $l$, $1 \leq l \leq s$, 
\begin{equation}
\begin{aligned}
&\sqrt{n}(n+1)\cdot m_\mu(\xi_l)^2\cdot\left(w_l^{(n)} - \xi_l + \frac{1}{n+1}\frac{1}{m_\mu(\xi_l)}\right)\\
&\quad=m_\mu(\xi_l)^2\cdot\sqrt{n}\left(\frac{1}{m_\mu(\xi_l)} - \frac{n}{\sum_{j\neq l}\frac{1}{\xi_l - \xi_j}+ \sum_{j=1}^{n+1-s}\frac{1}{\xi_l-X_j}}\right) + O_{\mu,\vec{\xi}}(n^{-1/2})\\
&\quad=m_\mu(\xi_l)^2\cdot\sqrt{n}\left(\frac{1}{m_\mu(\xi_l)} - \frac{1}{\frac{1}{n}\sum_{j=1}^{n}\frac{1}{\xi_l-X_j} + O_{\mu,\vec{\xi}}(1/n)}\right) + O_{\mu,\vec{\xi}}(n^{-1/2})\\
&\quad=\frac{m_\mu(\xi_l)}{\frac{1}{n}\sum_{j=1}^n\frac{1}{\xi_l-X_j}+ O_{\mu,\vec{\xi}}(1/n)} \cdot \sqrt{n}\left(\frac{1}{n}\sum_{j=1}^n\frac{1}{\xi_l-X_j} - m_\mu(\xi_l)\right) + O_{\mu,\vec{\xi}}(n^{-1/2}).
\end{aligned}
\label{eqn:outCLTCalc}
\end{equation}
In the case $s > 1$, we have used that
\[
\max_{1 \leq l \leq s}\abs{\sum_{j\neq l}\frac{1}{\xi_l-\xi_k}  - \sum_{j=n+2-s}^n\frac{1}{\xi_l - X_j}}= O_{\mu,\vec{\xi}}(1).
\]
 Now, we will use the Cram\'{e}r--Wold device (see e.g. Theorem 29.4 in \cite{Bill}) to show the convergence \eqref{eqn:multiOutCLT}. 
To start, let $t_1, \ldots, t_s, r_1, \ldots, r_s$ be arbitrary real numbers and define the random variables 
\begin{align*}
Y_{n,l} &:= n^{3/2}\cdot m_\mu(\xi_l)^2\cdot\left(w_l^{(n)} - \xi_l + \frac{1}{n+1}\frac{1}{m_\mu(\xi_l)}\right),\\
Z_{l,j} &:= \Re\left(\frac{1}{\xi_l-X_j}\right),\\
W_{l,j} &:= \Im\left(\frac{1}{\xi_l-X_j}\right),
\end{align*}
for $1 \leq l \leq s$. By \eqref{eqn:outCLTCalc}, we have, with probability tending to 1,
\begin{equation}
\begin{aligned}
Y_{n,l}&=\frac{n}{n+1}\frac{m_\mu(\xi_l)}{\frac{1}{n}\sum_{j=1}^n\frac{1}{\xi_l-X_j} + o(1)} \cdot \sqrt{n}\left(\frac{1}{n}\sum_{j=1}^n\frac{1}{\xi_l-X_j} - m_\mu(\xi_l)\right) + O\left(\frac{1}{\sqrt{n}}\right)\\
&=(1+o(1))\sqrt{n}\left(\frac{1}{n}\sum_{j=1}^n\frac{1}{\xi_l-X_j} - m_\mu(\xi_l)\right) + O\left(\frac{1}{\sqrt{n}}\right)\\
&=\sqrt{n}\left(\frac{1}{n}\sum_{j=1}^n\frac{1}{\xi_l-X_j} - m_\mu(\xi_l)\right) +o(1),
\end{aligned}
\label{eqn:YnlCalc}
\end{equation}
where all of the implied constants depend on $\xi_1, \ldots, \xi_s$ and $\mu$, and we have made ample use of Slutsky's theorem (see e.g. Theorem 11.4 from \cite{G}). To obtain the last line, we also used the classical CLT (see e.g. Theorem 29.5 from \cite{Bill}) in conjunction with Slutsky's theorem.  If we take linear combinations of the real and imaginary parts of $Y_{n,l}$, we obtain that with probability at least $1-o(1)$,
\begin{align*}
&\sum_{l=1}^st_l\Re(Y_{n,l}) + \sum_{l=1}^sr_l\Im(Y_{n,l})\\
&\quad=\sqrt{n}\left(\frac{1}{n}\sum_{j=1}^n\sum_{l=1}^s\left[t_lZ_{l,j} + r_lW_{l,j}- t_l\Re\left(m_\mu(\xi_l)\right) - t_l\Im\left(m_\mu(\xi_l)\right) \right]\right) +o(1),
\end{align*}
which converges by the classical CLT (and Slutsky's theorem) in distribution to a normally distributed random variable with mean $0$ and variance 
$
\var\left(\sum_{l=1}^s\left[t_lZ_{l,1} + r_lW_{l,1}\right]\right).
$
This limiting distribution is the same as the distribution of the random variable 
$
\sum_{l=1}^s\left[t_l\Re(N_l) + r_l\Im(N_l)\right]
$
with covariance structure given by \eqref{eqn:multiOutCov}, so by the Cram\'{e}r--Wold strategy, the proof of Theorem \ref{thm:multiOutCLT} is complete.

The next subsection illustrates how to modify the argument above to prove a generalization of Theorem \ref{thm:manyDet} to the case where $p_n$ has a number of deterministic roots that may grow with $n$.

\subsection{Generalization of Theorem \ref{thm:multiOutCLT}}
The following result shows how Theorem \ref{thm:detLoc} could be used to locate the critical points near a number of outlying deterministic roots that is allowed to depend on $n$. Compare the following theorem to Theorem 2 in \cite{H3}. Both theorems discuss the pairing between $s_n$ roots and critical points of $p_n$, where $s_n = o(n)$ is allowed to depend on $n$. Theorem \ref{thm:manyDet} describes the locations of the critical points with higher precision than Theorem 2 of \cite{H3}, however our theorem requires that the deterministic roots $\xi_1, \ldots, \xi_{s_n}$ be outside the support of $\mu$, while Theorem 2 in \cite{H3} doesn't make this restriction. 

\begin{theorem}[Locations of critical points when $p_n$ has many deterministic roots.]
	Suppose $X_1, X_2, \ldots$ are iid complex-valued random variables with distribution $\mu$, let $\xi_1, \xi_2,\ldots$ be fixed deterministic values, let $s_n, l_n, a_n$ be positive integers less than $n$, and fix $\eps, L > 0$, so that all of these together satisfy:
	\begin{enumerate}[(i)]
		\item \label{it:manyDet1} $1 \leq s_n \leq l_n = o(n)$, $a_nl_n = o(n)$, $a_n = o(\sqrt{n})$;
		\item \label{it:manyDet2}  $\min\set{\abs{m_\mu(\xi_l)}: 1 \leq l \leq s_n} \geq \eps$ and $\max\set{\abs{m_\mu(\xi_l)}: 1 \leq l \leq s_n} \leq L$;
		\item \label{it:manyDet3} $\min\set{\abs{\xi_l - x} : 1\leq l \leq s_n,\ x \in \supp(\mu) \cup \set{\xi_j}_{j=1, j\neq l}^{l_n} } > \frac{6}{\eps\cdot a_n}$.
	\end{enumerate}
	Then, there exist constants $C,c_{\mu,\eps,L}, C_{\mu, \eps,L}>0$ so that 
	with probability at least $1-C\cdot s_n\exp(-c_{\mu,\eps,L}\cdot n/a_n^2)$, the polynomial
	\[p_n(z) = \prod_{l=1}^{l_n}(z-\xi_l)\prod_{j=1}^{n+1-l_n}(z-X_j)\]
	has $s_n$ critical points, $w_1^{(n)}, \ldots, w_{s_n}^{(n)}$, such that for $1 \leq l \leq s_n$, $w_l^{(n)}$ is the unique critical point of $p_n$ within $\frac{3}{2\eps n}$ of $\xi_l$ and
	\begin{equation}
	\max_{1\leq l \leq s_n}\abs{w_l^{(n)} - \xi_l + \frac{1}{n+1}\frac{n}{\sum_{k=1, k\neq l}^{l_n} \frac{1}{\xi_l-\xi_k} + \sum_{j=1}^{n+1-l_n}\frac{1}{\xi_l - X_j}}} < \frac{C_{\mu,\eps,L}\cdot a_n^2}{n^2}.
	\end{equation}
	\label{thm:manyDet}
\end{theorem}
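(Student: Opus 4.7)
The plan is to adapt the deterministic-polynomial argument from Section \ref{sec:detLoc} that established \eqref{eqn:multiOutLoc}, invoking Theorem \ref{thm:detLoc} separately at each outlier $\xi_l$, $1 \le l \le s_n$, with
\[
\vec{X}^{(l)} := (\xi_1, \ldots, \xi_{l-1}, \xi_{l+1}, \ldots, \xi_{l_n}, X_1, \ldots, X_{n+1-l_n})
\]
and constants $C_{1,l} := \eps/2$, $C_{2,l} := 3L/2$, together with a Lipschitz constant $k_{\mathrm{Lip},l}$ to be tracked. The overall probability bound then comes from a union bound over the $s_n$ ``bad'' events on which hypothesis \eqref{it:detLoc1} of Theorem \ref{thm:detLoc} might fail. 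The principal obstacle compared with the fixed-$s$ case is that $C_{1,l}$, $C_{2,l}$, $k_{\mathrm{Lip},l}$, the threshold on $n$ required by Theorem \ref{thm:detLoc}, and the Hoeffding tail bound must all be controlled \emph{uniformly} in $l$ while also carrying the sharp $a_n$-dependence needed to produce the error $O(a_n^2/n^2)$.

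For each $1 \le l \le s_n$, define the bad event
\[
E_n^l := \left\{\left|\frac{1}{n+1-l_n}\sum_{j=1}^{n+1-l_n}\frac{1}{\xi_l - X_j} - m_\mu(\xi_l)\right| \ge \frac{\eps}{2.1}\right\}.
\]
On $(E_n^l)^c$, the full averaged logarithmic derivative at $\xi_l$ satisfies hypothesis \eqref{it:detLoc1} of Theorem \ref{thm:detLoc} with the stated $C_{1,l}, C_{2,l}$ for $n$ large, provided the deterministic remainder
\[
\frac{1}{n}\sum_{k \neq l,\, k \le l_n}\frac{1}{\xi_l - \xi_k}
\]
is $o(1)$ uniformly in $l$. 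This follows because each summand is bounded in modulus by $\eps a_n/6$ by hypothesis \eqref{it:manyDet3}, so the total modulus is at most $l_n\cdot \eps a_n/(6n) = o(1)$ by \eqref{it:manyDet1}. Hypothesis \eqref{it:detLoc3} of Theorem \ref{thm:detLoc} reduces to $3/(C_{1,l}n) = 6/(\eps n) < 6/(\eps a_n)$, which holds once $a_n < n$. For hypothesis \eqref{it:detLoc2}, on $\{z : |z - \xi_l| \le 2/(C_{1,l}n) = 4/(\eps n)\}$ and for any $x$ among the entries of $\vec{X}^{(l)}$, the reverse triangle inequality combined with \eqref{it:manyDet3} gives $|z - x| \ge 6/(\eps a_n) - 4/(\eps n) \ge 3/(\eps a_n)$ for $n$ large (using $a_n = o(\sqrt{n})$), so the same averaging calculation as in Section \ref{sec:detLoc} yields $k_{\mathrm{Lip},l} = O_{\mu,\eps}(a_n^2)$ uniformly in $l$.

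With these constants, the threshold $n > 4C_{2,l}\max\{1/C_{1,l},\, C(k_{\mathrm{Lip},l}+1)\}$ of Theorem \ref{thm:detLoc} is met uniformly in $l$ for $n \gg a_n^2$, which holds because $a_n = o(\sqrt{n})$. On $\bigcap_{l=1}^{s_n}(E_n^l)^c$, Theorem \ref{thm:detLoc} then produces, for each $l$, a unique critical point $w_l^{(n)} \in B(\xi_l, 3/(\eps n))$ with
\[
\left|w_l^{(n)} - \xi_l + \frac{1}{n+1}\frac{n}{\sum_{k\neq l,\, k \le l_n}\frac{1}{\xi_l - \xi_k} + \sum_{j=1}^{n+1-l_n}\frac{1}{\xi_l - X_j}}\right| < \frac{C(k_{\mathrm{Lip},l}+1)}{n^2} = O_{\mu,\eps,L}\!\left(\frac{a_n^2}{n^2}\right),
\]
so absorbing the uniform implicit constant into $C_{\mu,\eps,L}$ gives the claimed bound. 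Finally, Lemma \ref{lem:Hoeffding} applied to the iid complex random variables $Y_j^l := (\xi_l - X_j)^{-1}$, which satisfy $|Y_j^l| \le \eps a_n/6$ by \eqref{it:manyDet3}, with threshold $t = \eps/2.1$, yields $\P(E_n^l) \le C\exp(-c_{\mu,\eps,L}\, n/a_n^2)$ (using $l_n = o(n)$), and a union bound over $l = 1, \ldots, s_n$ produces the announced probability $1 - Cs_n\exp(-c_{\mu,\eps,L}\, n/a_n^2)$.
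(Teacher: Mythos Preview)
Your proposal is correct and follows essentially the same approach as the paper's own proof sketch: apply Theorem \ref{thm:detLoc} at each $\xi_l$ with $C_1 = \eps/2$, $C_2 = 3L/2$, and $k_{\mathrm{Lip}} = O(a_n^2)$, verify the hypotheses uniformly in $l$ using \eqref{it:manyDet1}--\eqref{it:manyDet3}, and then control $\bigcup_l E_n^l$ via Hoeffding with $K = \eps a_n/6$ and a union bound over the $s_n$ indices. The only cosmetic difference is that the paper takes the threshold in $E_n^l$ to be $|m_\mu(\xi_l)|/2.1$ rather than your uniform $\eps/2.1$, but since $\eps \le |m_\mu(\xi_l)| \le L$ these are equivalent for the argument.
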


Theorem \ref{thm:manyDet} follows from an argument quite similar to the one provided in the previous subsection. We outline the main differences in the following proof sketch.

Argue as in Subsection \ref{sec:detLoc} for each $l$, $1 \leq l \leq s_n$, separately but in place of the definitions in equation \eqref{eqn:CkOut} choose
\[
C_1 := \frac{\eps}{2},\ C_2 := \frac{3L}{2},\ \text{and}\ k_\text{Lip} := \frac{\eps^2a_n^2}{4}.
\]
Also, modify the events $E_n^l$ into the events 
\[
E_n^l := \set{\abs{\frac{1}{n+1-l_n}\sum_{j=1}^{n+1-l_n}\frac{1}{\xi_l - X_j} - m_\mu(\xi_l)} \geq \frac{\abs{m_\mu(\xi_l)}}{3}},\ 1 \leq l \leq s_n.
\]
Notice that condition \eqref{it:detLoc1} from Theorem \ref{thm:detLoc} now holds for $n$ sufficiently large (depending on the rate of convergence of $a_nl_n/n \to 0$) on the complement of $E_n^l$ because 
\[
\abs{\frac{1}{n}\sum_{k=1,k\neq l}^{l_n}\frac{1}{\xi_l - \xi_k}} \leq \frac{\eps a_nl_n}{6n} = o(1),
\]
and this limit is uniform with respect to $1 \leq l \leq s_n$. The requirements \eqref{eqn:nMaxOutLoc} on $n$ now become
\[
n > \max\set{\frac{4C_2}{C_1},4C_2C\left(\frac{\eps^2a_n^2}{4}+1\right), a_n},
\]
which hold uniformly for $1 \leq l \leq s_n$ by assumption \eqref{it:manyDet1} in the statement of Theorem \ref{thm:manyDet}. By Hoeffding's inequality (Lemma \ref{lem:Hoeffding}), with $Y_j^l := \frac{1}{\xi_l - X_j}$, $K_l:=  \frac{\eps a_n}{6}$, and $t_l := \frac{\abs{m_\mu(\xi_l)}}{3} \geq \frac{\eps}{3}$, there are constants $C,c_{\mu,\eps} > 0$, independent of $l$, $\xi_l$, and $s_n$, so that for large $n$
\[
\P\left(E_n^l\right) \leq C \exp\left(-c_{\mu,\eps}(n+1-l_n)/a_n^2\right).
\]
Taking a union over $l$, $1 \leq l \leq s_n$ establishes the desired result.

\subsection{Proof of Theorem \ref{thm:multiInCLT}}

We now proceed to prove Theorem \ref{thm:multiInCLT}. In order to control the behavior of $\frac{1}{n}\sum_{j=1}^n\frac{1}{\xi-X_j}$, we will rely on the Law of Large Numbers. Lemma \ref{lem:CSnice} below justifies this approach by establishing some regularity properties for $\E(\xi-X_1)^{-1} = m_\mu(\xi)$ that we will continue to use throughout the remainder of the paper. 
We note that Lemma \ref{lem:CSnice} is similar to Lemma 5.7 in \cite{KS}.


\begin{lemma}[Regularity properties of the Cauchy--Stieltjes transform]
Suppose that on $B(\xi, \rho) \subset \C$, $\mu$ has a density with respect to the Lebesgue measure that is bounded by $C_{\mu,\xi,\rho}$. Then, 
\begin{enumerate}[(i)]
\item \label{it:CS1} for any $z \in B(\xi,\rho/2)$,
\[
\abs{m_\mu(z)} \leq \int_\C\frac{1}{\abs{z-w}}\,d\mu(w) \leq 2\pi C_{\mu,\xi,\rho}\min\set{{\rho}/{2},1} + \max\set{{2}/{\rho},1};\]
\item \label{it:CS2} if $\rho = \infty$ so that $\mu$ has a density bounded by $C_\mu$ on all of $\C$, then there exist constants $\kappa_\mu, \eps_\mu>0$, depending on $\mu$, so that the following holds. If $x,y \in \C$ with $\abs{x-y} < \eps_\mu$, then 
\[
\abs{m_\mu(x)-m_\mu(y)} \leq \kappa_\mu\abs{x-y}\ln\left(\abs{x-y}^{-1}\right).
\]
\end{enumerate}
\label{lem:CSnice}
\end{lemma}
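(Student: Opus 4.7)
The plan is to prove each part by splitting the integration domain so that the singularity of the kernel is controlled on a small disk (via the density bound) and the tail is controlled by the boundedness of $1/|z-w|$ (or $1/|z-w||w-y|$) together with $\mu(\C) = 1$.

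For part \eqref{it:CS1}, the first inequality is just the triangle inequality applied inside the integral defining $m_\mu(z)$. For the second, I would set $r := \min\{\rho/2, 1\}$ and note that since $z \in B(\xi,\rho/2)$, the inclusion $B(z,r) \subset B(z,\rho/2) \subset B(\xi,\rho)$ holds, so $\mu$ has density at most $C_{\mu,\xi,\rho}$ on $B(z,r)$. Split
\[
\int_\C \frac{d\mu(w)}{|z-w|} = \int_{B(z,r)} \frac{d\mu(w)}{|z-w|} + \int_{\C\setminus B(z,r)} \frac{d\mu(w)}{|z-w|}.
\]
On $B(z,r)$ bound $d\mu(w)$ by $C_{\mu,\xi,\rho}\, d^2w$ and integrate in polar coordinates to get $2\pi C_{\mu,\xi,\rho}\, r$. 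On the complement use $|z-w| \geq r$ and $\mu(\C) \leq 1$ to get at most $1/r$. The sum equals the claimed bound since $1/r = \max\{2/\rho,1\}$.

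For part \eqref{it:CS2}, I would begin with the identity
\[
m_\mu(x) - m_\mu(y) = \int_\C \frac{y-x}{(x-w)(y-w)}\, d\mu(w),
\]
so that the task reduces to proving
\[
I(\delta) := \int_\C \frac{d\mu(w)}{|x-w|\,|y-w|} \leq \kappa_\mu \ln(1/\delta), \qquad \delta := |x-y|,
\]
for $\delta$ small. I would partition $\C$ into $R_x = B(x,\delta/2)$, $R_y = B(y,\delta/2)$, and $R_\infty = \C \setminus(R_x \cup R_y)$. On $R_x$ the triangle inequality gives $|y-w| \geq \delta - |x-w| \geq \delta/2$, so the bound $d\mu \leq C_\mu\, d^2w$ and a polar integration yield a contribution of $\frac{2}{\delta} \cdot C_\mu \cdot 2\pi \cdot (\delta/2) = 2\pi C_\mu$; the symmetric bound holds on $R_y$.

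The essential (and most delicate) step is the tail region $R_\infty$. Here I would use the AM--GM bound $\frac{1}{|x-w||y-w|} \leq \tfrac12\bigl(|x-w|^{-2} + |y-w|^{-2}\bigr)$ and estimate $\int_{|x-w|>\delta/2} |x-w|^{-2}\, d\mu(w)$ by splitting at radius $1$: for $\delta/2 < |x-w| \leq 1$ the density bound and polar coordinates give $2\pi C_\mu \ln(2/\delta)$, and for $|x-w| > 1$ the integrand is at most $1$ so the contribution is at most $\mu(\C) = 1$. The same estimate applies with $y$ in place of $x$. Adding everything yields $I(\delta) \leq 2\pi C_\mu \ln(2/\delta) + O_\mu(1)$, and choosing $\eps_\mu$ small enough to absorb the constant into a logarithm gives a bound of the form $\kappa_\mu \ln(1/\delta)$. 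Multiplying by $|x-y| = \delta$ completes the proof.

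The only real obstacle is the $R_\infty$ estimate, since both the near-singularity of $|x-w|^{-2}$ and the unboundedness of the support must be handled simultaneously; the two-scale split at radius $1$ separates these issues cleanly, with the logarithm coming entirely from the inner annulus $\delta/2 < |x-w| \leq 1$. Everything else is a standard polar-coordinate computation plus the probability measure bound $\mu(\C) = 1$.
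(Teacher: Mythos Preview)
Your proof is correct and follows essentially the same strategy as the paper's: split near the singularities versus far, use polar coordinates with the density bound near the singularities, and extract the logarithm from the annular integral $\int_{\delta}^{1} r^{-1}\,dr$. The technical details differ slightly---the paper partitions at radius $|x-y|$ (not $|x-y|/2$), handles the ``near'' region $\mathcal{A}^c$ by bounding $\E[|x-Z|^{-1}\ind_{\mathcal{A}^c}]$ and $\E[|y-Z|^{-1}\ind_{\mathcal{A}^c}]$ separately via the triangle inequality rather than via the product identity, and on the ``far'' region uses Cauchy--Schwarz instead of your AM--GM bound---but these are interchangeable variations of the same argument.
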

\begin{proof}
To prove the first inequality, observe that for any $z \in B(\xi,\rho/2)$, 
\begin{align*}
\abs{m_\mu(z)} &\leq \int_\C\frac{1}{\abs{z-w}}\ind_{\abs{z-w}< \min\set{\rho/2, 1}}\,d\mu(w) + \int_\C\frac{1}{\abs{z-w}}\ind_{\abs{z-w}\geq \min\set{\rho/2, 1}}\,d\mu(w)\\
&\leq 2\pi C_{\mu,\xi,\rho} \int_0^{\min\set{\rho/2, 1}}\frac{1}{r}\cdot r\,dr + \max\set{{2}/{\rho},1}\\
&\leq 2\pi C_{\mu,\xi,\rho}\min\set{{\rho}/{2},1} + \max\set{{2}/{\rho},1},
\end{align*}
where we have used polar coordinates in the integral. To prove \eqref{it:CS2}, let $Z \sim \mu$ and fix $x,y \in \C$ with $\abs{x-y} \leq 1$. We will compute the difference 
\begin{equation}
\abs{m_\mu(x) - m_\mu(y)} = \abs{\E\left[\frac{1}{x-Z}\right] - \E\left[\frac{1}{y-Z}\right]} \leq \abs{x-y}\E\left[{\frac{1}{\abs{x-Z}\abs{y-Z}}}\right] 
\label{eqn:CSnice1}
\end{equation}
by splitting the expectations over each of the events
\begin{align*}
\mathcal{A} &:= \set{\abs{x-Z} \geq \abs{x-y}\ \text{and}\ \abs{y-Z} \geq \abs{x-y}},\\
\mathcal{B} &:= \set{\abs{x-Z} \geq \abs{x-y}\ \text{and}\ \abs{y-Z} < \abs{x-y}},\\
\mathcal{C} &:= \set{\abs{x-Z} < \abs{x-y}\ \text{and}\ \abs{y-Z} \geq \abs{x-y}},\\
\mathcal{D} &:= \set{\abs{x-Z} < \abs{x-y}\ \text{and}\ \abs{y-Z} < \abs{x-y}},
\end{align*}
whose union occurs almost surely. 
The Cauchy--Schwarz inequality implies
\begin{align*}
\E\left[{\frac{\ind_{\mathcal{A}}}{\abs{x-Z}\abs{y-Z}}}\right] 
&\leq 
\sqrt{\E\left[\frac{\ind_{\abs{x-Z}\geq \abs{x-y}}}{\abs{x-Z}^2}\right]\E\left[\frac{\ind_{\abs{y-Z}\geq \abs{x-y}}}{\abs{y-Z}^2}\right]}\\
&\leq 
2\pi C_\mu \int_{\abs{x-y}}^1\frac{1}{r^2}r\,dr + \E[1]= 
2\pi C_\mu\ln\abs{\frac{1}{x-y}} + 1.
\end{align*}
If $\mathcal{B}$ occurs, then $\abs{x-Z} \geq \abs{x-y}$, so the expectation on the right of \eqref{eqn:CSnice1} is bounded by
\begin{align*} 
\E\left[{\frac{\ind_{\mathcal{B}}}{\abs{x-Z}\abs{y-Z}}}\right] \leq\frac{1}{\abs{x-y}} \E\left[\frac{\ind_{\abs{y-Z}< \abs{x-y}}}{\abs{y-Z}}\right] \leq \frac{4\pi C_\mu}{\abs{x-y}}\int_0^{\abs{x-y}}\frac{1}{r}r\,dr = 4\pi C_\mu.
\end{align*}
We can bound $\E[\ind_\mathcal{C}(\abs{x-Z}\abs{x-Y})^{-1}]$ in similar fashion. 
For the expectations over the event $\mathcal{D}$, we have the following bound on the middle expression of \eqref{eqn:CSnice1}: 
\[
\E\left[\frac{\ind_{\mathcal{D}}}{\abs{x-Z}}\right]+\E\left[\frac{\ind_{\mathcal{D}}}{\abs{y-Z}}\right] \leq 8\pi C_\mu\int_0^{\abs{x-y}}\frac{1}{r}r\,dr \leq  8\pi C_\mu\abs{x-y}.
\]
%
In view of \eqref{eqn:CSnice1}, these last few inequalities establish \eqref{it:CS2}.
\end{proof}


We proceed to prove Theorem \ref{thm:multiInCLT}, starting with a justification of \eqref{eqn:multiInLoc} in the case $s=1$ and $\xi_1 = \xi$. 
Choose $\rho_\xi>0$ so that in the disk $B(\xi, 3\rho_\xi)$, $\mu$ has a density $f$ that is bounded by $C_f$. Our plan of attack will be to show that the hypotheses of Theorem \ref{thm:detLoc} are satisfied on the complement of a ``bad'' event whose probability tends to $0$ as $n$ grows. To optimize our control over this event, we allow it to depend on the parameter $\eps_n = o(1)$ that we will choose appropriately to achieve the asymptotic bound in \eqref{eqn:multiInLoc}.  

To that end, suppose $\eps_n \in (0,1)$, let $d_n:=\lceil\ln(\sqrt{n})\rceil$, and 
for each $n \geq 1$ define the annuli 
\begin{align*}
A_n^0 &:= \set{z \in \C:\abs{z - \xi} < \frac{\rho_\xi}{\sqrt{n}}},\\
A_n^k &:= \set{z \in \C: \frac{\rho_\xi e^{k-1}}{\sqrt{n}} \leq \abs{z-\xi} < \frac{\rho_\xi e^k}{\sqrt{n}}},\ 1 \leq k \leq d_n, 
\end{align*}
and the binomial random variables
\[
N_n^k := \#\set{1 \leq j \leq n: X_j \in A_n^k},\ 0 \leq k \leq d_n. 
\]
Consider the ``bad'' events
\begin{align*}
E_n &= \set{\abs{\frac{1}{n} \sum_{j=1}^n\frac{1}{\xi - X_j} - m_\mu(\xi)} \geq \frac{\abs{m_\mu(\xi)}}{2}},\\
F_n^k &= \set{N_n^k \geq \pi C_f\rho_\xi^2e^{2k} + \frac{e^k}{\sqrt{\eps_n}}},\ 0\leq k \leq d_n,\\ 
G_n &= \set{\min_{1\leq j \leq n}\abs{X_j - \xi} < \sqrt{\frac{\eps_n}{n}}}.
\end{align*}
We will demonstrate that if
\begin{equation}
C_1:= \frac{\abs{m_\mu(\xi)}}{2}, \quad C_2:= \frac{3\abs{m_\mu(\xi)}}{2}, \quad \text{and} \quad k_\text{Lip} :=\frac{C_{\mu,\xi}\ln{n}}{\eps_n^{3/2}},
\label{eqn:CkIn}
\end{equation}
for $\eps_n := (\ln{n})^{-2/3}$ and $C_{\mu,\xi}$ defined in Lemma \ref{lem:inLip} below, then the conditions in Theorem \ref{thm:detLoc} hold on the complement of $E_n \cup G_n \cup \bigcup_k F_n^k$ for large enough $n$. Furthermore, we will show that the union of these events occurs with probability tending to $0$. Notice that events $E_n$, $F_n^k$, and $G_n$ are related to conditions \eqref{it:detLoc1}, \eqref{it:detLoc2}, and \eqref{it:detLoc3} of Theorem \ref{thm:detLoc},  respectively. 

It is clear that condition \eqref{it:detLoc1} holds on the complement of $E_n$ because $m_\mu(\xi) \neq 0$. For $n > \frac{9}{C_1^2\eps_n}$, \eqref{it:detLoc3} is true, on the complement of $G_n$, because in this case, $\sqrt{\frac{\eps_n}{n}} > \frac{3}{C_1n}$. The following lemma establishes condition \eqref{it:detLoc2}. 

\begin{lemma}
There exists a constant $C_{\mu,\xi} > 0$, depending only on $\mu$ and $\xi$, so that if $\eps_n \in (0,1)$, and
\[
n > \max\set{\left(\frac{8\rho_\xi}{C_1\eps_n}\right)^2, \left(\frac{8e^2}{C_1\rho_\xi}\right)^2, \frac{8}{C_1\rho_\xi}},
\]
then, on the complement of $\bigcup_{k=0}^{d_n}F_n^k \cup G_n$, any complex numbers 
$z,w \in \overline{B}\left(\xi,\frac{2}{C_1n}\right)$
satisfy
\[
\abs{\sum_{j=1}^n\frac{1}{(z-X_j)(w-X_j)}} \leq C_{\mu,\xi}\cdot\frac{n\ln{n}}{\eps_n^{3/2}}.
\]
\label{lem:inLip}
\end{lemma}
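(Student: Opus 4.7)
The plan is to partition $\set{1, \ldots, n}$ according to which annulus $A_n^k$ (if any) contains $X_j$, and then estimate the contribution from each block separately. The first step is to observe that the prescribed lower bound on $n$, together with the event $G_n^c$, forces $\abs{z - \xi}, \abs{w - \xi} \leq \tfrac{1}{2}\min\set{\sqrt{\eps_n/n},\,\rho_\xi/\sqrt{n}}$, so that the reverse triangle inequality yields the uniform separation $\abs{z - X_j}, \abs{w - X_j} \geq \tfrac{1}{2}\abs{X_j - \xi}$ for every $j$.

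Next, for $X_j \in A_n^0$, the event $G_n^c$ gives $\abs{X_j - \xi} \geq \sqrt{\eps_n/n}$, so each term in the sum is at most $4n/\eps_n$, and on $F_n^{0,c}$ there are at most $\pi C_f \rho_\xi^2 + \eps_n^{-1/2}$ such indices; hence $A_n^0$ contributes $O(n/\eps_n^{3/2})$. For $1 \leq k \leq d_n$, each term is at most $4n/(\rho_\xi^2 e^{2(k-1)})$, while $F_n^{k,c}$ caps the cardinality at $\pi C_f \rho_\xi^2 e^{2k} + e^k/\sqrt{\eps_n}$, so annulus $A_n^k$ contributes $O(n) + O(n e^{-k}/\sqrt{\eps_n})$; summing over $1 \leq k \leq d_n$ yields $O(n \ln n) + O(n/\sqrt{\eps_n})$ since $\sum_{k \geq 1} e^{-k}$ converges. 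Any remaining $X_j$ satisfies $\abs{X_j - \xi} \geq \rho_\xi e^{d_n}/\sqrt{n} \geq \rho_\xi$, yielding a harmless $O(n/\rho_\xi^2)$. Adding these pieces, the dominant terms are the $O(n/\eps_n^{3/2})$ from $A_n^0$ and the $O(n \ln n)$ from the sum over the outer annuli, both of which fit inside a bound of the form $C_{\mu,\xi}\, n \ln n / \eps_n^{3/2}$ with a constant depending only on $\rho_\xi$ and $C_f$.

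The main obstacle is verifying that the three constraints on $n$ in the hypothesis are tuned precisely to guarantee the uniform separation $\abs{z - X_j} \geq \abs{X_j - \xi}/2$ simultaneously for indices in $A_n^0$ (where $\sqrt{\eps_n/n}$ is the smallest separation), for indices in the inner annulus $A_n^1$ (where $\rho_\xi/\sqrt{n}$ governs), and for far-away indices; each constraint addresses one of these regimes. Once the separation is secured, the remainder of the argument is a routine geometric-series computation. The logarithmic factor $\ln n$ arises solely from the annuli count $d_n = \lceil \ln \sqrt{n} \rceil$, while the $\eps_n^{-3/2}$ factor is entirely attributable to the innermost disk $A_n^0$, where the $\eps_n^{-1/2}$ extra roots permitted by $F_n^{0,c}$ combine with the $\eps_n^{-1}$ pointwise bound.
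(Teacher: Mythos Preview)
Your proposal is correct and follows essentially the same decomposition as the paper's proof: partition the indices by the annuli $A_n^k$, use $G_n^c$ and $(F_n^k)^c$ to control the minimum distance and the cardinality in each shell, and sum the resulting geometric series. The only cosmetic difference is that the paper works with the slightly sharper lower bound $\abs{z-X_j}\abs{w-X_j}\ge \abs{\xi-X_j}^2-\tfrac{4}{C_1n}\abs{\xi-X_j}$ (which is why the three thresholds on $n$ have their exact form), whereas you pass directly to $\abs{z-X_j}\ge \tfrac12\abs{\xi-X_j}$; both routes yield the same $O(n\eps_n^{-3/2})+O(n\ln n)$ tally.
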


\begin{proof}
Fix $z,w \in \overline{B}\left(\xi,\frac{2}{C_1n}\right)$ and $1 \leq j \leq n$. By applying the triangle inequality several times, we obtain
\begin{align*}
\abs{z-X_j}\abs{w-X_j} &\geq \Big|\big(\abs{\xi - X_j} - \abs{z-\xi}\big)\big(\abs{\xi - X_j} - \abs{w-\xi}\big)\Big|\\
&\geq \abs{\xi-X_j}^2 - \abs{\xi-X_j}\left(\abs{z-\xi}+\abs{w-\xi}\right)\\
&\geq \abs{\xi-X_j}^2 - \abs{\xi-X_j}\frac{4}{C_1n}.
\end{align*}
Consequently, on the complement of $\bigcup_{k=0}^{d_n}F_n^k \cup G_n$,
\begin{align*}
\abs{\sum_{j=1}^n\frac{1}{(z-X_j)(w-X_j)}}
&\leq \sum_{j=1}^n\frac{1}{\abs{z-X_j}\cdot\abs{w-X_j}}\\
&\leq \sum_{j=1}^n\frac{1}{\abs{\xi-X_j}^2 -\abs{\xi-X_j}\frac{4}{C_1n}}\\
&\leq\sum_{\substack{1\leq j \leq n\ \text{s.t.}\\[1pt]\frac{\sqrt{\eps_n}}{\sqrt{n}} \leq \abs{X_j - \xi} < \frac{\rho_\xi}{\sqrt{n}}}}\frac{1}{\frac{\eps_n}{n} -\frac{\rho_\xi}{\sqrt{n}}\frac{4}{C_1n}}\\
&\qquad{}+ \sum_{k=1}^{d_n}\sum_{\substack{1\leq j \leq n\ \text{s.t.}\\X_j \in A_n^k}}\frac{1}{\frac{\rho_\xi^2e^{2k-2}}{n} -\frac{\rho_\xi e^k}{\sqrt{n}}\frac{4}{C_1n}}\\
&\qquad{}+\sum_{\substack{1\leq j \leq n\ \text{s.t.}\\ \abs{X_j -\xi}\geq \rho_\xi}}\frac{1}{\abs{\xi-X_j}^2 -\abs{\xi-X_j}\frac{4}{C_1n}}.
\end{align*}
We have split the sum over $1 \leq j \leq n$ into $d_n +2$ pieces. Notice that for $n> \left(\frac{8\rho_\xi}{C_1\eps_n}\right)^2$, 
\begin{align*}
0 < \frac{1}{\frac{\eps_n}{n} -\frac{\rho_\xi}{\sqrt{n}}\frac{4}{C_1n}} &\leq \frac{2n}{\eps_n}
\end{align*}
and for $n > \left(\frac{8e^2}{C_1\rho_\xi}\right)^2$,
\begin{align*}
0< \frac{1}{\frac{\rho_\xi^2e^{2k-2}}{n} -\frac{\rho_\xi e^k}{\sqrt{n}}\frac{4}{C_1n}} &\leq \frac{2n}{\rho_\xi^2e^{2k-2}},\ \text{for $1 \leq k \leq d_n$.}
\end{align*}
Additionally, if $n > \frac{8}{C_1\rho_\xi}$ and $\abs{X_j - \xi} \geq \rho_\xi$, then,
\[
0<\frac{1}{\abs{\xi-X_j}^2 -\abs{\xi-X_j}\frac{4}{C_1n}} \leq \frac{1}{\abs{\xi-X_j}\left(\rho_\xi-\frac{4}{C_1n}\right)} \leq \frac{2}{\rho_\xi^2}.
\]
It follows that if 
\[
n > \max\set{\left(\frac{8\rho_\xi}{C_1\eps_n}\right)^2,  \left(\frac{8e^2}{C_1\rho_\xi}\right)^2, \frac{8}{C_1\rho_\xi}},
\]
on the complement of $\bigcup_{k=0}^{d_n}F_n^k \cup G_n$, for all $z,w \in \overline{B}\left(\xi,\frac{2}{C_1n}\right)$, 
\begin{align*}
&\abs{\sum_{j=1}^n\frac{1}{(z-X_j)(w-X_j)}}\\
&\quad\qquad\leq N_n^0\cdot \frac{2n}{\eps_n} + \sum_{k=1}^{d_n} N_n^k\cdot \frac{2n}{\rho_\xi^2e^{2k-2}} + \frac{2n}{\rho_\xi^2}\\
&\quad\qquad\leq \frac{(\pi C_f\rho_\xi^2+1)}{\sqrt{\eps_n}}\cdot \frac{2n}{\eps_n} + \sum_{k=1}^{d_n}\left(\pi C_f\rho_\xi^2e^{2k} + \frac{e^k}{\sqrt{\eps_n}}\right)\frac{2n}{\rho_\xi^2e^{2k-2}} + \frac{2n}{\rho_\xi^2}\\
&\quad\qquad= O_{\mu,\xi}\left(\frac{n}{\eps_n^{3/2}}\right) + \sum_{k=1}^{d_n}O_{\mu,\xi}\left(\frac{n}{\sqrt{\eps_n}}\right)= O_{\mu,\xi}\left(\frac{n\ln{n}}{\eps_n^{3/2}}\right),
\end{align*}
which completes the proof.  
\end{proof}

It remains to find an upper bound on the probability of $E_n \cup \bigcup_{k=1}^dF_n^k \cup G_n$, which we accomplish in the next lemma.

\begin{lemma}
\[
\P\left(E_n \cup \bigcup_{k=0}^{d_n} F_n^k \cup G_n \right) = o_{\mu,\xi}(1) + O_{\mu,\xi}\left(\ln{n}\cdot\eps_n^{2}+ \eps_n\right) = o_{\mu, \xi}(1)
\]
\label{lem:eventsToZero}
\end{lemma}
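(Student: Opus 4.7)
The plan is to handle the three kinds of events separately and combine them by a union bound.  The target asymptotic $o_{\mu,\xi}(1) + O_{\mu,\xi}(\ln n\cdot \eps_n^2 + \eps_n)$ decomposes naturally into the three bounds $\P(G_n) = O_{\mu,\xi}(\eps_n)$, $\sum_{k=0}^{d_n}\P(F_n^k) = O_{\mu,\xi}(\eps_n^2 \ln n)$, and $\P(E_n) = o(1)$.

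For $G_n$ I would apply a union bound together with the assumption that $\mu$ has a density bounded by $C_f$ on $B(\xi,3\rho_\xi)$: as soon as $\sqrt{\eps_n/n} \leq \rho_\xi$, we have $\P(|X_1 - \xi| < \sqrt{\eps_n/n}) \leq C_f \pi \eps_n/n$, so $\P(G_n) \leq n\cdot C_f\pi\eps_n/n = O_{\mu,\xi}(\eps_n)$.

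For $F_n^k$, note that $N_n^k$ is $\mathrm{Bin}(n, p_n^k)$ with $p_n^k = \P(X_1 \in A_n^k)$, and since $d_n = \lceil \ln\sqrt n\rceil$ forces $A_n^k \subset B(\xi, e\rho_\xi) \subset B(\xi,3\rho_\xi)$ for $0\leq k\leq d_n$, the area bound $|A_n^k| \leq \pi \rho_\xi^2 e^{2k}/n$ combined with $f \leq C_f$ yields $\E N_n^k \leq \pi C_f \rho_\xi^2 e^{2k}$.  Thus $F_n^k$ implies the deviation $N_n^k - \E N_n^k \geq e^k/\sqrt{\eps_n}$, and I would invoke the standard fourth central moment estimate for binomials, $\E[(N_n^k - \E N_n^k)^4] = O_{\mu,\xi}(e^{4k} + e^{2k})$, together with Markov's inequality to obtain
\[
\P(F_n^k) \leq \frac{O_{\mu,\xi}(e^{4k}+e^{2k})}{(e^k/\sqrt{\eps_n})^{4}} = O_{\mu,\xi}\bigl(\eps_n^2 (1 + e^{-2k})\bigr).
\]
Summing over $0\leq k\leq d_n$ and using the geometric decay of $e^{-2k}$ gives $\sum_k\P(F_n^k) = O_{\mu,\xi}(\eps_n^2 \ln n)$, as claimed.  (The choice of fourth moment is essential: Chebyshev alone would produce $O_{\mu,\xi}(\eps_n\ln n)$, which fails to be $o(1)$ once $\eps_n = (\ln n)^{-2/3}$.)

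Finally, for $E_n$ the variables $Z_j := (\xi - X_j)^{-1}$ are iid, and their absolute mean $\E|Z_1|$ is finite by Lemma \ref{lem:CSnice}\eqref{it:CS1}; since $\E Z_1 = m_\mu(\xi)$, the classical weak law of large numbers gives $\frac{1}{n}\sum_{j=1}^n Z_j \to m_\mu(\xi)$ in probability, so $\P(E_n) \to 0$.  Assembling these three estimates with a union bound yields the lemma.  The only mild technical obstacle is arranging the fourth-moment computation for $N_n^k$ in a form that is uniform in $k$ (so that the geometric sum $\sum_k e^{-2k}$ can be harvested); everything else is standard.
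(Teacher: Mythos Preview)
Your proposal is correct and follows essentially the same approach as the paper: weak law of large numbers for $E_n$, union bound plus density estimate for $G_n$, and fourth-moment Markov on the binomial $N_n^k$ for the $F_n^k$ (the paper computes the fourth central moment directly as $O_{\mu,\xi}(e^{4k})$, which is your $O_{\mu,\xi}(e^{4k}+e^{2k})$ with the lower-order term absorbed). Your remark that Chebyshev alone would be insufficient is a nice observation not made explicit in the paper.
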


\begin{proof}
To control $\P(E_n)$, apply the Weak Law of Large Numbers to the random variables $\frac{1}{\xi - X_j}$, which have finite expectation by Lemma \ref{lem:CSnice}. Next, consider that for large $n$,
\[
\P(G_n) \leq n\cdot\P\left(\abs{X_1 - \xi} \leq \sqrt{\frac{\eps_n}{n}}\right) \leq n \cdot 
\pi C_f\cdot\frac{\eps_n}{n} = \pi C_f\eps_n,
\]
which establishes $\P(G_n) = O_{\mu,\xi}(\eps_n)$.

We now turn our attention to the events $F_n^k$. For $0\leq k \leq d_n$ and $1 \leq j \leq n$, define the random variables 
\[
\chi_{j,k} := \ind_{\set{X_j \in A_n^k}},
\]
which, for a fixed $k$, are independent and identically distributed according to a Bernoulli distribution with parameter $p_k \leq \pi C_f\rho_\xi^2 e^{2k}/n$. Since $N_n^k = \sum_{j=1}^n \chi_{j,k}$ has expectation at most $\pi C_f\rho_\xi^2e^{2k}$, Markov's inequality yields 
\begin{equation}
\P\left(N_n^k  \geq \pi C_f \rho_\xi^2e^{2k} +\frac{e^k}{\sqrt{\eps_n}}\right) \leq \frac{\eps_n^{2}\E\left[\left(N_n^k - \E[N_n^k]\right)^4\right]}{e^{4k}}.
\label{eqn:NDev}
\end{equation}
In order to control the fourth central moment of $N_n^k$, recall that for two independent, real-valued random variables $X$ and $Y$, 
\begin{align*}
&\E\left[\left(X+Y - \E[X]-\E[Y]\right)^4\right]\\
&\qquad\qquad= \E\left[\left(X-\E[X]\right)^4\right] + \E\left[\left(Y-\E[Y]\right)^4\right] + 6\var(X)\var(Y).
\end{align*}
Since $\chi_{j,k}$ are iid, it follows by inductively applying the previous identity that 
\begin{align*}
\E\left[\left(N_n^k - \E[N_n^k]\right)^4\right] &= n\E\left[\left(\chi_{1,k}-\E[\chi_{1,k}]\right)^4\right] + 6\frac{n(n-1)}{2}\var(\chi_{1,k})^2\\
&\leq n\left(\E[\chi_{1,k}^4] + 6\var(\chi_{1,k})\left(\E[\chi_{1,k}]\right)^2\right) + 3n^2\var(\chi_{1,k})^2\\
&\leq n\left(\E[\chi_{1,k}] + 6\left(\E[\chi_{1,k}]\right)^2\right) + 3n^2\left(\E[\chi_{1,k}]\right)^2\\
&\leq n\left(\frac{\pi C_f \rho_\xi^2e^{2k}}{n} + 6\frac{\pi^2C_f^2\rho_\xi^4e^{4k}}{n^2}\right) + 3n^2\frac{\pi^2C_f^2\rho_\xi^4e^{4k}}{n^2}\\
&= O_{\mu,\xi}\left(e^{4k}\right).
\end{align*}
Consequently, \eqref{eqn:NDev} becomes 
\[
\P\left(N_n^k  \geq \pi C_f \rho_\xi^2e^{2k} +\frac{e^{k}}{\sqrt{\eps_n}}\right) = O_{\mu,\xi}(\eps_n^2),
\]
and by the union bound
\[
\P\left(\bigcup_{k=0}^{d_n}F_n^k\right) = O_{\mu,\xi}\left(\ln{n}\cdot\eps^2_n\right).
\]
The proof of Lemma \ref{lem:eventsToZero} is complete.
\end{proof}

We have established that $C_1$, $C_2$, and $k_\text{Lip}$ defined in \eqref{eqn:CkIn} satisfy conditions \eqref{it:detLoc1}, \eqref{it:detLoc2}, and \eqref{it:detLoc3} of Theorem \ref{thm:detLoc} for large $n$, on the complement of $E_n \cup \bigcup_{k=0}^{d_n}F_n^k \cup G_n$, a ``bad'' event whose probability tends to zero. 
Consequently, the conclusion of Theorem \ref{thm:detLoc} guarantees that with probability at least $1-o_{\mu,\xi}(1)$, the polynomial $p_n$ has a unique critical point $w_\xi^{(n)}$ that fulfills  \eqref{eqn:multiInLoc}. 

We now consider the case $s>1$. The argument in this more general situation is much the same as the one just presented for $s=1$, so we sketch the proof and point out the major differences. Consider each of the roots $\xi_l$, $1 \leq l \leq s$ separately and modify the argument above in the obvious ways. In particular, we replace the annuli $A_n^k$ with  
\begin{align*}
A_{l,n}^0 &:= \set{z \in \C:\abs{z - \xi_l} < \frac{\delta}{\sqrt{n}}},\ 1 \leq l \leq s;\\
A_{l,n}^k &:= \set{z \in \C: \frac{\delta e^{k-1}}{\sqrt{n}} \leq \abs{z-\xi_l} < \frac{\delta e^k}{\sqrt{n}}},\ 1 \leq k \leq d_n,\ 1 \leq l \leq s;
\end{align*}
where $\delta > 0$ is any real number such that $f$ is a density for $\mu$ in the balls $B(\xi_l, \delta)$ and so that $2\delta < \min_{1\leq j<l\leq s}\abs{\xi_j-\xi_l}$. 
Define the random variables $N_{l,n}^k$ accordingly, in addition to the modified ``bad'' events 
\begin{align*}
E_{l,n} &=\set{\abs{\frac{1}{n+1-s}\sum_{j=1}^{n+1-s}\frac{1}{\xi_l - X_j} - m_\mu(\xi_l)} \geq \frac{\abs{m_\mu(\xi_l)}}{3}},\ 1 \leq l \leq s;\\
F_{l,n}^k &= \set{N_{l,n}^k \geq \pi C_f\delta^2e^{2k} + \frac{\delta e^k}{\sqrt{\eps_n}}},\ 0\leq k \leq d_n,\ 1 \leq l \leq s;\\
G_{l,n} &= \set{\min_{1\leq j \leq n}\abs{X_j - \xi_l} < \sqrt{\frac{\eps_n}{n}}},\ 1 \leq l \leq s;
\end{align*}
and the modified constants
\[
C_1^l:= \frac{\abs{m_\mu(\xi_l)}}{2}, \quad C_2^l:= \frac{3\abs{m_\mu(\xi_l)}}{2}, \quad \text{and} \quad k_\text{Lip}^l :=C_{\mu,\vec{\xi}}^l\cdot\frac{\ln{n}}{\eps_n^{3/2}},\ 1\leq l \leq s.
\]
(Note that $C_{\mu,\vec{\xi}}^l$, $1 \leq l \leq s$, will be defined via lemmata similar to Lemma \ref{lem:inLip}.) On the complement of the union of the modified ``bad'' events, for each $l$, $1 \leq l \leq s$, conditions \eqref{it:detLoc1}, \eqref{it:detLoc2}, and \eqref{it:detLoc3} of Theorem \ref{thm:detLoc} hold for reasons similar to those given in the argument for $s=1$ above. (Notice that for $1 \leq l \leq s$,
\[
\abs{\frac{1}{n}\sum_{k=1,k\neq l}^{s}\frac{1}{\xi_l - \xi_k}} = o(1),
\]
so computations similar to \eqref{eqn:multiOutC2} and \eqref{eqn:multiOutC1} establish condition \eqref{it:detLoc1} of Theorem \ref{thm:detLoc}.) The fact that the union of the modified ``bad'' events occurs with probability at most $o(1)$ follows by an updated version of Lemma \ref{lem:eventsToZero} and the union bound (recall $s$ is fixed and finite). 

We now turn our attention to \eqref{eqn:multiInCLT} which describes the joint fluctuations of $w_l^{(n)}$, $1 \leq l \leq s$. This is considerably more difficult than our consideration of \eqref{eqn:multiOutCLT} because in the current situation, $(\xi_l - X_j)^{-1}$, are heavy-tailed random variables. In Appendix \ref{sec:appendixB}, we appeal to the Lindeberg exchange method with an appropriate truncation to establish Theorem \ref{thm:genCLT}, a CLT that we use to prove \eqref{eqn:multiInCLT} in a similar manner to our justification of \eqref{eqn:multiOutCLT}. 

To start, consider that with probability $1-o(1)$, $w_l^{(n)}$, $1 \leq l \leq s$ satisfy \eqref{eqn:multiInLoc}, so with inspiration from \eqref{eqn:outCLTCalc} and \eqref{eqn:YnlCalc}, we obtain with probability at least $1-o(1)$ that for $1 \leq l \leq s$,
\begin{align*}
&\frac{n^{3/2}}{\sqrt{\ln{n}}}\cdot m_\mu(\xi_l)^2\cdot\left(w_l^{(n)} - \xi_l + \frac{1}{n+1}\frac{1}{m_\mu(\xi_l)}\right)\\
&\hspace{2.5cm}=\sqrt{n}\left(\frac{1}{n}\sum_{j=1}^n\frac{1}{\xi_l-X_j} - m_\mu(\xi_l)\right) +o(1),
\end{align*}
where all of the implied constants depend on $\xi_1, \ldots, \xi_s$ and $\mu$, and we have used Slutsky's theorem several times. (We also used the heavy-tailed CLT, Theorem \ref{thm:genCLT} once.)  For the arbitrary constants $t_1, \ldots, t_s \in \C$, we have with probability at least $1-o(1)$,
\begin{align*}
&\Re\left(\sum_{l=1}^st_l\frac{n^{3/2}}{\sqrt{\ln{n}}}\cdot m_\mu(\xi_l)^2\cdot\left(w_l^{(n)} - \xi_l+ \frac{1}{n+1}\frac{1}{m_\mu(\xi_l)}\right)\right)\\
&\qquad\quad= \Re\left(\sqrt{\frac{n}{\ln{n}}}\left(\frac{1}{n}\sum_{j=1}^n\sum_{l=1}^st_l\left[\frac{1}{\xi_l-X_j} - m_\mu(\xi_l)\right]\right)\right) +o(1),
\end{align*}
which converges in distribution by Slutsky's theorem and Theorem \ref{thm:genCLT} to a normal distribution with with mean zero and variance $\sum_{l=1}^s\frac{\pi\abs{t_l}^2f(\xi_l)}{2}$. This is exactly the same distribution as the sum $\Re\left(\sum_{l=1}^s t_l N_l\right)$, where $N_l$ are defined as in \eqref{eqn:multiInCLT} with covariance structure \eqref{eqn:multiInCov}. 
By the Cram\'{e}r--Wold technique, this completes the proof of 
Theorem \ref{thm:multiInCLT}.

\subsection{Proof of Theorem \ref{thm:eig}} We conclude this section by using Theorem \ref{thm:detLoc} to prove Theorem \ref{thm:eig}.
\begin{proof}
	Conclusions \eqref{item:inside} and \eqref{item:outside} follow from \cite[Theorem 1.7]{Tout}.  We now use Theorem \ref{thm:detLoc} to establish \eqref{eq:eigcp}.  In particular, we will verify the three conditions of Theorem \ref{thm:detLoc} hold for some constants $C_1, C_2, k_\text{Lip} > 0$ which depend only on $\eps$ and $\lambda$.  In view of parts \eqref{item:inside} and \eqref{item:outside}, it suffices to work on the event where 
	\begin{equation} \label{eq:eigevent}
	\max_{1 \leq i \leq n-1} |X_i| \leq 1 + 2 \eps, \quad \min_{1 \leq i \leq n-1} |\xi - X_i| \geq \frac{\eps}{2}, \quad 1 + \frac{11}{4} \eps \leq |\xi| \leq |\lambda| + 1. 
	\end{equation}
	In fact, this event automatically guarantees the third condition from Theorem \ref{thm:detLoc} for all values of $n$ sufficiently large.  The second condition also follows for large n since, for $z,w \in \mathbb{C}$ with $|z|, |w| > 1 + 5/2 \eps$, we have
	\[ \left| \frac{1}{n} \sum_{i=1}^{n-1} \frac{1}{z - X_i} - \frac{1}{n} \sum_{i=1}^{n-1} \frac{1}{w - X_i}  \right| \leq \frac{|z - w|}{n} \sum_{i=1}^{n-1} \frac{1}{|z - X_i| |w - X_i| } \ll_{\eps} |z-w| \]
	on the same event.  The upper bound in the first condition of Theorem \ref{thm:detLoc} follows from a similar argument.  The lower bound, however, is slightly more involved.  Indeed, for any $\theta \in \mathbb{R}$, we have 
	\[ \left| \frac{1}{n} \sum_{i=1}^{n-1} \frac{1}{\xi - X_i} \right| = \left| \frac{1}{n} \sum_{i=1}^{n-1} \frac{1}{\xi e^{\sqrt{-1} \theta} - X_i e^{\sqrt{-1} \theta}} \right| \geq \frac{1}{n} \sum_{i=1}^{n-1} \frac{\Re(\xi e^{\sqrt{-1} \theta}) - \Re(X_i e^{\sqrt{-1} \theta}) }{|\xi - X_i|^2}. \]
	Choose $\theta \in \mathbb{R}$ so that $\xi e^{\sqrt{-1} \theta}$ is real-valued and positive.  This gives
	\[ \left| \frac{1}{n} \sum_{i=1}^{n-1} \frac{1}{\xi - X_i} \right| \geq \frac{1}{n} \sum_{i=1}^{n-1} \frac{\xi e^{\sqrt{-1} \theta} - \Re(X_i e^{\sqrt{-1} \theta}) }{|\xi - X_i|^2} \geq \frac{1}{n} \sum_{i=1}^{n-1} \frac{ |\xi| - |X_i|}{(|\xi| + |X_i| )^2}. \]
	Thus, on the event \eqref{eq:eigevent}, we conclude that
	\begin{equation} \label{eq:eiglower}
	\left| \frac{1}{n} \sum_{i=1}^{n-1} \frac{1}{\xi - X_i} \right| \gg_{\eps, \lambda} 1, 
	\end{equation}
	which completes the proof of the lower bound.  Hence, the three conditions of Theorem \ref{thm:detLoc} are satisfied.  Applying Theorem \ref{thm:detLoc}, we obtain \eqref{eq:eigcp}.  Lastly, \eqref{eq:eigo1} follows from \eqref{eq:eigcp} after applying conclusion \eqref{item:outside} and \eqref{eq:eiglower}.  
\end{proof}


\section{Proof of results in Section \ref{sec:locallaw}}\label{sec:locProof}

\subsection{Proof of Theorem \ref{thm:main}} \label{sec:proof:main}

This section is devoted to the proof of Theorem \ref{thm:main}.  We will need the following lemmata.   

\begin{lemma}[Monte Carlo sampling; Lemma 36 from \cite{TV}] \label{lemma:mc}
Let $(X, \mu)$ be a probability space, and let $F:X \to \mathbb{C}$ be a square-integrable function.  Let $m \geq 1$, let $x_1, \ldots, x_m$ be drawn independently at random from $X$ with distribution $\mu$, and let $S$ be the empirical average
\[ S := \frac{1}{m} ( F(x_1) + \cdots + F(x_m)). \]
Then $S$ has mean $\int_X F d \mu$ and variance $\frac{1}{m}\int_X | F - \int_X F d\mu |^2 d \mu$.  In particular, by Chebyshev's inequality, one has
\[ \Prob \left( \left| S - \int_X F d \mu \right| \geq t \right) \leq \frac{1}{m t^2} \int_X \left| F - \int_X F d \mu \right|^2 d \mu \]
for any $t > 0$, or equivalently, for any $\delta > 0$ one has with probability at least $1 - \delta$ that
\[ \left| S - \int_X F d \mu \right| \leq \frac{1}{\sqrt{m \delta}} \left( \int_X \left| F - \int_X F d \mu \right|^2 d \mu \right)^{1/2}. \]
\end{lemma}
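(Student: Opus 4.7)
The statement is a standard consequence of linearity of expectation, the additivity of variance for independent random variables, and Chebyshev's inequality. The plan is therefore to verify the mean and variance formulas directly from these ingredients and then invoke Chebyshev; the two displayed probabilistic bounds are equivalent reformulations of the same inequality.

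First, I will compute the mean. By linearity of expectation, $\E[S] = \frac{1}{m}\sum_{i=1}^m \E[F(x_i)]$, and since each $x_i$ has law $\mu$, every summand equals $\int_X F\,d\mu$, giving the claimed mean. For the variance, I would set $G := F - \int_X F\,d\mu$, so that $\E[G(x_i)] = 0$, and expand
\[
\var(S) = \E\bigl|S - \E S\bigr|^2 = \frac{1}{m^2}\sum_{i,j=1}^m \E\bigl[G(x_i)\,\overline{G(x_j)}\bigr].
\]
Independence of the $x_i$ together with the centering $\E[G(x_i)] = 0$ kills the off-diagonal terms, while each of the $m$ diagonal terms equals $\int_X |G|^2 \, d\mu$, producing the claimed variance $\frac{1}{m}\int_X |G|^2\,d\mu$.

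The tail bound is then immediate from the complex-valued Chebyshev inequality applied to $Z = S$: since $|S - \E S|^2$ is a nonnegative real random variable, Markov's inequality gives $\Prob(|S - \E S| \geq t) \leq \var(S)/t^2$, which is the first displayed bound. The equivalent quantile statement is just a rewriting: setting the right-hand side of the Chebyshev bound equal to $\delta$ and solving for $t$ yields $t = \tfrac{1}{\sqrt{m\delta}}\bigl(\int_X |G|^2\,d\mu\bigr)^{1/2}$. The only mildly subtle point in the whole argument is that $F$ is complex-valued, but this is handled uniformly by using $|z|^2 = z\bar z$ and the fact that independence of $x_i, x_j$ implies independence of $G(x_i)$ and $\overline{G(x_j)}$ for $i \neq j$; consequently no real obstacle arises.
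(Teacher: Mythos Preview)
Your proof is correct and entirely standard. Note that the paper does not actually give a proof of this lemma; it is stated with a citation to Tao--Vu and used as a black box. Your argument is exactly the routine verification one would expect: linearity for the mean, expansion of $|S-\E S|^2$ with independence killing the cross terms for the variance, and Markov applied to $|S-\E S|^2$ for the tail bound.
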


\begin{lemma} \label{lemma:help}
Fix $C > 0$, and let $X_1, \ldots, X_n$ be complex-valued random variables (not necessarily independent nor identically distributed) such that, with overwhelming probability, 
\begin{equation} \label{eq:maxXi}
	\max_{1 \leq i \leq n} |X_i| \leq e^{n^C}. 
\end{equation}
Let $\varphi: \mathbb{C} \to \mathbb{R}$ be a twice continuously differentiable function (possibly depending on $n$) which satisfies the pointwise bound in \eqref{eq:pointwise} for all $z \in \mathbb{C}$.  Then, with overwhelming probability, 
\begin{align}
	\int_{B(0, n^C)} | \lap \varphi(z) |^2  \log^2 | p_n(z) | d^2 z \ll n^{2C} n^{O(1)}, \label{eq:pbnd}\\
	\int_{B(0, n^C)} | \lap \varphi(z) |^2  \log^2 | p_n'(z) | d^2 z \ll n^{2C} n^{O(1)}, \label{eq:p'bnd}
\end{align}
and
\begin{equation} \label{eq:lapbnd}
	\int_{B(0, n^{C})} | \lap \varphi(z) |^2 d^2 z \ll n^{4C}. 
\end{equation}
\end{lemma}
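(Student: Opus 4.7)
The plan is to work deterministically on the event $\mathcal{E}_n := \{\max_{1 \leq i \leq n} |X_i| \leq e^{n^C}\}$, which has overwhelming probability by hypothesis \eqref{eq:maxXi}. All three bounds will follow pointwise on $\mathcal{E}_n$. The third bound \eqref{eq:lapbnd} is immediate: from the pointwise estimate $|\lap \varphi(z)|^2 \leq n^{2C}$ and $\mathrm{Area}(B(0,n^C)) = \pi n^{2C}$, one gets $\int_{B(0,n^C)} |\lap \varphi|^2\, d^2 z \leq \pi n^{4C}$.

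For \eqref{eq:pbnd}, I would expand $\log |p_n(z)| = \sum_{i=1}^n \log |z - X_i|$ and apply the elementary inequality $\bigl(\sum_{i=1}^n a_i\bigr)^2 \leq n \sum_{i=1}^n a_i^2$ to bound $\log^2 |p_n(z)| \leq n \sum_i \log^2 |z - X_i|$. This reduces matters to estimating $\int_{B(0,n^C)} \log^2 |z - X_i|\, d^2 z$ for each fixed $i$. I would split the integral into the two pieces $\{|z - X_i| < 1\}$ and $\{|z - X_i| \geq 1\} \cap B(0, n^C)$. The first piece contributes $O(1)$, via polar coordinates and the integrability of $r \log^2 r$ at the origin. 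On the second, the event $\mathcal{E}_n$ gives $|z - X_i| \leq 2 e^{n^C}$, hence $|\log |z - X_i|| \leq n^C + \log 2$ and $\log^2 |z - X_i| = O(n^{2C})$; since the area is $O(n^{2C})$, this piece is $O(n^{4C})$. Combining with $|\lap\varphi|^2 \leq n^{2C}$ and summing over $i$, the whole integral is $O(n^{2C} \cdot n \cdot n \cdot n^{4C}) = O(n^{6C+2})$, which fits the required shape $n^{2C} n^{O(1)}$.

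For \eqref{eq:p'bnd}, the key new ingredient is a factorization of $p_n'$ that locates its zeros. I would invoke the Gauss--Lucas theorem: the critical points $w_1^{(n)}, \ldots, w_{n-1}^{(n)}$ lie in the convex hull of $X_1, \ldots, X_n$, so on $\mathcal{E}_n$ one has $\max_j |w_j^{(n)}| \leq e^{n^C}$. Since $p_n$ is monic of degree $n$, $p_n'(z) = n \prod_{j=1}^{n-1}(z - w_j^{(n)})$, and therefore $\log |p_n'(z)| = \log n + \sum_{j=1}^{n-1} \log |z - w_j^{(n)}|$. Using $(a+b)^2 \leq 2a^2 + 2b^2$ together with the same Cauchy--Schwarz-type inequality as before, I get $\log^2 |p_n'(z)| \leq 2 \log^2 n + 2(n-1) \sum_j \log^2 |z - w_j^{(n)}|$. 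The per-root integral estimate from the previous paragraph applies verbatim (it only uses that the base point lies in $\overline{B}(0, e^{n^C})$), and the bound \eqref{eq:p'bnd} follows by summing.

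The main obstacle is the lower bound on $|p_n'|$: near a zero of $p_n'$, $\log^2 |p_n'|$ blows up, and without a factored form this quantity is awkward to control. Gauss--Lucas resolves this cleanly by confining all $n-1$ zeros of $p_n'$ to $\overline{B}(0, e^{n^C})$ on $\mathcal{E}_n$, reducing the analysis to the same $\log^2 |z - w|$-type integrals as for $p_n$. The only other mild subtlety, the logarithmic singularity of $\log^2 |z - X_i|$ (or $\log^2 |z - w_j^{(n)}|$) at a single point, is handled by the local integrability of $r \log^2 r$ in two real dimensions.
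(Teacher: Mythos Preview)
Your proposal is correct and follows essentially the same route as the paper: reduce via the pointwise bound $|\lap\varphi|\le n^{C}$, use Cauchy--Schwarz on $\log|p_n(z)|=\sum_i\log|z-X_i|$, split each $\int_B\log^2|z-X_i|\,d^2z$ into the near/far pieces, and then repeat for $p_n'$ after invoking Gauss--Lucas to confine the critical points to $\overline{B}(0,e^{n^C})$. The only cosmetic difference is that you explicitly track the leading coefficient $n$ in $p_n'(z)=n\prod_j(z-w_j^{(n)})$, contributing an extra harmless $\log^2 n$ term that the paper absorbs into ``the same procedure.''
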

\begin{proof}
The bound in \eqref{eq:lapbnd} follows immediately from the pointwise bound in \eqref{eq:pointwise}.  In order to prove \eqref{eq:pbnd} it suffices, by the pointwise bound in \eqref{eq:pointwise}, to prove that with overwhelming probability
\[ \int_{B(0, n^C)} \log^2 | p_n(z) | d^2 z \ll n^{O(1)}. \]
By supposition, we now work on the event where $X_1, \ldots, X_n \in B(0, e^{n^C})$.  As
\[ \log^2 |p_n(z)| \ll n \sum_{i=1}^n \log^2 |z - X_i|, \]
it suffices to prove that
\[ \max_{1 \leq i \leq n} \int_B \log^2 |z - X_i| d^2 z \ll n^{O(1)}, \]
where $B := B(0, n^{C})$.  
Since $X_1, \ldots, X_n \in B(0, e^{n^C})$, it follows that
\[ \max_{1 \leq i \leq n} \int_{B \setminus B(X_i, 1 ) } \log^2 |z - X_i| d^2 z \ll  n^{2C} |B| \ll n^{O(1)}, \]
where $|B|$ is the Lebesgue measure of $B$, and $|B| = O(n^{2C})$.  Near each root, we have
\[ \max_{1 \leq i \leq n} \int_{B \cap B(X_i, 1)} \log^2 |z - X_i| d^2 z \leq \max_{1 \leq i \leq n} \int_{B(X_i, 1)} \log^2 |z - X_i| d^2 z \ll 1 \]
since $\log| \cdot |$ is locally square-integrable.  This completes the proof of \eqref{eq:pbnd}.  

For \eqref{eq:p'bnd}, we observe that on the event where \eqref{eq:maxXi} holds, the Gauss--Lucas theorem implies that
\[ \max_{1 \leq j \leq n-1} |w_j| \leq e^{n^C}, \]
where $w_1^{(n)}, \ldots, w_{n-1}^{(n)}$ are the critical points of $p_n$.  Working on this event, the proof follows from the same procedure as we used to prove \eqref{eq:pbnd}; we omit the details.  
\end{proof}

\begin{lemma}[Crude upper bound] \label{lemma:crude}
Fix $C > 0$, and let $X_1, \ldots, X_n$ be complex-valued random variables (not necessarily independent nor identically distributed).  Assume $Z$ is uniformly distributed on $B(0, n^C)$, independent of $X_1, \ldots, X_n$.  Then for every $a > 0$, there exits $b > 0$ such that
\[ \left| \sum_{i=1}^n \frac{1}{Z - X_i} \right| \leq n^{b} \]
with probability $1 - O_a(n^{-a})$.  
\end{lemma}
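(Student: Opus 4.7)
The plan is to condition on the vector $(X_1, \ldots, X_n)$ and use the fact that $Z$, being uniform on a ball of area $\pi n^{2C}$, is unlikely to be very close to any of the points $X_i$. On the event that $Z$ stays a distance at least $n^{-b'}$ from every $X_i$, the triangle inequality gives $\left|\sum_{i=1}^n (Z-X_i)^{-1}\right| \leq n \cdot n^{b'} = n^{1+b'}$, so it suffices to show that the ``close'' event has probability $O_a(n^{-a})$ for a suitable choice of $b'$.

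First, I would fix $r > 0$ and observe that for any fixed $x \in \mathbb{C}$,
\[
\P(|Z - x| < r) \;=\; \frac{\bigl|B(x,r) \cap B(0, n^C)\bigr|}{\pi n^{2C}} \;\leq\; \frac{\pi r^2}{\pi n^{2C}} \;=\; \frac{r^2}{n^{2C}}.
\]
Conditioning on $X_1, \ldots, X_n$ (which is legitimate by the independence of $Z$ and the $X_i$'s) and taking $r = n^{-b'}$, a union bound yields
\[
\P\!\left(\min_{1 \leq i \leq n}|Z - X_i| < n^{-b'}\right) \;\leq\; \sum_{i=1}^n \E\!\left[\P(|Z - X_i| < n^{-b'} \mid X_i)\right] \;\leq\; \frac{n \cdot n^{-2b'}}{n^{2C}} \;=\; n^{1 - 2b' - 2C}.
\]

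Given $a > 0$, I would then choose $b' := (1 + 2C + a)/2$, which makes the exponent on the right-hand side equal to $-a$. On the complementary event, which holds with probability at least $1 - n^{-a}$, every denominator satisfies $|Z - X_i| \geq n^{-b'}$, and hence
\[
\left|\sum_{i=1}^n \frac{1}{Z - X_i}\right| \;\leq\; n \cdot n^{b'} \;=\; n^{1 + b'}.
\]
Setting $b := 1 + b' = (3 + 2C + a)/2$ completes the argument.

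There is no substantial obstacle here; the lemma is essentially a small-ball estimate for a uniform random variable combined with a union bound. The only point requiring a bit of care is the interchange of expectation and conditional probability, which is immediate from the independence of $Z$ from $(X_1,\ldots,X_n)$ and Fubini.
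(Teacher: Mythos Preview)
Your proposal is correct and follows essentially the same approach as the paper's proof: bound $\P(\min_i |Z - X_i| < \eps)$ by a union bound using that $Z$ is uniform on a disk of area $\pi n^{2C}$, then bound the sum by $n/\eps$ on the complementary event. One harmless arithmetic slip: with your choice $b' = (1+2C+a)/2$ the exponent $1 - 2b' - 2C$ equals $-4C - a$, not $-a$, but since $-4C - a \leq -a$ the conclusion still follows (and this choice conveniently keeps $b'>0$ for all $a>0$, avoiding the paper's reduction to the case $a > 2C$).
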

\begin{proof}
Conditioning on $X_1, \ldots, X_n$, we find that
\begin{align*}
	\Prob \left( \min_{1 \leq i \leq n} |Z - X_i| \leq \eps \right) &\leq \sum_{i=1}^n \Prob( Z \in B(X_i, \eps ) ) \ll n \frac{ \eps^2}{n^{2C} } 
\end{align*}
for all $\eps > 0$.  In addition, on the event where $\min_{1 \leq i \leq n} |Z - X_i| > \eps$, we have
\[ \left| \sum_{i=1}^n \frac{1}{Z - X_i} \right| \leq \frac{n}{\eps}. \]
In order to prove the claim, it suffices to assume $a > 2C$.  In this case, by taking $\eps := \sqrt{ \frac{ n^{2C} }{ n^{a + 1}} }$, the result follows from the estimates above.   
\end{proof}

We now prove Theorem \ref{thm:main}. 
\begin{proof}[Proof of Theorem \ref{thm:main}]
Let $B := B(0, n^C)$, and let $|B|$ denote its Lebesgue measure.  Fix $\alpha > 0$, and let $\beta \in \mathbb{N}$ be a large constant (depending on $C, c, \alpha$) to be chosen later.  

Using the log-transform of the empirical measures constructed from the roots and critical points of $p$, we obtain 
\begin{align*}
	\sum_{i=1}^n \varphi(X_i) &= \frac{1}{2 \pi} \int_B \lap \varphi(z) \log |p_n(z)| d^2 z, \quad
	\sum_{j=1}^{n-1} \varphi(w_j) &= \frac{1}{2 \pi} \int_B \lap \varphi(z) \log |p'_n(z))| d^2 z.  
\end{align*}
(These identities can also be found in a more general form in \cite[Section 2.4.1]{JHough}.)  Instead of working with the integrals on the right-hand sides, we will work with large empirical averages by applying Lemma \ref{lemma:mc}.  Indeed, let $m := n^\beta$, and let $Z_1, \ldots, Z_m$ be iid random variables uniformly distributed on $B$, independent of $X_1, \ldots, X_n$.  Taking $\beta$ sufficiently large and applying Lemmas \ref{lemma:mc} and \ref{lemma:help}, we conclude that
\begin{align}
	\frac{2 \pi}{|B|} \sum_{i=1}^n \varphi(X_i) &= \frac{1}{m} \sum_{l=1}^m \lap \varphi(Z_l) \log |p_n(Z_l)| + O(n^{-c - 2C}), \label{eq:loc:conc1}\\
	\frac{2 \pi}{|B|} \sum_{j=1}^{n-1} \varphi(w_j) &= \frac{1}{m} \sum_{l=1}^m \lap \varphi(Z_l) \log |p'_n(Z_l)| + O(n^{-c - 2C}), \label{eq:loc:conc2}\\
	\frac{1}{|B|} \int_B | \lap \varphi(z) | d^2 z &= \frac{1}{m} \sum_{l=1}^m |\lap \varphi(Z_l)| + O(n^{-c - 2C -1 }) \label{eq:loc:conc3}
\end{align}
with probability $1 - O(n^{-\alpha})$.  In addition, by \eqref{eq:anti}, Lemma \ref{lemma:crude}, and the union bound it follows that there exists $b > 0$ such that
\[ n^{-b} \leq \min_{1 \leq l \leq m} \left| \sum_{i=1}^n \frac{1}{Z_l - X_i } \right| \leq \max_{1 \leq l \leq m} \left| \sum_{i=1}^n \frac{1}{Z_l - X_i } \right| \leq n^{b} \]
with probability $1 - O(n^{-\alpha})$.  Thus, since 
$ \frac{p'_n(z)}{ p_n(z)} = \sum_{i=1}^n \frac{1}{z - X_i }, $
we obtain 
\begin{equation} \label{eq:loc:det}
	\sup_{1 \leq l \leq m} \left| \log |p_n(Z_l)| - \log |p'_n(Z_l)| \right| = O(\log n) 
\end{equation}
with probability $1 - O(n^{-\alpha})$.  

From \eqref{eq:loc:conc1} and \eqref{eq:loc:conc2}, we find
\begin{align*}
	&\left| \frac{2 \pi}{|B|} \sum_{i=1}^n \varphi(X_i) - \frac{2 \pi}{|B|} \sum_{j=1}^{n-1} \varphi(w_j) \right| \\
	&\qquad \leq \frac{1}{m} \sum_{l=1}^m | \lap \varphi(Z_l) |  \left| \log |p_n(Z_l)| - \log |p'_n(Z_l)| \right| + O(n^{-c -2C}) 
\end{align*}
with probability $1 - O(n^{-\alpha})$.  
Applying \eqref{eq:loc:conc3} and \eqref{eq:loc:det} yields
\begin{align*}
	&\left| \frac{2 \pi}{|B|} \sum_{i=1}^n \varphi(X_i) - \frac{2 \pi}{|B|} \sum_{j=1}^{n-1} \varphi(w_j) \right| \\
	&\qquad \ll (\log n) \frac{1}{m} \sum_{l=1}^m | \lap \varphi(Z_l)| + n^{-c-2C} \\
	&\qquad \ll (\log n) \frac{1}{|B|} \int_B |\lap \varphi(z)| d^2z + n^{-c-2C}
\end{align*}
with probability $1 - O(n^{-\alpha})$.  Since $|B| = \Theta(n^{2C})$, we rearrange to obtain
\begin{equation} \label{eq:loc:conc}
	\left|  \sum_{i=1}^n \varphi(X_i) -  \sum_{j=1}^{n-1} \varphi(w_j) \right| \ll (\log n) \| \lap \varphi \|_1 + n^{-c} 
\end{equation}
with probability $1 - O(n^{-\alpha})$.  The proof of the theorem is complete.  
\end{proof}


\subsection{Proof of Theorems \ref{thm:indep} and \ref{thm:clt}} \label{sec:proof:indep}

In order to prove Theorem \ref{thm:indep}, it suffices to show that $X_1, \ldots, X_n$ satisfy the two axioms of Theorem \ref{thm:main}.  This follows from Lemmas \ref{lemma:critup} and \ref{lemma:critanti}.  

We now turn to the proof of Theorem \ref{thm:clt}.  By Theorem \ref{thm:indep}, 
\[ \sum_{j=1}^{n-1} \varphi( w^{(n)}_j) = \sum_{i=1}^n \varphi(X_i) + O(\log n) \]
with probability $1 - O(n^{-100})$.  Since $\varphi$ is bounded, we obtain
\[ \E\left[\sum_{j=1}^{n-1}\varphi( w^{(n)}_j)\right] = \sum_{i=1}^n \E \varphi(X_i) + O(\log n). \]
Therefore, we conclude that
\[
\frac{1}{\sqrt{n}} \left( \sum_{j=1}^{n-1} \varphi( w^{(n)}_j) - \E\left[\sum_{j=1}^{n-1}\varphi( w^{(n)}_j)\right] \right) = \frac{1}{\sqrt{n}} \sum_{i=1}^n \left( \varphi(X_i) - \E \varphi(X_i) \right) + o(1)
\]
with probability $1 - O(n^{-100})$.  
The claim now follows by applying the classical CLT to the right-hand side.  


\subsection{Proof of Theorem \ref{thm:analytic}} \label{sec:proof:analytic}

We will need the following companion matrix result, which describes a matrix whose eigenvalues are the critical points of a given polynomial.  This result appears to have originally been developed in \cite{KI} (see \cite[Lemma 5.7]{KI}).  However, the same result was later rediscovered and significantly generalized by Cheung and Ng \cite{CN,CN2}.

\begin{theorem}[Lemma 5.7 from \cite{KI}; Theorem 1.2 from \cite{CN2}] \label{thm:companion}
Let $p(z) := \prod_{j=1}^n (z - z_j)$ for some complex numbers $z_1, \ldots, z_n$, and let $D$ be the diagonal matrix $D := \diag(z_1, \ldots, z_n)$.  Then
$$ \frac{1}{n} z p'(z) = \det \left(zI - D \left( I - \frac{1}{n} J \right) \right), $$
where $I$ is the $n \times n$ identity matrix and $J$ is the $n \times n$ all-one matrix.  
\end{theorem}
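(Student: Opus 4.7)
\textbf{Proof proposal for Theorem \ref{thm:companion}.}

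The plan is to view the matrix $M := D(I - \tfrac{1}{n}J)$ as a rank-one perturbation of the diagonal matrix $D$ and then invoke the matrix determinant lemma. Concretely, if $d := (z_1,\ldots,z_n)^T$ and $\mathbf{1}$ is the all-ones column vector, then $\tfrac{1}{n}DJ = \tfrac{1}{n} d\,\mathbf{1}^T$, so
\[
 zI - M \;=\; (zI - D) + \tfrac{1}{n}\, d\, \mathbf{1}^T.
\]

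First I would handle the generic case $p(z)\neq 0$, where $zI - D$ is invertible with $(zI-D)^{-1} = \diag\bigl(\tfrac{1}{z-z_1},\ldots,\tfrac{1}{z-z_n}\bigr)$. The matrix determinant lemma then gives
\[
 \det(zI - M) \;=\; \det(zI - D)\left(1 + \tfrac{1}{n}\,\mathbf{1}^T (zI-D)^{-1} d\right) \;=\; p(z)\left(1 + \tfrac{1}{n}\sum_{j=1}^n \frac{z_j}{z-z_j}\right).
\]
Combining the terms in the parenthetical factor yields
\[
 1 + \tfrac{1}{n}\sum_{j=1}^n \frac{z_j}{z - z_j} \;=\; \tfrac{1}{n}\sum_{j=1}^n \frac{z}{z-z_j} \;=\; \tfrac{z}{n}\cdot\frac{p'(z)}{p(z)},
\]
using the standard identity $\sum_j \tfrac{1}{z - z_j} = p'(z)/p(z)$. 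Multiplying through by $p(z)$ gives $\det(zI - M) = \tfrac{1}{n}\,z\,p'(z)$ whenever $p(z) \neq 0$.

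Finally, both sides of the asserted identity are polynomials in $z$, and they agree on the complement of the finite set $\{z_1,\ldots,z_n\}$, which is dense in $\mathbb{C}$. By continuity (or the identity principle for polynomials), they agree for every $z \in \mathbb{C}$. There is no serious obstacle here; the only point to mention is that one must justify removing the hypothesis $p(z)\neq 0$ at the end, and the polynomial-identity argument accomplishes that in one line. If desired, one can alternatively establish the identity without excluding any $z$ by writing out $\det(zI - M)$ via cofactor expansion along the rank-one correction $\tfrac{1}{n}d\,\mathbf{1}^T$, but the rank-one-update approach via the matrix determinant lemma is cleaner.
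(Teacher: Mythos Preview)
Your proof is correct. The paper does not actually prove Theorem~\ref{thm:companion}; it merely cites the result from \cite{KI} and \cite{CN2}, so there is no in-paper argument to compare against. Your rank-one-update approach via the matrix determinant lemma is the standard way to establish this identity, and every step checks out: the rewriting $zI - M = (zI - D) + \tfrac{1}{n}d\,\mathbf{1}^T$, the application of the determinant lemma, the simplification $1 + \tfrac{1}{n}\sum_j \tfrac{z_j}{z-z_j} = \tfrac{z}{n}\cdot\tfrac{p'(z)}{p(z)}$, and the polynomial-identity argument to remove the restriction $p(z)\neq 0$ are all sound. It is worth noting that the paper states the Sherman--Morrison formula (Lemma~\ref{lemma:SM}) immediately after Theorem~\ref{thm:companion}; your use of the matrix determinant lemma is precisely the determinant analogue of Sherman--Morrison, so your argument is entirely in the spirit of the surrounding material.
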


We will also need the Sherman--Morrison formula for computing the inverse of a rank one update to a matrix.  

\begin{lemma}[Sherman--Morrison formula] \label{lemma:SM}
Suppose $A$ is an invertible matrix and $u,v$ are column vectors.  If ${1 + v^\mathrm{T} A^{-1} u \neq 0}$, then
$$ (A + uv^\mathrm{T})^{-1} = A^{-1} - \frac{A^{-1} u v^\mathrm{T} A^{-1}}{1 + v^\mathrm{T} A^{-1} u}. $$
\end{lemma}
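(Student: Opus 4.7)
The plan is to prove the Sherman--Morrison formula by direct verification: I will multiply the matrix $A + uv^\mathrm{T}$ by the candidate expression $A^{-1} - \frac{A^{-1} u v^\mathrm{T} A^{-1}}{1 + v^\mathrm{T} A^{-1} u}$ on the right and simply check that the product equals the identity. Since the hypothesis $1 + v^\mathrm{T} A^{-1} u \neq 0$ ensures that the fraction is well-defined, there is no nontrivial step; the entire proof reduces to a computation that exploits the scalar nature of $v^\mathrm{T} A^{-1} u$.

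Concretely, I would carry out the expansion
\begin{align*}
(A + uv^\mathrm{T}) \left( A^{-1} - \frac{A^{-1} u v^\mathrm{T} A^{-1}}{1 + v^\mathrm{T} A^{-1} u} \right)
&= I + u v^\mathrm{T} A^{-1} - \frac{u v^\mathrm{T} A^{-1} + u (v^\mathrm{T} A^{-1} u) v^\mathrm{T} A^{-1}}{1 + v^\mathrm{T} A^{-1} u}.
\end{align*}
The key observation is that $v^\mathrm{T} A^{-1} u$ is a scalar, so it commutes with the outer vectors and can be factored: the numerator in the fraction equals $u\bigl(1 + v^\mathrm{T} A^{-1} u\bigr) v^\mathrm{T} A^{-1}$, which cancels against the denominator to produce $u v^\mathrm{T} A^{-1}$. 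Subtracting this from $I + u v^\mathrm{T} A^{-1}$ yields $I$, establishing that the candidate is a right inverse. Since $A + uv^\mathrm{T}$ is a square matrix, being a right inverse suffices to conclude it is the two-sided inverse; alternatively, one performs the symmetric calculation on the left to obtain $I$ directly.

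There is no substantive obstacle here: the only subtlety is the handling of the scalar $v^\mathrm{T} A^{-1} u$, which must be kept track of carefully so that associativity is respected in the step above. I will state the identity as a lemma and present the one-line verification, since the formula is used elsewhere in the paper (in particular alongside Theorem \ref{thm:companion}) as a computational tool rather than as an object of independent interest.
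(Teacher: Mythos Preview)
Your direct verification is correct and is the standard argument. Note, however, that the paper does not actually supply a proof of this lemma: it simply cites \cite{B} (and \cite[Section 0.7.4]{HJ} for the more general Sherman--Morrison--Woodbury formula) and moves on, treating the identity as a known tool. Your one-line multiplication check is exactly what those references contain, so there is no substantive difference in approach---you have just written out what the paper leaves to the literature.
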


Lemma \ref{lemma:SM} can be found in \cite{B}; see also \cite[Section 0.7.4]{HJ} for a more general version of this identity known as the Sherman--Morrison--Woodbury formula.  
We also require the following consequence of \cite[Lemma 4.1]{OW}.

\begin{lemma} \label{lemma:OW}
Under the assumptions of Theorem \ref{thm:analytic}, there exists a constant $c' > 0$ (depending only on $C, c$, and $\eps$) such that
\[ \inf_{z \in \Gamma} \left| \frac{z}{n} \sum_{i=1}^n \frac{1}{z - X_i} \right| \geq c' \]
with overwhelming probability.  
\end{lemma}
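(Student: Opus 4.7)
The plan is to deduce a uniform lower bound on the empirical Cauchy--Stieltjes transform on $\Gamma$ from the assumed pointwise lower bound on $m_\mu$, by showing that the empirical transform is uniformly close to $m_\mu$ with overwhelming probability. First I would exploit the geometric separation: since $\supp(\mu) \subset \overline{B}(0,C)$ and $\Gamma = \partial B(0, C+\eps)$, every root $X_i$ satisfies $|z - X_i| \geq \eps$ for all $z \in \Gamma$. Consequently, for each $z \in \Gamma$ the random variables $(z - X_i)^{-1}$ are almost surely bounded in modulus by $1/\eps$, and Lemma \ref{lem:Hoeffding} (the complex-valued Hoeffding inequality recalled in the paper) yields, for every fixed $z \in \Gamma$,
\[
\Prob\!\left(\left| \frac{1}{n} \sum_{i=1}^n \frac{1}{z - X_i} - m_\mu(z) \right| \geq \frac{c}{2} \right) \leq C_0 \exp(-c_0 n)
\]
for constants depending only on $c$ and $\eps$.

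The second step is to upgrade this to a uniform estimate on $\Gamma$ via a standard net argument. The map $z \mapsto (z - X_i)^{-1}$ is Lipschitz on $\Gamma$ with constant $1/\eps^2$ (uniformly in $X_i \in \overline{B}(0,C)$), so both the empirical transform and $m_\mu$ are $(1/\eps^2)$-Lipschitz on $\Gamma$. Covering $\Gamma$ by an $n^{-2}$-net $\mathcal{N} \subset \Gamma$ of cardinality $O(n^2)$, one has for any $z \in \Gamma$ and nearest $z' \in \mathcal{N}$
\[
\left| \frac{1}{n} \sum_{i=1}^n \frac{1}{z - X_i} - m_\mu(z) \right| \leq \left| \frac{1}{n} \sum_{i=1}^n \frac{1}{z' - X_i} - m_\mu(z') \right| + \frac{2}{\eps^2 n^2}.
\]
Applying Hoeffding at each point of $\mathcal{N}$ and taking a union bound absorbs the polynomial factor against the exponential tail, giving
\[
\sup_{z \in \Gamma} \left| \frac{1}{n} \sum_{i=1}^n \frac{1}{z - X_i} - m_\mu(z) \right| \leq \frac{c}{2}
\]
with overwhelming probability.

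Finally, the triangle inequality together with the hypothesis $|m_\mu(z)| \geq c$ on $\Gamma$ yields
\[
\left| \frac{1}{n} \sum_{i=1}^n \frac{1}{z - X_i} \right| \geq c - \frac{c}{2} = \frac{c}{2}
\]
uniformly on $\Gamma$, and multiplying by $|z| = C + \eps$ produces the claimed bound with $c' := (C+\eps) c / 2$. The only mild technical point is ensuring that Hoeffding's inequality is applied to the real and imaginary parts of $(z - X_i)^{-1}$ (or invoking the complex version Lemma \ref{lem:Hoeffding} directly); the main conceptual step is the net argument, which is routine because the separation between $\Gamma$ and $\supp(\mu)$ makes the integrand bounded and Lipschitz, so there is no concentration obstacle to overcome.
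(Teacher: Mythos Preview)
Your proof is correct and follows essentially the same approach as the paper: reduce to a uniform lower bound on the empirical Cauchy--Stieltjes transform along $\Gamma$, obtained by showing it is uniformly close to $m_\mu$ there. The paper simply cites \cite[Lemma~4.1]{OW} for this uniform approximation, whereas you have written out the standard Hoeffding-plus-net argument that underlies that lemma; the geometric separation $|z-X_i|\ge\eps$ and the resulting Lipschitz bound make this routine, exactly as you describe.
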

\begin{proof}
Clearly $|z| = C+ \eps$ for all $z \in \Gamma$.  Thus, it suffices to prove that
\[ \inf_{z \in \Gamma} \left| \frac{1}{n} \sum_{i=1}^n \frac{1}{z - X_i} \right| \geq c' \]
with overwhelming probability.  The claim now follows from the uniform bound in \cite[Lemma 4.1]{OW} and the assumption on $m_\mu$ given in \eqref{eq:mmulowbnd}.  
\end{proof}

With Lemma \ref{lemma:OW} in hand, we are now prepared to present the proof of Theorem \ref{thm:analytic}.  

\begin{proof}[Proof of Theorem \ref{thm:analytic}]
Let $D$ be the diagonal matrix $D:=\diag(X_1, \ldots, X_n)$.  Using the notation from Theorem \ref{thm:companion}, we observe that $zI - D$ is invertible for all $z \in \Gamma$ since $X_1, \ldots, X_n \in B(0, C)$ by supposition.  In addition, by the Gauss--Lucas theorem and Theorem \ref{thm:companion}, it must be the case that the eigenvalues of $D \left( I - \frac{1}{n} J \right)$ are also contained in $B(0,C)$.  This implies that $zI - D \left( I - \frac{1}{n} J \right)$ is also invertible for every $z \in \Gamma$.   In view of these observations, we define the resolvents
\[ G(z) := (zI - D)^{-1}, \qquad R(z) := \left( zI - D \left( I - \frac{1}{n} J \right) \right)^{-1} \]
for $z \in \Gamma$.  

Thus, by Cauchy's integral formula 
\[ \sum_{i=1}^n \varphi(X_i) = \frac{1}{2 \pi \sqrt{-1}} \oint_{\Gamma} \varphi(z) \tr G(z) dz \]
and
\[ \sum_{j=1}^{n-1} \varphi(w_j) + \varphi(0) = \frac{1}{2 \pi \sqrt{-1}} \oint_{\Gamma} \varphi(z) \tr R(z) dz. \]
We now take the difference of these two equalities.  Since $|\varphi(0)| \ll \int_\Gamma |\varphi(z)| |dz|$, it suffices by the triangle inequality to show
\begin{equation} \label{eq:analyticshow}
	\sup_{z \in \Gamma} \left| \tr G(z) - \tr R(z) \right| = O(1) 
\end{equation}
with overwhelming probability.  

Since $J = \one \one^{\mathrm{T}}$, where $\one$ is the all-ones vector, the Sherman--Morrison formula (Lemma \ref{lemma:SM}) implies that 
\begin{equation} \label{eq:SM}
	R(z) = G(z) - \frac{ \frac{1}{n} G(z) D J G(z) }{1 + \frac{1}{n} \one^{\mathrm{T}} G(z) D \one } 
\end{equation}
provided $1 + \frac{1}{n} \one^{\mathrm{T}} G(z) D \one \neq 0$.  In view of Lemma \ref{lemma:OW}, there exists a constant $c' > 0$ (depending only on $C, c$, and $\eps$) such that
\begin{equation} \label{eq:gammalowbnd}
	\inf_{z \in \Gamma} \left| 1 + \frac{1}{n} \one^{\mathrm{T}} G(z) D \one \right| = \inf_{z \in \Gamma} \left| \frac{z}{n} \sum_{i=1}^n \frac{1}{z - X_i} \right| \geq c'
\end{equation}
with overwhelming probability.  Here, we have exploited the fact that $D$ and $G(z)$ are diagonal matrices, which implies that
$ \one^{\mathrm{T}} G(z) D \one = \sum_{i=1}^n \frac{X_i}{z - X_i}$. 
Using \eqref{eq:SM} and \eqref{eq:gammalowbnd}, we conclude that with overwhelming probability
\[ \sup_{z \in \Gamma} \left| \tr G(z) - \tr R(z) \right| \leq \frac{1}{n c'} \sup_{z \in \Gamma} \left| \tr [G(z) DJ G(z)] \right|. \]
To bound this last remaining term, we again exploit the fact that $J = \one \one^{\mathrm{T}}$.  Indeed, from the cyclic property of the trace, we have the deterministic bound 
\[ \left| \tr [G(z) DJ G(z)] \right| = \left| \one^{\mathrm{T}} G^2(z) D \one \right| = \left| \sum_{i=1}^n \frac{X_i}{(z - X_i)^2} \right| \leq \sum_{i=1}^n \frac{|X_i|}{|z - X_i|^2} \leq n\frac{C}{\eps^2} \]
for all $z \in \Gamma$.  Combining the bounds above, we obtain \eqref{eq:analyticshow}, and the proof is complete.  
\end{proof}

\section{Proof of Theorem \ref{thm:wasserstein}}\label{sec:wassProof}

This section is devoted to proving Theorem \ref{thm:wasserstein}. Our first lemma shows that Assumption \ref{assum:radSym} implies Assumption \ref{assum:subquad}.


\begin{lemma}[Sufficiency of Assumption \ref{assum:radSym}]
If $\mu$ satisfies Assumption \ref{assum:radSym}, then $\mu$ also satisfies Assumption \ref{assum:subquad}.
\label{lem:radSymAssump}
\end{lemma}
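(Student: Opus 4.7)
The plan is to verify the two parts of Assumption \ref{assum:subquad} separately: part (ii) will follow from moment bounds alone, while part (i) requires analyzing the structure of $m_\mu$ implied by radial symmetry. For (ii), since $\mu$ has two finite absolute moments, $\E|X_1|^2 < \infty$, and a Markov inequality combined with the union bound gives
\[
\P(\eta_n \geq n^{C_2}) \leq \frac{n\,\E|X_1|^2}{n^{2C_2}},
\]
which tends to zero for any $C_2 > 1/2$.

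The substance lies in part (i). First, I would invoke Lemma \ref{lem:radSymMu} (combined with a translation by $z_0$) to obtain the closed-form expression
\[
m_\mu(z) = \frac{\mu(\overline{B}(z_0, |z-z_0|))}{z-z_0},
\]
so that, writing $R := |X_1 - z_0|$ and $g(r) := \mu(\overline{B}(z_0, r))/r$, one has $|m_\mu(X_1)| = g(R)$. Since $f$ is radially symmetric about $z_0$, the radial coordinate $R$ has density $2\pi r \tilde f(r)$ on $[0, \infty)$, where $\tilde f$ is the radial profile of $f$. The problem reduces to controlling $\P(g(R) < \eps)$.

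I would then split the analysis at a fixed small scale $\delta > 0$. Continuity of $f$ at $z_0$ together with $f(z_0) > 0$ yields $\delta > 0$ for which $\tilde f(\rho) \geq \tilde f(0)/2$ on $[0, \delta]$, and integrating this lower bound gives $g(R) \geq \tfrac{\pi \tilde f(0)}{2} R$ for $R \leq \delta$. Hence the event $\{g(R) < \eps\} \cap \{R \leq \delta\}$ forces $R$ to be of order $\eps$, and local boundedness of $\tilde f$ near $z_0$ bounds the contribution by $O(\eps^2)$. On $\{R > \delta\}$, the trivial monotonicity $\mu(\overline{B}(z_0, R)) \geq \mu(\overline{B}(z_0, \delta)) > 0$ shows that $\{g(R) < \eps\} \cap \{R > \delta\}$ forces $R > \mu(\overline{B}(z_0, \delta))/\eps$, and a second-moment Markov inequality bounds this probability also by $O(\eps^2)$.

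The main obstacle is producing $O(\eps^2)$ rather than merely $O(\eps)$ in the large-$R$ regime; this is exactly where the hypothesis of two (as opposed to one) finite absolute moments is essential, upgrading Markov from $\P(R > T) \leq \E R/T$ to $\P(R > T) \leq \E R^2/T^2$. Combining the two regime estimates and absorbing the trivial range $\eps \gtrsim 1$ into the constant yields $\P(|m_\mu(X_1)| < \eps) \leq C_1 \eps^2$ for every $\eps > 0$, completing the verification of Assumption \ref{assum:subquad}.
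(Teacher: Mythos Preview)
Your proposal is correct and follows essentially the same approach as the paper: invoke Lemma~\ref{lem:radSymMu} to write $|m_\mu(z)|$ in terms of the radial cumulative distribution, use continuity and positivity of $f$ at $z_0$ to control the small-$R$ regime, and use the second-moment hypothesis via Markov's inequality for the large-$R$ regime and for part~(ii). The only minor difference is that the paper splits into three regimes (inserting an intermediate annulus handled by the extreme value theorem), whereas your two-regime argument using monotonicity of $r\mapsto\mu(\overline B(z_0,r))$ is slightly more direct.
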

\begin{proof}
Without loss of generality, suppose $\mu$ is radially symmetric about $z=0$, and let $X \sim \mu$. By Lemma \ref{lem:radSymMu}, we can write
\[
\abs{m_\mu(z)} = \frac{\P(\abs{X} < \abs{z})}{\abs{z}},
\]
so the hypotheses guarantee that $\abs{m_\mu(z)}$ is continuous on $\C\setminus\set{0}$. (Indeed, $\P(\abs{X} < r)$ is the cumulative distribution function associated to the radial part of $\mu$, which has a continuous density.) Since $f(0) > 0$, there are $\delta, c > 0$ so that $\abs{z} \leq \delta$ implies $\abs{f(z)} \geq c > 0$. In particular, for $\abs{z} \leq \delta$, 
\begin{equation}
\abs{m_\mu(z)} = \frac{1}{\abs{z}}\int_0^{\abs{z}} r f(r)\,dr \geq  \frac{c}{\abs{z}}\int_0^{\abs{z}} r\,dr = \frac{c\abs{z}}{2}.
\label{eqn:CSsupLin}
\end{equation}
Let $r_{1/2}$ be any value for which $\P(\abs{X} < r_{1/2}) = 1/2$. By the extreme value theorem, $\abs{m_\mu(z)}$ achieves its minimum, $m_\text{min}$, on the closed, bounded annulus 
\[
A := \set{z\in \C: \delta \leq \abs{z} \leq r_{1/2}}.
\]
We know that $m_\text{min}$ is non-zero by \eqref{eqn:CSsupLin} and the fact that $\P(\abs{X} < r)$ is non-decreasing in $r$. This second fact additionally implies that for $\abs{z} \geq r_{1/2}$, 
\[
\abs{m_\mu(z)} = \frac{\P(\abs{X} < \abs{z})}{\abs{z}} \geq \frac{1}{2\abs{z}}.
\]
We conclude that for any $\eps \in (0, m_\text{min})$, 
\begin{equation}\label{eqn:radSubquad}
\P(\abs{m_\mu(X)} < \eps) \leq \P\left(\frac{c\abs{X}}{2} < \eps\right) + \P(m_\text{min} < \eps) + \P\left(\frac{1}{2\abs{X}} < \eps\right) \leq C\eps^2,
\end{equation}
for some $C>0$. (We have used the fact that $\mu$ has two finite absolute moments to bound the last probability.) It follows that $\mu$ satisfies Assumption \ref{assum:subquad} part \eqref{it:subquad}. 

To see that $\mu$ satisfies Assumption \ref{assum:subquad} part \eqref{it:maxBd}, let $X_1, \ldots, X_n$ be iid complex-valued random variables with distribution $\mu$, and observe that 
\[
\P\left(\max_j\abs{X_j} > \sqrt{n\ln{n}}\right) = 1 - \P\left(\abs{X_1} \leq \sqrt{n\ln{n}}\right)^n.
\]
By Markov's inequality, 
\[
 \P\left(\abs{X_1} \leq \sqrt{n\ln{n}}\right)^n \geq \left(1 - \frac{\E\abs{X_1}^2}{n\ln{n}}\right)^n \xrightarrow{n\to \infty} 1,
\]
which completes the argument.
\end{proof}

\subsection{Introduction to and motivation for the proof of Theorem \ref{thm:wasserstein}.}

The following proof of Theorem \ref{thm:wasserstein} is motivated by the illustration in Figure \ref{fig:loops} that depicts the roots (red dots) and critical points (blue crosses) of $p_n(z)$ when the roots, $X_1, \ldots, X_{150}$ are chosen independently and uniformly in the unit disk centered at the origin. The observer will notice two things:
\begin{enumerate}[1)]
\item since the $X_j$ are chosen uniformly at random, they tend to ``clump together,'' and
\item the roots further from the origin tend to ``pair'' more closely with nearby critical points than the roots near the origin.
\end{enumerate}

The first of these makes it difficult to use our strategy from Theorems \ref{thm:multiInCLT}, \ref{thm:multiOutCLT} and \ref{thm:detLoc}, where it was a simple matter to ``zoom in'' on a fixed root and ensure that no other roots were nearby. We address this concern by grouping the critical points that lie near each ``clump'' of roots and simultaneously considering all of the critical points that lie in the same group. We will show that each ``clump'' of roots (and its corresponding group of critical points) is far away from other ``clumps,'' for large $n$.

The second observation can be explained by Theorem \ref{thm:multiInCLT}, which suggests that the closest critical point, $w_j^{(n)}$, to a given root $X_j$ is at a distance $\frac{1}{n\abs{m_\mu(X_j)}}$ from $X_j$. For example, in the case where $\mu$ is uniform on the unit disk, $\abs{m_\mu(z)} = \abs{z}$ for $\abs{z} \leq 1$, so near the origin, it makes sense that the ``pairing'' phenomenon gets worse. We tackle this problem by counting the ``clumps'' of roots and critical points in exponentially widening, nested regions that avoid the zeros of $m_\mu$. (In Figure \ref{fig:loops}, these are the annuli delimited by concentric dashed circles.) Using this method, we can take advantage of the fact that the number of ``clumps'' that are a given distance from the zero set of $m_\mu$ is roughly proportional to the strength of the ``pairing'' within those ``clumps.'' The ``pairing'' phenomenon is quite unreliable near the zeros of $m_\mu$, so for any ``clumps'' that are sufficiently close to the zeros of $m_\mu$, we bound the distances between the roots and critical points using the Gauss--Lucas theorem.  (In fact, this is where we expect to find the ``extra,'' un-paired root that results because $p_n$ has a higher degree than $p_n'$). 

In order to synthesize these two ideas, we will form random, disjoint, simple closed curves to encircle each ``clump'' of roots and critical points. We will build the curves from the arcs of circles centered at the roots of $p_n$ and will use smaller circles for roots that are farther away from the zeros of $m_\mu$. See, for example, the boundaries of the gray domains depicted in Figure \ref{fig:loops}. We will conclude with an argument involving Rouch\'{e}'s theorem to count the number of critical points interior to each curve by comparing $p_n'$ to a simpler polynomial whose critical points can be located with Walsh's two circle theorem. Near the zeros of $m_\mu$, our method breaks down, and we use the Gauss--Lucas theorem for a bound on the distances between the critical points and roots of $p_n$. Luckily, there are few critical points near the zeros of $m_\mu$, a fact which follows in part from Assumptions \ref{assum:subquad} and \ref{assum:radSym}. 

\begin{figure}
\includegraphics[width =.7\columnwidth]{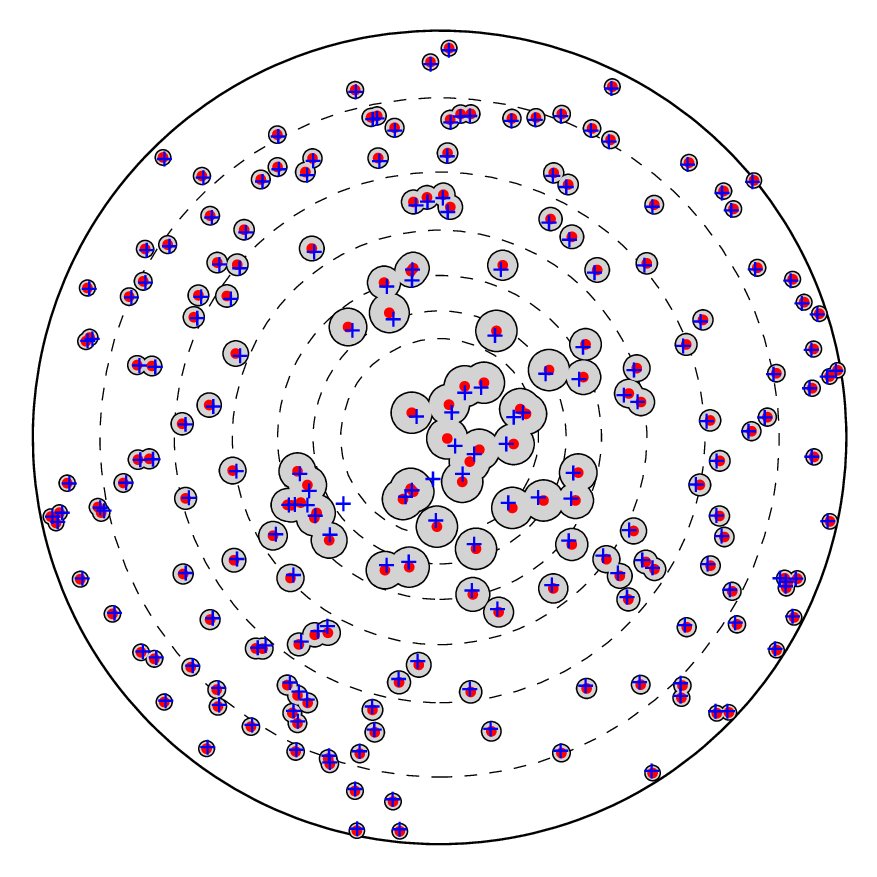}
\caption{An illustration motivating the strategy we use to prove Theorem \ref{thm:wasserstein}. The red circles and blue crosses represent the locations of the roots and critical points, respectively, of $p_{150}(z)$, where $\mu$ is the uniform distribution on the unit disk. Roughly speaking, the gray disks around the $X_j$ are of radius $\max\set{1/(n\abs{m_\mu(X_j)}),1/\sqrt{n}}$. The dashed concentric circles are meant to divide the unit disk into exponentially widening annuli.}
\label{fig:loops}
\end{figure} 

\subsection{Definitions}
In view of Lemma \ref{lem:radSymAssump}, we prove Theorem \ref{thm:wasserstein} under Assumption \ref{assum:subquad}.  Let $C_\mu > 0$ be larger than each of the constants in Assumption \ref{assum:subquad} and larger than the constant bounding the density associated to $\mu$. For each $n \in \mathbb{N}$, define the following sets which partition $\C$ into regions based on the size of $\abs{m_\mu(z)}$:
\begin{align*}
A_n^k &:= \set{z \in \C:  \abs{m_\mu(z)} < \frac{e^k}{\sqrt{n}}},\ k = \lfloor 4\ln(\ln{n})\rfloor,\\
A_{n}^k &:= \set{z \in \C: \frac{e^{k-1}}{\sqrt{n}} \leq \abs{m_\mu(z)} < \frac{e^k}{\sqrt{n}}},\ \lfloor 4\ln(\ln{n})\rfloor + 1 \leq k \leq \left\lfloor \ln\left(\sqrt{n}\right)\right\rfloor,\\
A_n &:= \set{z \in \C: \abs{m_\mu(z)} \geq \frac{e^{\left\lfloor \ln\left(\sqrt{n}\right)\right\rfloor}}{\sqrt{n}}}.
\end{align*}
Additionally, define the random variables 
\begin{align*}
N_{n}^k &:= \#\set{1 \leq j \leq n : X_j \in A_n^k},\ \lfloor 4\ln(\ln{n})\rfloor \leq k \leq \left\lfloor \ln\left(\sqrt{n}\right)\right\rfloor,\\
\zeta_{i,j}^{(n)} &:= \begin{cases}\displaystyle\frac{1}{X_i-X_j}\ind_{\abs{X_i - X_j} \geq \frac{(\ln{n})^2}{n\abs{m_\mu(X_i)}}},&\abs{m_\mu(X_i)} \neq 0\\\displaystyle 0, & \text{otherwise}
\end{cases},\ 1 \leq i, j \leq n,\ j\neq i,
\end{align*}
and let $\mathcal{N}_n$ be a $n^{-1/2}$-net of the closed disk $\overline{B}(0,n^{C_\mu})$ 
that satisfies:
\begin{enumerate}[(i)]
\item $\overline{B}(0,n^{C_\mu}) \subseteq \bigcup_{x \in \mathcal{N}_n} B(x,n^{-1/2})$,
\item if $x,y \in \mathcal{N}_n$, and $x \neq y$, then $\abs{x-y} \geq \frac{1}{2\sqrt{n}}$,
\item $\#\mathcal{N}_n = O_\mu(n^{1+2C_\mu})$.
\end{enumerate} 
Such a collection of points exists by e.g. Lemma 3.3 in \cite{OW}. 
Let $\delta > 0$ be a fixed real parameter to be chosen later. We will show that the conclusion of Theorem \ref{thm:wasserstein} holds on the complement of the union of the following ``bad'' events:
\begin{align*}
E_{n}^k &:= \set{N_n^k \geq 2C_\mu e^{2k}\ln(\ln{n})},\ \lfloor 4\ln(\ln{n})\rfloor \leq k \leq \left\lfloor \ln\left(\sqrt{n}\right)\right\rfloor;\\
F_n^i&:= \set{\abs{m_\mu(X_i)} \geq \frac{(\ln{n})^4}{\sqrt{n}},\ \abs{\frac{1}{n-1}\sum_{\substack{j = 1\\j\neq i}}^n\left(\zeta_{i,j}^{(n)} - \E[\zeta_{i,j}^{(n)}|X_i]\right)} \geq \frac{\abs{m_\mu(X_i)}}{2}},\\
&\quad \text{for $1\leq i \leq n$;}\\
G_n^\delta&:= \set{\exists x \in \mathcal{N}_n \cup \set{X_i}_{i=1}^n\ \text{s.t.}\ \#\set{ 1 \leq j \leq n : \abs{X_j-x} < \frac{1}{\sqrt{n}}}\geq 2 + \delta\ln{n}};\\
H_n &: = \set{\eta_n \geq n^{C_\mu}}.
 \end{align*}

For convenience, we use $\mathcal{E}_n^\text{bad}$ to denote the union of all of the ``bad'' events:
\[
\mathcal{E}_n^\text{bad} :=\bigcup_{k=\lfloor 4\ln(\ln{n})\rfloor}^{\left\lfloor \ln\left(c_\mu \sqrt{n}\right)\right\rfloor} E_n^k \cup \bigcup_{i=1}^nF_n^i \cup G_n^\delta \cup H_n.
\]

\subsection{The ``bad'' events are unlikely}
In this subsection, we establish that
\begin{equation}
\P\left(\mathcal{E}_n^\text{bad}\right) = o(1).
\label{eqn:WBadSmall}
\end{equation}
By assumption, $\P(H_n) = o(1)$, so it remains to bound the probabilities of the remaining events. 

\begin{lemma}
\[
\P\left(\bigcup_{k=\lfloor 4\ln(\ln{n})\rfloor}^{\left\lfloor \ln\left(\sqrt{n}\right)\right\rfloor} E_n^k\right) \leq \frac{1}{C_\mu [\ln(\ln{n})]^2} = o(1).
\]
\label{lem:WEprob}
\end{lemma}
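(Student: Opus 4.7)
My plan is to apply Chebyshev's inequality to each $N_n^k$ and then union-bound over $k$.

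The first observation is that whether $A_n^k$ is the initial ball (the case $k=\lfloor 4\ln\ln n\rfloor$) or one of the subsequent annuli, it is always contained in $\{z:|m_\mu(z)|<e^k/\sqrt{n}\}$. Combined with hypothesis \eqref{it:subquad} of Assumption \ref{assum:subquad}, this gives $p_k:=\P(X_1\in A_n^k)\le C_\mu e^{2k}/n$, so $N_n^k$ is Binomial$(n,p_k)$ with
\[
\E N_n^k\le C_\mu e^{2k}\qquad\text{and}\qquad \var N_n^k\le np_k\le C_\mu e^{2k}.
\]
For $n$ so large that $2\ln\ln n-1\ge\ln\ln n$, the occurrence of $E_n^k$ forces $N_n^k-\E N_n^k\ge C_\mu e^{2k}\ln\ln n$, and Chebyshev produces
\[
\P(E_n^k)\le\frac{\var N_n^k}{(C_\mu e^{2k}\ln\ln n)^2}\le\frac{1}{C_\mu e^{2k}(\ln\ln n)^2}.
\]

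Finally, I would sum this bound via the union bound. The sum over $k\ge\lfloor 4\ln\ln n\rfloor$ is a geometric series with ratio $e^{-2}$ whose first term is at most a constant multiple of $(\ln n)^{-8}$, so I obtain
\[
\P\!\left(\bigcup_{k=\lfloor 4\ln\ln n\rfloor}^{\lfloor\ln\sqrt n\rfloor}E_n^k\right)\le\frac{1}{C_\mu(\ln\ln n)^2}\cdot\frac{e^{-2\lfloor 4\ln\ln n\rfloor}}{1-e^{-2}},
\]
which is comfortably smaller than the claimed bound $\tfrac{1}{C_\mu(\ln\ln n)^2}$. I do not foresee any real obstacle: the design of the partition is such that the lower endpoint $\lfloor 4\ln\ln n\rfloor$ makes the geometric factor small enough to absorb the $O(\ln n)$ terms in the union, while the target threshold $2C_\mu e^{2k}\ln\ln n$ exceeds the mean $\E N_n^k$ by a factor of order $\ln\ln n$ that the variance comfortably controls. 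A Markov-type bound would only yield $O(1/\ln\ln n)$ per $k$ and would not survive the union bound, which is why the second-moment method is essential.
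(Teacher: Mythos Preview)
Your proof is correct and follows essentially the same approach as the paper: both bound $\P(E_n^k)$ via Chebyshev on the binomial $N_n^k$ (using $\E N_n^k,\ \var N_n^k\le C_\mu e^{2k}$) to obtain $\P(E_n^k)\le 1/(C_\mu e^{2k}(\ln\ln n)^2)$, then union-bound over $k$ with a geometric sum. The only cosmetic difference is that the paper sums from $k=1$ rather than from $\lfloor 4\ln\ln n\rfloor$, yielding the slightly looser but still sufficient bound $\tfrac{1}{C_\mu(e^2-1)(\ln\ln n)^2}$.
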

\begin{proof}
Observe that for a fixed $n$ and $k$, $\lfloor 4\ln(\ln{n})\rfloor \leq k \leq \left\lfloor \ln\left(\sqrt{n}\right)\right\rfloor$, $N_n^k$ is a binomial random variable with parameters $n$ and $p_k \leq C_\mu e^{2k}/n$. 
By Markov's inequality, we have,
\begin{align*}
\P\left(N^k_n \geq 2C_\mu e^{2k} \ln(\ln{n})\right) &\leq \P\left(\abs{N_n^k -\E\left[N_n^k\right]} \geq C_\mu e^{2k}\ln(\ln{n})\right)\\
&\leq \frac{\var\left(N_n^k\right)}{C_\mu^2e^{4k}[\ln(\ln{n})]^2}\\
&= \frac{np_k(1-p_k)}{C_\mu^2e^{4k}[\ln(\ln{n})]^2}\\
&\leq \frac{1}{C_\mu e^{2k}[\ln(\ln{n})]^2}. 
\end{align*}
If we take the union over $k$, we obtain
\[
\P\left(\bigcup_{k=\lfloor 4\ln(\ln{n})\rfloor}^{\left\lfloor \ln\left(\sqrt{n}\right)\right\rfloor} E_n^k\right) \leq \sum_{k=1}^{\infty} \frac{1}{C_\mu e^{2k}[\ln(\ln{n})]^2} = \frac{1}{C_\mu (e^2-1)[\ln(\ln{n})]^2},
\]
which implies the desired result.
\end{proof}

\begin{lemma}
$\P\left(\bigcup_{i=1}^nF_n^i\right) = o(1).$
\label{lem:WFProb}
\end{lemma}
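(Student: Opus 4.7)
The plan is to control each individual event $F_n^i$ by conditioning on $X_i$ and applying a concentration inequality to the sum $S_i := \sum_{j \neq i} (\zeta_{i,j}^{(n)} - \E[\zeta_{i,j}^{(n)} \mid X_i])$, and then to union bound over $i$. The key observation is that, conditionally on $X_i$, the random variables $\{\zeta_{i,j}^{(n)}\}_{j \neq i}$ are i.i.d. since $X_1, \ldots, X_n$ are i.i.d., and the conditional distribution of $X_j$ (for $j \neq i$) is just $\mu$.

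There are two crucial quantitative features of $\zeta_{i,j}^{(n)}$. First, by the indicator in its definition, we have the deterministic bound
\[
|\zeta_{i,j}^{(n)}| \leq K_i := \frac{n |m_\mu(X_i)|}{(\ln n)^2}
\]
whenever $|m_\mu(X_i)| \neq 0$. Second, since $\mu$ has density bounded by $C_\mu$, a direct polar-coordinates computation (splitting according to whether $|X_j - X_i|$ is at most $1$ or larger, and using $\eta_n \leq n^{C_\mu}$ to handle the tail) gives the conditional second-moment bound
\[
\E\bigl[|\zeta_{i,j}^{(n)}|^2 \,\big|\, X_i\bigr] = O_\mu(\ln n),
\]
after working on $H_n^c$ (which, as already noted, is an event of probability $1 - o(1)$). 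The second moment is therefore much smaller than $K_i^2$, which is why Hoeffding's inequality alone (which only sees $K_i$) will be too weak: a direct application yields a tail of the form $\exp(-c(\ln n)^4/n)$, which is essentially useless for a union bound over $i$. Instead, I would use a Bernstein-type inequality applied separately to the real and imaginary parts of $S_i$, choosing the deviation threshold $(n-1)|m_\mu(X_i)|/2$. The Bernstein bound then has the schematic form
\[
\P\bigl(|S_i| \geq \tfrac{n-1}{2}|m_\mu(X_i)| \,\big|\, X_i\bigr) \leq C \exp\!\left( -\frac{c\, n^2 |m_\mu(X_i)|^2}{n \cdot O(\ln n) + K_i \cdot n |m_\mu(X_i)|} \right),
\]
and on the event $\{|m_\mu(X_i)| \geq (\ln n)^4/\sqrt{n}\}$ both terms in the denominator yield exponents that are at least of order $(\ln n)^2$.

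Combining, one obtains $\P(F_n^i \mid X_i) \leq C \exp(-c(\ln n)^2)$ on $\{|m_\mu(X_i)| \geq (\ln n)^4/\sqrt{n}\}$, with the event being empty otherwise. Taking expectation, $\P(F_n^i) \leq C \exp(-c(\ln n)^2)$, which is smaller than any fixed negative power of $n$. A union bound over $1 \leq i \leq n$ then gives $\P(\bigcup_i F_n^i) \leq nC\exp(-c(\ln n)^2) = o(1)$, as required.

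The main obstacle is the first paragraph's variance estimate: one must be careful that the truncation $|X_i - X_j| \geq (\ln n)^2/(n|m_\mu(X_i)|)$ coupled with the bounded-density assumption produces a variance of only logarithmic size rather than something comparable to $K_i$. This logarithmic gap between the variance and the square of the almost-sure bound is exactly what allows Bernstein to beat Hoeffding and deliver a tail summable under the union bound.
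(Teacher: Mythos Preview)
Your approach is correct and takes a genuinely different route from the paper. The paper uses a fourth-moment Markov bound: it computes $\E\bigl[|\zeta_{i,j}^{(n)}|^k \mid X_i\bigr]$ for $k=1,2,3,4$, assembles these into
\[
\E\Bigl[\,\bigl|\textstyle\sum_{j\neq i}(\zeta_{i,j}^{(n)}-\E[\zeta_{i,j}^{(n)}\mid X_i])\bigr|^4 \;\Big|\; X_i\Bigr] \leq \frac{C_\mu'' |m_\mu(X_i)|^2 n^3}{(\ln n)^4},
\]
and applies Markov to get the conditional bound
\[
\P(F_n^i\mid X_i)\leq \frac{C_\mu''}{n|m_\mu(X_i)|^2(\ln n)^4}.
\]
This bound depends on $|m_\mu(X_i)|$ and is too weak near the threshold $|m_\mu(X_i)|\approx (\ln n)^4/\sqrt{n}$ for a direct union bound to succeed. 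The paper therefore decomposes $\{1,\dots,n\}$ according to which level set $A_n^k$ the root $X_i$ lies in, and uses Assumption~2.1(i) (the bound $\P(|m_\mu(X_1)|<\eps)\leq C_\mu\eps^2$) to control the measure of the bad levels; the resulting telescoping sum over $k$ produces the $o(1)$.

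Your Bernstein argument exploits the same variance estimate $\E[|\zeta_{i,j}^{(n)}|^2\mid X_i]=O_\mu(\ln n)$ but combines it with the truncation bound $K_i=n|m_\mu(X_i)|/(\ln n)^2$ in an exponential inequality, which yields the \emph{uniform} conditional bound $\P(F_n^i\mid X_i)\leq C\exp(-c(\ln n)^2)$ on the defining event. This is superpolynomially small, so a plain union bound over $i$ suffices and Assumption~2.1(i) is never invoked. The trade-off: the paper's argument is more elementary (only moments and Markov), while yours is shorter and actually shows that this particular lemma does not require the anti-concentration hypothesis on $m_\mu$. One minor point: your appeal to $\eta_n\leq n^{C_\mu}$ for the tail of the variance integral is unnecessary, since the contribution from $|X_i-X_j|>1$ is at most $1$ regardless.
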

\begin{proof}
We will use the method of moments to control the probability of each $F_n^i$, $1 \leq i \leq n$. Since $F_n^i \subset \set{\abs{m_\mu(X_i)} \geq n^{-1/2}}$, we will often assume that $\abs{m_\mu(X_i)} \geq n^{-1/2}$ in our calculations. Recall from Lemma \ref{lem:CSnice}, part \eqref{it:CS1} that $\abs{m_\mu(X_i)}$ is almost surely bounded above by an absolute constant (that depends only on $\mu$).

First, consider that for complex-valued random variables $X$ and $Y$, where $Y$ has a finite fourth absolute moment,
\begin{equation}
\begin{aligned}
&\E\left[\abs{Y - \E\left[Y\mid X\right]}^4\;\middle\vert\; X\right]\\
&\qquad\qquad\leq \E\left[\abs{Y}^4\;\middle\vert\; X\right] + 6\left(\E\left[\abs{Y}^2\;\middle\vert\; X\right]\right)^2 + 4\E\left[\abs{Y}^3\;\middle\vert\; X\right]\cdot \E\left[ \abs{Y}\;\middle\vert\; X\right].
\end{aligned}
\label{eqn:WfourY}
\end{equation}
(This inequality could be derived by writing
\[
\abs{Y - \E\left[Y\mid X\right]}^4 = \left(\vphantom{\overline{\E[Y\mid X]}}Y-\E[ Y\mid X]\right)^2\left(\overline{Y}-\overline{\E[Y \mid X]}\right)^2,
\]
expanding the expression at right, and bounding each of the resulting terms with an appropriate term from the right side of \eqref{eqn:WfourY}.)

Now, for $X=X_i$ and $Y = \zeta_{i,j}^{(n)}$, where $1 \leq i,j \leq n$ with $j \neq i$,
\begin{align*}
\E\left[\abs{Y}^4\;\middle\vert\; X_i\right] &\leq \E\left[\frac{1}{\abs{X_i-X_j}^4}\ind_{\frac{(\ln{n})^2}{n\abs{m_\mu(X_i)}} \leq \abs{X_i-X_j}\leq 1} \;\middle\vert\; X_i\right]\\
&\qquad+ \E\left[\frac{1}{\abs{X_i-X_j}^4}\ind_{\abs{X_i-X_j}>1}\;\middle\vert\; X_i\right]\\
&\leq 2\pi C_\mu \int_{\frac{(\ln{n})^2}{n\abs{m_\mu(X_i)}}}^1\frac{r}{r^4}\,dr + 1\\
&= \frac{\pi C_\mu n^2\abs{m_\mu(X_i)}^2}{(\ln{n})^4}- \pi C_\mu + 1,
\end{align*}
and similarly, 
\begin{align*}
\E\left[\abs{Y}^3\;\middle\vert\; X_i\right] &\leq  \frac{2\pi C_\mu n\abs{m_\mu(X_i)}}{(\ln{n})^2}- 2\pi C_\mu + 1,\\
\E\left[\abs{Y}^2\;\middle\vert\; X_i\right] &\leq  2\pi C_\mu \ln\left(\frac{n\abs{m_\mu(X_i)}}{(\ln{n})^2}\right) + 1,\\
\E\left[\abs{Y}\;\middle\vert\; X_i\right] &\leq  2\pi C_\mu  + 1.
\end{align*}
Consequently, via \eqref{eqn:WfourY}, there are positive constants $C'_\mu$, $K_\mu$ that depend only on $\mu$ so  that if $n \geq K_\mu$, on the event $\abs{m_\mu(X_i)} \geq n^{-1/2}$,   
\begin{equation}
\E\left[\abs{\zeta_{i,j}^{(n)}-\E\left[\zeta_{i,j}^{(n)}\;\middle\vert\;X_i\right]}^4\;\middle\vert\; X_i\right] \leq \frac{C'_\mu\abs{m_\mu(X_i)}^2n^2}{(\ln{n})^4}.
\label{eqn:Wzeta4}
\end{equation}
Next, we show that there are constants $C''_\mu, K'_\mu >0$ that depend only on $\mu$, so that for $n\geq K'_\mu$ and any fixed $i$, $1 \leq i \leq n$, 
\begin{equation}
\ind_{\abs{m_\mu(X_i)} \geq \frac{1}{\sqrt{n}}}\cdot\E\left[\abs{\sum_{\substack{j=1\\j\neq i}}^n\left(\zeta_{i,j}^{(n)} - \E\left[\zeta_{i,j}^{(n)}|X_i\right]\right)}^4\;\middle\vert\; X_i\right] \leq\frac{C''_\mu\abs{m_\mu(X_i)}^2n^3}{(\ln{n})^4}.
\label{eqn:WzetaSum} 
\end{equation}
Write 
\begin{align*}
&\E\left[\abs{\sum_{\substack{j=1\\j\neq i}}^n\left(\zeta_{i,j}^{(n)} - \E\left[\zeta_{i,j}^{(n)}|X_i\right]\right)}^4\;\middle\vert\; X_i\right]\\
&\qquad= \E\left[\left(\sum_{\substack{j=1\\j\neq i}}^n\left(\zeta_{i,j}^{(n)} - \E\left[\zeta_{i,j}^{(n)}|X_i\right]\right)\right)^2\left(\overline{\sum_{\substack{j=1\\j\neq i}}^n\left(\zeta_{i,j}^{(n)} - \E\left[\zeta_{i,j}^{(n)}|X_i\right]\right)}\right)^2\;\middle\vert\; X_i\right],
\end{align*}
and observe that if we distribute the factors inside the expectation, the independence of $\set{X_j}_{j=1}^n$ implies that the only terms which contribute to a nonzero expectation are bounded by expectations of the form 
\[
\E\left[\abs{\zeta_{i,j}^{(n)} - \E\left[\zeta_{i,j}^{(n)}\;\middle\vert\;X_i\right]}^2\cdot\abs{\zeta_{i,k}^{(n)}-\E\left[\zeta_{i,k}^{(n)}\;\middle\vert\;X_i\right]}^2\;\middle\vert\; X_i\right],
\]
where $1 \leq j,k \leq n$ and $j,k \neq i$. By a routine counting argument and the fact that $\zeta_{i,j}^{(n)}$, $j \neq i$ are identically distributed, it follows that
\begin{align*}
&\E\left[\abs{\sum_{\substack{j=1\\j\neq i}}^n\left(\zeta_{i,j}^{(n)} - \E\left[\zeta_{i,j}^{(n)}\;\middle\vert\;X_i\right]\right)}^4 \;\middle\vert\; X_i\right]\\
&\hspace{1.2in}\leq (n-1)\E\left[\abs{\zeta_{i,l}^{(n)}-\E\left[\zeta_{i,l}^{(n)}\;\middle\vert\;X_i\right]}^4\;\middle\vert\; X_i\right]\\
&\hspace{1.2in}\qquad+ \binom{n-1}{2}\binom{4}{2}\left(\E\left[\abs{\zeta_{i,l}^{(n)}-\E\left[\zeta_{i,l}^{(n)}\;\middle\vert\;X_i\right]}^2 \;\middle\vert\; X_i\right]\right)^2,
\end{align*}
where $l \neq i$ is any fixed index. From \eqref{eqn:Wzeta4} and the bounds on $\E[\abs{Y^2} \mid X_i]$ and $\E\left[\abs{Y}\mid X_i\right]$ above, we can find $C''_\mu, K'_\mu > 0$ large enough so that $n \geq K'_\mu$ implies \eqref{eqn:WzetaSum}. (For the asymptotics, we are using that $n^{-1/2}\leq \abs{m_\mu(X_i)} = O_\mu(1)$, where the implied constant depends only on $\mu$.) 
Via Markov's inequality, it follows that for $n\geq K'_\mu$ and a fixed $i$, $1\leq i \leq n$, on the event $\abs{m_\mu(X_i)} \geq n^{-1/2}$,
\begin{equation}
\P\left(\abs{\frac{1}{n-1}\sum_{\substack{j=1\\j\neq i}}^n\left(\zeta_{i,j}^{(n)} - \E[\zeta_{i,j}^{(n)}|X_i]\right)} \geq \frac{\abs{m_\mu(X_i)}}{2} \;\middle\vert\; X_i\right) \leq\frac{C''_\mu}{n\abs{m_\mu(X_i)}^2(\ln{n})^4}.
\label{eqn:WXiNearMean}
\end{equation}
%
%
%
%
%
%
%
%
We conclude the proof by demonstrating that $\P(\cup_{i=1}^nF_n^i) = o(1)$. 
Indeed, for $n \geq K'_\mu$,
\begin{align*}
&\P\left(\bigcup_{i=1}^n F_n^i\right)\leq n \P\left(F_n^1\right)\\
&\quad= n \P(\emptyset) + n \sum_{k=\lfloor 4\ln(\ln{n})\rfloor+1}^{\left\lfloor\ln\left(\sqrt{n}\right)\right\rfloor}\P\left(\set{X_1 \in A_n^k} \cap F_n^1\right) + n\cdot \P\left(\set{X_1 \in A_n} \cap F_n^1\right)\\
&\quad= n \sum_{k=\lfloor 4\ln(\ln{n})\rfloor+1}^{\left\lfloor\ln\left(\sqrt{n}\right)\right\rfloor}\E\left(\ind_{\set{X_1 \in A_n^k}} \cdot \P(F_n^1\mid X_1)\right) + n\cdot \E\left(\ind_{\set{X_1 \in A_n}} \cdot \P(F_n^1\mid X_1)\right)\\
&\quad\leq n \sum_{k=\lfloor 4\ln(\ln{n})\rfloor+1}^{\left\lfloor\ln\left(\sqrt{n}\right)\right\rfloor}\E\left( \frac{C''_\mu\cdot\ind_{\set{X_1 \in A^k_n}}}{n\abs{m_\mu(X_1)}^2(\ln{n})^4}\right) + n\cdot \E\left(\frac{C''_\mu\cdot\ind_{\set{X_1 \in A_{n}}}}{n\abs{m_\mu(X_1)}^2(\ln{n})^4}\right)\\
&\quad\leq \sum_{k=\lfloor 4\ln(\ln{n})\rfloor+1}^{\left\lfloor\ln\left(\sqrt{n}\right)\right\rfloor}\frac{C''_\mu\cdot n^2\cdot \P(X_1 \in A^k_n)}{n(\ln{n})^4e^{2k-2}} + \frac{C''_\mu \cdot n^2\cdot \P(X_1 \in A_{n})}{n(\ln{n})^4e^{2\left\lfloor \ln\left(\sqrt{n}\right)\right\rfloor}},
\end{align*}
where we used \eqref{eqn:WXiNearMean} to bound $\P(F^1_n\mid X_1)$. 
Assumption \ref{assum:subquad} guarantees that
\[
\P(X_1 \in A^k_n) \leq C_\mu\cdot \frac{e^{2k}}{n},\ \lfloor 4\ln(\ln{n})\rfloor \leq k \leq \left\lfloor \ln\left(\sqrt{n}\right)\right\rfloor.
\]
We also have
\[
e^{2\left\lfloor \ln\left(\sqrt{n}\right)\right\rfloor} \geq e^{2\ln\left(\sqrt{n}\right)-2} = ne^{-2}.
\]
Hence, for large $n$, our calculation from above yields
\begin{align*}
\P\left(\bigcup_{i=1}^n F^i_n\right) &\leq \sum_{k=1}^{\left\lfloor\ln\left(\sqrt{n}\right)\right\rfloor}\frac{C''_\mu C_\mu e^2}{(\ln{n})^4} + \frac{C''_\mu e^2}{(\ln{n})^4}\cdot 1 = o(1).
\end{align*}

\end{proof}

\begin{lemma}
For a fixed $\delta \in \left(0,\frac{1}{2\pi C_\mu}\right)$,
\[
\P(G_n^\delta) = O_\mu\left(\frac{n^{2+2C_\mu}}{\left(1+\delta\ln{n}\right)^{(2 + \delta\ln{n})}}\right) = o_\delta(1).
\]
\label{lem:WGProb}
\end{lemma}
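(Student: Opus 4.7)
The plan is a union bound over the finite set $\mathcal{N}_n \cup \set{X_i}_{i=1}^n$ combined with a binomial tail estimate for the count of roots in a small disk. Let $k_n := \lceil 2 + \delta \ln n \rceil$ and, for any $x \in \C$, write $N_x := \#\set{1 \leq j \leq n : \abs{X_j - x} < n^{-1/2}}$. Then
\[
\P(G_n^\delta) \leq \sum_{x \in \mathcal{N}_n}\P(N_x \geq k_n) + \sum_{i=1}^n\P(N_{X_i} \geq k_n).
\]

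For a fixed $x \in \mathcal{N}_n$, the ball $B(x, n^{-1/2})$ has area $\pi/n$, so the bound on the density of $\mu$ gives $\P(\abs{X_1 - x} < n^{-1/2}) \leq \pi C_\mu/n$; hence $N_x$ is stochastically dominated by a $\operatorname{Bin}(n, \pi C_\mu/n)$ variable and
\[
\P(N_x \geq k_n) \leq \binom{n}{k_n}(\pi C_\mu/n)^{k_n} \leq \frac{(\pi C_\mu)^{k_n}}{k_n!}.
\]
For the second sum I would condition on $X_i$: the remaining $n-1$ roots are iid with density bounded by $C_\mu$, so the number of indices $j \neq i$ with $\abs{X_j - X_i} < n^{-1/2}$ is stochastically dominated by $\operatorname{Bin}(n-1, \pi C_\mu/n)$ uniformly in $X_i$, and must be at least $k_n - 1$. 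This yields an essentially identical tail estimate.

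Next, I would apply Stirling's inequality $k_n! \geq (k_n/e)^{k_n}$ to rewrite the tail as
\[
\frac{(\pi C_\mu)^{k_n}}{k_n!} \leq \left(\frac{e\pi C_\mu}{k_n}\right)^{k_n} \leq \left(\frac{e\pi C_\mu}{1+\delta \ln n}\right)^{2+\delta \ln n},
\]
and combine with $\#\mathcal{N}_n = O_\mu(n^{1+2C_\mu})$ (plus the $n$ random base points) to obtain a total bound carrying an extra factor of $(e\pi C_\mu)^{2+\delta \ln n} = (e\pi C_\mu)^2 \, n^{\delta \ln(e\pi C_\mu)}$ in the numerator. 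The hypothesis $\delta < 1/(2\pi C_\mu)$ is precisely what is needed so that $1 + 2C_\mu + \delta \ln(e\pi C_\mu) \leq 2+2C_\mu$, giving the claimed form. The $o_\delta(1)$ conclusion is then immediate, since $(1+\delta \ln n)^{\delta \ln n} \geq \exp(\delta \ln n \cdot \ln \ln n) = n^{\delta \ln \ln n}$ for $n$ sufficiently large, which eventually dominates any fixed power of $n$.

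The main obstacle is the bookkeeping after Stirling: one must verify that the stray polynomial factor $n^{\delta \ln(e\pi C_\mu)}$ produced by $(e\pi C_\mu)^{\delta \ln n}$ is absorbed into the prefactor $n^{2+2C_\mu}$, which is exactly the role played by the restriction $\delta < 1/(2\pi C_\mu)$. Conceptually, the only subtlety is the handling of the random base points $\set{X_i}$, which is resolved cleanly by conditioning on $X_i$ and exploiting independence of the remaining roots.
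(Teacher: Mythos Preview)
Your argument is correct and follows the same overall scaffold as the paper---a union bound over $\mathcal{N}_n \cup \{X_i\}_{i=1}^n$ together with a binomial tail estimate, with conditioning on $X_i$ to handle the random centers---but the tail bound itself is obtained differently. The paper uses a Chernoff bound: from $\E[e^{tN_x}] \leq e^{\pi C_\mu(e^t-1)}$ with the choice $t = \ln\bigl(1 + \tfrac{1}{\pi C_\mu}\ln n\bigr)$ and Markov's inequality it gets
\[
\P(N_x \geq 2+\delta\ln n) \leq \frac{n}{\bigl(1+\tfrac{1}{\pi C_\mu}\ln n\bigr)^{2+\delta\ln n}},
\]
and then invokes $\delta < 1/(\pi C_\mu)$ to replace the base by $1+\delta\ln n$. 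Your route through $\binom{n}{k_n}p^{k_n} \leq (\pi C_\mu)^{k_n}/k_n!$ and Stirling is more elementary (no moment generating functions) but manufactures the extra factor $(e\pi C_\mu)^{\delta\ln n} = n^{\delta\ln(e\pi C_\mu)}$, whose absorption into $n^{2+2C_\mu}$ requires the side check $\delta\ln(e\pi C_\mu) \leq 1$. This does indeed follow from $\delta < 1/(2\pi C_\mu)$, since the function $y \mapsto (1+\ln y)/(2y)$ is maximized at $y=1$ with value $1/2$.

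One small slip at the very end: the inequality $(1+\delta\ln n)^{\delta\ln n} \geq n^{\delta\ln\ln n}$ would require $1+\delta\ln n \geq \ln n$, which fails for large $n$ whenever $\delta < 1$. The easy fix is to bound below by $(\delta\ln n)^{\delta\ln n} = n^{\delta\ln\ln n + \delta\ln\delta}$, which still dominates every fixed power of $n$, so the $o_\delta(1)$ conclusion stands.
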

\begin{proof}
This is a straight-forward application of the Chernoff bound for binomial random variables. In particular, for each $x \in \mathcal{N}_n$, define the random variable \[N_{x}:= \sum_{j=1}^n\ind_{\abs{X_j - x} \leq \frac{1}{\sqrt{n}}},\] which has a binomial distribution with parameters $n$ and $p \leq \pi C_\mu/n$. The moment generating function for $N_x$ is \[\E[e^{tN_x}] = (1 + p(e^t-1))^n \leq e^{np(e^t-1)} \leq e^{\pi C_\mu(e^t-1)}.\] Choosing $t = \ln(1 + 1/(\pi C_\mu)\ln{n})$ establishes 
\[
\E\left[(1 + 1/(\pi C_\mu)\ln{n})^{N_x}\right] \leq n,
\] 
and by Markov's inequality, we obtain
\[
\P\left(N_x \geq 2 + \delta\ln{n}\right) \leq \frac{\E\left[(1 + 1/(\pi C_\mu)\ln{n})^{N_x}\right]}{(1+1/(\pi C_\mu)\ln{n})^{(2+\delta\ln{n})}}\leq \frac{n}{(1+1/(\pi C_\mu)\ln{n})^{(2+\delta\ln{n})}}.
\]
Note that the bound is independent of $x$, and that the argument can be easily modified (by conditioning on $X_i$) to show that for a fixed $1 \leq i \leq n$,
\[
\P\left(\sum_{\substack{{j=1}\\j\neq i}}\ind_{\abs{X_j - X_i} \leq \frac{1}{\sqrt{n}}} \geq 2 + \delta\ln{n}\right) \leq \frac{n}{(1+1/(2\pi C_\mu)\ln{n})^{(2+\delta\ln{n})}}.
\]
Hence, we can apply the the union bound over all $x \in \mathcal{N}_n$ and $X_1, \ldots, X_n$ to obtain the desired result. 
\end{proof}

Combining Lemmas \ref{lem:WEprob}, \ref{lem:WFProb}, and \ref{lem:WGProb} from this subsection establishes \eqref{eqn:WBadSmall}, so for the remainder of the proof, we work on the complements of the ``bad'' events.


\subsection{Constructing disjoint domains that partition the roots}

We will create disjoint domains which contain clusters of roots of $p_n(z)$ that are close to one another and show that inside each domain, the numbers of roots and critical points of $p_n(z)$ are the same. The domains will be disjoint to ensure that no roots or critical points are counted more than once (see Figure \ref{fig:loops} for reference). For technical reasons involving Rouch\'{e}'s theorem, we will require that the boundaries of the regions be simple, closed curves.  

Our strategy will be to make an open ball around each $X_i$, $1 \leq i \leq n$, and to consider the path-connected components of the union of these balls. Some of the resulting regions may not be simply connected, so we need to ``fill in the holes.'' To start, define the random collection of open balls
\[
\mathcal{C}_n := \set{B\left(x, \frac{(\ln{n})^3}{n \cdot \max\set{\abs{m_\mu(x)},\frac{(\ln{n})^4}{\sqrt{n}}}}\right): x \in \set{X_j}_{j=1}^n},
\]
and define on $\set{1, 2, \ldots, n}$ the equivalence relation given by the following rule: $i\sim j$ if and only if there is a collection \[\set{B_0, B_1, \ldots, B_l} \subset \mathcal{C}_n,\]
with
\begin{align*}
B_0 &= B\left(X_i, \frac{(\ln{n})^3}{n\cdot \max\set{\abs{m_\mu(X_i)},\frac{(\ln{n})^4}{\sqrt{n}}}}\right),\\
\intertext{and}
B_l &= B\left(X_j, \frac{(\ln{n})^3}{n\cdot\max\set{\abs{m_\mu(X_j)}, \frac{(\ln{n})^4}{\sqrt{n}}}}\right),
\end{align*}
such that $B_k \cap  B_{k+1} \neq \emptyset$ for $0 \leq k \leq l-1$. Let $\mathcal{P}_n$ be the set of equivalence classes induced by $\sim$. The idea is that for a fixed $P \in \mathcal{P}_n$,
\[
\mathcal{U}_{n,P} := \bigcup_{i \in P} B\left(X_i, \frac{(\ln{n})^3}{n\cdot \max\set{\abs{m_\mu(X_i)},\frac{(\ln{n})^4}{\sqrt{n}}}}\right)
\]
forms a connected component of $\cup_{B\in \mathcal{C}_n} B$. Each light gray region in Figure \ref{fig:loops} is one connected component, $\mathcal{U}_{n,P}$ for some $P \in \mathcal{P}_n$; a ``zoomed-in'' version is presented in Figure \ref{fig:walsh}. Notice that some of the $\mathcal{U}_{n,P}$, $P \in \mathcal{P}_n$ may not have simple, closed boundaries, and some could be ``nested'' inside ``holes'' formed by others. We address these concerns in the following discussion, where we demonstrate how to select a simple, closed component of the boundary of each $\mathcal{U}_{n,P}$, $P \in \mathcal{P}_n$, whose interior contains $\mathcal{U}_{n,P}$. 

More specifically, for each equivalence class $P \in \mathcal{P}_n$, we will create a simple closed curve, $\gamma_{n,P} \subset \partial\mathcal{U}_{n,P}$, such that each $X_j$, $j \in P$ is contained interior to the bounded component of $\C\setminus\gamma_{n,P}$. Furthermore, we will show that the interiors of the bounded regions defined by the curves $\set{\gamma_{n,P}}_{P \in \mathcal{P}_n}$ are partially ordered with respect to set inclusion.  This will allow us to combine ``nested'' regions.

To that end, fix an equivalence class $P \in \mathcal{P}_n$, and recall the definition of the open set $\mathcal{U}_{n,P}$ from above. For simplicity, write
$
\mathcal{U}_{n,P} = \bigcup_{i=1}^l B_i,
$ 
where $B_1, \ldots, B_l$ are \textit{distinct} open balls (in the definition of $\mathcal{U}_{n,P}$, some of the open balls could coincide if, for example $X_i = X_j$ for $i,j \in P$, $i \neq j$).  
We use $\mathcal{V}_{n,P}$ to denote the unique unbounded, path-connected component of the complement of $\overline{\mathcal{U}_{n,P}}$. (The complement of $\overline{\mathcal{U}_{n,P}}$ has a unique unbounded, path-connected component because $\overline{\mathcal{U}_{n,P}}$, a union of finitely  many closed disks, is compact.) 
By construction, the boundaries $\partial \mathcal{U}_{n,P}\supseteq \partial\mathcal{V}_{n,P}$ consist of arcs of the finitely many circles $\partial B_1, \ldots, \partial B_l$.
\begin{lemma}
	The curve $\gamma_{n,P}:=\partial \mathcal{V}_{n,P}$ is a simple, closed curve (i.e. a Jordan curve), and $\mathcal{U}_{n,P}$ is contained in the bounded component of $\C \setminus \gamma_{n,P}$. 
\end{lemma}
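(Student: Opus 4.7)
The plan is to extract $\gamma_{n,P}$ as the ``outer contour'' of the finite union of overlapping closed disks $\overline{\mathcal{U}_{n,P}}$, and then to verify that this contour inherits a Jordan curve structure from the combinatorics of the circles $\partial B_i$. I would proceed in three stages, followed by a brief remark on containment.

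First, I would show that $\overline{\mathcal{U}_{n,P}} = \bigcup_{i=1}^l \overline{B_i}$ is compact (a finite union of closed disks) and path-connected (the equivalence relation defining $P$ supplies, for any $i,j\in P$, a chain $B_{i_0}=B_i, B_{i_1},\ldots, B_{i_m}=B_j$ with $B_{i_k}\cap B_{i_{k+1}}\neq\emptyset$, which passes to the closures). Hence $\C\setminus\overline{\mathcal{U}_{n,P}}$ has a single unbounded path-component, namely $\mathcal{V}_{n,P}$, and $\gamma_{n,P}=\partial\mathcal{V}_{n,P}\subset\partial\overline{\mathcal{U}_{n,P}}\subset\bigcup_{i=1}^l\partial B_i$ is a finite union of circular arcs.

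Second, I would analyze $\gamma_{n,P}$ locally. At a point $z\in\gamma_{n,P}$ that lies on a single circle $\partial B_i$ only, a small disk around $z$ meets $\gamma_{n,P}$ in a piece of $\partial B_i$. For a boundary point $z$ lying on two circles $\partial B_i$ and $\partial B_j$, generic position (no tangencies, no triple intersections) of the centers $X_i$ forces the two circles to cross transversely at $z$; by inspection, exactly two of the four sectors around $z$ lie in $\mathcal{V}_{n,P}$, so $\gamma_{n,P}$ locally consists of two circular arcs meeting only at $z$. This generic configuration holds with probability one under the continuity hypothesis on $\mu$, and the measure-zero failure set can be absorbed into $\mathcal{E}_n^{\text{bad}}$.

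Third, I would glue these local pieces into a single Jordan curve by a walking argument. Starting at a point $z_0$ in the relative interior of an arc on some $\partial B_{i_0}$, I walk along $\partial B_{i_0}$ in the orientation that keeps $\mathcal{V}_{n,P}$ on the left, until I first reach an intersection $z_1\in\partial B_{i_0}\cap\partial B_{i_1}$ on $\gamma_{n,P}$. Transversality of the crossing selects a unique outgoing arc of $\partial B_{i_1}$ that continues to bound $\mathcal{V}_{n,P}$; I switch to it and iterate. Because $\gamma_{n,P}$ comprises only finitely many arcs, the walk must close up into a cycle, and by construction it visits each constituent arc exactly once, realizing $\gamma_{n,P}$ as a simple closed curve. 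Containment of $\mathcal{U}_{n,P}$ in the bounded complementary component is then immediate: since $\mathcal{V}_{n,P}$ contains a neighborhood of infinity, the Jordan Curve Theorem identifies $\mathcal{V}_{n,P}$ as the unbounded component of $\C\setminus\gamma_{n,P}$, and the open connected set $\mathcal{U}_{n,P}$, being disjoint from $\gamma_{n,P}\cup\mathcal{V}_{n,P}$, must lie in the bounded component.

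The subtlest step will be the genericity portion of stage two: ruling out pinch points, mutual tangencies, and triple coincidences among the randomly positioned circles. I expect to handle these degeneracies by enlarging $\mathcal{E}_n^{\text{bad}}$ to include the (zero-probability) event of a non-generic configuration, rather than attempting a delicate combinatorial analysis of singular meeting points; once that exceptional event is excluded, the traversal argument runs cleanly.
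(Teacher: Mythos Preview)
Your approach differs from the paper's. Rather than constructing the curve by traversal, the paper invokes a converse to the Jordan Curve Theorem due to Sch\"onflies: a compact set $F\subset\C$ whose complement has exactly two regions, from each of which every point of $F$ is accessible, must be a simple closed curve. The paper then shows that $\mathcal{U}'_{n,P}:=\C\setminus\overline{\mathcal{V}_{n,P}}$ is connected (so $\C\setminus\gamma_{n,P}$ has exactly the two regions $\mathcal{V}_{n,P}$ and $\mathcal{U}'_{n,P}$) and checks accessibility by a local analysis that handles points lying on arbitrarily many of the circles $\partial B_i$. This proves the lemma for every configuration, with no genericity hypothesis.

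Your walking argument has a real gap. Granting the local analysis, $\gamma_{n,P}$ is a compact $1$-manifold and hence a finite disjoint union of Jordan curves; the walk in stage three traces out one of them, but your claim that it ``visits each constituent arc exactly once'' is precisely the assertion that there is only one component, and you have not justified it. One fix: if $\gamma^{(1)},\gamma^{(2)}$ were two components, then since $\mathcal{V}_{n,P}$ is connected, unbounded, and disjoint from both, each $\gamma^{(k)}$ lies in the exterior of the other, so their bounded Jordan interiors $R_1^{\mathrm{in}},R_2^{\mathrm{in}}$ are disjoint; but at any smooth point of $\gamma^{(k)}$ the side opposite $\mathcal{V}_{n,P}$ lies in some $B_i\subset\mathcal{U}_{n,P}$, so the connected open set $\mathcal{U}_{n,P}$ meets both $R_1^{\mathrm{in}}$ and $R_2^{\mathrm{in}}$ while avoiding $\gamma^{(1)}\cup\gamma^{(2)}\subset\partial\mathcal{U}_{n,P}$, a contradiction.

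Two smaller points. In stage two, at a transverse crossing of $\partial B_i$ and $\partial B_j$ on $\gamma_{n,P}$, three of the four local sectors lie in $B_i\cup B_j\subset\mathcal{U}_{n,P}$, so exactly \emph{one} sector, not two, lies in $\mathcal{V}_{n,P}$; your conclusion that $\gamma_{n,P}$ is locally two arcs meeting at $z$ is nonetheless correct. And since the lemma is stated deterministically, folding tangencies and triple points into $\mathcal{E}_n^{\mathrm{bad}}$ only yields an almost-sure version---adequate for Theorem~\ref{thm:wasserstein}, but weaker than what is written.
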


\begin{proof}
	There are several ways that one could proceed. One method is to construct a simple path starting on the boundary $\partial\mathcal{V}_{n,P}$ that follows circle arcs until it returns to the start. A second approach is to consider the genus of the region $\mathcal{U}_{n,P}$, find generators for its fundamental group, and ``close-off'' any ``holes.'' We present, in detail, a third method that relies on the following converse of the Jordan curve theorem due to Sch\"{o}nflies (see \cite{FI,TCar}, and the discussion on pp. 13 and 67 of \cite{W}). The theorem statement requires two definitions.
	
	
	A \textit{region} of the closed set $F \subset \C$ is defined as a path-connected component of $\C \setminus F$. A point $x$ in $F$ is \textit{accessible} from a region $\mathcal{R}$ if there is a point $y \in \mathcal{R}$ and a simple path from $y$ to $x$, whose intersection with $F$ is $\set{x}$.
	
	\begin{theorem}[Theorem 1 in \cite{TCar}; see also Theorem II 5.38 on p. 67 of \cite{W}]
		If $F$ is a compact set in $\C$ with precisely two regions such that every point of $F$ is accessible from each of those regions, then $F$ is a simple closed curve. 
		\label{thm:JordanCon}
	\end{theorem}

	Our goal is to show that the compact set $\gamma_{n,P} = \partial \mathcal{V}_{n,P}$ has precisely two regions from which $\gamma_{n,P}$ is accessible at every point. Define $\mathcal{U}'_{n,P} := \C \setminus \overline{\mathcal{V}_{n,P}}$. Observe that $\C \setminus \gamma_{n,P} = \mathcal{V}_{n,P} \cup \mathcal{U}'_{n,P}$, where the union is disjoint. It is clear that $\mathcal{V}_{n,P}$ is a region of $\gamma_{n,P}$; next, we argue that $\mathcal{U}'_{n,P}$ is also a region of $\gamma_{n,P}$.
	
	Since $\mathcal{U}'_{n,P}\subset\C$ is open, it suffices to show that $\mathcal{U}'_{n,p}$ is connected. Suppose, for a contradiction, that this is not the case. Then, there are disjoint, non-empty open sets $S,T\subset \C$ such that $S \cup T = \mathcal{U}'_{n,P}$. By construction, the open set $\mathcal{U}_{n,P} \subset \mathcal{U}'_{n,P}$ is path-connected, and hence connected, so $\mathcal{U}_{n,P}$ must be completely contained in either $S$ or $T$. Suppose, without loss of generality, that $\mathcal{U}_{n,P} \subset S$. Since $T$ is non-empty, there is some $x \in T$. We will demonstrate that a path whose image is contained entirely in $\mathcal{U}'_{n,P}$ connects $x$ to a point of $\mathcal{U}_{n,P} \subset S$, which results in a contradiction. We may assume that $x \notin \partial\mathcal{U}_{n,P}$ because otherwise $x$ lies on a one of the circles $\partial B_i$, $1 \leq i \leq l$, and there is a path in $\mathcal{U}'_{n,P}$ between $x$ and a point of $\mathcal{U}_{n,P} \subset S$.
	
	Since the (finitely many) circles $\partial B_1,\ldots, \partial B_l$ are distinct, there are only finitely many points of $\C$ that are contained in more than one circle. Consequently, we can choose a point $v \in \mathcal{V}_{n,P}$ such that the line segment $\overline{xv}$ does not contain any points of $\C$ that lie in the intersection of two or more distinct $B_i$, $1 \leq i \leq l$. (Indeed, choose a circle $\mathfrak{C}_x \subset \mathcal{V}_{n,P}$, centered at $x$, whose interior contains the compact set $\overline{\mathcal{U'}_{n,P}}$. Then, the collection $\set{\overline{xz} : z \in \mathfrak{C}_x}$ of line segments connecting $x$ to points of $\mathfrak{C}_x$ is infinite in number. Also, $x \notin \partial{U}_{n,P}$ by assumption.) 
	Define the path $\ell:[0,1]\to \C $ via  $t \mapsto tx +(1-t)v$, whose image is the line segment $\overline{xv}$. Since $\overline{xv}$ is 
	connected, it cannot be the case that $\overline{xv} \in \C \setminus \gamma_{n,P}$ (indeed, $\mathcal{U}'_{n,P} \cup \mathcal{V}_{n,P} = \mathcal{C}\setminus \gamma_{n,P}$ is a disjoint union of non-empty open sets). Consequently, $\overline{xv}$ contains a point of $\gamma_{n,P}$. Let $t^* := \min\set{t: \ell(t) \in \gamma_{n,P}}$ and set $y := \ell(t^*)$. Note that $t^* >0$ since $x\notin\mathcal{U}_{n,P}$. 
		
	\begin{figure}
		\begin{minipage}[b]{.4\linewidth}
			\centering
			\begin{tikzpicture}
			\draw[fill=gray!50] (2,0) arc (0:130:2);
			\fill[color=black] (1.414,1.414) circle (2pt);
			\draw[dashed] (1.414,1.414) circle (0.6);
			\draw (1.45,1.45)node[right]{$y$};
			\draw (-.2,1.2)node[above]{$\mathcal{U}_{n,P}$};
			\draw (-.2,2)node[above]{$\mathcal{V}_{n,P}$};
			\end{tikzpicture}
			\subcaption{Case 1: $y$ is on precisely one circle among $\set{\partial B_i}_{i=1}^l$. \label{fig:ballA}}
		\end{minipage}
	\qquad
		\begin{minipage}[b]{.4\linewidth}
			\centering
			\begin{tikzpicture}[x=1.5cm, y=1.5cm]
			\draw[] (1.732,-1) arc (-30:30:2);
			\draw[rotate around={65:(2,0)}] (1.732,-1) arc (-30:30:2);
			\draw[rotate around={110:(2,0)}] (1.732,-1) arc (-30:30:2);
			\fill[color=black] (2,0) circle (2pt);
			\draw[dashed] (2,0) circle (0.6);
			\draw (2.15,-.25) node {$y$};
			\end{tikzpicture}
			\subcaption{Case 2: $y$ is on more than one of the circles $\set{\partial B_i}_{i=1}^l$. \label{fig:ballB}}
		\end{minipage}
		\caption{The geometry near $y \in \gamma_{n,P}$. }
	\end{figure}
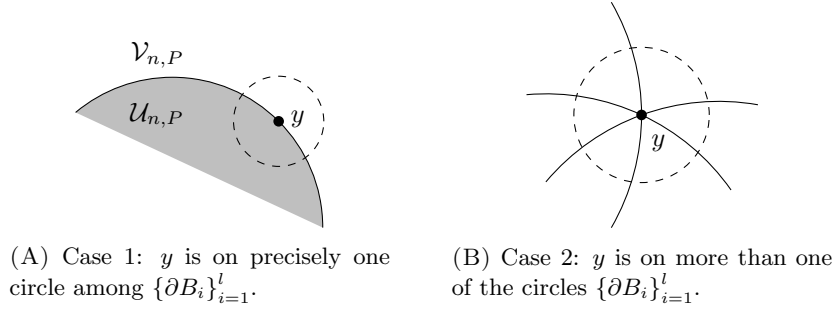	
	
	By construction, $y$ lies on precisely one of the circles $\set{\partial B_i}_{i=1}^l$; suppose, without loss of generality, that $y \in \partial B_1$. Hence, we can choose an open ball $\mathcal{B}_y\ni y$ small enough that $\mathcal{B}_y \setminus \partial B_1$ consists of exactly two disjoint, path-connected open regions (See Figure \ref{fig:ballA}). One of these regions must be a subset of $B_1 \subset \mathcal{U}_{n,P}$, and the other must be a subset of $\mathcal{V}_{n,P}$. (The second region is connected and open, contains no points of $\partial \mathcal{V}_{n,P}$, and must contain a point of $\mathcal{V}_{n,P}$ because $y \in \partial \mathcal{V}_{n,P}$.) 
	
	Choose $\eta>0$ small enough so that $t^*-\eta > 0$ and $\ell(t^*-\eta) \in \mathcal{B}_y$. It follows that the line segment $$L:=\set{\ell(t): 0\leq t \leq t^*-\eta}$$ is connected and disjoint from $\gamma_{n,P}$. We conclude that $L$ is contained entirely in $T$, for it contains $x \in T$. This means $L$ does not contain any points of $\mathcal{V}_{n,P}$, so $\ell(t^*-\eta) \in \mathcal{B}_y \cap B_1 \subset \mathcal{U}_{n,P} \subset S$. We have reached a contradiction since $S$ and $T$ are disjoint, so $\mathcal{U}'_{n,P}$ must be connected. 
	
	We have shown that $\gamma_{n,P}$ has precisely two regions,  $\mathcal{V}_{n,P}$ and $\mathcal{U}'_{n,P}$. It remains to show that every point of $\gamma_{n,P}$ is accessible from both of these regions. Suppose $y \in \gamma_{n,P}$. There are two cases: $y$ is contained in precisely one of $\partial B_i$, $1 \leq i\leq l$, or $y$ is contained in more than one of these circles. (See Figures \ref{fig:ballA} and \ref{fig:ballB}, respectively.)  
	
	If the first case is true, just as we did above, we can choose an open ball $\mathcal{B}_y\ni y$ small enough that $\mathcal{B}_y \setminus \partial B_1$ consists of the two disjoint, path-connected open regions $\mathcal{B}_y \cap \mathcal{U}_{n,P}$ and $\mathcal{B}_y \cap \mathcal{V}_{n,P}$. It is now clear that $y$ is accessible from both $\mathcal{V}_{n,P}$ and $\mathcal{U}'_{n,P} \supset \mathcal{U}_{n,P}$.
	
	On the other hand, suppose, without loss of generality, that $y$ is contained in the circles $\partial B_1, \partial B_2, \ldots, \partial B_j$. Then, we can choose an open ball $\mathcal{B}_y\ni y$ small enough that $\mathcal{B}_y \setminus \bigcup_{i=1}^j\partial B_i$ consists of $2j$ disjoint path-connected, open regions that do not contain points from $\gamma_{n,P}$ (see Figure \ref{fig:ballB}). Consequently, each of these regions must be entirely contained in one of the disjoint open sets $\mathcal{U}'_{n,P}$ or $\mathcal{V}_{n,P}$. Since $y \in \partial \mathcal{U}'_{n,P} = \partial \mathcal{V}_{n,P}$, at least one of the $2j$ regions must be contained in $\mathcal{U}'_{n,P}$ and at least one must be contained in $\mathcal{V}_{n,P}$. It follows that $y$ is accessible from both $\mathcal{V}_{n,P}$ and $\mathcal{U}'_{n,P}$.
	
	We conclude via Theorem \ref{thm:JordanCon} that $\gamma_{n,P}$ is a simple closed curve whose interior contains $\mathcal{U}_{n,P}$ because $\mathcal{U}'_{n,P}$ is the bounded component of $\C \setminus \gamma_{n,P}$, and  $\mathcal{U}_{n,P} \subset \mathcal{U}'_{n,P}$.
 \end{proof}

We have shown that there are simple, closed curves $\set{\gamma_{n,P}}_{P \in \mathcal{P}_n}$ so that for each $P \in \mathcal{P}_n$, $\gamma_{n,P} \subseteq \partial\mathcal{U}_{n,P}$ and $\mathcal{U}_{n,P}$ is contained in the interior of the bounded region defined by $\gamma_{n,P}$. Furthermore, the path-connected, open regions $\set{\mathcal{U}_{n,P}}_{P\in\mathcal{P}_n}$ are disjoint by the definition of the equivalence relation $\sim$. This means that no curve $\gamma_{n,P}$ can pass through the interior of any region $\mathcal{U}_{n,P}$, and as a result, we can  identify ``maximal'' curves which we will use in the remainder of the proof. 

\begin{definition}
We say that a simple, closed curve $\gamma_{n,P^*}$ among $\set{\gamma_{n,P}}_{P \in \mathcal{P}_n}$ is \textit{maximal} if whenever $\mathcal{U}_{n,P^*}$ is in the bounded component of $\C\setminus\gamma_{n,P}$ for some $P \in \mathcal{P}_n$, we have $P = P^*$. We use $\mathcal{M}_n$ to denote the collection of maximal curves. For each $\Gamma \in \mathcal{M}_n$, let $\mathcal{O}_\Gamma$ denote the bounded component of $\C \setminus \Gamma$, so that $\partial\mathcal{O}_\Gamma = \Gamma$. 
\end{definition}

Notice that the domains $\mathcal{O}_\Gamma$, $\Gamma \in \mathcal{M}_n$ are disjoint by construction and that each $X_j$, $1 \leq j \leq n$, is contained in precisely one $\mathcal{O}_\Gamma$. 
We conclude this subsection with two important lemmas that restrict the sizes of the equivalence classes $P$, $P \in \mathcal{P}_n$ and domains $\mathcal{O}_\Gamma$, $\Gamma \in \mathcal{M}_n$.

\begin{lemma}
Suppose $0< \delta < 1/3$. There exists $C_\delta>0$ so that for $n \geq C_\delta$, the following holds on the complement of $G_n^\delta$: for each $P \in \mathcal{P}_n$, $\abs{P} \leq \delta\ln{n} + 2$, and if $x, y \in \overline{\mathcal{U}_{n,P}}$, then,
$
\abs{x-y} < \frac{3\delta}{\sqrt{n}}.
$ 
\label{lem:WPsmall}
\end{lemma}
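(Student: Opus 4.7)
The plan is to exploit two facts: every ball in $\mathcal{C}_n$ has radius at most $\rho := 1/(\sqrt{n}\ln n)$, and on $(G_n^\delta)^c$ the ball $B(X_i,1/\sqrt{n})$ contains fewer than $2+\delta\ln n$ of the $X_j$ for every $1\le i\le n$. Observe that two indices $i,j\in\{1,\ldots,n\}$ correspond to intersecting balls in $\mathcal{C}_n$ exactly when $|X_i-X_j|\le 2\rho$, so $P$ is precisely a connected component for the relation ``$i$ and $j$ can be joined by a chain of such single intersections.'' My strategy is to first bound the length of shortest chains between pairs of elements of $P$, then translate this into a Euclidean diameter bound on $\{X_i:i\in P\}$, and finally invoke $(G_n^\delta)^c$ once more to bound $|P|$ and the diameter of $\overline{\mathcal{U}_{n,P}}$.

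For the chain-length bound, fix $j_1,j_2\in P$ and let $j_1=l_0,l_1,\ldots,l_L=j_2$ be a shortest such chain, so the $l_k$ are pairwise distinct and $|X_{l_k}-X_{l_{k-1}}|\le 2\rho$ for each $k\ge 1$. The triangle inequality gives $|X_{l_k}-X_{l_0}|\le 2k\rho=2k/(\sqrt{n}\ln n)$. Whenever $k<(\ln n)/2$ this quantity is strictly less than $1/\sqrt{n}$, so the $k+1$ distinct indices $l_0,\ldots,l_k$ all lie in $\{j:|X_j-X_{l_0}|<1/\sqrt{n}\}$. Applying $(G_n^\delta)^c$ with $x=X_{l_0}$ forces $k+1<2+\delta\ln n$, i.e.\ $k\le\delta\ln n$. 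Choose $C_\delta$ so that $n\ge C_\delta$ implies $\delta\ln n+1<(\ln n)/2$; this is possible since $\delta<1/2$. If some $k\le L$ satisfied $k>\delta\ln n$, the smallest such $k$ would lie in $(\delta\ln n,\delta\ln n+1]\subseteq(\delta\ln n,(\ln n)/2)$, contradicting the implication above. Hence $L\le\delta\ln n$, and the Euclidean diameter of $\{X_i:i\in P\}$ is at most $2L\rho\le 2\delta/\sqrt{n}$.

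Now fix any $j^*\in P$. The previous step shows $\{X_i:i\in P\}\subseteq B(X_{j^*},2\delta/\sqrt{n})\subseteq B(X_{j^*},1/\sqrt{n})$, so a second application of $(G_n^\delta)^c$ with $x=X_{j^*}$ yields $|P|\le\#\{j:|X_j-X_{j^*}|<1/\sqrt{n}\}<2+\delta\ln n$, i.e.\ $|P|\le\delta\ln n+2$. For the diameter of $\overline{\mathcal{U}_{n,P}}$, the finiteness of $P$ gives $\overline{\mathcal{U}_{n,P}}=\bigcup_{i\in P}\overline{B}(X_i,r_{X_i})$, so any $x,y$ in this set satisfy $x\in\overline{B}(X_i,r_{X_i})$ and $y\in\overline{B}(X_j,r_{X_j})$ for some $i,j\in P$. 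The triangle inequality then yields $|x-y|\le r_{X_i}+|X_i-X_j|+r_{X_j}\le 2\rho+2\delta/\sqrt{n}$. Enlarging $C_\delta$ further so that $n\ge C_\delta$ also forces $\ln n>2/\delta$, the term $2\rho$ is strictly less than $\delta/\sqrt{n}$, and we obtain $|x-y|<3\delta/\sqrt{n}$, as required.

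The one point that requires a bit of care is the bootstrap in the second paragraph: the estimate $k\le\delta\ln n$ must be valid at \emph{every} position along the shortest chain, and this works only because $\rho$ is a full factor of $\ln n$ smaller than $1/\sqrt{n}$, which keeps initial chain segments trapped in $B(X_{l_0},1/\sqrt{n})$ for far longer than the $(G_n^\delta)^c$ count permits. The restriction $\delta<1/3$ is not actually used in the chain argument itself; it is what ensures the output diameter $3\delta/\sqrt{n}$ is smaller than $1/\sqrt{n}$, which is presumably needed for the subsequent Rouch\'e/Walsh-type arguments in the paper.
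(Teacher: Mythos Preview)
Your approach is essentially the paper's, just organized differently: you bound shortest-chain lengths first and deduce $|P|$ afterward, whereas the paper bounds $|P|$ directly via a two-case analysis on chain lengths and reads off the diameter from that. The underlying mechanism---a chain of overlapping balls of radius at most $1/(\sqrt{n}\ln n)$ cannot have more than about $\delta\ln n$ distinct centers without placing too many roots in a ball of radius $1/\sqrt{n}$---is identical.

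There is one arithmetic slip. From $k+1<2+\delta\ln n$ you write ``i.e.\ $k\le\delta\ln n$,'' but the correct deduction is only $k<1+\delta\ln n$, and your bootstrap contradiction relies on the stronger form: with the corrected bound, $k=\lceil\delta\ln n\rceil$ can satisfy both $k>\delta\ln n$ and $k<1+\delta\ln n$, so no contradiction arises. The fix is to replace the bootstrap by a direct dichotomy. If $L<(\ln n)/2$, your count argument at $k=L$ gives $L<1+\delta\ln n$ directly. If instead $L\ge(\ln n)/2$, apply it at the largest integer $k_0<(\ln n)/2$; then $k_0+1\ge(\ln n)/2$, which for large $n$ exceeds $2+\delta\ln n$ (using only $\delta<1/2$), contradicting $(G_n^\delta)^c$. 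Either way $L<1+\delta\ln n$, and your diameter estimate becomes $|x-y|\le 2\rho+2L\rho<(4+2\delta\ln n)/(\sqrt{n}\ln n)<3\delta/\sqrt{n}$ once $\ln n>4/\delta$; the bound on $|P|$ and the remainder of your argument go through unchanged.
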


\begin{proof}
Assume, for a contradiction, that there is a $P \in \mathcal{P}_n$ for which $\abs{P} > \delta\ln{n} + 2$, and suppose, without loss of generality, that $1 \in P$. By the definition of $\mathcal{P}_n$, for each $i \in P \setminus \set{1}$, there are 
elements $B_0^i, B_1^i, \ldots B^i_{l_i} \in \mathcal{C}_n$, where
\begin{align*}
B_0^i &= B\left(X_1, \frac{(\ln{n})^3}{n\cdot\max\set{\abs{m_\mu(X_1)},\frac{(\ln{n})^4}{\sqrt{n}}}}\right),\\
B_{l_i}^i &=  B\left(X_i, \frac{(\ln{n})^3}{n\cdot\max\set{\abs{m_\mu(X_i)},\frac{(\ln{n})^4}{\sqrt{n}}}}\right), 
\end{align*}
$B_{k}^i \cap B_{k+1}^i \neq \emptyset\ \text{for}\ 0\leq k \leq l_i-1,$ and $B_0^i, \ldots, B_{l_i}^i$ are balls with radius at most ${(\ln{n})^{-1}n^{-1/2}}$. Notice that the distance between $X_1$ and any $X_i$, $i \in P\setminus\set{1}$ is bounded by $2 + 2(l_i-1)$ times this maximum radius (recall that $X_1$ and $X_i$, $i \in P\setminus\set{1}$ are the centers of $B_0^i$ and $B_{l_i}^i$, respectively). We consider two cases:
\begin{enumerate}[(i)]
\item \label{it:WPSmall1} for every $i \in P\setminus \set{1}$, $l_i < \delta\ln{n} + 2$
\item \label{it:WPSmall2} there is an $i^* \in P\setminus \set{1}$ for which $l_{i^*} \geq \delta\ln{n} + 2$.
\end{enumerate}
If case \eqref{it:WPSmall1} is true, then, for $n$ large enough to guarantee $\delta \ln{n} \geq 3$,
\[
\max_{i \in P\setminus\set{1}}\abs{X_1 - X_i} < \max_{i \in P\setminus\set{1}} \frac{2+ 2(l_i-1)}{\ln{n}\sqrt{n}}< \frac{2 + 2(\delta\ln{n}+1)}{\ln{n}\sqrt{n}} 
\leq \frac{3\delta}{\sqrt{n}} < \frac{1}{\sqrt{n}},
\]
so every $X_i$, $i \in P$ is in the ball of radius $n^{-1/2}$ centered at $X_1$, which is impossible on the complement of $G_n^\delta$. On the other hand, if case \eqref{it:WPSmall2} is true, then, for large $n$, 
\[
\bigcup_{k=0}^{\left\lceil\delta\ln{n}+2\right\rceil} B_k^{i^*} \subset B\left(X_1, \frac{1}{\sqrt{n}}\right).
\]
Indeed, $\set{B_k^{i^*}}_{k=0}^{\lceil\delta\ln{n}+2\rceil}$ are overlapping balls with radius at most $(\ln{n})^{-1}n^{-1/2}$, so if $n$ is large enough that $\delta\ln{n} \geq 7$ and $y \in  \bigcup_{k=0}^{\lceil\delta\ln{n}+2\rceil} B_k^{i^*}$, then, 
\[
\abs{y-X_1} \leq \frac{1 + 2\lceil\delta\ln{n} +2\rceil}{\ln{n}\sqrt{n}} < \frac{2\delta\ln{n} + 7}{\ln{n}\sqrt{n}} \leq \frac{3\delta}{\sqrt{n}} < \frac{1}{\sqrt{n}}.
\]
This is impossible on the complement of $G_n^\delta$ because it would imply too many roots among $\set{X_j}_{j=1}^n$ in the ball of radius $n^{-1/2}$ centered at $X_1$.

Now, suppose $x, y \in \overline{\mathcal{U}_{n,P}}$ and $n$ is large enough to guarantee that, on the complement of $G_n^\delta$, $\abs{P} \leq \delta\ln{n} + 2$  and $\delta\ln{n} > 4$. Since the path-connected set $\overline{\mathcal{U}_{n,P}}$ consists of $\abs{P}$ overlapping closed disks of radius at most $(\ln{n})^{-1}n^{-1/2}$, we have
\[
\abs{x-y} \leq \abs{P}\frac{2}{\ln{n}\sqrt{n}} \leq \frac{2(\delta\ln{n} + 2)}{\ln{n}\sqrt{n}} < \frac{3\delta}{\sqrt{n}}.
\]
\end{proof}

\begin{corollary}
Suppose  $0< \delta < 1/3$. There exists $C_\delta > 0$ such that for $n \geq C_\delta$, on the complement of $G_n^\delta$, each $\Gamma \in \mathcal{M}_n$ satisfies the following. There exist $x^*,y^* \in \Gamma$ so that if $x,y \in \overline{\mathcal{O}_\Gamma}$, then 
$
\abs{x-y} \leq\abs{x^*-y^*} <\frac{3\delta}{\sqrt{n}}.
$
\label{cor:WOsmall}
\end{corollary}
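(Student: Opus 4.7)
The plan is to reduce the corollary directly to Lemma~\ref{lem:WPsmall} by showing that the diameter of $\overline{\mathcal{O}_\Gamma}$ is already attained by two points on the bounding curve $\Gamma$. Fix $\Gamma \in \mathcal{M}_n$, and let $P^* \in \mathcal{P}_n$ be such that $\Gamma = \gamma_{n,P^*}$. By the construction of the previous subsection, $\Gamma \subseteq \partial\mathcal{U}_{n,P^*} \subseteq \overline{\mathcal{U}_{n,P^*}}$. Since $\Gamma$ is a Jordan curve, it is compact, so there exist $x^*, y^* \in \Gamma$ achieving $\sup\set{\abs{x-y} : x,y \in \Gamma}$. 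Applying Lemma~\ref{lem:WPsmall} to $x^*, y^* \in \overline{\mathcal{U}_{n,P^*}}$ then yields $\abs{x^*-y^*} < 3\delta/\sqrt{n}$, provided $n$ is at least the constant $C_\delta$ produced by that lemma, which is the same constant we use in the corollary.

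It remains to verify that for every $x, y \in \overline{\mathcal{O}_\Gamma}$ one has $\abs{x-y} \leq \abs{x^*-y^*}$. I would prove this via a short line-segment argument. Let $\ell$ be the straight line through $x$ and $y$. The intersection $\ell \cap \overline{\mathcal{O}_\Gamma}$ is a compact subset of $\ell$, hence has two extremal points $x_0, y_0$ with respect to the linear order along $\ell$. Both extremal points must lie on $\Gamma$: if one of them lay in the open set $\mathcal{O}_\Gamma$, a small sub-interval of $\ell$ around that point would also lie in $\mathcal{O}_\Gamma$, contradicting extremality. Because $x$ and $y$ lie on $\ell$ between $x_0$ and $y_0$, we obtain $\abs{x-y} \leq \abs{x_0 - y_0} \leq \abs{x^* - y^*}$, completing the proof.

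The only real subtlety is the geometric fact that $\operatorname{diam}(\overline{\mathcal{O}_\Gamma}) = \operatorname{diam}(\Gamma)$. This step is necessary because $\mathcal{O}_\Gamma$ can contain ``holes'' of $\mathcal{U}_{n,P^*}$ inside which nested domains $\mathcal{U}_{n,P}$ (with $P \neq P^*$) live; points of $\overline{\mathcal{O}_\Gamma}$ lying in such holes need not belong to $\overline{\mathcal{U}_{n,P^*}}$, so Lemma~\ref{lem:WPsmall} cannot be applied to them directly. The line-segment argument is precisely what transfers the diameter bound from the ``outer skeleton'' $\Gamma$ to the full closed region $\overline{\mathcal{O}_\Gamma}$, and I expect this to be the main (and essentially only) obstacle in the proof.
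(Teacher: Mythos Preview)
Your argument is correct and is essentially the same approach as the paper's: both show that the diameter of $\overline{\mathcal{O}_\Gamma}$ is realized on $\Gamma$ via a ``push to the boundary along a line segment'' argument, then invoke Lemma~\ref{lem:WPsmall} on $\Gamma\subseteq\overline{\mathcal{U}_{n,P^*}}$. The only cosmetic difference is the direction: the paper picks $x^*,y^*$ as a diameter-achieving pair in $\overline{\mathcal{O}_\Gamma}$ and shows each must lie on $\Gamma$ (otherwise the segment $\overline{x^*y^*}$ could be extended inside $\mathcal{O}_\Gamma$), whereas you pick $x^*,y^*$ on $\Gamma$ first and then bound an arbitrary pair $x,y$ by extremal points of the line $\ell$ in $\overline{\mathcal{O}_\Gamma}$.
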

\begin{proof}
In view of Lemma \ref{lem:WPsmall}, it suffices to show that there exist $x^*, y^* \in \Gamma$ so that
\begin{equation}
\sup_{x,y \in \overline{\mathcal{O}_\Gamma}}\abs{x-y} \leq \abs{x^*-y^*}.
\label{eqn:diamBd}
\end{equation}
(Recall that there exists $P^* \in \mathcal{P}_n$ so that $\Gamma \subset \partial \overline{\mathcal{U}_{n,P^*}}$.)
Since $\overline{\mathcal{O}_\Gamma}$ is compact and $(x,y) \mapsto \abs{x-y}$ is continuous, the extreme value theorem guarantees the existence of $x^*, y^* \in \overline{\mathcal{O}_\Gamma}$ so that the supremum in \eqref{eqn:diamBd} is achieved when $x=x^*$ and $y=y^*$. Suppose, for a contradiction, that  $x^*\notin \Gamma$. Then, $x^*$ is in the open set $\mathcal{O}_\Gamma$, and there is a $\rho > 0$ so that $x^* \in B(x^*,\rho) \subset \mathcal{O}_\Gamma$. Consequently, the line segment $\overline{x^*y^*}$ can be extended along the line connecting $x^*$ and $y^*$ by length $\rho/2$ without leaving $\overline{\mathcal{O}_\Gamma}$. This contradicts the assumption that the supremum in \eqref{eqn:diamBd} is achieved for $x=x^*$, $y=y^*$. We conclude that $x^* \in \Gamma$. A similar argument shows that $y^* \in \Gamma$. 

\end{proof}


\subsection{Pairing of roots and critical points inside each domain}

We now show that on the complement of the ``bad'' events, the roots and critical points within most of the domains $\mathcal{O}_\Gamma$, $\Gamma \in \mathcal{M}_n$ are ``paired.'' The only domains for which this does not occur are those that contain roots of $p_n(z)$ that are ``too close'' to the zeros of $m_\mu$. (See Figure \ref{fig:loops} for reference; recall that $m_\mu(z) = 0$ precisely when $z=0$ in the case where $\mu$ is the uniform measure on the unit disk.) To make ``too close'' rigorous, we define the random collection of roots
\begin{align*}
R^{\text{pair}}_n &:= \set{X_j: 1 \leq j \leq n\ \text{and}\ X_j \in \C \setminus \left(A_n^{\lfloor 4\ln(\ln{n})\rfloor}\cup A_n^{\lfloor 4\ln(\ln{n})\rfloor+1}\right) }\\
&\subseteq \set{X_j: 1 \leq j \leq n\ \text{and}\ \abs{m_{\mu}(X_j)} > \frac{(\ln{n})^4}{\sqrt{n}}}.
\end{align*}

The following lemma is the main result of this subsection. 
\begin{lemma}
For a fixed $\delta>0$ chosen sufficiently small, there is a constant $C_\delta>0$ so that for $n\geq C_\delta$, on the complement of $\cup_{i=1}^nF_n^i \cup G_n^\delta \cup H_n$, the following conclusion holds. For each $\mathcal{O}_\Gamma$, $\Gamma \in \mathcal{M}_n$, such that $\mathcal{O}_\Gamma \cap R^{\text{pair}}_n \neq \emptyset$, the number of critical points of $p_n(z)$ that lie inside $\mathcal{O}_\Gamma$ is equal to the number of roots of $p_n(z)$ that lie inside $\mathcal{O}_\Gamma$ (where both counts include multiplicity). Furthermore, if $X \in \mathcal{O}_\Gamma \cap R^{\text{pair}}_n$ and $w \in \mathcal{O}_\Gamma$ is a critical point of $p_n(z)$, then, 
$
\abs{X-w} \leq \frac{(\ln{n})^4}{n\abs{m_\mu(X)}}.
$
\label{lem:Wmain}
\end{lemma}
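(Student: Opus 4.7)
The plan is to apply Rouché's theorem (in its meromorphic form, via the argument principle) on $\Gamma$ to the logarithmic derivative $L(z):=p_n'(z)/p_n(z)$, comparing it with a simpler reference $L_q(z):=q_n'(z)/q_n(z)$. Let $P^*\in\mathcal{P}_n$ be the class with $\Gamma\subset\partial\mathcal{U}_{n,P^*}$, and choose $i^*\in P^*$ maximizing $|m_\mu(X_j)|$ over $j\in P^*$. Since $\mathcal{O}_\Gamma\cap R^{\text{pair}}_n\neq\emptyset$, we have $|m_\mu(X_{i^*})|\gtrsim(\ln n)^4/\sqrt n$. Set
$$q_n(z):=(z-X_{i^*})^{|P^*|}\prod_{j\notin P^*}(z-X_j),\qquad L_q(z)=\frac{|P^*|}{z-X_{i^*}}+\sum_{j\notin P^*}\frac{1}{z-X_j}.$$
A preliminary step is to combine the Lipschitz estimate of Lemma \ref{lem:CSnice}(ii) with the cluster-diameter bound of Lemma \ref{lem:WPsmall}: since every $X_j$ with $j\in P^*$ lies within $3\delta/\sqrt n$ of $X_{i^*}$, we have $|m_\mu(X_j)|$ within a factor of two of $|m_\mu(X_{i^*})|$; hence every ball radius $r_j$ used to define $\mathcal{U}_{n,P^*}$ is of order $r_{i^*}\asymp(\ln n)^3/(n|m_\mu(X_{i^*})|)$, and the diameter bound refines to $\operatorname{diam}(\overline{\mathcal{U}_{n,P^*}})=O(|P^*|\,r_{i^*})$.

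The Rouché hypothesis $|L-L_q|<|L_q|$ on $\Gamma$ is the central estimate. The difference telescopes as
$$L(z)-L_q(z)=\sum_{j\in P^*,\,j\neq i^*}\frac{X_j-X_{i^*}}{(z-X_j)(z-X_{i^*})},$$
and since $z\in\partial\mathcal{U}_{n,P^*}$ lies outside every open ball in the construction, $|z-X_j|\geq r_j\asymp r_{i^*}$ for each $j\in P^*$. Together with $|X_j-X_{i^*}|=O(|P^*|\,r_{i^*})$ this gives $|L-L_q|=O(|P^*|^2/r_{i^*})=O(\delta^2 n|m_\mu(X_{i^*})|/\ln n)$. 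For the lower bound, I would show that the dominant sum $\sum_{j\notin P^*}(z-X_j)^{-1}$ is within $\tfrac{2}{3}|m_\mu(X_{i^*})|$ of $n\,m_\mu(X_{i^*})$ on $\Gamma$: on the event $(F_n^{i^*})^c$, the truncated sum $\sum_{j\neq i^*}\zeta_{i^*,j}^{(n)}$ is close to $n\,m_\mu(X_{i^*})$ at $z=X_{i^*}$ (the truncation error in $\E[\zeta_{i^*,j}^{(n)}\mid X_{i^*}]$ is $O((\ln n)^2/(n|m_\mu(X_{i^*})|))$ and the self-cluster contribution $\sum_{j\in P^*,j\neq i^*}\zeta_{i^*,j}^{(n)}$ is $O(|P^*|\,n|m_\mu(X_{i^*})|/(\ln n)^2)$), and this bound propagates uniformly to $z\in\Gamma$ via a Cauchy--Schwarz perturbation exploiting $|z-X_{i^*}|=O(|P^*|\,r_{i^*})$. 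Thus $|L_q|\gtrsim n|m_\mu(X_{i^*})|$ on $\Gamma$, and choosing $\delta$ small enforces $|L-L_q|\ll|L_q|$.

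The meromorphic Rouché theorem then yields
$$\#\{\text{critical points of }p_n\text{ in }\mathcal{O}_\Gamma\}-|P^*|=\#\{\text{zeros of }L_q\text{ in }\mathcal{O}_\Gamma\}-1,$$
since $L$ has $|P^*|$ simple poles in $\mathcal{O}_\Gamma$ (one at each $X_j$, $j\in P^*$) while $L_q$ has only the simple pole at $X_{i^*}$. An auxiliary local Rouché on a small circle around $X_{i^*}$ (comparing $L_q$ with $|P^*|/(z-X_{i^*})+n\,m_\mu(X_{i^*})$) shows that $L_q$ has exactly one zero inside $\mathcal{O}_\Gamma$, located within $O(|P^*|/(n|m_\mu(X_{i^*})|))\ll r_{i^*}$ of $X_{i^*}-|P^*|/(n\,m_\mu(X_{i^*}))$. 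Hence $p_n$ has exactly $|P^*|$ critical points in $\mathcal{O}_\Gamma$, matching the root count. For the distance bound, every critical point $w\in\mathcal{O}_\Gamma$ satisfies $|w-X_{i^*}|\leq\operatorname{diam}(\mathcal{O}_\Gamma)\leq\operatorname{diam}(\overline{\mathcal{U}_{n,P^*}})=O(\delta(\ln n)^4/(n|m_\mu(X_{i^*})|))$; for any $X\in\mathcal{O}_\Gamma\cap R^{\text{pair}}_n$, the Lipschitz comparison yields $|m_\mu(X)|\geq|m_\mu(X_{i^*})|/2$, so choosing $\delta$ small enough and applying the triangle inequality gives $|X-w|\leq(\ln n)^4/(n|m_\mu(X)|)$.

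The hardest step is the uniform lower bound $|L_q(z)|\gtrsim n|m_\mu(X_{i^*})|$ over $z\in\Gamma$. The event $(F_n^{i^*})^c$ yields concentration only at $z=X_{i^*}$ and only for the $\zeta$-truncated sum, so converting to $\sum_{j\notin P^*}(X_{i^*}-X_j)^{-1}$ requires removing the self-cluster terms (using $|P^*|\leq\delta\ln n+2$ and the truncation scale $(\ln n)^2/(n|m_\mu(X_{i^*})|)$ in $\zeta_{i^*,j}^{(n)}$), and then propagating the estimate from $X_{i^*}$ to $z\in\Gamma$ requires a perturbation argument that tracks the cluster geometry and handles the finitely many circle arcs comprising $\Gamma$ (in particular their intersection points).
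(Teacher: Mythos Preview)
Your approach—collapsing the nearby cluster to a single multiple root and applying meromorphic Rouché to the logarithmic derivatives—is a natural dual to the paper's strategy (which instead collapses the \emph{far-away} roots to a single multiple root $Y_X$ and invokes Walsh's two-circle theorem). However, there is a genuine gap concerning nested clusters.

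The construction of maximal curves explicitly allows $\mathcal{O}_\Gamma$ to contain roots $X_j$ with $j\notin P^*$: if some $\mathcal{U}_{n,P}$ with $P\neq P^*$ sits in a ``hole'' of $\mathcal{U}_{n,P^*}$, then $\gamma_{n,P}$ is not maximal and the roots indexed by $P$ lie inside $\mathcal{O}_\Gamma$. (The paper's discussion preceding the definition of $\mathcal{M}_n$ is precisely about filling in such holes.) On $(G_n^\delta)^c$ the bound $|P^*|\leq\delta\ln n+2$ can be large enough for the balls to form a ring, so this configuration cannot be excluded. In that case your displayed Rouché identity has the wrong pole counts on both sides—though the extra $k$ nested poles cancel, so $Z_L=Z_{L_q}+|P^*|-1$ survives. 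The real problem is the claim $Z_{L_q}=1$: with $k$ nested roots in $\mathcal{O}_\Gamma$, your $q_n$ has $k$ simple roots there in addition to the multiple root at $X_{i^*}$, and $q_n'$ will have critical points near that nested cluster. Your auxiliary Rouché on a small circle about $X_{i^*}$ does not see those zeros of $L_q$, and counting them is exactly as hard as the original problem. The target equality actually requires $Z_{L_q}=1+k$, not $1$, so your conclusion $\#\{\text{crit pts}\}=|P^*|$ undercounts by $k$.

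The paper sidesteps this entirely by keeping the factor $q_X(z)=\prod_{j\notin R_{\text{near}}}(z-X_j)$ common to both $p_n=q_Xr_X$ and the comparison $f_X$; any nested (or $R_{\text{med}}$) roots are zeros of $q_X$ and contribute identically to both zero-counts, so only the $|R_{\text{near}}|$ inner roots and the $|R_{\text{near}}|$ Walsh-located critical points of $\widetilde{p}_X$ need to be matched. Your approach can be repaired by replacing $P^*$ throughout with the full set $S:=\{j:X_j\in\mathcal{O}_\Gamma\}$: the bound $|S|\leq\delta\ln n+2$ still holds on $(G_n^\delta)^c$ since $\operatorname{diam}(\mathcal{O}_\Gamma)<1/\sqrt{n}$, the telescoping formula for $L-L_q$ and its estimate carry over verbatim, and then $L_q$ genuinely has a single pole in $\mathcal{O}_\Gamma$. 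Note also that the auxiliary Rouché must be run on $\Gamma$ itself, not on a small circle, in order to control zeros of $L_q$ in all of $\mathcal{O}_\Gamma$; your lower bound $|L_q|\gtrsim n|m_\mu(X_{i^*})|$ on $\Gamma$ already provides what is needed for that.
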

\begin{proof}
The proof of this lemma is similar in flavor to the proofs of Theorems \ref{thm:multiInCLT} and \ref{thm:detLoc}, although the argument presented here is much more technical. Fix $n \in \mathbb{N}$, suppose $\mathcal{O}_\Gamma$, $\Gamma \in \mathcal{M}_n$ is such that $\mathcal{O}_\Gamma \cap R^{\text{pair}}_n \neq \emptyset$, and choose an $X \in \mathcal{O}_\Gamma \cap R^{\text{pair}}_n$ to be a distinguished root that will be a reference point in our calculations. We classify the roots $\set{X_j}_{j=1}^n$ into three groups based on their proximity to $X$ (see Figure \ref{fig:walsh}). To that end, define 
\begin{align*}
R_{\text{near}} &:= \set{j: 1 \leq j \leq n,\ \abs{X_j - X} <  \frac{(\ln{n})^2}{n\abs{m_\mu(X)}}}\\
R_{\text{med}} &:= \set{j: 1 \leq j \leq n,\ \abs{X_j - X} <  \frac{1}{\sqrt{n}}} \setminus R_{\text{near}}\\
R_{\text{far}} &:= \set{j: 1 \leq j \leq n,\ \abs{X_j - X} \geq  \frac{1}{\sqrt{n}}},
\end{align*}
and let 
\[
q_X(z) := \!\!\!\!\prod_{j \notin R_{\text{near}}}\!\!\!\!(z-X_j)\quad \text{and}\quad r_X(z):=\!\!\!\!\prod_{j \in R_{\text{near}}}\!\!\!\!(z-X_j),
\]
so that $p_n(z) = q_X(z)r_X(z)$. Note that $\abs{R_{\text{med}}}$ and $\abs{R_\text{near}}$ are of size at most $\delta\ln{n} + 2$ on the complement of $G_n^\delta$. We will compare the zeros of $p_n'(z)$ inside $\mathcal{O}_\Gamma$ to the zeros of the function  
\[
f_X(z) := q_X(z)\left(r_X'(z) + r_X(z)\frac{n-\abs{R_\text{near}}}{z-Y_X}\right)
\]
that are inside $\mathcal{O}_\Gamma$, where $Y_X$ is defined by
\[
Y_X := X - \frac{n-\abs{R_\text{near}}}{\sum_{j \notin R_\text{near}}\frac{1}{X-X_j}}.
\] The idea is that 
\[
\frac{f_X(z)}{p_n(z)} = \frac{r'_X(z)}{r_X(z)} + \frac{n-\abs{R_\text{near}}}{z-Y_X}
\]
is similar to the logarithmic derivative of $p_n(z)$ for $z$ near $X$. Furthermore, the number of roots of the equation
\[
0= r_X'(z) + r_X(z)\frac{n-\abs{R_\text{near}}}{z-Y_X}
\]
that are inside $\mathcal{O}_\Gamma$ will be easy to calculate since these are the same as the critical points of 
\[
\widetilde{p}_X(z):=r_X(z)\cdot (z-Y_X)^{n-\abs{R_\text{near}}}
\]
that lie inside $\mathcal{O}_\Gamma$ (we will show that $Y_X \notin \mathcal{O}_\Gamma$), and these can be located with Walsh's two circle theorem. 

The following lemma contains a few facts that we will frequently reference for the remainder of the proof of Lemma \ref{lem:Wmain}. 

\begin{lemma}
Suppose $\delta < 1/3$. There is a constant $K_{\mu,\delta} \in \mathbb{N}$, depending only on $\mu$ and $\delta$ (and not on $X, P, \Gamma$, etc...), so that $n \geq K_{\mu,\delta}$ implies the following. On the complement of $\cup_{i=1}^nF_n^i \cup G_n^\delta$, if $X \in \mathcal{O}_\Gamma \cap R^{\text{pair}}_n$ and $z\in \overline{\mathcal{O}_\Gamma}$, then  
\begin{enumerate}[(i)]
\item $\displaystyle \abs{z-X} \leq \frac{4\delta(\ln{n})^4}{n\abs{m_\mu(X)}}$, and $\abs{z-X} \geq \frac{(\ln{n})^3}{n\abs{m_\mu(X)}}$ if $z \in \Gamma$; \label{WbdzX}

\item 
$\displaystyle\frac{\abs{m_\mu(X)}}{4} \leq \abs{\frac{1}{n-\abs{R_\text{near}}}\sum_{j \notin R_\text{near}}\frac{1}{X-X_j}} \leq 2\abs{m_\mu(X)}$; \label{WbdCLT}

\item 
$\displaystyle \frac{1}{4\abs{m_\mu(X)}} \leq \abs{z-Y_X} \leq \frac{5}{\abs{m_\mu(X)}},$
so in particular, $f_X(z)$ is analytic in $\mathcal{O}_\Gamma$. \label{WbdzY}
\end{enumerate}
\label{lem:Wfacts}
\end{lemma}
\begin{proof}
Much of this proof relies on the fact that $m_\mu(\cdot)$ is nearly Lipschitz (see Lemma \ref{lem:CSnice} part \eqref{it:CS2}). To establish \eqref{WbdzX}, we first observe that for large $n$, on the complement of $G_n^\delta$, if $\xi \in \overline{\mathcal{O}_\Gamma}$, then
\begin{equation}
\frac{\abs{m_\mu(X)}}{2} \leq \abs{m_\mu(\xi)}\leq \frac{3\abs{m_\mu(X)}}{2}.
\label{eqn:Wmclose}
\end{equation}
Indeed, via Corollary \ref{cor:WOsmall}, $\abs{\xi - X} < \frac{3\delta}{\sqrt{n}} < \frac{1}{\sqrt{n}}$ for large $n$, on the complement of $G_n^\delta$, so as long as we also have $\frac{1}{\sqrt{n}} < \min\set{\eps_\mu,e^{-1}}$, Lemma \ref{lem:CSnice} guarantees that 
\[
\abs{m_\mu(\xi)-m_\mu(X)} \leq \kappa_\mu\frac{3\delta}{\sqrt{n}}\ln\left(\frac{\sqrt{n}}{3\delta}\right).
\]
(We have used the fact that on the interval $[0,e^{-1}]$, the function $-x\ln{x}$ is increasing.) It follows that for $n \geq 5$ and larger than some constant depending on $\mu$ and $\delta$, on the complement of $G_n^\delta$,
\[
\abs{m_\mu(\xi)-m_\mu(X)} \leq \frac{(\ln{n})^2}{\sqrt{n}} \leq \frac{(\ln{n})^4}{2\sqrt{n}} \leq \frac{\abs{m_\mu(X)}}{2},
\]
which implies equation \eqref{eqn:Wmclose}. (The last inequality follows since $X \in R_n^\text{pair}$.) We will use this inequality to compute $\abs{z - X}$, for $z \in \overline{\mathcal{O}_\Gamma}$, in a way that references the balls that we started with when we constructed $\Gamma$.

Let $n$ be large enough to establish \eqref{eqn:Wmclose} and the conclusion of Corollary \ref{cor:WOsmall} on the complement of $G_n^\delta$. Since, $z, X \in \overline{\mathcal{O}_\Gamma}$, Corollary \ref{cor:WOsmall} guarantees the existence of $w_1, w_2 \in \Gamma$ for which $\abs{z-X} \leq \abs{w_1-w_2}$. Recall that $\Gamma \subseteq \partial \mathcal{U}_{n,P^*}$ for some $P^* \in \mathcal{P}_n$, so there are $i_1, i_2 \in P^*$ for which, $X_{i_1}, X_{i_2} \in \mathcal{O}_\Gamma$, and
\begin{align*}
w_1 &\in \partial B\left(X_{i_1}, \frac{(\ln{n})^3}{n \max\set{\abs{m_\mu(X_{i_1})},\frac{(\ln{n})^4}{\sqrt{n}}}}\right)\\
\intertext{and}
w_2 &\in \partial B\left(X_{i_2}, \frac{(\ln{n})^3}{n \max\set{\abs{m_\mu(X_{i_2})},\frac{(\ln{n})^4}{\sqrt{n}}}}\right).
\end{align*}
Furthermore, since $i_1$ and $i_2$ are related by the equivalence that defines $\mathcal{P}_n$, there are open balls $B_0, B_1, \ldots B_{l} \in \mathcal{C}_n$, of the form
\[
B\left(X_j, \frac{(\ln{n})^3}{n\cdot\max\set{\abs{m_\mu(X_j)},\frac{(\ln{n})^4}{\sqrt{n}}}}\right),\ j \in P^*,\ X_j\in \mathcal{O}_\Gamma,
\]
where
\begin{align*}
B_0 &= B\left(X_{i_1}, \frac{(\ln{n})^3}{n\cdot\max\set{\abs{m_\mu(X_{i_1})},\frac{(\ln{n})^4}{\sqrt{n}}}}\right),\\
B_{l} &=  B\left(X_{i_2}, \frac{(\ln{n})^3}{n\cdot\max\set{\abs{m_\mu(X_{i_2})},\frac{(\ln{n})^4}{\sqrt{n}}}}\right), 
\end{align*}
and $B_{k} \cap B_{k+1} \neq \emptyset\ \text{for}\ 0\leq k \leq l-1$. Notice that on the complement of $G_n^\delta$, equation \eqref{eqn:Wmclose} guarantees that the radii of these balls are bounded by $\frac{2(\ln{n})^3}{n\abs{m_\mu(X)}}$ (recall that $X \in R^{\text{pair}}_n$), and if $n$ is large enough to guarantee the conclusion of Lemma \ref{lem:WPsmall}, the number of balls, $l$, is less than $\abs{P^*} \leq \delta\ln{n} + 2$. It follows that for $n$ larger than a constant depending on $\delta$, on the complement of $G_n^\delta$, 
\[
\abs{z-X} \leq \abs{w_1 - w_2} \leq \abs{P^*}\cdot 2\frac{2(\ln{n})^3}{n\abs{m_\mu(X)}}\leq \frac{4(\delta\ln{n} + 2)(\ln{n})^3}{n\abs{m_\mu(X)}} \leq \frac{4\delta(\ln{n})^4}{n\abs{m_\mu(X)}}.
\]
We have established the first half of \eqref{WbdzX}. To see the second inequality, simply recall that $\Gamma$ does not pass through $\mathcal{U}_{n,P}$ for any $P \in \mathcal{P}_n$, so if $z \in \Gamma$, then  
\[
\abs{z-X_j} \geq \frac{(\ln{n})^3}{n\cdot\max\set{\abs{m_\mu(X_j)},\frac{(\ln{n})^4}{\sqrt{n}}}}
\]
for any root $X_j$, $1 \leq j \leq n$. In particular, this is true for $X \in R^{\text{pair}}_n$, which satisfies $\abs{m_\mu(X)} \geq \frac{(\ln{n})^4}{\sqrt{n}}$, so we obtain the second part of \eqref{WbdzX}.

Inequality \eqref{WbdCLT} holds for large $n$ on the complement of $\cup_{i=1}^nF_n^i\cup G_n^\delta$ after several interpolations. 
For each $i$, $1 \leq i \leq n$, the random variables $\E[\zeta_{i,j}^{(n)}\mid X_i]$, $1 \leq j \leq n$, $j \neq i$ are identically distributed, so
\begin{equation}
\begin{aligned}
\abs{\frac{1}{n-1}\sum_{\substack{j=1\\j\neq i}}^n\zeta^{(n)}_{i,j}} &\leq  \abs{\frac{1}{n-1}\sum_{\substack{j=1\\j\neq i}}^n\left(\zeta^{(n)}_{i,j}-\E[\zeta^{(n)}_{i,j}\mid X_i]\right)}\\
&\qquad+ \abs{\E[\zeta^{(n)}_{i,l}\mid X_i] - m_\mu(X_i)} + \abs{m_\mu(X_i)},
\end{aligned}
\label{eqn:WbdCLT2}
\end{equation}
where $l$ is any index different from $i$. 
Since the $X_j$ are iid, we have
\begin{align*}
\abs{\E[\zeta^{(n)}_{i,l}\mid X_i] - m_\mu(X_i)} &= \abs{\E\left[\frac{1}{X_i - X_l}\ind_{\abs{X_i - X_l}< \frac{(\ln{n})^2}{n\abs{m_\mu(X_i)}}}\;\middle\vert\;X_i\right]}\\
&\leq 2\pi C_\mu \int_0^{\frac{(\ln{n})^2}{n\abs{m_\mu(X_i)}}}\frac{1}{r}\cdot r\,dr\\
&= 2\pi C_\mu \frac{(\ln{n})^2}{n\abs{m_\mu(X_i)}},
\end{align*}
so equation \eqref{eqn:WbdCLT2} implies that for any $i$, $1\leq i \leq n$,
\[
\abs{\frac{1}{n-1}\sum_{\substack{j=1\\j\neq i}}^n\zeta^{(n)}_{i,j}} \leq  \abs{\frac{1}{n-1}\sum_{\substack{j=1\\j\neq i}}^n\left(\zeta^{(n)}_{i,j}-\E[\zeta^{(n)}_{i,j}\mid X_i]\right)} + \frac{2\pi C_\mu(\ln{n})^2}{n\abs{m_\mu(X_i)}} +\abs{m_\mu(X_i)}.
\]
Now, $X = X_{i_X}$ for some $i_X$, $1 \leq i_X \leq n$, and $X \in R^{\text{pair}}_n$, so on the complement of $\cup_{i=1}^nF_n^i$,
\begin{equation}
\begin{aligned}
\abs{\frac{1}{n-\abs{R_\text{near}}}\sum_{j \notin R_\text{near}}\frac{1}{X-X_j}} &= \frac{n-1}{n-\abs{R_\text{near}}}\abs{\frac{1}{n-1}\sum_{\substack{j=1\\j\neq i_X}}^n\zeta^{(n)}_{i_X,j}}\\
&\leq \frac{n-1}{n-\abs{R_\text{near}}}\left(\frac{3}{2}\abs{m_\mu(X_{i_X})} + \frac{2\pi C_\mu(\ln{n})^2}{n\abs{m_\mu(X_{i_X})}}\right)\\
&\leq\frac{n-1}{n-\abs{R_\text{near}}}\left(\frac{3}{2}\abs{m_\mu(X)} + \frac{2\pi C_\mu}{\sqrt{n}(\ln{n})^2} \right).
\end{aligned}
\label{eqn:WbdCLT1}
\end{equation}
On the complement of $G_n^\delta$, $\abs{R_\text{near}}$ is at most $\delta\ln{n} + 2$, so for large $n$, on the complement of $\cup_{i=1}^nF_n^i \cup G_n^\delta$ inequality \eqref{eqn:WbdCLT1} establishes the upper bound in \eqref{WbdCLT}. (We have used that $X \in R^{\text{pair}}_n$ to bound $\frac{2\pi C_\mu}{\sqrt{n}(\ln{n})^2}$ above by, say, $1/4\abs{m_\mu(X)}$ for large $n$.) The lower bound in \eqref{WbdCLT} is achieved similarly by using the reverse triangle inequality to obtain
\begin{equation*}
\begin{aligned}
\abs{\frac{1}{n-1}\sum_{\substack{j=1\\j\neq i}}^n\zeta^{(n)}_{i,j}} &\geq  \abs{m_\mu(X_i)}-\abs{\frac{1}{n-1}\sum_{\substack{j=1\\j\neq i}}^n\left(\zeta^{(n)}_{i,j}-\E[\zeta^{(n)}_{i,j}\mid X_i]\right)}\\
&\qquad- \abs{\E[\zeta^{(n)}_{i,l}\mid X_i] - m_\mu(X_i)},
\end{aligned}
\end{equation*}
in place of \eqref{eqn:WbdCLT2}.


We conclude by establishing \eqref{WbdzY} as a consequence of \eqref{WbdzX} and \eqref{WbdCLT}. Indeed, via the triangle inequality, we have for large $n$, on the complement of $\cup_{i=1}^nF_n^i \cup G_n^\delta$, that
\[
\abs{z-Y_X} \leq \abs{z-X} + \abs{\frac{n-\abs{R_\text{near}}}{\sum_{j\notin R_\text{near}\frac{1}{X-X_j}}}} \leq \frac{4\delta(\ln{n})^4}{n\abs{m_\mu(X)}} + \frac{4}{\abs{m_\mu(X)}} \leq \frac{5}{\abs{m_\mu(X)}},
\]
where the rightmost inequality holds for large $n$. The lower bound in \eqref{WbdzY} follows for similar reasons, and $f_X$ is analytic because $\abs{m_\mu(X)}$ is almost surely bounded above by an constant that depends only on $\mu$ (apply Lemma \ref{lem:CSnice}, part \eqref{it:CS1} with $\xi = 0$ and $\rho = +\infty$). 
\end{proof}
%
%
%
%
%
%


The next Lemma justifies our choice of $f_X(z)$ as an intermediate comparison between $p_n(z)$ and $p_n'(z)$ because it establishes that under the right conditions, $f_X(z)$ and $p_n(z)$ have the same number of roots in the domain $\mathcal{O}_\Gamma$.  Consider Figure \ref{fig:walsh} which provides a visual aid to the argument.

\begin{figure}
\includegraphics[width =.85\columnwidth]{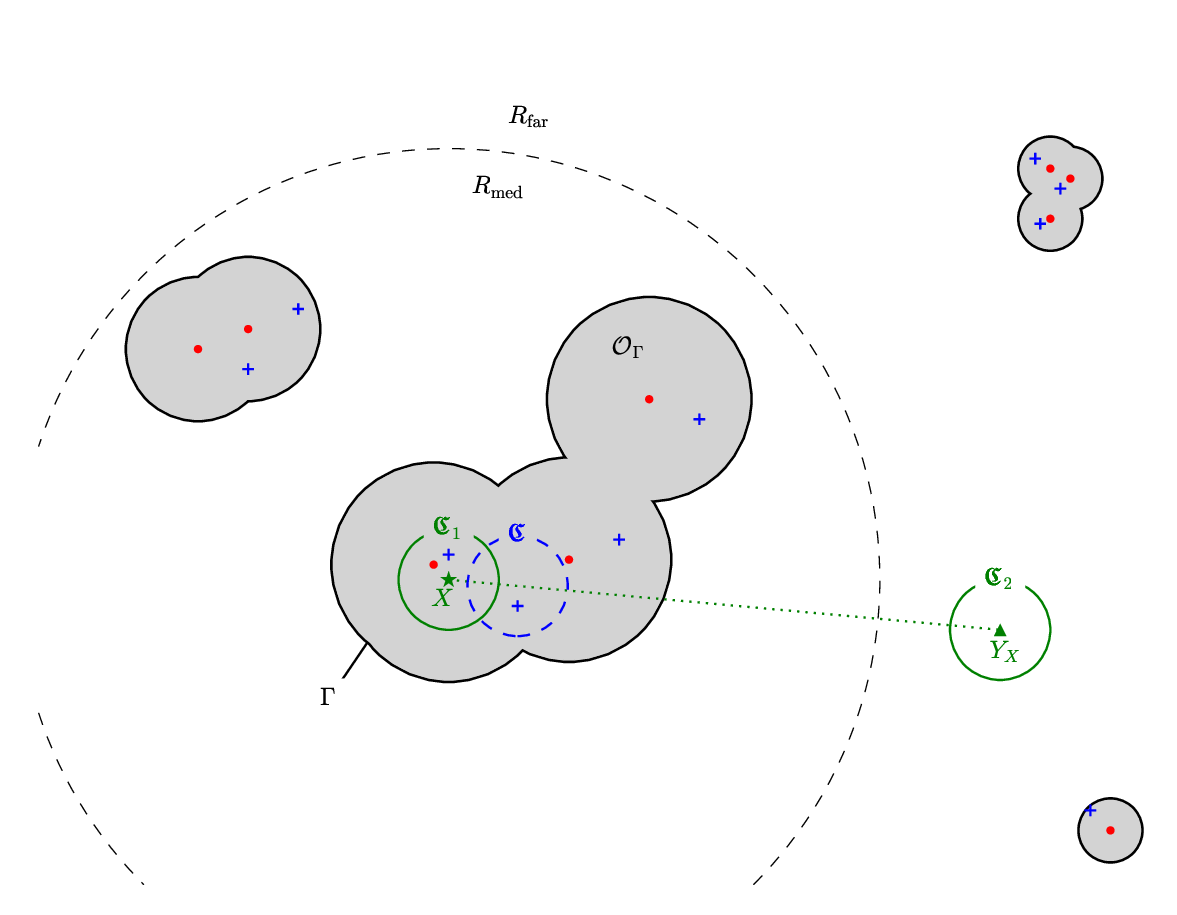}
\caption{A diagram to illustrate Lemma \ref{lem:Wwalsh} and its proof. The red dots and blue crosses are meant to represent roots and critical points, respectively, of $p_n$ that lie in a region near $X$, which is denoted by a green star. The large dashed circle is intended to be on the order of $n^{-1/2}$. Note that indices $1\leq j \leq n$ in $R_\text{near}$ correspond to roots $X_j$ that lie interior to $\mathfrak{C}_1$. This figure is neither to scale nor the result of a simulation.}
\label{fig:walsh}
\end{figure} 

\begin{lemma}
Suppose $\delta < 1/3$. For large $n$, on the complement of $\cup_{i=1}^nF_n^i \cup G_n^\delta$, the polynomial $\widetilde{p}_X(z) = r_X(z)(z-Y_X)^{n-\abs{R_\text{near}}}$ has $\abs{R_\text{near}}$ critical points inside $B\left(X, \frac{5(\ln{n})^2}{n\abs{m_\mu(X)}}\right) \subset \mathcal{O}_\Gamma$, and none of these is $Y_X \notin \mathcal{O}_\Gamma$. In particular, under these conditions, $f_X(z)$ has the same number of roots inside $\mathcal{O}_\Gamma$ as $p_n(z)$ does. 
\label{lem:Wwalsh}
\end{lemma}
\begin{proof}
This follows from Walsh's two circle theorem (see e.g. Theorem 4.1.1 in \cite{RS}.) 
First, we will show that $r_X(z)$ and $\widetilde{p}'_X(z)$ have the same number of roots, $\abs{R_\text{near}}$, inside $\mathcal{O}_\Gamma$ by using Walsh's two circle theorem, and then, we will use this fact to compare the roots of $p_n(z)$ and $f_X(z)$ inside $\mathcal{O}_\Gamma$.

To that end, choose $n$ large enough so that the statements in Lemma \ref{lem:Wfacts} hold on the complement of $\cup_{i=1}^nF_n^i \cup G_n^\delta$, and define the circular domains
\[
\mathfrak{C}_1 := B\left(X, \frac{(\ln{n})^2}{n\abs{m_\mu(X)}}\right) \quad \text{and} \quad \mathfrak{C}_2 := B\left(Y_X, \frac{(\ln{n})^2}{n\abs{m_\mu(X)}}\right).
\]
Note that $\mathfrak{C}_1$ and $\mathfrak{C}_2$ are disjoint for large $n$ on the complement of $\cup_{i=1}^nF_n^i \cup G_n^\delta$ by inequality \eqref{WbdzY} of Lemma \ref{lem:Wfacts}:
\[
\abs{X-Y_X}\geq \frac{1}{4\abs{m_\mu(X)}} > \frac{(\ln{n})^2}{n\abs{m_\mu(X)}}.
\]
In fact, for $n$ large enough, 
\[
\frac{1}{4\abs{m_\mu(X)}}> \frac{4\delta(\ln{n})^4}{n\abs{m_\mu(X)}} + \frac{(\ln{n})^2}{n\abs{m_\mu(X)}},
\]
so on the complement of $\cup_{i=1}^nF_n^i \cup G_n^\delta$, Lemma \ref{lem:Wfacts} part \eqref{WbdzX} guarantees that $\mathfrak{C}_2$ is disjoint from $\mathcal{O}_\Gamma$.

Next, observe that all of the roots of $\widetilde{p}_X(z)$ lie in $\mathfrak{C}_1 \cup \mathfrak{C_2}$, so by Walsh's two circle theorem, the critical points of $\widetilde{p}_X$ lie in $\mathfrak{C}_1 \cup \mathfrak{C}_2 \cup \mathfrak{C}$, where $\mathfrak{C}$ is the open ball 
\[
\mathfrak{C} := B\left(\frac{n-\abs{R_\text{near}}}{n}X + \frac{\abs{R_\text{near}}}{n}Y_X,\frac{(\ln{n})^2}{n\abs{m_\mu(X)}}\right).
\]
By Lemma \ref{lem:Wfacts}, for large $n$, on the complement of $\cup_{i=1}^nF_n^i \cup G_n^\delta$, $x \in \mathfrak{C}$ implies 
\begin{align*}
\abs{x-X} &\leq \abs{\frac{n-\abs{R_\text{near}}}{n}X + \frac{\abs{R_\text{near}}}{n}Y_X - X} +  \frac{(\ln{n})^2}{n\abs{m_\mu(X)}}\\
&= \frac{\abs{R_\text{near}}}{n}\abs{\frac{n-\abs{R_\text{near}}}{\sum_{j \notin R_\text{near}}\frac{1}{X-X_j}}} + \frac{(\ln{n})^2}{n\abs{m_\mu(X)}}\\
&\leq \frac{\abs{R_\text{near}}}{n}\frac{4}{\abs{m_\mu(X)}} + \frac{(\ln{n})^2}{n\abs{m_\mu(X)}}\\
&\leq \frac{4(\delta\ln{n}+2) + (\ln{n})^2}{n\abs{m_\mu(X)}}\\
&< \frac{5(\ln{n})^2}{n\abs{m_\mu(X)}},
\end{align*}
where the last inequality holds for large $n$. It follows that for large $n$, on the complement of $\cup_{i=1}^nF_n^i \cup G_n^\delta$,
\[
\mathfrak{C} \subseteq B\left(X, \frac{5(\ln{n})^2}{n\abs{m_\mu(X)}}\right) \subseteq B\left(X, \frac{(\ln{n})^3}{n\cdot\max\set{\abs{m_\mu(X)},\frac{(\ln{n})^4}{\sqrt{n}}}}\right)
\]
(recall $X \in R^{\text{pair}}_n$), so in particular, $\mathfrak{C} \cup \mathfrak{C}_1$ is contained in $\mathcal{O}_\Gamma$, and this union is disjoint from $\mathfrak{C}_2$. Consequently, by the Supplement Theorem 4.1.1 in \cite{RS},  for large $n$, on the complement of $\cup_{i=1}^nF_n^i \cup G_n^\delta$, $\widetilde{p}'_X(z)$ has $\abs{R_\text{near}}$ roots inside $\mathcal{O}_\Gamma$, just like $r_X(z)$ does. Under these conditions, $f_X(z)$ has the same roots as $q_X(z)\widetilde{p}'_X(z)$ inside $\mathcal{O}_\Gamma$ because $Y_X \notin \mathcal{O}_\Gamma$, so it follows that $f_X(z)$ and $p_n(z)= q_X(z)r_X(z)$ have the same number of roots inside $\mathcal{O}_\Gamma$. 
\end{proof}

We conclude this subsection with two lemmas and an application of Rouch\'{e}'s theorem to establish that $f_X(z)$ and $p_n'(z)$ have the same numbers of zeros in $\mathcal{O}_\Gamma$. This will imply via Lemma \ref{lem:Wwalsh} that $p_n(z)$ and $p_n'(z)$ have the same numbers of zeros in $\mathcal{O}_\Gamma$.

\begin{lemma}
Suppose $\delta < 1/8$. There exist positive constants $\widetilde{C}_\mu$, dependent only on $\mu$,  and $C_{\mu,\delta}$, dependent only on $\mu$ and $\delta$ (and not on $X$, $\Gamma$, etc...), so that for $n \geq C_{\mu,\delta}$, on the complement of $\cup_{i=1}^n F_n^i\cup G_n^\delta \cup H_n$, if $z \in \Gamma$, 
\begin{equation}
\abs{p_n'(z) - f_X(z)} \leq \abs{p_n(z)}\widetilde{C}_\mu\delta^2n\abs{m_\mu(X)}
\label{eqn:Wlbd}
\end{equation}
(here, $\widetilde{C}_\mu$ is independent of $\delta$).
\label{lem:RoucheLB}
\end{lemma}
\begin{proof}
Fix $z \in \Gamma$. By the definition of $f_X(z)$ and the triangle inequality, we have
\begin{equation}
\begin{aligned}
&\abs{p'_n(z)-f_X(z)}\\
&\quad= \abs{p_n(z)}\cdot\abs{\sum_{j=1}^n\frac{1}{z-X_j} - \frac{r'_X(z)}{r_X(z)} - \frac{n-\abs{R_\text{near}}}{z-Y_X}}\\
&\quad= \abs{p_n(z)}\cdot\abs{\sum_{j \notin R_\text{near}}\hspace{-2mm}\frac{1}{z-X_j}- \frac{n-\abs{R_\text{near}}}{z-Y_X}}\\
&\quad\leq  \abs{p_n(z)}\left(\abs{\sum_{j \notin R_\text{near}}\hspace{-2mm}\frac{1}{z-X_j} - \sum_{j \notin R_\text{near}}\hspace{-2mm}\frac{1}{X-X_j}} + \abs{\sum_{j \notin R_\text{near}}\hspace{-2mm}\frac{1}{X-X_j} - \frac{n-\abs{R_\text{near}}}{z-Y_X}}\right).
\end{aligned}
\label{eqn:leftRoucheStart}
\end{equation}
We find upper bounds for the two terms at right separately. First, factor $\abs{ \sum_{j \notin R_\text{near}}\frac{1}{X-X_j}}$ from the second term, and combine the resulting fractions to obtain
\begin{align*}
&\abs{\sum_{j \notin R_\text{near}}\hspace{-2mm}\frac{1}{X-X_j} - \frac{n-\abs{R_\text{near}}}{z-Y_X}}\\
&\quad=\abs{\sum_{j \notin R_\text{near}}\frac{1}{X-X_j}}\abs{\frac{(z-X)\sum_{j \notin R_\text{near}}\frac{1}{X-X_j}}{(z-X)\sum_{j \notin R_\text{near}}\frac{1}{X-X_j}+n - \abs{R_\text{near}}}}\\
&\quad= \abs{z-X}\abs{\frac{1}{n-\abs{R_\text{near}}}\sum_{j \notin R_\text{near}}\hspace{-2mm}\frac{1}{X-X_j}}^2\abs{\frac{n-\abs{R_\text{near}}}{(z-X)\frac{1}{n - \abs{R_\text{near}}}\sum_{j \notin R_\text{near}}\frac{1}{X-X_j}+1}}.
\end{align*}
Lemma \ref{lem:Wfacts} and an application of the reverse triangle inequality to the denominator of the rightmost factor yield for large $n$, on the complement of $\cup_{i=1}^nF_n^i\cup G^\delta_n$,
\begin{equation}
\begin{aligned}
&\abs{\sum_{j \notin R_\text{near}}\hspace{-2mm}\frac{1}{X-X_j} - \frac{n-\abs{R_\text{near}}}{z-Y_X}}\\
&\qquad\leq\frac{4\delta(\ln{n})^4}{n\abs{m_\mu(X)}}\cdot 4\abs{m_\mu(X)}^2\cdot \frac{n-\abs{R_\text{near}}}{1-\abs{z-X}\abs{\frac{1}{n-\abs{R_\text{near}}}\sum_{j \notin R_\text{near}}\frac{1}{X-X_j}}}\\
&\qquad= O_\delta\left((\ln{n})^4\abs{m_\mu(X)}\right).
\end{aligned}
\label{eqn:Wkllbd}
\end{equation}
We now find a bound on the first term in \eqref{eqn:leftRoucheStart}. Combining the summands gives 
\begin{equation*}  
\begin{aligned}
&\abs{\sum_{j \notin R_\text{near}}\hspace{-2mm}\frac{1}{z-X_j} - \sum_{j \notin R_\text{near}}\hspace{-2mm}\frac{1}{X-X_j}}\\
&\qquad= \abs{z-X}\abs{\sum_{j \notin R_\text{near}}\hspace{-1mm}\frac{1}{(z-X_j)(X-X_j)}}\\
&\qquad\leq \abs{z-X}\left(\abs{\sum_{j \in R_\text{med}}\frac{1}{(z-X_j)(X-X_j)}} + \abs{\sum_{j \in R_\text{far}}\frac{1}{(z-X_j)(X-X_j)}}\right),
\end{aligned}
\end{equation*}
so in view of \eqref{eqn:Wkllbd} and the upper bound Lemma \ref{lem:Wfacts} gives for $\abs{z-X}$, we can establish \eqref{eqn:Wlbd} by showing  
%
%
there exist positive constants $\widetilde{C}'_\mu$, $C'_{\mu,\delta}$ satisfying the following: on the complement of $\cup_{i=1}^nF_n^i\cup G^\delta_n$, $n \geq C'_{\mu,\delta}$ implies 
\begin{equation}
\abs{\sum_{j \in R_\text{med}}\frac{1}{(z-X_j)(X-X_j)}} + \abs{\sum_{j \in R_\text{far}}\frac{1}{(z-X_j)(X-X_j)}} \leq \frac{\widetilde{C}'_\mu\delta n^2\abs{m_\mu(X)}^2}{(\ln(n))^{4}}.
\label{eqn:Wkbd}
\end{equation}

We will bound each term on the left separately. By construction of the sets $\set{\mathcal{O}_\Gamma}_{\Gamma \in \mathcal{M}_n}$, recall that the curves $\Gamma \in \mathcal{M}_n$ do not intersect the interiors of the open balls forming $\mathcal{U}_{n,P}$, $P \in \mathcal{P}_n$. Hence, for $j \in R_\text{med}$,
\[
\abs{z - X_j} \geq \frac{(\ln{n})^3}{n\cdot\max\set{\abs{m_\mu(X_j)},\frac{(\ln{n})^4}{\sqrt{n}}}}.
\]
By Lemma \ref{lem:CSnice}, it follows that for large $n$, $\abs{m_\mu(X_j)} \leq 2\abs{m_\mu(X)}$ (Recall that for $j \in R_\text{med}$, $\abs{X-X_j} < \frac{1}{\sqrt{n}}$ and $X \in R^{\text{pair}}_n$). Consequently, for large $n$, on the complement of $\cup_{i=1}^nF_n^i \cup G_n^\delta$, 
\[
\abs{z-X_j} \geq \frac{(\ln{n})^3}{n\cdot\max\set{2\abs{m_\mu(X)},\frac{(\ln{n})^4}{\sqrt{n}}}} \geq \frac{(\ln{n})^3}{2n\abs{m_\mu(X)}}.
\]
In addition, for $j \in R_\text{med}$, $\abs{X-X_j} \geq \frac{(\ln{n})^2}{n\abs{m_\mu(X)}}$. Hence, for $n$ large, on the complement of $\cup_{i=1}^nF_n^i \cup G_n^\delta$, 
\begin{equation}
\abs{\sum_{j \in R_\text{med}}\frac{1}{(z-X_j)(X-X_j)}} \leq \abs{R_\text{med}} \frac{2n^2\abs{m_\mu(X)}^2}{(\ln{n})^5} \leq (\delta\ln{n}+2)\frac{2n^2\abs{m_\mu(X)}^2}{(\ln{n})^5}.
\label{eqn:Wkbd1}
\end{equation}

We now turn our attention to the second term on the left side of \eqref{eqn:Wkbd}. Since $\mu$ is absolutely continuous with respect to the Lebesgue measure on $\C$, we expect that the number of $X_j$, $j \in R_\text{far}$ within a given distance of $X$ is roughly proportional to the square of that distance, and hence, the sum over $j \in R_\text{far}$ in \eqref{eqn:Wkbd} should be roughly on the order of $n$. To take advantage of this intuition, we will split the sum into pieces by grouping terms $[(z-X_j)(X-X_j)]^{-1}$ according to the distance between $X_j$ and $X$. To that end, define for $1 \leq k \leq \sqrt{n}-1$, the annuli
\begin{align*}
D_{k,n} &:= \set{w \in \C : \frac{k}{\sqrt{n}} \leq \abs{w-X} \leq \frac{k+1}{\sqrt{n}}}\\
D_{k,n}' &:= \set{w \in \C : \frac{k-1}{\sqrt{n}} \leq \abs{w-X} \leq \frac{k+2}{\sqrt{n}}}\\
D_{k,n}'' &:= \set{w \in \C : \frac{k-2}{\sqrt{n}} \leq \abs{w-X} \leq \frac{k+3}{\sqrt{n}}}
\end{align*}
and the random variables
\[
\#_{k,n} := \#\set{j: 1\leq j \leq n,\ X_j \in D_{k,n}}.
\]
(Note that $D'_{1,n}, D''_{1,n}, D''_{2,n}$ are disks.) On the complement of $H_n$, each $X_j$, $1 \leq j \leq n$ is within $n^{-1/2}$ of some $x_j \in \mathcal{N}_n$, and on the complement of $G^\delta_n$, there are at most $2 + \delta\ln{n}$ roots $X_l$, $1\leq l \leq n$ within $n^{-1/2}$ of $x_j$. It follows that
\begin{equation}
\#_{k,n} \leq \abs{\mathcal{N}_n \cap D'_{k,n}} \cdot (\delta\ln{n} +2).
\label{eqn:WnumAbd}
\end{equation}
We will argue that due to the fact that any distinct $x,y \in \mathcal{N}_n$ are separated by at least $\frac{1}{2\sqrt{n}}$, the size of $\mathcal{N}_n \cap D_{k,n}'$ is bounded by $16^2k$. Indeed, for any distinct $x,y \in \mathcal{N}_n$, the balls $B(x,n^{-1/2}/4)$ and $B(y,n^{-1/2}/4)$ are disjoint, and if $x \in D_{k,n}'$, then 
$
B(x,n^{-1/2}/4) \subset D''_{k,n}.
$
The area of $D''_{k,n}$ for $k \geq 2$ is $\frac{\pi}{n}(10k + 5)$, so at most $16(10k+5)$ disjoint balls of radius $n^{-1/2}/4$ can fit in $D''_{k,n}$. Similarly, at most $16^2$ balls of radius $n^{-1/2}/4$ can fit in $D''_{1,n}$. Combining this with equation \eqref{eqn:WnumAbd} establishes that, on the complement of $G_n^\delta \cup H_n$, we have $\#_{k,n} \leq 16^2k(\delta\ln{n} + 2)$.

We can now find an upper bound for the second term on the left of \eqref{eqn:Wkbd} by breaking this sum into pieces that correspond to the annuli $D_{k,n}$, $1 \leq k \leq \sqrt{n}-1$. To start, observe that
\begin{equation}
\begin{aligned}
&\sum_{j \in R_\text{far}}\frac{1}{\abs{z-X_j}\abs{X-X_j}}\\
&\leq \sum_{k=1}^{\sqrt{n}-1} \sum_{j: X_j \in D_{k,n}}\frac{1}{\abs{z-X_j}\abs{X-X_j}} + \sum_{j: \abs{X-X_j} \geq 1} \frac{1}{\abs{z-X_j}}\\
&\leq \sum_{k=1}^{\sqrt{n}-1}\sum_{j: X_j \in D_{k,n}}\frac{1}{\left(\abs{X-X_j}-\abs{z-X}\right)\abs{X-X_j}}+ \sum_{j: \abs{X-X_j} \geq 1} \frac{1}{\abs{X-X_j}-\abs{z-X}},
\end{aligned}
\label{eqn:WkFar1}
\end{equation}
where the last line follows from the reverse triangle inequality (the next equation justifies why $\abs{z-X}$ is smaller than $\abs{X-X_j}$). 
If $\delta < 1/8$ and $n$ is large enough to guarantee the conclusions of Lemma \ref{lem:Wfacts}, then on the complement of $\cup_{i=1}^nF_n^i \cup G_n^\delta \cup H_n$, for $j \in R_\text{far}$, we have 
\[
\abs{X-X_j} \geq \frac{1}{\sqrt{n}} > \frac{8\delta}{\sqrt{n}} \geq \frac{8\delta(\ln{n})^4}{n\abs{m_\mu(X)}} \geq 2\abs{z-X}
\] 
(note that $\abs{m_\mu(X)} \geq \frac{(\ln{n})^4}{\sqrt{n}}$). Substituting $\abs{X-X_j}/2$ for $\abs{z-X}$ into the last line of \eqref{eqn:WkFar1} establishes that for large $n$,
on the complement of $\cup_{i=1}^nF_n^i \cup G_n^\delta\cup H_n$, 
\[
\sum_{j \in R_\text{far}}\frac{1}{\abs{z-X_j}\abs{X-X_j}} \leq \sum_{k=1}^{\sqrt{n}-1}\sum_{j: X_j \in D_{k,n}}\frac{2}{\abs{X-X_j}^2} + \sum_{j: \abs{X-X_j} \geq 1} \frac{2}{\abs{X-X_j}},
\]
at which point we can use the fact that $\abs{X_j - X} \geq \frac{k}{\sqrt{n}}$ for $X_j \in D_{k,n}$, $1 \leq k \leq \sqrt{n}-1$ to obtain
\[
\sum_{j \in R_\text{far}}\frac{1}{\abs{z-X_j}\abs{X-X_j}}\leq \sum_{k=1}^{\sqrt{n}-1}\#_{k,n}\frac{2n}{k^2}+2n \leq \sum_{k=1}^{\sqrt{n}-1}\frac{2\left(16^2k(\delta\ln{n} +2)\right)n}{k^2}+2n.
\]
By approximating $\sum_{k-1}^{\sqrt{n}-1}k^{-1}$ with $1 + \int_{1}^{\sqrt{n}-1}x^{-1}\,dx$, we see that this last expression is on the order of $O\left(\delta n(\ln{n})^2\right)$,
where the implied constant is independent of $\delta$. 
Together with \eqref{eqn:Wkbd1}, this establishes equation \eqref{eqn:Wkbd} since $\abs{m_\mu(X)} \geq \frac{(\ln{n})^4}{\sqrt{n}}$. 
\end{proof}

The last lemma in this subsection establishes a lower bound on $\abs{f_X(z)}$ that will combine with \eqref{eqn:Wlbd} to fulfill the hypotheses of Rouch\'{e}'s theorem on the boundary $\Gamma$ of the domain $\mathcal{O}_\Gamma$.

\begin{lemma}
For fixed $\delta>0$, there is a constant $\check{C}_{\mu,\delta}$ depending only on $\mu$ and $\delta$ so that when $n \geq \check{C}_{\mu,\delta}$, on the complement of $\cup_{i=1}^nF_n^i \cup G_n^\delta$, if $z \in \Gamma$, 
\begin{equation}
\abs{f_X(z)} \geq \abs{p_n(z)}n\abs{m_\mu(X)}\cdot e^{-9}.
\label{eqn:Wubd}
\end{equation}
\label{lem:RoucheUB}
\end{lemma}
\begin{proof}
We have
\[
\abs{r_X(z)} = \prod_{X_j \in R_\text{near}}\abs{z-X_j}\leq \left(\abs{z-X} + \frac{(\ln{n})^2}{n\abs{m_\mu(X)}}\right)^{\abs{R_\text{near}}}\\
\]
and
\[
\abs{\frac{f_X(z)}{q_X(z)}} = \frac{1}{\abs{z-Y_X}}\abs{(z-Y_X)r_X'(z) + r_X(z)\left(n-\abs{R_\text{near}}\right)}.
\]
By Lemma \ref{lem:Wwalsh}, for large $n$, on the complement of $\cup_{i=1}^nF_n^i \cup G_n^\delta$, the polynomial expression
\[
(z-Y_X)r_X'(z) + r_X(z)\left(n-\abs{R_\text{near}}\right) = \frac{\widetilde{p}'_X(z)}{(z-Y_X)^{n-\abs{R_\text{near}}-1}}
\]
has degree $\abs{R_\text{near}}$, leading coefficient $n$, and $\abs{R_\text{near}}$ roots in $B\left(X, \frac{5(\ln{n})^2}{n\abs{m_\mu(X)}}\right) \subset \mathcal{O}_\Gamma$. It follows that 
under these conditions,
\begin{align*}
\abs{\frac{f_X(z)}{q_X(z)}} 
&= \frac{n}{\abs{z-Y_X}}\prod_{\substack{w\in \mathcal{O}_\Gamma\\ \widetilde{p}'_X(w)=0}}\abs{z-w}\\
&\geq \frac{n}{\abs{z-Y_X}}\left(\abs{z-X} - \frac{5(\ln{n})^2}{n\abs{m_\mu(X)}}\right)^{\abs{R_\text{near}}},
\end{align*}
where the critical points of $\widetilde{p}_X(z)$ that index the product are considered with multiplicity.


 If additionally, $\delta < 1$ and $n$ is large enough to guarantee the bounds on $\abs{z-X}$ in Lemma \ref{lem:Wfacts}, 
we have that on the complement of $\cup_{i=1}^nF_n^i \cup G_n^\delta$ and for $z \in \Gamma$,
\[
\frac{(\ln{n})^2}{n\abs{m_\mu(X)}} \leq \frac{\abs{z-X}}{\delta\ln{n}}.
\]
Hence, if $n$ is large enough, on the complement of $\cup_{i=1}^nF_n^i \cup G_n^\delta$, 
for $z \in \Gamma$,  
\begin{align*}
\abs{f_X(z)} &= \abs{p_n(z)}\cdot\frac{1}{\abs{r_X(z)}} \cdot \abs{\frac{f_X(z)}{q_X(z)}}\\
&\geq \abs{p_n(z)} \cdot \frac{n}{\abs{z-Y_X}} \cdot \left(\frac{\abs{z-X}\left(1- \frac{5}{\delta\ln{n}}\right)}{\abs{z-X}\left(1+\frac{1}{\delta\ln{n}}\right)}\right)^{\abs{R_\text{near}}}\\
&\geq \abs{p_n(z)}\cdot \frac{n\abs{m_\mu(X)}}{5} \cdot \left(\frac{1- \frac{5}{\delta\ln{n}}}{1+\frac{1}{\delta\ln{n}}}\right)^{\delta\ln{n}+2}\\
&\geq \abs{p_n(z)}n\abs{m_\mu(X)}\cdot e^{-9}.
\end{align*}
We have used Lemma \ref{lem:Wfacts} to bound $\abs{z-Y_X}$, and the last inequality holds for large $n$ and comes from the fact that
\[
\left(1 +\frac{x}{\delta\ln{n}}\right)^{\delta\ln{n}+2} \xrightarrow{n \to\infty} e^{ x}.
\]
(Note that the rate of convergence possibly depends on $\delta$.) We have achieved \eqref{eqn:Wubd} as was desired.
\end{proof}


We have now established both \eqref{eqn:Wlbd} and \eqref{eqn:Wubd}, where the inequalities are independent of $X$, $\Gamma$, and $z\in \Gamma$. Since $\widetilde{C}_\mu$ is independent of $\delta$, we can choose $\delta \in (0,1/8)$ small enough that $\widetilde{C}_\mu\delta^2 < e^{-9}$. For such a $\delta$, by Lemmas \ref{lem:RoucheLB} and \ref{lem:RoucheUB}, for large $n$, on the complement of $\cup_{i=1}^nF_n^i\cup G_n^\delta\cup H_n$, any $z \in \Gamma$ satisfies
\[
\abs{p'_n(z) - f_X(z)} < \abs{f_X(z)}.
\]
It follows by Rouch\'{e}'s theorem that for large $n$, on the complement of $\cup_{i=1}^nF_n^i\cup G_n^\delta\cup H_n$, $p'_n(z)$ and $f_X(z)$ have the same number of zeros inside $\mathcal{O}_\Gamma$, and by Lemma \ref{lem:Wwalsh}, we conclude that $p'_n(z)$ and $p_n(z)$ have the same number of zeros inside $\mathcal{O}_\Gamma$. The inequality in the conclusion of Lemma \ref{lem:Wmain} follows directly from this and Lemma \ref{lem:Wfacts} part \eqref{WbdzX} (note $\delta \leq 1/4$).

In the argument above, the particular curve $\Gamma \in \mathcal{M}_n$ and the root $X \in \mathcal{O}_\Gamma \cap R_n^\text{pair}$ were arbitrary, and all of the constants involved were independent of $\Gamma$, so we have proved Lemma \ref{lem:Wmain}.
\end{proof}

\subsection{Bounding the Wasserstein distance}

In this subsection, we use Lemma \ref{lem:Wmain} to prove Theorem \ref{thm:wasserstein}. Let $w_1^{(n)}, \ldots, w_{n-1}^{(n)}$ denote the (not necessarily distinct) critical points of $p_n(z)$, and recall the definitions of the empirical measures, $\mu_n$ and $\mu'_n$ (see \eqref{eq:def:mun}) 
Since the numbers of roots and critical points of a polynomial differ by one, we first compare the measure $\mu_n'$ to the intermediate measure
\[
\tilde{\mu}'_n := \frac{1}{n}\left(\delta_{\overline{X}} + \sum_{j=1}^{n-1}\delta_{w_j^{(n)}}\right),\ \text{where}\ \overline{X} = \frac{1}{n}\sum_{j=1}^nX_j.
\]
The following lemma justifies our choice of $\tilde{\mu}'_n$.
\begin{lemma}
Let $\mu'_n$, $\tilde{\mu}'_n$, and $\eta_n:=\max_{1\leq j \leq n}\abs{X_j}$ be defined as above. Then, with probability $1$,
$
W_1(\mu_n', \tilde{\mu}'_n) \leq \frac{2\eta_n}{n}.
$
\label{lem:WWhyTilde}
\end{lemma}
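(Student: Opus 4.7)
The plan is to exhibit an explicit coupling $\pi$ of $\mu_n'$ and $\tilde{\mu}_n'$ and bound $\int|x-y|\,d\pi$ directly. Since both measures already share mass $\frac{1}{n}$ at each critical point (as $\frac{1}{n-1}\geq \frac{1}{n}$), the natural coupling is to leave mass $\frac{1}{n}$ fixed at each $w_j^{(n)}$ and transport the remaining mass $\frac{1}{n-1}-\frac{1}{n} = \frac{1}{n(n-1)}$ from each $w_j^{(n)}$ to $\overline{X}$. Summing over $j=1,\ldots,n-1$, the total mass redistributed is $(n-1)\cdot\frac{1}{n(n-1)}=\frac{1}{n}$, which matches precisely the extra mass $\tilde{\mu}_n'$ places at $\overline{X}$. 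Thus the candidate $\pi$ on $\mathbb{C}\times\mathbb{C}$ assigns mass $\frac{1}{n}$ to each pair $(w_j^{(n)},w_j^{(n)})$ and mass $\frac{1}{n(n-1)}$ to each pair $(w_j^{(n)},\overline{X})$.

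First I would verify formally that $\pi$ has the correct marginals $\mu_n'$ and $\tilde{\mu}_n'$ (this is a short accounting exercise). Then the transport cost is
\[
\int_{\mathbb{C}\times\mathbb{C}} |x-y|\,d\pi(x,y) = \frac{1}{n(n-1)}\sum_{j=1}^{n-1}\bigl|w_j^{(n)}-\overline{X}\bigr|.
\]
Here the key input is the Gauss--Lucas theorem, which guarantees that every critical point $w_j^{(n)}$ lies in the convex hull of $X_1,\ldots,X_n$. Hence almost surely $|w_j^{(n)}|\leq \eta_n$, and obviously $|\overline{X}|\leq \eta_n$, so $|w_j^{(n)}-\overline{X}|\leq 2\eta_n$ by the triangle inequality. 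Substituting gives
\[
\int |x-y|\,d\pi \leq \frac{1}{n(n-1)}\cdot(n-1)\cdot 2\eta_n = \frac{2\eta_n}{n},
\]
and since $W_1(\mu_n',\tilde{\mu}_n')$ is the infimum over all couplings, the claim follows.

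There is no real obstacle here: the lemma is a bookkeeping step bridging the slight mismatch in total cardinality ($n-1$ critical points versus $n$ roots) before the main Wasserstein estimate is applied. The only nontrivial ingredient is Gauss--Lucas, which controls the worst-case distance from any critical point to the sample mean of the roots via the diameter of the support of $\mu_n$.
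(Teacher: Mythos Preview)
Your proposal is correct and essentially identical to the paper's proof: the paper defines exactly the same coupling $\widetilde{\pi} = \frac{1}{n}\sum_{j=1}^{n-1}\delta_{(w_j^{(n)},w_j^{(n)})} + \frac{1}{n(n-1)}\sum_{j=1}^{n-1}\delta_{(w_j^{(n)},\overline{X})}$, checks the marginals, and bounds $|w_j^{(n)}-\overline{X}|\le 2\eta_n$ via the Gauss--Lucas theorem.
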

\begin{proof}
Let $\widetilde{\pi}$ be the measure on $\C \times \C$ given by
\[
\widetilde{\pi} := \frac{1}{n} \sum_{j=1}^{n-1} \delta_{(w_j^{(n)}, w_j^{(n)})} + \frac{1}{n(n-1)}\sum_{j=1}^{n-1}\delta_{(w_j^{(n)},\overline{X})},
\]
whose marginal distributions are easily seen to be $\mu'_n$ and $\tilde{\mu}'_n$. It follows from the definition of the $L_1$-Wasserstein metric that, almost surely,
\begin{align*}
W_1(\mu'_n, \tilde{\mu}'_n) &\leq \frac{1}{n}\sum_{j=1}^{n-1}\abs{w_j^{(n)} - w_j^{(n)}} + \frac{1}{n(n-1)}\sum_{j=1}^{n-1}\abs{w_j^{(n)} - \overline{X}}\leq 0 + \frac{1}{n} \cdot 2\eta_n,
\end{align*}
where the last inequality follows from the Gauss--Lucas theorem. 
\end{proof}

The next result is an $L_1$-Wasserstein comparison between $\mu_n$ and $\tilde{\mu}'_n$ that we will use in conjunction with Lemma \ref{lem:WWhyTilde} and the triangle inequality to prove Theorem \ref{thm:wasserstein}.
\begin{lemma}
Let $X_1, \ldots, X_n$ be iid, complex random variables with distribution $\mu$ that has a bounded density and satisfies Assumption \ref{assum:subquad}. Then, there is a constant $C$, depending only on $\mu$, so that 
with probability $1 - o(1)$,
$
W_1(\mu_n, \tilde{\mu}'_n) \leq \frac{C\eta_n(\ln{n})^{9}}{n},
$
where $\mu_n$, $\tilde{\mu}'_n$, and $\eta_n$ are defined as above.
%
\label{lem:WassMuTilde}
\end{lemma}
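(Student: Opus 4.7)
The plan is to construct an explicit bijective coupling between the $n$ atoms of $\mu_n$ and the $n$ atoms of $\tilde{\mu}'_n$ (each carrying mass $1/n$), then bound the resulting transport cost. Since $\P(\mathcal{E}_n^{\text{bad}}) = o(1)$ by \eqref{eqn:WBadSmall}, and since $\eta_n \geq c_\mu > 0$ with probability $1 - o(1)$ whenever $\mu$ has a bounded density (as $\mu$ cannot be a point mass), it suffices to reason deterministically on the joint complement of these events. I would call a domain $\mathcal{O}_\Gamma$, $\Gamma \in \mathcal{M}_n$, \emph{good} if $\mathcal{O}_\Gamma \cap R_n^{\text{pair}} \neq \emptyset$, and \emph{bad} otherwise. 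Inside every good domain, Lemma \ref{lem:Wmain} supplies equal counts of roots and critical points of $p_n$, so I fix an arbitrary local bijection between them. The identity $\deg p_n - \deg p'_n = 1$ combined with Lemma \ref{lem:Wmain} then forces the number of bad-domain roots to exceed the number of bad-domain critical points by exactly one, which is precisely compensated by the extra atom at $\overline{X}$ in $\tilde{\mu}'_n$. I complete the coupling by pooling the bad-domain roots together with the bad-domain critical points and $\overline{X}$, and taking any bijection of this pool.

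The Wasserstein cost is at most $\frac{1}{n}$ times the total matched distance, which I would split into three regimes. For a good-domain pair $(X,w)$ with $X \in R_n^{\text{pair}}$, Lemma \ref{lem:Wmain} gives $|X-w| \leq (\ln n)^4 / (n |m_\mu(X)|)$; decomposing by the annuli $A_n^k$ and using $N_n^k \leq 2C_\mu e^{2k} \ln\ln n$ (from the complement of $E_n^k$) together with $|m_\mu| \geq e^{k-1}/\sqrt{n}$ on $A_n^k$, the per-annulus contribution becomes $O(e^k (\ln n)^4 \ln\ln n / n^{3/2})$, which is a geometric series in $e^k$ dominated by its top term at $k = \lfloor \ln \sqrt{n}\rfloor$, giving a total of $O((\ln n)^4 \ln\ln n / n)$; the contribution from $A_n$ adds only $O((\ln n)^4/n)$. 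For a good-domain pair with $X \notin R_n^{\text{pair}}$, Corollary \ref{cor:WOsmall} gives the crude bound $|X-w| \leq 3\delta/\sqrt{n}$; there are at most $N_n^{\lfloor 4\ln\ln n\rfloor} + N_n^{\lfloor 4\ln\ln n\rfloor+1} = O((\ln n)^8 \ln\ln n)$ such roots, contributing only $O((\ln n)^9 / n^{3/2})$. For the pool, every pair has distance at most $2\eta_n$ by the Gauss--Lucas theorem (every atom lies in $\overline{B}(0,\eta_n)$), and the pool contains at most $O((\ln n)^8 \ln\ln n)$ pairs, contributing $O(\eta_n (\ln n)^8 \ln\ln n / n)$. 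Collapsing $\ln\ln n \leq \ln n$ and using $\eta_n \geq c_\mu$ to absorb the subleading $O((\ln n)^5 / n)$ term into $\eta_n (\ln n)^9/n$ yields the claimed bound.

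The main obstacle is the combinatorial bookkeeping at the interface between the good-domain and bad-domain parts of the coupling: one must confirm that the singleton surplus created by $\deg p_n - \deg p'_n = 1$ is concentrated entirely in the bad pool, so that the atom $\overline{X}$ (which is not Gauss--Lucas-controlled within any particular domain) absorbs it there rather than interfering with the tight good-domain pairings driven by Lemma \ref{lem:Wmain}. A secondary subtlety is that a good domain may contain roots outside $R_n^{\text{pair}}$; since the crude diameter bound from Corollary \ref{cor:WOsmall} is the best available for such roots, one must verify they are few enough (bounded by the total population of $A_n^{\lfloor 4\ln\ln n\rfloor} \cup A_n^{\lfloor 4\ln\ln n\rfloor + 1}$) so that their contribution stays subleading. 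Finally, the annular telescoping in regime (a) must be executed with some care: because the annuli $A_n^k$ are geometric in $k$ while $N_n^k$ grows like $e^{2k}$ and $1/|m_\mu|$ like $e^{-k}$, the summand is geometric in $e^k$, so the $O(\ln n)$ number of annuli does not contribute a spurious logarithmic factor, which is what keeps the good-regime estimate at $O((\ln n)^5/n)$ instead of $O((\ln n)^6/n)$.
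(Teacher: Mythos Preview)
Your proof is correct and follows essentially the same approach as the paper: build a bijective coupling between the atoms of $\mu_n$ and $\tilde\mu'_n$ by using Lemma~\ref{lem:Wmain} to pair roots and critical points inside each good domain, assign the remaining roots arbitrarily to the leftover critical points together with $\overline{X}$, bound the leftover via Gauss--Lucas, and estimate the good-domain cost by decomposing over the annuli $A_n^k$. The only differences are cosmetic: the paper merges your second and third regimes into one (every root outside $R_n^{\text{pair}}$ receives the crude $2\eta_n$ bound regardless of whether its domain is good), so your use of Corollary~\ref{cor:WOsmall} for good-domain non-paired roots is a harmless refinement; and your phrase ``bad-domain critical points'' should read ``critical points not in any good domain,'' since critical points may lie outside every $\mathcal{O}_\Gamma$, but your counting argument goes through unchanged under that reading.
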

\begin{proof}

Suppose $w_1^{(n)},\ldots, w_{n-1}^{(n)}$ are critical points of $p_n(z)$ defined above, and define $w_n^{(n)} := \overline{X}$. Then, for any permutation $\sigma_n$ of $\set{1,2,\ldots, n}$, the measure
\[
\pi_{\sigma_n} := \frac{1}{n}\sum_{j=1}^n\delta_{(X_j, w_{\sigma_n(j)}^{(n)})}
\]
has marginal distributions $\mu_n$ and $\tilde{\mu}'_n$, so 
\begin{equation}
W_1(\mu_n, \tilde{\mu}_n') \leq \int\abs{x-y}\,d\pi_{\sigma_n}(x,y) = \frac{1}{n}\sum_{j=1}^n\abs{X_j - w_{\sigma_n(j)}^{(n)}}.
\label{eqn:WUpBd}
\end{equation}
We will now make a judicious choice of $\sigma_n$ in order to take advantage of the ``clumping'' behavior of the roots and critical points of $p_n(z)$ proclaimed in the conclusion of Lemma \ref{lem:Wmain}.

To start, define the index sets $S_\Gamma$, $\Gamma \in \mathcal{M}_n$ by
\[
S_\Gamma := \set{1 \leq j \leq n : X_j \in \mathcal{O}_\Gamma}.
\] 
For large $n$, on the complement of $\mathcal{E}_n^\text{bad}$, Lemma \ref{lem:Wmain} guarantees that each $\mathcal{O}_\Gamma$, $\Gamma \in \mathcal{M}_n$ satisfying $\mathcal{O}_\Gamma \cap R_n^\text{pair} \neq \emptyset$ contains the same number of critical points and roots of $p_n(z)$. Consequently, we can choose $\sigma_n$ so that for each $\Gamma \in \mathcal{M}_\Gamma$ satisfying $\mathcal{O}_\Gamma \cap R_n^\text{pair}\neq \emptyset$, we have
\[
\sigma_n(S_\Gamma) = \set{1 \leq j \leq n-1: w_j^{(n)} \in \mathcal{O}_\Gamma} 
\] 
(recall that $\mathcal{O}_\Gamma$, $\Gamma \in \mathcal{M}_n$ are pairwise disjoint). For the remaining indices whose images under $\sigma_n$ we haven't specified, arbitrarily assign them from among the remaining choices. (There is at least one index $1 \leq i \leq n$ for which $\sigma_n(i)$ is still undefined because the number of roots and critical points of $p_n(z)$ differs by $1$. Recall that we have added $w_n^{(n)} = \overline{X}$ to account for this fact.)

Based on our construction of $\sigma_n$, Lemma \ref{lem:Wmain} also implies that for large $n$, on the complement of $\mathcal{E}_n^\text{bad}$,
\begin{equation}
\abs{X_j - w_{\sigma_n(j)}^{(n)}} \leq \frac{(\ln{n})^4}{n\abs{m_\mu(X_j)}}\ \text{when $X_j \in R_n^\text{pair}$},\ 1\leq j \leq n
\label{eqn:WPairBd}
\end{equation}
(Indeed, $X_j \in R_n^\text{pair}$ implies that $X_j \in \mathcal{O}_\Gamma$, with $\mathcal{O}_\Gamma\cap R_n^\text{pair} = \emptyset$ for some $\Gamma \in \mathcal{M}_n$.)
%
%
%
%
%
%
By the Gauss--Lucas theorem, each critical point (and each root) of $p_n(z)$ is in the convex hull of the set $\set{X_j}_{j=1}^n$ of roots of $p_n(z)$, so for any $X_j \notin R_n^\text{pair}$, we have the trivial bound
\begin{equation}
\abs{X_j - w_{\sigma_n(j)}^{(n)}} \leq 2\eta_n.
\label{eqn:WTrivialBd}
\end{equation}
We now find an upper bound for $W_1(\mu_n, \tilde{\mu}'_n)$ by splitting the sum on the right side of \eqref{eqn:WUpBd} into many pieces. To start, we classify the terms based on whether or not $X_j \in R_n^\text{pair}$ and apply \eqref{eqn:WPairBd} and \eqref{eqn:WTrivialBd} to obtain
\begin{equation}
W_1(\mu_n, \tilde{\mu}_n')\leq \frac{1}{n}\sum_{j: X_j \notin R_n^\text{pair}}2\eta_n + \frac{1}{n}\sum_{j: X_j \in R_n^\text{pair}} \frac{(\ln{n})^4}{n\abs{m_\mu(X_j)}},
\label{eqn:WDistSplit}
\end{equation}
for large $n$, on the complement of $\mathcal{E}_n^\text{bad}$. According to the definition of $R_n^\text{pair}$, any $X_j\notin R_n^\text{pair}$ is an element of $A^{\lfloor 4\ln(\ln{n})\rfloor} \cup A^{\lfloor 4\ln(\ln{n})\rfloor+1}$, so there are $N_n^{\lfloor 4\ln(\ln{n})\rfloor} + N_n^{\lfloor 4\ln(\ln{n})\rfloor+1}$ terms in the leftmost sum. It follows that for large $n$, on the complement of $\mathcal{E}_n^\text{bad}$,
\begin{equation}
\sum_{j: X_j \notin R_n^\text{pair}}\hspace{-2mm}2\eta_n \leq 4\eta_nC_\mu\ln(\ln{n})\left( e^{2\lfloor 4\ln(\ln{n})\rfloor} + e^{2\lfloor 4\ln(\ln{n})\rfloor+2} \right) \ll_\mu (\ln{n})^9\eta_n.
\label{eqn:WUnpairBd}
\end{equation}
It remains to control the rightmost sum in \eqref{eqn:WDistSplit}, which we accomplish by grouping indices $j$, $1 \leq j \leq n$ that correspond to $X_j$ that fall in the same region among $A_n$ and $A_n^k$, $\lfloor 4\ln(\ln{n})\rfloor+2\leq k \leq \lfloor \ln(\sqrt{n})\rfloor$. We have,
\begin{align*}
\sum_{j: X_j \in R_n^\text{pair}} \frac{(\ln{n})^4}{n\abs{m_\mu(X_j)}}&=\sum_{k = \lfloor 4\ln(\ln{n})\rfloor+2}^{\lfloor \ln(\sqrt{n})\rfloor}\sum_{j: X_j \in A_n^k}\frac{(\ln{n})^4}{n\abs{m_\mu(X_j)}} + \sum_{j: X_j \in A_n}\frac{(\ln{n})^4}{n\abs{m_\mu(X_j)}}\\
&\leq\sum_{k = \lfloor 4\ln(\ln{n})\rfloor+2}^{\lfloor \ln(\sqrt{n})\rfloor}N_n^k\cdot\frac{(\ln{n})^4\sqrt{n}}{ne^{k-1}} + n\cdot \frac{(\ln{n})^4\sqrt{n}}{ne^{\lfloor \ln(\sqrt{n})\rfloor}},
%
%
\end{align*}
where the last inequality follows after applying the lower bounds on $\abs{m_\mu(X_j)}$ that are given in the definitions of $A_n$ and $A_n^k$ (note that there are $N_n^k$ terms in each inner sum, and the very last sum has at most $n$ terms). For large $n$, on the complement of $\mathcal{E}_n^\text{bad}$, we have $N_n^k < 2C_\mu e^{2k}\log(\log{n})$, for $\lfloor 4\log(\log{n})\rfloor\leq k \leq \lfloor\log(\sqrt{n})\rfloor$, so, continuing from above,
\begin{align*}
\sum_{j: X_j \in R_n^\text{pair}} \frac{(\ln{n})^4}{n\abs{m_\mu(X_j)}} &\leq 2C_\mu\ln(\ln{n})\frac{e(\ln{n})^4}{\sqrt{n}}\sum_{k = \lfloor 4\ln(\ln{n})\rfloor+2}^{\lfloor \ln(\sqrt{n})\rfloor}e^k + \frac{(\ln{n})^4\sqrt{n}}{e^{\lfloor \ln(\sqrt{n})\rfloor}}\\
&\leq 2C_\mu\ln(\ln{n})\frac{e(\ln{n})^4}{\sqrt{n}}\cdot \log(\sqrt{n})e^{\log(\sqrt{n})} + \frac{e(\ln{n})^4\sqrt{n}}{\sqrt{n}}\\
&\ll_\mu (\log{n})^6.
\end{align*}
In view of the fact that  $\P(\mathcal{E}_n^\text{bad}) = o(1)$, we complete the proof of Lemma \ref{lem:WassMuTilde} by combining the last asymptotic with \eqref{eqn:WDistSplit} and \eqref{eqn:WUnpairBd}. 
(Note that with probability $1-o(1)$, $\eta_n\ln{n} \geq 1$.)
%
\end{proof}

We conclude this subsection by remarking that Theorem \ref{thm:wasserstein} follows from Lemmas \ref{lem:radSymAssump}, \ref{lem:WWhyTilde}, and \ref{lem:WassMuTilde} and the triangle inequality for the $L_1$-Wasserstein metric. 

\appendix

\section{Proof of assorted results from Section \ref{sec:results}}\label{sec:appendixA}
\subsection{Computation of $m_\mu$ for radially symmetric distributions}

The following lemma is useful for computing the Cauchy--Stieltjes transforms of radially symmetric distributions, which can expedite the verification of Assumptions \ref{assum:subquad} and \ref{assum:radSym} in a variety of situations. We note that Lemma \ref{lem:radSymMu} also appears as Proposition 3.1 in \cite{KS}. 

\begin{lemma}
	Suppose $\mu$ has a density $f(r,\theta) = f(r)$ that is radially symmetric about the origin. Then, $m_\mu(0) = 0$, and for $\xi \neq 0$, 
	\[
	m_\mu(\xi) = \frac{2\pi}{\xi}\int_0^{\abs{\xi}}rf(r)\,dr = \frac{1}{\xi}\P(\abs{X} < \abs{\xi}),
	\]
	where $X \sim \mu$.
	\label{lem:radSymMu}
\end{lemma}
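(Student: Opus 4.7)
The plan is to write the defining integral \eqref{eq:def:m_mu} in polar coordinates and evaluate the inner angular integral explicitly by the residue theorem. In polar form,
\[
m_\mu(\xi) = \int_0^\infty \int_0^{2\pi} \frac{f(r)\,r\,d\theta\,dr}{\xi - re^{\sqrt{-1}\theta}},
\]
so the whole computation reduces to determining
\[
I(r,\xi) := \int_0^{2\pi} \frac{d\theta}{\xi - re^{\sqrt{-1}\theta}}
\]
for $\xi \neq 0$ and $r \geq 0$.

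To evaluate $I(r,\xi)$, I would substitute $z = e^{\sqrt{-1}\theta}$, turning the integral into a contour integral over the unit circle:
\[
I(r,\xi) = \frac{1}{\sqrt{-1}} \oint_{|z|=1} \frac{dz}{z(\xi - rz)}.
\]
A partial fractions decomposition $\frac{1}{z(\xi-rz)} = \frac{1}{\xi}\left(\frac{1}{z} + \frac{r}{\xi-rz}\right)$ splits this into two standard contour integrals. The first contributes $2\pi\sqrt{-1}/\xi$ always; the second has a pole at $z = \xi/r$, which lies inside the unit circle iff $r > |\xi|$. A direct residue calculation then gives
\[
I(r,\xi) = \begin{cases} 2\pi/\xi, & 0 \leq r < |\xi|, \\ 0, & r > |\xi|. \end{cases}
\]
Substituting back into the double integral yields
\[
m_\mu(\xi) = \frac{2\pi}{\xi}\int_0^{|\xi|} r f(r)\,dr,
\]
and identifying $2\pi \int_0^{|\xi|} r f(r)\,dr$ with $\P(|X| < |\xi|)$ finishes the main claim.

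The case $\xi = 0$ is handled separately: either interpret $m_\mu(0)$ via the angular integral $\int_0^{2\pi} e^{-\sqrt{-1}\theta}\,d\theta = 0$, so that the integrand integrates to zero on each circle of radius $r > 0$ by the rotational symmetry of $f$. No step is really an obstacle here; the only minor care needed is handling the boundary $r = |\xi|$ (a single radius, hence a set of Lebesgue measure zero in the radial integral) and confirming absolute integrability of the radial integrand when $\xi \neq 0$, which follows from the boundedness of $I(r,\xi)$ and the fact that $f(r)r\,dr$ is a finite measure on $[0,\infty)$.
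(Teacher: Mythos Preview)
Your proposal is correct and follows essentially the same approach as the paper: pass to polar coordinates and evaluate the angular integral, obtaining $2\pi/\xi$ when $r<|\xi|$ and $0$ when $r>|\xi|$, then handle $\xi=0$ by the same symmetry observation. The only cosmetic difference is that you compute the angular integral via the residue theorem after the substitution $z=e^{\sqrt{-1}\theta}$, whereas the paper expands $\frac{1}{1-(r/\xi)e^{i\theta}}$ and $\frac{(\xi/r)e^{-i\theta}}{1-(\xi/r)e^{-i\theta}}$ as geometric series and integrates term by term; these are equivalent computations.
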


\begin{proof} For $\xi \neq 0$, we can use polar coordinates and Laurent series to obtain
	\begin{align*}
	m_\mu(\xi) &= \int_0^{2\pi} \int_0^\infty \frac{f(r)}{\xi - re^{i\theta}}\cdot r\,dr\,d\theta\\
	&= \frac{1}{\xi}\int_0^{\abs{\xi}} r f(r) \int_0^{2\pi}\frac{1}{1 - \frac{r}{\xi}e^{i\theta}}\,d\theta\,dr -  \frac{1}{\xi}\int_{\abs{\xi}}^\infty r f(r) \int_0^{2\pi}\frac{\frac{\xi}{r}e^{-i\theta}}{1-\frac{\xi}{r}e^{-i\theta}}\,d\theta\,dr\\
	&= \frac{1}{\xi}\int_0^{\abs{\xi}} r f(r) \sum_{j=0}^\infty\int_0^{2\pi}\left(\frac{r}{\xi}e^{i\theta}\right)^j\,d\theta\,dr\\
	&\qquad\qquad -  \frac{1}{\xi}\int_{\abs{\xi}}^\infty r f(r) \sum_{j=0}^\infty\frac{\xi}{r}\int_0^{2\pi}e^{-i\theta}\left(\frac{\xi}{r}e^{-i\theta}\right)^j\,d\theta\,dr.
	\end{align*}
	The only nonzero integral occurs when the power on the exponential is $0$, so we obtain
	\[
	m_\mu(\xi) = \frac{2\pi}{\xi}\int_0^{\abs{\xi}}rf(r)\,dr 
	\] 
	as is desired. Finally, observe that
	\[
	m_\mu(0) = \int_0^\infty\int_0^{2\pi} \frac{-f(r)}{re^{i\theta}}\cdot r\,d\theta\,dr = -\int_0^\infty f(r)\int_0^{2\pi} e^{-i\theta}\,d\theta\,dr = 0.
	\]
\end{proof}

\subsection{Calculations for Example \ref{ex:twoDisks}}

In this subsection, we establish that $\mu$ from Example \ref{ex:twoDisks} satisfies part \eqref{it:subquad} of Assumption \ref{assum:subquad}. Setting each of the three branches in \eqref{eqn:twoCirc} to zero shows that the only zeros of $m_\mu(z)$ are when $z = 0, \pm \sqrt{3}$. We claim that there is a $C>0$ such that if $X\sim \mu$, and $\eps >0$ is small, then, $\P(\abs{m_\mu(X)} < \eps) \leq C\eps^2$. To start, consider that for $\abs{z+2} < 1$,
\begin{align*}
\abs{m_\mu(z)} &= \frac{1}{2(z-2)}\abs{(z-2)(\overline{z+2}) + 1}\\
&=\frac{1}{2\abs{z-2}}\abs{\left(z+\sqrt{3}- \sqrt{3} - 2\right)\left(\overline{z + \sqrt{3}} - \sqrt{3} + 2\right) +1}\\
&=\frac{1}{2\abs{z-2}} \abs{\abs{z+\sqrt{3}}^2 + (2-\sqrt{3})(z+\sqrt{3}) - (2+\sqrt{3})(\overline{z+\sqrt{3}})}\\
&=\frac{\abs{z+\sqrt{3}}}{2\abs{z-2}}\abs{\overline{(z+\sqrt{3})} + 2-\sqrt{3} - (2+\sqrt{3})\frac{\overline{z+\sqrt{3}}}{z+\sqrt{3}}}.
\end{align*}
Since $\abs{z + 2} < 1$, it follows that $\abs{z-2} < 6$ and also, by the triangle inequality,
\[
\abs{z+\sqrt{3}} \leq \abs{z+2} + \abs{\sqrt{3}-2} \leq 1 +2-\sqrt{3} = 3-\sqrt{3}.
\]
Hence, the reverse triangle inequality transforms our previous calculation into
\begin{align*}
\abs{m_\mu(z)} &\geq \frac{\abs{z+\sqrt{3}}}{12}\left(\abs{(2+\sqrt{3})\frac{\overline{z+\sqrt{3}}}{z+\sqrt{3}}} - \abs{z+\sqrt{3}} - \abs{2-\sqrt{3}}\right)\\
&\geq \frac{\abs{z+\sqrt{3}}}{12} \left((2+\sqrt{3}) - (3-\sqrt{3}) - (2-\sqrt{3})\right)\\
&= \frac{\sqrt{3}-1}{4}\abs{z+\sqrt{3}},
\end{align*}
for $\abs{z+2} < 1$. Similarly, $\abs{z-2}<1$ implies that
\[
\abs{m_\mu(z)} \geq \frac{\sqrt{3}-1}{4}\abs{z-\sqrt{3}}.
\]
Since the random variable $X$ can only take values $z$ for which $\abs{z\pm 2} < 1$, it follows that 
\[
\P\left(\abs{m_\mu(X)} < \eps\right) \leq \P\left(\abs{X+\sqrt{3}} < c\eps\right) + \P\left(\abs{X-\sqrt{3}} < c\eps\right) \leq \frac{c^2\eps^2}{2},
\]
where $c= 4/(\sqrt{3}-1)$ and $\eps > 0$ is small enough that $B(\sqrt{3}, c\eps) \subset B(2,1)$. 

\subsection{Proof of Lemma \ref{lemma:critanti}}

\begin{proof}
	Fix $a > 0$, and let $b > 0$ be a large constant (depending on $C$ and $a$) to be chosen later.  Since $Z$ is independent of $X_1, \ldots, X_n$ it follows that, with probability $1$, $Z \not\in \{X_1, \ldots, X_n\}$.  Hence the sum
	$ \sum_{i=1}^n \frac{1}{Z - X_i} $
	is well-defined and finite.  By conditioning on the values of $X_2, \ldots, X_n$ and $Z$, it suffices to prove that
	\[ \sup_{w \in \mathbb{C}} \sup_{z \in B(0, n^C)} \Prob \left( \left| \frac{1}{z - X_1} - w \right| \leq n^{-b} \right) \ll_a n^{-a}. \]
	The claim now follows from Lemma \ref{lemma:singleanti} below by taking $\eps := n^{-b}$ and choosing $b$ sufficiently large in terms of $C$ and $a$.  
\end{proof}

\begin{lemma} \label{lemma:singleanti}
	Fix $C > 0$, and let $X$ be a complex-valued random variable that is absolutely continuous (with respect to Lebesgue measure on $\mathbb{C}$) and which has density bounded by $n^C$.  If $\E|X| \leq n^C$, then for every $a > 0$ and $0 < \eps < 1$, 
	\[ \sup_{w \in \mathbb{C} } \sup_{z \in B(0, n^C)} \Prob \left( \left| \frac{1}{z - X} - w \right| \leq \eps \right) \leq 4 \sqrt{\eps} n^C + 4 \pi \eps n^{3C + 2a}  + n^{-a}. \]
\end{lemma}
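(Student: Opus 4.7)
The plan is to bound $\Prob(|1/(z-X) - w| \leq \eps)$ uniformly in $w \in \mathbb{C}$ and $z \in B(0, n^C)$ by a geometric case analysis on $w$, and take the supremum at the end. The first step is to reformulate the event: squaring $|1/(z-X) - w| \leq \eps$ as $|1 - w(z-X)|^2 \leq \eps^2 |z-X|^2$, expanding, and completing the square in $X$ shows that whenever $|w| > \eps$, the set $S_{z,w} := \{X : |1/(z-X) - w| \leq \eps\}$ is the closed Euclidean disk of radius $r_{z,w} = \eps/(|w|^2 - \eps^2)$ centered at $c_{z,w} = z - \overline{w}/(|w|^2 - \eps^2)$. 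The density bound $\rho_X \leq n^C$ then yields $\Prob(X \in T) \leq n^C \cdot \mathrm{area}(T)$ for any Borel set $T$, converting any area estimate on $S_{z,w}$ (or its intersection with a large ball) into a probability estimate.

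The analysis then splits into three regimes, each contributing one of the three terms in the target bound. In the small-$|w|$ regime $|w| \leq \sqrt{\eps}$, the triangle inequality gives $|1/(z-X)| \leq |w| + \eps \leq 2\sqrt{\eps}$ (using $\eps < 1$), hence $|X| \geq 1/(2\sqrt{\eps}) - n^C$; when $\sqrt{\eps} \leq 1/(4n^C)$ Markov's inequality applied to $\E|X| \leq n^C$ yields $\Prob \leq 4\sqrt{\eps}\,n^C$, and otherwise this term already exceeds $1$ and the bound is trivial. In the ``generic'' large-$|w|$ regime, $|w| > \sqrt{\eps}$ with $|w|^2 - \eps^2 \geq \sqrt{\eps}/(2n^{C+a})$, the disk radius is controlled by $r_{z,w} \leq 2\sqrt{\eps}\,n^{C+a}$, and multiplying the disk area $\pi r_{z,w}^2$ by the density bound $n^C$ produces exactly $4\pi\eps\,n^{3C+2a}$.

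The hard case is the remaining degenerate regime, $|w| > \sqrt{\eps}$ with $|w|^2 - \eps^2 < \sqrt{\eps}/(2n^{C+a})$. Here $r_{z,w}$ is no longer small --- only the elementary upper bound $r_{z,w} < 2$ is available, since $|w|^2 - \eps^2 > \eps(1-\eps) > \eps/2$ for $\eps \leq 1/2$ --- so the naive area-times-density argument of the previous case breaks down, and this is the main obstacle. The rescue is that in this regime the very small denominator pushes the center of $S_{z,w}$ far out: $|c_{z,w} - z| = |w|/(|w|^2 - \eps^2) > 2n^{C+a}$, so $|c_{z,w}| > 2n^{C+a} - n^C$, which for $n$ sufficiently large exceeds $n^{C+a} + r_{z,w}$. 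This forces the disk $S_{z,w}$ to be disjoint from $B(0, n^{C+a})$, so the event $\{X \in S_{z,w}\}$ implies $|X| > n^{C+a}$, and Markov's inequality on $\E|X| \leq n^C$ delivers $\Prob \leq n^{-a}$. Combining the three case bounds, and handling the finitely many small values of $n$ for which the geometric argument in Case~2 degenerates by observing that the stated right-hand side already exceeds $1$ in that finite range, yields the lemma.
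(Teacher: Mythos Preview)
Your explicit Möbius-disk computation is a natural route and Cases 1 and 2 are correct, but the fallback for small $n$ in Case 3 does not work as stated. You assert that for the finitely many $n$ where your disjointness argument breaks down (those with $n^{C+a} \le n^C + 2$, coming from the crude bound $r_{z,w} < 2$) the right-hand side exceeds $1$; this is false. Case 3 itself forces $\sqrt{\eps}(1-\eps) < 1/(2n^{C+a})$, so $\eps$ can be arbitrarily small, and as $\eps \to 0$ the right-hand side tends to $n^{-a} < 1$ for every $n \ge 2$. Concretely, with $C = a = 0.01$ and $n = 2$ one has $n^{C+a} - n^C \approx 0.007$, so your bound $|c_{z,w}| - r_{z,w} > n^{C+a}$ is not established, yet for $\eps = 10^{-8}$ the right-hand side is about $0.993$. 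The gap is repairable --- for instance by exploiting the exact ratio $|c_{z,w}-z|/r_{z,w} = |w|/\eps > 1/\sqrt{\eps}$ rather than the crude bound $r_{z,w} < 2$ --- but the argument as written is incomplete.

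The paper avoids the degenerate regime entirely by not computing the image disk. For $|w| > \sqrt{\eps}$ it uses only the inclusion
\[
\left\{ \left| \tfrac{1}{z-X} - w \right| \le \eps \right\} \subseteq \left\{ \left| (z-X) - \tfrac{1}{w} \right| \le \sqrt{\eps}\,|z-X| \right\},
\]
obtained by dividing through by $|w|$ and using $\eps/|w| < \sqrt{\eps}$, and then intersects with $\{|X| \le n^{C+a}\}$. On that event $|z-X| \le 2n^{C+a}$, so $X$ lies in a disk of radius $2\sqrt{\eps}\,n^{C+a}$ around $z-1/w$, contributing at most $4\pi\eps n^{3C+2a}$ via the density bound; the complementary event has probability at most $n^{-a}$ by Markov. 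This two-case split works uniformly for all $n \ge 1$ and sidesteps the small-$n$ issue that your third case introduces.
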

\begin{proof}
	Fix $w \in \mathbb{C}$ and $z \in B(0, n^C)$.  We consider two cases.  If $|w| \leq \sqrt{\eps}$, then
	\begin{align*}
	\Prob \left( \left| \frac{1}{z - X} - w \right| \leq \eps \right) &\leq \Prob \left( \left| \frac{1}{z - X} \right| \leq 2 \sqrt{\eps} \right) \\
	&\leq \Prob \left( |X - z| \geq \frac{1}{2 \sqrt{\eps} } \right) \\
	&\leq 2 \sqrt{\eps} \left( \E|X| + n^C \right) \\
	&\leq 4 \sqrt{\eps} n^C
	\end{align*}
	by Markov's inequality.  
	
	We now consider the case where $|w| > \sqrt{\eps}$.  Define the event
	$ \mathcal{E} := \{ |X| \leq n^{C+a} \}. $
	By Markov's inequality, it follows that
	$ \Prob(\mathcal{E}^c) \leq n^{-a}. $
	Thus, we obtain
	\begin{align*}
	\Prob \left( \left| \frac{1}{z - X} - w \right| \leq \eps \right) &\leq \Prob \left( \left| \frac{1}{w} - (z - X) \right| \leq \sqrt{\eps} |z - X| \right) \\
	&\leq \Prob \left( \left| \frac{1}{w} - (z - X) \right| \leq \sqrt{\eps} |z - X| \;\middle\vert\; \mathcal{E} \right) \Prob(\mathcal{E}) + \Prob(\mathcal{E}^c) \\
	&\leq \Prob \left( \left| \frac{1}{w} - (z - X) \right| \leq 2\sqrt{\eps} n^{C+a} \right) + n^{-a} \\
	&\leq \Prob \left( X \in B(w^{-1} - z, 2 \sqrt{\eps} n^{C+a}) \right) + n^{-a} \\
	&\leq \pi n^C \left( 2 \sqrt{\eps} n^{C+a} \right)^2 + n^{-a}. 
	\end{align*}
	
	Combining the bounds above yields 
	\[ \Prob \left( \left| \frac{1}{z - X} - w \right| \leq \eps \right) \leq 4 \sqrt{\eps} n^C + 4 \pi \eps n^{3C + 2a}  + n^{-a} \]
	for any $w \in \mathbb{C}$ and $z \in B(0, n^C)$.  The proof of the lemma is complete.  
\end{proof}

\section{A heavy-tailed CLT} \label{sec:appendixB}

In this subsection, we prove Theorem \ref{thm:genCLT}, a CLT for ``heavy-tailed'' random variables that have the same distribution as $Y := \frac{1}{\xi-X}$, where $X\sim \mu$ and $\mu$ has a continuous density $f$ in a neighborhood of $\xi$. Notice that $\E\abs{Y}^p < \infty$ for $p \in [0, 2)$, but $\E\abs{Y}^2 = \infty$. Many results demonstrate that $Y$ is in the domain of attraction of a normal random variable (see e.g. Section XVII.5 in \cite{F}, Theorem 11 in Section 6.4 of \cite{Gal},  and Theorem 3.10 in \cite{P}), however, our implementation of Theorem \ref{thm:genCLT} requires specific information about the parameters of the limiting normal distribution; we include an explicit statement and proof for clarity.


\begin{theorem}
Let $X_1, X_2, \ldots $ be iid, complex-valued random variables with common distribution $\mu$, fix $s,k \in \mathbb{N}$, and suppose  $\xi_1, \ldots, \xi_s, t_1, \ldots, t_k \in \C$ are deterministic values with  $\xi_1, \ldots, \xi_s$ distinct.  In addition, assume that $\mu$ has a bounded density $f$ in a neighborhood of each $\xi_l$, $1\leq l\leq s$, that is continuous at these points. Then,
\[
\frac{1}{\sqrt{n\ln{n}}}\sum_{j=1}^n\sum_{k=1}^st_k\left[\frac{1}{\xi_k-X_j} - m_\mu(\xi)\right] \longrightarrow N
\]
in distribution as $n\to \infty$, where $N$ is a complex random variable with mean zero whose real and imaginary parts have a joint Gaussian distribution that has covariance matrix 
\begin{equation}
\Sigma:= \sum_{k=1}^s \frac{\pi\abs{t_k}^2f(\xi_k)}{2}I. 
\label{eqn:genCLT}
\end{equation}
(Here, $I$ denotes the $2 \times 2$ identity matrix.)
\label{thm:genCLT}
\end{theorem}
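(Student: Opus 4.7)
The random variable $Y := \sum_{k=1}^s t_k (1/(\xi_k - X) - m_\mu(\xi_k))$ sits on the boundary of the classical CLT's domain of attraction: since $\mu$ has a bounded density near each $\xi_k$ and the $\xi_k$ are distinct, $\P(|Y|>R) = O(R^{-2})$ as $R\to\infty$ but $\E|Y|^2 = \infty$ (logarithmically divergent), which is exactly what produces the non-standard $\sqrt{n\ln n}$ normalization.  My plan is a truncation-plus-Lindeberg argument.  By the Cram\'er--Wold device, it suffices to prove that for each pair $(a,b)\in\R^2$ the real-linear combination $a\Re(S_n)+b\Im(S_n)$ converges to a mean-zero Gaussian with variance $\pi(a^2+b^2)\sum_{k=1}^s |t_k|^2 f(\xi_k)/2$, where $S_n := \frac{1}{\sqrt{n\ln n}}\sum_{j=1}^n Z_j$ with $Z_j := \sum_k t_k(1/(\xi_k - X_j) - m_\mu(\xi_k))$.

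Fix a truncation level $R_n := \sqrt n$ and decompose $Z_j = \tilde Z_j + \check Z_j$, where $\tilde Z_j := Z_j \ind_{\abs{Z_j}\leq R_n}$.  For the tail part, the tail bound $\P(|Z_j|>t) = O(t^{-2})$ (which follows from the bounded-density hypothesis near each $\xi_k$ by a polar-coordinates calculation) yields $\E|\check Z_j| = O(R_n^{-1})$, so $\sum_j \check Z_j = O_P(n/R_n) = O_P(\sqrt n) = o_P(\sqrt{n\ln n})$ by Markov.  Likewise $\E Z_j = 0$ forces the recentering correction $n\E\tilde Z_j = -n\E\check Z_j$ to satisfy $|n\E\tilde Z_j| = O(n/R_n) = o(\sqrt{n\ln n})$.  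Thus up to vanishing errors we may replace $S_n$ by the normalized sum of iid bounded variables $\tilde Z_j - \E\tilde Z_j$.  To these one applies the Lindeberg--Feller CLT; the Lindeberg condition is automatic because $|\tilde Z_j - \E\tilde Z_j| \leq 2R_n = 2\sqrt n = o(\sqrt{n\ln n})$, so the truncation kills the tail term for every fixed $\varepsilon>0$ once $n$ is large.

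The main obstacle, and the heart of the proof, is showing that
\[
\var\!\bigl(a\Re(\tilde Z_1) + b\Im(\tilde Z_1)\bigr) = \frac{\pi(a^2+b^2)\ln n}{2}\sum_{k=1}^s |t_k|^2 f(\xi_k) + o(\ln n),
\]
so that $\var(a\Re(S_n)+b\Im(S_n)) \to \pi(a^2+b^2)\sum_k|t_k|^2 f(\xi_k)/2$, matching the stated covariance $\Sigma$.  To evaluate this, I would fix a small $\delta>0$ so that the disks $B(\xi_l,\delta)$, $1\leq l\leq s$, are disjoint and contained in the neighborhoods where $f$ is continuous.  On the complement of $\bigcup_l B(\xi_l,\delta)$ the integrand is bounded, contributing $O(1)$ to the variance.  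On each disk $B(\xi_l,\delta)$, the cross terms $t_k t_{k'}/((\xi_k-X)(\xi_{k'}-X))$ with $k\neq k'$ are bounded because at most one denominator can be small, while the diagonal term $|t_l/(\xi_l - X)|^2$ produces the $\ln n$ growth.  Writing $X = \xi_l + re^{i\theta}$ and using the identity $\int_0^{2\pi}(a\cos(\phi-\theta)+b\sin(\phi-\theta))^2\,d\theta = \pi(a^2+b^2)$ (independent of the phase $\phi$ of $t_l$), continuity of $f$ at $\xi_l$, and the radial integral $\int_{1/R_n}^{\delta}\frac{dr}{r} = \frac{\ln n}{2} + O(1)$, one obtains the claimed leading order, with $a^2+b^2$ appearing symmetrically from the angular integral and hence the rotational-symmetric covariance $\propto I$.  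Plugging this variance into the Lindeberg--Feller conclusion and invoking Cram\'er--Wold completes the proof.
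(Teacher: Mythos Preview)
Your argument is correct and complete. Both you and the paper follow the same high-level strategy—truncate, show the tail is negligible, and verify a Lindeberg-type condition for the bounded part—but the implementations differ in two respects.

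First, the paper carries out the Lindeberg \emph{exchange} by hand: it fixes a smooth test function $g$, Taylor-expands $g$ to third order, and swaps the centered truncated variables $\widetilde\zeta_j$ one at a time for independent Gaussians $\sqrt{\ln n}\,N_j$, bounding the accumulated error by matching second moments and controlling $\E|\widetilde\zeta_1|^3$. You instead invoke Cram\'er--Wold and the off-the-shelf Lindeberg--Feller theorem, which is shorter and avoids the explicit Taylor bookkeeping.

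Second, the truncations differ. The paper truncates each summand $t_k/(\xi_k-X_j)$ separately at level $\eps\sqrt{n\ln n}$ with a free parameter $\eps$; the third-moment error then comes out as $O(\eps)$, and an additional limit $\eps\to 0$ is needed at the end. Your choice $R_n=\sqrt{n}$ (applied to the whole $Z_j$) is strictly smaller than the normalization $\sqrt{n\ln n}$, so the Lindeberg condition is vacuous for large $n$ and no auxiliary parameter is required. The price is that the truncation boundary $\{|Z_j|=R_n\}$ near each $\xi_l$ is not an exact circle, but as you implicitly use, the perturbation from the bounded cross terms shifts the effective inner radius by $O(R_n^{-2})$, which contributes only $O(1)$ to the second moment and does not affect the leading $\tfrac12\ln n$. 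Either route works; yours is the more economical packaging.
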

\begin{proof}
We proceed by Lindeberg's exchange method \cite{L}. (See also \cite{C}. Similar methods have been applied to problems in random matrix theory; see e.g. \cite{TV2011}, \cite{TVpoly}.) To that end, let $N, N_1, N_2, \ldots$ be a sequence of iid complex random variables independent of $\set{X_j}$, whose components have a joint Gaussian distribution with mean zero and covariance matrix $\Sigma$, defined in \eqref{eqn:genCLT}, and let $g:\C \to \R$ be a smooth test function with compact support. We will show that
\begin{equation}
\abs{\E\left[g\left(\frac{1}{\sqrt{n\ln{n}}}\sum_{j=1}^n\sum_{k=1}^st_k\left[\frac{1}{\xi_k-X_j} - m_\mu(\xi_k)\right]\right)\right] - \E\left[g\left(\frac{1}{\sqrt{n}}\sum_{j=1}^nN_j\right)\right]}\to 0,
\label{eqn:portmanteau}
\end{equation}
as $n \to \infty$, which implies convergence of the corresponding measures in the vague topology. Convergence in distribution follows because for each $n$, $n^{-1/2}\sum_{j=1}^n N_j$ has the same distribution as the random variable $N$. (See e.g. Exercise 1.1.25 of \cite{Ttop}, pages 23-33.)

 Since the random variables $\sum_{k=1}^s\frac{t_k}{\xi_k-X_j}$ are heavy-tailed, we initially need to truncate them. Let $\eps \in (0,1)$ be fixed, and define 
\begin{align*}
\zeta_j &:= \sum_{k=1}^s\frac{t_k}{\xi_k - X_j}\ind_{\{\abs{\xi_k-X_j}^{-1}<\eps\sqrt{n\ln{n}} \}},\\
\widetilde{\zeta}_j &:= \zeta_j - \E[\zeta_j].
\end{align*}
(Be aware that this notation suppresses the dependence of $\zeta_j$ and $\widetilde{\zeta}_j$ on $\eps$ and $n$.)
\begin{lemma}
There is a constant $C_{\mu,s,\vec{t}}>0$, depending only on $\mu$, $s$, and $t_1, \ldots, t_k$, and there is a natural number $K_{\mu,g,\eps}$ so that $n \geq K_{\mu,g,\eps}$ implies
\[
\abs{\E\left[g\left(\frac{1}{\sqrt{n\ln{n}}} \sum_{j=1}^n\widetilde{\zeta}_j\right)\right]- \E\left[g\left(\frac{1}{\sqrt{n}}\sum_{j=1}^nN_j\right)\right]} \leq C_{\mu,s,\vec{t}}\eps.
\]
\label{lem:genCLTtrunc}
\end{lemma}

\begin{proof}
By Taylor's theorem applied to the Taylor series for $g$ centered at \[A_{1,n} := \frac{1}{\sqrt{n\ln{n}}}\sum_{j=2}^n\widetilde{\zeta}_j,\] we have
\begin{align*}
g\left(\frac{1}{\sqrt{n\ln{n}}}\sum_{j=1}^n\widetilde{\zeta}_j\right) &= g\left(A_{1,n}\right)+ \frac{g_x\left(A_{1,n}\right)}{\sqrt{n\ln{n}}}\Re\left(\widetilde{\zeta}_1\right) + \frac{g_y\left(A_{1,n}\right)}{\sqrt{n\ln{n}}}\Im\left(\widetilde{\zeta}_1\right)\\
&+ \frac{g_{xx}\left(A_{1,n}\right)}{2n\ln{n}}\Re\left(\widetilde{\zeta}_1\right)^2 + \frac{g_{yy}\left(A_{1,n}\right)}{2n\ln{n}}\Im\left(\widetilde{\zeta}_1\right)^2\\
&+  \frac{g_{xy}\left(A_{1,n}\right)}{n\ln{n}}\Re\left(\widetilde{\zeta}_1\right)\Im\left(\widetilde{\zeta}_1\right) + R_3\left(\frac{1}{\sqrt{n\ln{n}}}\sum_{j=1}^n\widetilde{\zeta}_j\right),
\end{align*}
where 
\[
\abs{R_3\left(\frac{1}{\sqrt{n\ln{n}}}\sum_{j=1}^n\widetilde{\zeta}_j\right)} \leq \frac{8\cdot C_g}{2!\cdot (n\ln{n})^{3/2}}\abs{\widetilde{\zeta}_1}^3,
\]
and $C_g$ is any constant that is an upper bound for the mixed partial derivatives of $g$ up to and including order three (which are compactly supported and thus bounded). Taking the expectation of both sides yields (by independence and the fact that $\widetilde{\zeta}_j$ are centered) 
\begin{align*}
&\E\left[g\left(\frac{1}{\sqrt{n\ln{n}}}\sum_{j=1}^n\widetilde{\zeta}_j\right)\right]\\ &\qquad= \E[g(A_{1,n})] + \frac{\E[g_{xx}\left(A_{1,n}\right)]}{2n\ln{n}}\E\left[\Re\left(\widetilde{\zeta}_1\right)^2\right] + \frac{\E[g_{yy}\left(A_{1,n}\right)]}{2n\ln{n}}\E\left[\Im\left(\widetilde{\zeta}_1\right)^2\right]\\
&\qquad+ \frac{\E[g_{xy}\left(A_{1,n}\right)]}{n\ln{n}}\E\left[\Re\left(\widetilde{\zeta}_1\right)\Im\left(\widetilde{\zeta}_1\right)\right] + \E\left[R_3\left(\frac{1}{\sqrt{n\ln{n}}}\sum_{j=1}^n\widetilde{\zeta}_j\right)\right].
\end{align*}
Similarly, we have
\begin{align*}
&\E\left[g\left(\frac{N_1}{\sqrt{n}} + \frac{1}{\sqrt{n\ln{n}}}\sum_{j=2}^n\widetilde{\zeta}_j \right)\right]\\ &\qquad= \E[g(A_{1,n})] + \frac{\E[g_{xx}\left(A_{1,n}\right)]}{2n}\E\left[\Re\left(N_1\right)^2\right] + \frac{\E[g_{yy}\left(A_{1,n}\right)]}{2n}\E\left[\Im\left(N_1\right)^2\right]\\
&\qquad+ \frac{\E[g_{xy}\left(A_{1,n}\right)]}{n}\E\left[\Re\left(N_1\right)\Im\left(N_1\right)\right] + \E\left[R_3\left(\frac{N_1}{\sqrt{n}}+\frac{1}{\sqrt{n\ln{n}}}\sum_{j=2}^n\widetilde{\zeta}_j\right)\right],
\end{align*}
where \[
\abs{R_3\left(\frac{N_1}{\sqrt{n}}+\frac{1}{\sqrt{n\ln{n}}}\sum_{j=2}^n\widetilde{\zeta}_j\right)} \leq \frac{8\cdot C_g}{2! \cdot n^{3/2}}\abs{N_1}^3.
\]
The difference between these two equations is bounded by 
\begin{align*}
&\abs{\E\left[g\left(\frac{1}{\sqrt{n\ln{n}}}\sum_{j=1}^n\widetilde{\zeta}_j\right)\right] - \E\left[g\left(\frac{N_1}{\sqrt{n}} + \frac{1}{\sqrt{n\ln{n}}}\sum_{j=2}^n\widetilde{\zeta}_j \right)\right]} \\
&\qquad\qquad \leq\frac{C_g}{2n}\abs{\frac{1}{\ln{n}}\E\left[\Re(\widetilde{\zeta}_1)^2\right] - \E\left[\Re\left(N_1\right)^2\right]}\\
&\qquad\qquad +\frac{C_g}{2n}\abs{\frac{1}{\ln{n}}\E\left[\Im(\widetilde{\zeta}_1)^2\right] - \E\left[\Im\left(N_1\right)^2\right]}\\
&\qquad\qquad +\frac{C_g}{n}\abs{\frac{1}{\ln{n}}\E\left[\Re(\widetilde{\zeta}_1)\Im(\widetilde{\zeta}_1)\right] - \E\left[\Re\left(N_1\right)\Im\left(N_1\right)\right]}\\
&\qquad\qquad + \frac{4C_g}{(n\ln{n})^{3/2}}\E\left[\abs{\widetilde{\zeta}_1}^3\right] + \frac{4C_g}{n^{3/2}}\E\left[\abs{N_1}^3\right].
\end{align*}
If we continue, for $2 \leq k \leq n$, the process of computing the second order Taylor polynomials of $g$ centered at 
\[
A_{k,n} := \frac{1}{\sqrt{n}}\sum_{j=1}^{k-1}N_j + \frac{1}{\sqrt{n\ln{n}}}\sum_{j=k+1}^n\widetilde{\zeta}_j
\]
and evaluating them at both
\[\frac{1}{\sqrt{n}}\sum_{j=1}^{k-1}N_j + \frac{1}{\sqrt{n\ln{n}}}\sum_{j=k}^n\widetilde{\zeta}_j \quad \text{and}\quad \frac{1}{\sqrt{n}}\sum_{j=1}^{k}N_j + \frac{1}{\sqrt{n\ln{n}}}\sum_{j=k+1}^n\widetilde{\zeta}_j,\]
we find that
\begin{align*}
&\left|\E\left[g\left(\frac{1}{\sqrt{n}}\sum_{j=1}^{k-1}N_j + \frac{1}{\sqrt{n\ln{n}}}\sum_{j=k}^n\widetilde{\zeta}_j\right)\right]\right.\\
&\qquad\qquad\qquad- \left.\E\left[g\left(\frac{1}{\sqrt{n}}\sum_{j=1}^{k}N_j + \frac{1}{\sqrt{n\ln{n}}}\sum_{j=k+1}^n\widetilde{\zeta}_j \right)\right]\right| \\
& \leq\frac{C_g}{2n}\abs{\frac{1}{\ln{n}}\E\left[\Re(\widetilde{\zeta}_k)^2\right] - \E\left[\Re\left(N_k\right)^2\right]}\\
&\qquad\qquad +\frac{C_g}{2n}\abs{\frac{1}{\ln{n}}\E\left[\Im(\widetilde{\zeta}_k)^2\right] - \E\left[\Im\left(N_k\right)^2\right]}\\
&\qquad\qquad +\frac{C_g}{n}\abs{\frac{1}{\ln{n}}\E\left[\Re(\widetilde{\zeta}_k)\Im(\widetilde{\zeta}_k)\right] - \E\left[\Re\left(N_k\right)\Im\left(N_k\right)\right]}\\
&\qquad\qquad + \frac{4C_g}{(n\ln{n})^{3/2}}\E\left[\abs{\widetilde{\zeta}_k}^3\right] + \frac{4C_g}{n^{3/2}}\E\left[\abs{N_k}^3\right].
\end{align*}
Now, repeatedly applying the triangle inequality and using the fact that the $\widetilde{\zeta_j}$ and $N_j$ are iid gives
\begin{equation}
\begin{aligned}
&\left|\E\left[g\left(\frac{1}{\sqrt{n\ln{n}}}\sum_{j=1}^n\widetilde{\zeta}_j\right)\right]- \E\left[g\left(\frac{1}{\sqrt{n}}\sum_{j=1}^{n}N_j \right)\right]\right|\\
&\qquad\qquad\leq\frac{C_g}{2}\abs{\frac{1}{\ln{n}}\E\left[\Re(\widetilde{\zeta}_1)^2\right] - \E\left[\Re\left(N\right)^2\right]}\\
&\qquad\qquad\qquad +\frac{C_g}{2}\abs{\frac{1}{\ln{n}}\E\left[\Im(\widetilde{\zeta}_1)^2\right] - \E\left[\Im\left(N\right)^2\right]}\\
&\qquad\qquad\qquad +C_g\abs{\frac{1}{\ln{n}}\E\left[\Re(\widetilde{\zeta}_1)\Im(\widetilde{\zeta}_1)\right] - \E\left[\Re\left(N\right)\Im\left(N\right)\right]}\\
&\qquad\qquad\qquad + \frac{4C_g}{\ln{n}\sqrt{n\ln{n}}}\E\left[\abs{\widetilde{\zeta}_1}^3\right] + \frac{4C_g}{\sqrt{n}}\E\left[\abs{N}^3\right].
\end{aligned}
\label{eqn:gen:nInterp}
\end{equation}
In order to establish Lemma \ref{lem:genCLTtrunc}, we need to show that each of the terms on the right side of \eqref{eqn:gen:nInterp} is dominated by $\eps$ as $n \to \infty$. It is in these computations that we use the fact that $f$ is continuous at $\xi_1, \ldots, \xi_s$. To take advantage of this hypothesis, fix $\eta > 0$, and note that there is a $\delta = \delta(\eta) > 0$ such that $\delta < \frac{1}{2}\min_{1 \leq k < l \leq s}\abs{\xi_k - \xi_l}$ and for which $\abs{z-\xi_k} < \delta$ implies $\abs{f(z) - f(\xi_k)} < \eta$ for $1 \leq k \leq s$. We have
\begin{align*}
\E\left[\Re^2(\widetilde{\zeta}_1)\right] &= \E\left[\Re^2(\zeta_1)\right] - \left(\E\left[\Re(\zeta_1)\right]\right)^2 \leq \E\left[\Re^2(\zeta_1)\right]\\
&\leq \E\left[\Re^2\left(\sum_{k=1}^s\frac{t_k}{\xi_k-X_1}\right) \prod_{k=1}^s\ind_{\abs{\xi_k-X_1} \geq \delta}\right]\\
&\quad {}+ \sum_{k=1}^s\E\left[\Re^2\left(\sum_{l=1}^s\frac{t_l}{\xi_l-X_1}\right) \ind_{1/(\eps\sqrt{n\ln{n}} < \abs{\xi_k-X_1} < \delta}\prod_{l\neq k}\ind_{\abs{\xi_l-X_1} \geq \delta}\right]\\
&\leq \left(\sum_{k=1}^s\frac{\abs{t_k}}{\delta}\right)^2 + \sum_{k=1}^s\E\left[\Re^2\left(\frac{t_k}{\xi_k-X_1}\right)\ind_{1/(\eps\sqrt{n\ln{n}} < \abs{\xi_k-X_1} < \delta}\right]\\
&\quad {} + 2\sum_{k=1}^s\sum_{l\neq k} \frac{\abs{t_l}}{\delta}\E\abs{\frac{t_k}{\xi_k - X_1}} + \sum_{k=1}^s\left(\sum_{l\neq k} \frac{\abs{t_l}}{\delta}\right)^2,
\end{align*}
where the last inequality follows from the fact that 
\[
\Re^2(z + w) = \Re^2(z) + 2\Re(z)\Re(w) + \Re^2(w)  \leq \Re^2(z) + 2\abs{z}\abs{w} + \abs{w}^2.
\]
Since $\E\abs{\frac{1}{\xi_k-X_1}}$ is bounded by a constant that depends only on $\mu$ (see Lemma \ref{lem:CSnice}), there is a constant $C_{s,\vec{t},\delta}$ depending on $s$, $t_1, \ldots, t_s$ and $\delta$ so that, continuing from above,
\begin{align*}
\E\left[\Re^2(\widetilde{\zeta}_1)\right] & \leq C_{s,\vec{t},\delta} + \sum_{k=1}^s\E\left[\frac{\Re^2((\xi_k - X_1)/t_k)}{\abs{(\xi_k - X_1)/t_k}^4}\ind_{1/(\eps\sqrt{n\ln{n}})<\abs{\xi_k-X_1} <\delta}\right]\\
&\leq C_{s,\vec{t},\delta} + \sum_{k=1}^s (f(\xi_k) + \eta)\abs{t_k}^2\int_0^{2\pi}\int_{1/(\abs{t_k}\eps\sqrt{n\ln{n}})}^{\delta/\abs{t_k}} \frac{r^2\cos^2{\theta}}{r^4}r\,dr\,d\theta\\
&\leq C_{s,\vec{t},\delta} + \sum_{k=1}^s\pi(f(\xi_k) + \eta)\abs{t_k}^2\ln(\delta \eps \sqrt{n\ln{n}}).
\end{align*}
Dividing both sides by $\ln{n}$ yields
\begin{equation}
\frac{1}{\ln{n}}\E\left[\Re^2(\widetilde{\zeta}_1)\right]\leq \sum_{k=1}^s\abs{t_k}^2\frac{\pi(f(\xi) + \eta)}{2} + o(1).
\label{eqn:gen:reUp}
\end{equation}
On the other hand, similar to above,
\begin{align*}
&\E\left[\Re^2(\widetilde{\zeta}_1)\right]\\
&\quad= \E\left[\Re^2(\zeta_1)\right] - \left(\E\left[\Re(\zeta_1)\right]\right)^2\\
&\quad\geq \sum_{k=1}^s\E\left[\Re^2\left(\sum_{l=1}^s\frac{t_l}{\xi_l-X_1}\right) \ind_{1/(\eps\sqrt{n\ln{n}} < \abs{\xi_k-X_1} < \delta}\prod_{l\neq k}\ind_{\abs{\xi_l-X_1} \geq \delta}\right]  - o(\ln{n})\\
&\quad\geq \sum_{k=1}^s\E\left[\frac{\Re^2((\xi_k - X_1)/t_k)}{\abs{(\xi_k - X_1)/t_k}^4}\ind_{1/(\eps\sqrt{n\ln{n}})<\abs{\xi-X_1} <\delta}\right] - o(\ln{n})\\
&\quad\geq \sum_{k=1}^s(f(\xi_k) - \eta)\abs{t_k}^2\int_0^{2\pi}\int_{1/(\abs{t_k}\eps\sqrt{n\ln{n}})}^{\delta/\abs{t_k}} \frac{r^2\cos^2{\theta}}{r^4}r\,dr\,d\theta - o(\ln{n})\\
&\quad\geq \sum_{k=1}^s\pi(f(\xi_k) - \eta)\abs{t_k}^2\ln(\delta \eps \sqrt{n\ln{n}}) - o(\ln{n}),
\end{align*}
and dividing by $\ln{n}$ yields
\begin{equation}
\frac{\E\left[\Re^2(\widetilde{\zeta}_1)\right]}{\ln{n}}\geq \sum_{k=1}^s\frac{\pi(f(\xi_k) - \eta)\abs{t_k}^2}{2} - o(1).
\label{eqn:gen:reLow}
\end{equation}
If we combine inequalities \eqref{eqn:gen:reUp} and \eqref{eqn:gen:reLow} and first take $\limsup_{n\to \infty}$ (respectively $\liminf_{n\to \infty}$) of both sides and then take $\eta \to 0$, we see that
\begin{equation}
\lim_{n\to \infty}\frac{1}{\ln{n}}\E\left[\Re^2(\widetilde{\zeta}_1)\right] = \sum_{k=1}^s\frac{\pi f(\xi_k)\abs{t_k}^2}{2} = \E[\Re^2(N)].
\label{eqn:gen:reVar}
\end{equation} (Note: $f$ is bounded, and by Lemma \ref{lem:CSnice}, the expectation of $\abs{\xi-X_1}^{-1}$ is uniformly bounded, so the limit in $n$ is uniform in $\xi$.) Nearly identical arguments 
show that 
\begin{equation}
\lim_{n\to \infty}\frac{1}{\ln{n}}\E\left[\Im^2(\widetilde{\zeta}_1)\right] = \sum_{k=1}^s\frac{\pi f(\xi_k)\abs{t_k}^2}{2} = \E[\Im^2(N)]
\label{eqn:gen:imVar}
\end{equation}
and 
\begin{equation}
\lim_{n\to \infty}\frac{1}{\ln{n}}\E\left[\Re(\widetilde{\zeta_1})\Im(\widetilde{\zeta}_1)\right] = 0 = \E[\Re(N)\Im(N)],
\label{eqn:gen:coVar}
\end{equation}
with the only difference being that when we find bounds for $\E\left[\Re(\widetilde{\zeta_1})\Im(\widetilde{\zeta}_1)\right]$, we consider separately the cases where the integrand is positive and negative. 

In our quest to prove Lemma \ref{lem:genCLTtrunc}, we next show that 
\begin{equation}
\limsup_{n\to \infty} \frac{1}{\ln{n}\sqrt{n\ln{n}}}\E\left[\abs{\widetilde{\zeta_1}}^3\right] \leq O_{\mu,s,\vec{t}}(\eps).
\label{eqn:gen:thirdMo}
\end{equation}
Note that 
\[
\E\left[\abs{\widetilde{\zeta}_1}^3\right] \leq 2\cdot\E\left[\abs{\zeta_1}^3\right] + 6\cdot\E\left[\abs{\zeta_1}^2\right] \cdot \E\abs{\zeta_1} \leq 8\eps\sqrt{n\ln{n}}\sum_{k=1}^s\abs{t_k}\E\left[\abs{\zeta_1}^2\right],
\]
where the last inequality comes from using the fact that, almost surely, $\abs{\zeta_1} \leq \eps\sqrt{n\ln{n}}\sum_{k=1}^s\abs{t_k} $. Choose $\delta_1>0$ so that $\delta_1 < \frac{1}{2}\min_{1\leq k < l \leq s}\abs{\xi_k-\xi_l}$ and that for $\abs{z-\xi_k} < \delta_1$, $1 \leq k \leq s$, we have $\abs{f(z) - f(\xi_k)} < 1$. Then, it follows that for $n$ large enough to ensure $\frac{1}{\eps\sqrt{n\ln{n}}} \leq \delta_1$,
\begin{align*}
\frac{\E\left[\abs{\widetilde{\zeta_1}}^3\right]}{\ln{n}\sqrt{n\ln{n}}} & \leq \sum_{k=1}^s\frac{8\eps\abs{t_k}}{\ln{n}}\E\left[\abs{\zeta_1}^2\right]\\
&\leq\sum_{k=1}^s\frac{8\eps\abs{t_k}}{\ln{n}}\left(\E\left[\left(\sum_{k=1}^s\frac{t_k}{\xi_k-X_1}\right)^2 \prod_{k=1}^s\ind_{\abs{\xi_k-X_1} \geq \delta_1}\right]\right.\\
&\quad {}+ \left.\sum_{k=1}^s\E\left[\left(\sum_{l=1}^s\frac{t_l}{\xi_l-X_1}\right)^2 \ind_{1/(\eps\sqrt{n\ln{n}} < \abs{\xi_k-X_1} < \delta_1}\prod_{l\neq k}\ind_{\abs{\xi_l-X_1} \geq \delta_1}\right]\right)\\
&\leq \sum_{k=1}^s\frac{8\eps\abs{t_k}}{\ln{n}}\left(\sum_{l=1}^s\frac{\abs{t_k}}{\delta_1}\right)^2\\
&\quad {} + \sum_{k=1}^s\frac{8\eps\abs{t_k}}{\ln{n}}\sum_{l=1}^s\E\left[\abs{\frac{t_l}{\xi_l-X_1}}^2\ind_{1/(\eps\sqrt{n\ln{n}} < \abs{\xi_l-X_1} < \delta_1}\right]\\
&\quad {} + \sum_{k=1}^s\frac{8\eps\abs{t_k}}{\ln{n}}\left(\sum_{l=1}^s2\E\abs{\frac{t_l}{\xi_l - X_1}}\sum_{j\neq l} \frac{\abs{t_j}}{\delta_1} + \sum_{l=1}^s\left(\sum_{j\neq l} \frac{\abs{t_j}}{\delta_1}\right)^2\right),
\end{align*}
where we have used the fact that
$\abs{z+w}^2 \leq \abs{z}^2 + 2\abs{z}\abs{w} + \abs{w}^2.$
Continuing from above, where the second sum is the only one of non-negligible order, we have
\begin{align*}
&\frac{1}{\ln{n}\sqrt{n\ln{n}}}\E\left[\abs{\widetilde{\zeta_1}}^3\right]\\
&\qquad \leq \sum_{k=1}^s\frac{8\eps\abs{t_k}}{\ln{n}}\sum_{l=1}^s\E\left[\abs{\frac{t_l}{\xi_l-X_1}}^2\ind_{1/(\eps\sqrt{n\ln{n}} < \abs{\xi_l-X_1} < \delta_1}\right] + o(1)\\
&\qquad\leq \sum_{k,l = 1}^s\frac{8\eps\abs{t_k}}{\ln{n}}8\eps(f(\xi_l)+1)\abs{t_l}^2\int_0^{2\pi}\int_{1/(\eps\sqrt{n\ln{n}})}^{\delta_1}\frac{1}{r^2}r\,dr\,d\theta + o(1)\\
&\qquad= \sum_{k,l = 1}^s\frac{16\pi\eps(f(\xi_l) + 1)\abs{t_k}\abs{t_l}^2\ln(\delta_1\eps\sqrt{n\ln{n}})}{\ln{n}} + o(1)
\end{align*}
and taking $\limsup_{n\to \infty}$ establishes \eqref{eqn:gen:thirdMo}.
We conclude the proof of Lemma \ref{lem:genCLTtrunc} by combining equations \eqref{eqn:gen:nInterp}, \eqref{eqn:gen:reVar}, \eqref{eqn:gen:imVar}, \eqref{eqn:gen:coVar}, and \eqref{eqn:gen:thirdMo} in view of the facts that $\abs{N}$ has a finite third moment and $f(z)$ and $\eps$ are bounded. 
\end{proof}

In order to establish \eqref{eqn:portmanteau}, we still need to remove the truncation, which we will accomplish through a series of interpolations. We have
\begin{align*}
&\frac{1}{\sqrt{n\ln{n}}}\sum_{j=1}^n\sum_{k=1}^st_k\left(\frac{1}{\xi_k-X_j}- m_\mu(\xi_k)\right)\\
&\hspace{1in}= \frac{1}{\sqrt{n\ln{n}}}\sum_{j=1}^n\left(\sum_{k=1}^s\frac{t_k}{\xi_k-X_j} - \zeta_j\right) + \frac{1}{\sqrt{n\ln{n}}}\sum_{j=1}^n\widetilde{\zeta}_j\\
&\hspace{1in}\qquad + \frac{1}{\sqrt{n\ln{n}}}\sum_{j=1}^n\left(\E[\zeta_j] - \sum_{k=1}^s t_km_\mu(\xi_k)\right).
\end{align*}
For $n$ large enough to guarantee that the density, $f$, is well-defined and bounded by a constant, $C_f$, on \[\bigcup_{k=1}^sB\left(\xi_k,\frac{1}{\eps\sqrt{n\ln{n}}}\right),\]
it follows that
\[
\E\abs{ \frac{1}{\sqrt{n\ln{n}}}\sum_{j=1}^n\left(\sum_{k=1}^s\frac{t_k}{\xi_k-X_j} - \zeta_j\right)}\quad\text{and}\quad\abs{\frac{1}{\sqrt{n\ln{n}}}\sum_{j=1}^n\left(\E[\zeta_j] - \sum_{k=1}^st_k m_\mu(\xi_k)\right)}
\]
are both less than
\begin{align*}
\frac{n}{\sqrt{n\ln{n}}}\sum_{k=1}^s &\abs{t_k}\E\left[\frac{1}{\abs{\xi_k-X_1}}\ind_{\abs{\xi_k-X_1}^{-1} \geq \eps\sqrt{n\ln{n}}}\right]\\
&\leq \frac{ns\max_{1 \leq k \leq s}\abs{t_k}\cdot C_f}{\sqrt{n\ln{n}}}\int_0^{2\pi}\int_0^{1/(\eps\sqrt{n\ln{n}})}\frac{1}{r}r\,dr\,d\theta\\
&\leq \frac{ns\max_{1 \leq k \leq s}\abs{t_k}\cdot C_f}{\sqrt{n\ln{n}}}\frac{2\pi}{\eps\sqrt{n\ln{n}}}\\
&=o(1).
\end{align*}
Consequently,
\[
\E\abs{\frac{1}{\sqrt{n\ln{n}}}\sum_{j=1}^n\sum_{k=1}^st_k\left(\frac{1}{\xi-X_j}- m_\mu(\xi)\right) - \frac{1}{\sqrt{n\ln{n}}}\sum_{j=1}^n\widetilde{\zeta}_j} = o(1).
\]
We can take advantage of the fact that $g$ is Lipshitz (indeed, $g$ is smooth with compact support, so it has bounded partial derivatives), to obtain 
\[
\E\abs{g\left(\frac{1}{\sqrt{n\ln{n}}}\sum_{j=1}^n\sum_{k=1}^st_k\left(\frac{1}{\xi-X_j}- m_\mu(\xi)\right)\right) - g\left( \frac{1}{\sqrt{n\ln{n}}}\sum_{j=1}^n\widetilde{\zeta}_j\right)} = o(1).
\]
Lemma \ref{lem:genCLTtrunc} now implies that for $n$ larger than a constant depending on $\mu, g$, $\eps$, $s$, and $t_1, \ldots, t_s$,
\begin{align*}
&\abs{\E\left[g\left(\frac{1}{\sqrt{n\ln{n}}}\sum_{j=1}^n\sum_{k=1}^st_k\left[\frac{1}{\xi-X_j} - m_\mu(\xi)\right]\right)\right] - \E\left[g\left(\frac{1}{\sqrt{n}}\sum_{j=1}^nN_j\right)\right]}\\
&\qquad \leq \abs{\E\left[g\left(\frac{1}{\sqrt{n\ln{n}}}\sum_{j=1}^n\widetilde{\zeta}_j\right)\right] - \E\left[g\left(\frac{1}{\sqrt{n}}\sum_{j=1}^nN_j\right)\right]} +o(1)\\
&\qquad = O_{\mu,s,\vec{t},g}(\eps),
\end{align*}
so taking $\eps \to 0$ yields equation \eqref{eqn:portmanteau}. The conclusion of Theorem \ref{thm:genCLT} follows since our choice of $g$ was arbitrary.
\end{proof}


\bibliography{localCP24}

\begin{thebibliography}{10}

\bibitem{B}
M.~S. Bartlett.
\newblock An inverse matrix adjustment arising in discriminant analysis.
\newblock {\em Ann. Math. Statistics}, 22:107--111, 1951.

\bibitem{Bill}
P.~Billingsley.
\newblock {\em Probability and measure}.
\newblock Wiley Series in Probability and Statistics. John Wiley \& Sons, Inc.,
  Hoboken, NJ, 2012.
\newblock Anniversary edition [of MR1324786], With a foreword by Steve Lalley
  and a brief biography of Billingsley by Steve Koppes.

\bibitem{BLR}
S.-S. Byun, J.~Lee, and T.~R. Reddy.
\newblock Zeros of random polynomials and its higher derivatives.
\newblock Available at arXiv:1801.08974, 2018.

\bibitem{C}
S.~Chatterjee.
\newblock A generalization of the {L}indeberg principle.
\newblock {\em Ann. Probab.}, 34(6):2061--2076, 2006.

\bibitem{CN}
W.~S. Cheung and T.~W. Ng.
\newblock A companion matrix approach to the study of zeros and critical points
  of a polynomial.
\newblock {\em J. Math. Anal. Appl.}, 319(2):690--707, 2006.

\bibitem{CN2}
W.~S. Cheung and T.~W. Ng.
\newblock Relationship between the zeros of two polynomials.
\newblock {\em Linear Algebra Appl.}, 432(1):107--115, 2010.

\bibitem{DH}
M.~R. Dennis and J.~H. Hannay.
\newblock Saddle points in the chaotic analytic function and {G}inibre
  characteristic polynomial.
\newblock {\em J. Phys. A}, 36(12):3379--3383, 2003.
\newblock Random matrix theory.

\bibitem{FI}
I.~F\'{a}ry and E.~M. Isenberg.
\newblock On a converse of the {J}ordan curve theorem.
\newblock {\em Amer. Math. Monthly}, 81:636--639, 1974.

\bibitem{F}
W.~Feller.
\newblock {\em An introduction to probability theory and its applications.
  {V}ol. {II}}.
\newblock Second edition. John Wiley \& Sons, Inc., New York-London-Sydney,
  1971.

\bibitem{Gal}
J.~Galambos.
\newblock {\em Advanced probability theory}, volume~10 of {\em Probability:
  Pure and Applied}.
\newblock Marcel Dekker, Inc., New York, second edition, 1995.

\bibitem{G}
A.~Gut.
\newblock {\em Probability: a graduate course}.
\newblock Springer Texts in Statistics. Springer, New York, 2005.

\bibitem{H1}
B.~Hanin.
\newblock Correlations and pairing between zeros and critical points of
  {G}aussian random polynomials.
\newblock {\em Int. Math. Res. Not. IMRN}, 2015(2):381--421, 2015.

\bibitem{H2}
B.~Hanin.
\newblock Pairing of zeros and critical points for random meromorphic functions
  on {R}iemann surfaces.
\newblock {\em Math. Res. Lett.}, 22(1):111--140, 2015.

\bibitem{H3}
B.~Hanin.
\newblock Pairing of zeros and critical points for random polynomials.
\newblock {\em Ann. Inst. Henri Poincar\'e Probab. Stat.}, 53(3):1498--1511,
  2017.

\bibitem{HJ}
R.~A. Horn and C.~R. Johnson.
\newblock {\em Matrix analysis}.
\newblock Cambridge University Press, Cambridge, second edition, 2013.

\bibitem{JHough}
J.~B. Hough, M.~Krishnapur, Y.~Peres, and B.~Vir\'ag.
\newblock {\em Zeros of {G}aussian analytic functions and determinantal point
  processes}, volume~51 of {\em University Lecture Series}.
\newblock American Mathematical Society, Providence, RI, 2009.

\bibitem{K}
Z.~Kabluchko.
\newblock Critical points of random polynomials with independent identically
  distributed roots.
\newblock {\em Proc. Amer. Math. Soc.}, 143(2):695--702, 2015.

\bibitem{KS}
Z.~Kabluchko and H.~Seidel.
\newblock Distances between zeroes and critical points for random polynomials
  with i.i.d. zeroes.
\newblock Available at arXiv:1807.02140, 2018.

\bibitem{KI}
N.~L. Komarova and I.~Rivin.
\newblock Harmonic mean, random polynomials and stochastic matrices.
\newblock {\em Adv. in Appl. Math.}, 31(2):501--526, 2003.

\bibitem{L}
J.~W. Lindeberg.
\newblock Eine neue {H}erleitung des {E}xponentialgesetzes in der
  {W}ahrscheinlichkeitsrechnung.
\newblock {\em Math. Z.}, 15(1):211--225, 1922.

\bibitem{M}
M.~Marden.
\newblock {\em Geometry of polynomials}.
\newblock Second edition. Mathematical Surveys, No. 3. American Mathematical
  Society, Providence, R.I., 1966.

\bibitem{O}
S.~O'Rourke.
\newblock Critical points of random polynomials and characteristic polynomials
  of random matrices.
\newblock {\em Int. Math. Res. Not. IMRN}, 2016(18):5616--5651, 2016.

\bibitem{OW}
S.~O'Rourke and N.~Williams.
\newblock Pairing between zeros and critical points of random polynomials with
  independent roots.
\newblock {\em Trans. Amer. Math. Soc.}, 371(4):2343--2381, 2019.

\bibitem{OWo}
S.~O'Rourke and P.~M. Wood.
\newblock Spectra of nearly {H}ermitian random matrices.
\newblock {\em Ann. Inst. Henri Poincar\'e Probab. Stat.}, 53(3):1241--1279,
  2017.

\bibitem{PR}
R.~Pemantle and I.~Rivin.
\newblock The distribution of zeros of the derivative of a random polynomial.
\newblock In {\em Advances in combinatorics}, pages 259--273. Springer,
  Heidelberg, 2013.

\bibitem{P}
V.~V. Petrov.
\newblock {\em Limit theorems of probability theory}, volume~4 of {\em Oxford
  Studies in Probability}.
\newblock The Clarendon Press, Oxford University Press, New York, 1995.
\newblock Sequences of independent random variables, Oxford Science
  Publications.

\bibitem{RS}
Q.~I. Rahman and G.~Schmeisser.
\newblock {\em Analytic theory of polynomials}, volume~26 of {\em London
  Mathematical Society Monographs. New Series}.
\newblock The Clarendon Press, Oxford University Press, Oxford, 2002.

\bibitem{TRR}
T.~R. Reddy.
\newblock On critical points of random polynomials and spectrum of certain
  products of random matrices.
\newblock Available at arXiv:1602.05298, 2016.

\bibitem{SS}
S.~Steinerberger.
\newblock A stability version of the gauss-lucas theorem and applications.
\newblock Available at arXiv:1805.10454, 2018.

\bibitem{S}
S.~D. Subramanian.
\newblock On the distribution of critical points of a polynomial.
\newblock {\em Electron. Commun. Probab.}, 17:no. 37, 9, 2012.

\bibitem{Ttop}
T.~Tao.
\newblock {\em Topics in random matrix theory}, volume 132 of {\em Graduate
  Studies in Mathematics}.
\newblock American Mathematical Society, Providence, RI, 2012.

\bibitem{Tout}
T.~Tao.
\newblock Outliers in the spectrum of iid matrices with bounded rank
  perturbations.
\newblock {\em Probab. Theory Related Fields}, 155(1-2):231--263, 2013.

\bibitem{TV2011}
T.~Tao and V.~Vu.
\newblock Random matrices: universality of local eigenvalue statistics.
\newblock {\em Acta Math.}, 206(1):127--204, 2011.

\bibitem{TVpoly}
T.~Tao and V.~Vu.
\newblock Local universality of zeroes of random polynomials.
\newblock {\em Int. Math. Res. Not. IMRN}, 2015(13):5053--5139, 2015.

\bibitem{TV}
T.~Tao and V.~Vu.
\newblock Random matrices: universality of local spectral statistics of
  non-{H}ermitian matrices.
\newblock {\em Ann. Probab.}, 43(2):782--874, 2015.

\bibitem{TCar}
C.~Thomassen.
\newblock The converse of the {J}ordan curve theorem and a characterization of
  planar maps.
\newblock {\em Geom. Dedicata}, 32(1):53--57, 1989.

\bibitem{V}
C.~Villani.
\newblock {\em Optimal transport}, volume 338 of {\em Grundlehren der
  Mathematischen Wissenschaften [Fundamental Principles of Mathematical
  Sciences]}.
\newblock Springer-Verlag, Berlin, 2009.
\newblock Old and new.

\bibitem{W}
R.~L. Wilder.
\newblock {\em Topology of manifolds}.
\newblock American Mathematical Society Colloquium Publications, Vol. XXXII.
  American Mathematical Society, Providence, R.I., 1963.

\end{thebibliography}
\bibliographystyle{abbrv}

\end{document}